\tikzset{curve/.style={settings={#1},to path={(\tikztostart)
    .. controls ($(\tikztostart)!\pv{pos}!(\tikztotarget)!\pv{height}!270:(\tikztotarget)$)
    and ($(\tikztostart)!1-\pv{pos}!(\tikztotarget)!\pv{height}!270:(\tikztotarget)$)
    .. (\tikztotarget)\tikztonodes}},
    settings/.code={\tikzset{quiver/.cd,#1}
        \def\pv##1{\pgfkeysvalueof{/tikz/quiver/##1}}},
    quiver/.cd,pos/.initial=0.35,height/.initial=0}
\tikzset{tail reversed/.code={\pgfsetarrowsstart{tikzcd to}}}
\tikzset{2tail/.code={\pgfsetarrowsstart{Implies[reversed]}}}
\tikzset{2tail reversed/.code={\pgfsetarrowsstart{Implies}}}
\tikzset{no body/.style={/tikz/dash pattern=on 0 off 1mm}}
\newcolumntype{L}{>{$}l<{$}}
\newcommand{\stackyq}{\text{ }/\kern-0.3cm /\text{ }}
\newenvironment{manualtheorem}[1]{%
  \manualtheoreminner
}{\endmanualtheoreminner}
\newenvironment{manualcorollary}[1]{%
  \manualcorollaryinner
}{\endmanualcorollaryinner}
\DeclareMathOperator{\Aut}{Aut}
\DeclareMathOperator{\End}{End}
\DeclareMathOperator{\Ind}{Ind}
\newcommand{\mr}{\mathrm}
\DeclareMathOperator{\Hom}{Hom}
\DeclareMathOperator{\Jac}{Jac}
\DeclareMathOperator{\Sym}{Sym}
\DeclareMathOperator{\Gal}{Gal}
\DeclareMathOperator{\Ext}{Ext}
\DeclareMathOperator{\holim}{holim}
\DeclareMathOperator{\QCoh}{QCoh}
\DeclareMathOperator{\Spf}{Spf}
\DeclareMathOperator{\Def}{Def}
\DeclareMathOperator{\im}{im}
\DeclareMathOperator{\Div}{Div}
\DeclareMathOperator{\supp}{supp}
\DeclareMathOperator{\Spec}{Spec}
\tikzset{node distance=2cm, auto}
\newcommand{\SO}{\mathbb{S}}
\newcommand{\C}{\mathbb{C}}
\newcommand{\Z}{\mathbb{Z}}
\newcommand{\F}{\mathbb{F}}
\newcommand{\G}{\mathbb{G}}
\newcommand{\Q}{\mathbb{Q}}
\newcommand{\A}{\mathbb{A}}
\newcommand{\PP}{\mathbb{P}}
\newcommand{\OO}{\mathcal{O}}
\newcommand{\mc}{\mathcal}
\newcommand{\bb}{\mathbb}
\newcommand{\Art}{\ms{Art}}
\newcommand{\Orb}{\mr{Orb}}
\newcommand{\ms}{\mathsf}
\newcommand{\mf}{\mathfrak}
\newcommand{\msc}{\mathscr}
\newcommand{\pt}{\mr{pt}}
\newtheorem{defn}{Definition}[section]
\newtheorem{theorem}[defn]{Theorem}
\newtheorem{lemma}[defn]{Lemma}
\newtheorem{cor}[defn]{Corollary}
\newtheorem{conjecture}[defn]{Conjecture}
\newtheorem{example}[defn]{Example}
\newtheorem{theorem*}{Theorem}
\newtheorem{construction}{Construction}
\theoremstyle{remark}
\newtheorem*{remark}{Remark}
\newtheorem*{notation}{Notation}
\title{Moduli Stacks of $G$-Curves in Homotopy Theory at $h=p-1$}
\author{Rin Ray}
\date{\today} 
\begin{document}

\begin{abstract} We study the action on the deformation space of a formal group by the maximal finite subgroup $G$ of its automorphisms, at the first height where the group has nontrivial $p$-torsion for odd $p$. We show given this group $G$ there is a universal construction of a geometric model of the $G$-action via inverse Galois theory which generalizes the use of level structure to ramification data. We use configuration spaces to understand the model, and conclude that the Lubin-Tate action at $h=p-1$ is a subgroup of the symmetric group action on the configuration space of $p+1$ points on $\PP^1$. 
\end{abstract}

\maketitle

\begin{center}\includegraphics[width=13cm]{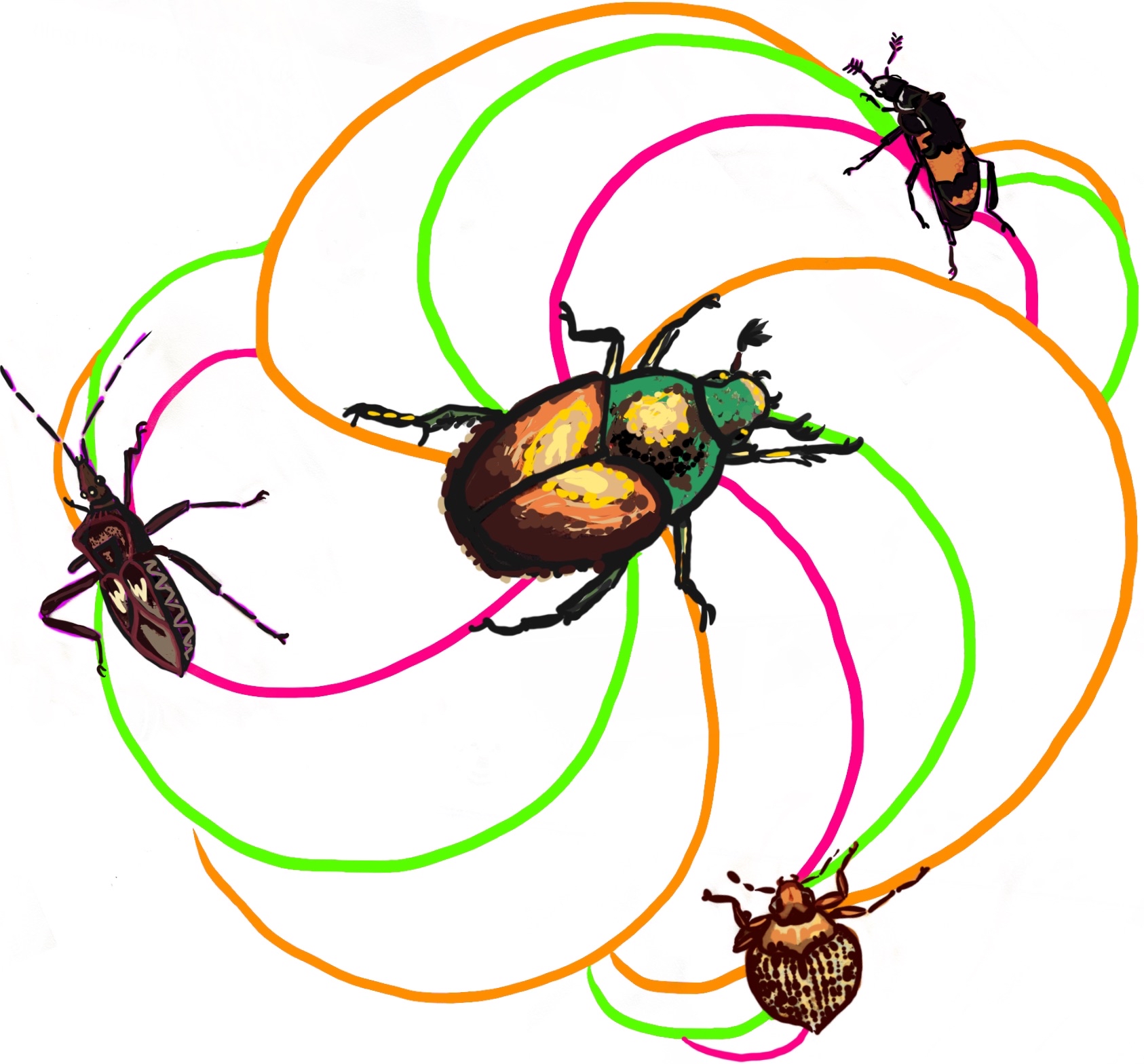}\end{center}
\tableofcontents

\section*{Introduction}

\subsection{Motivation for the Lubin-Tate Action}
 The chromatic viewpoint arose in the 1960s in the work of Quillen \cite{quill} which was expanded by Morava \cite{morava} and Ravenel \cite{ravperiod} \cite{ravperiod2}. The stable homotopy groups of spheres seemed chaotic and without pattern and this new viewpoint began to impose order. It is the most successful approach to the systematic structure of stable homotopy, both from a conceptual and a calculational standpoint, connecting algebraic topology of finite complexes with deep patterns in number theory.

The starting point of chromatic theory is the observation due to Quillen and Morava that the geometry of the moduli stack of formal groups controls the stable homotopy category. Much of what can be proved and conjectured about stable homotopy theory arises from the study of this stack, its stratifications, and the theory of its quasi-coherent sheaves. 

At a fixed prime $p$, there is a single invariant called {\color{Bittersweet}{height}} which stratifies the moduli stack of formal groups. This yields an efficient filtration of the homotopy groups of spheres $\pi_n(\SO)$ through localizations $L_{h}\SO$ of the sphere spectrum $\SO $ at the chromatic primes $(h, p)$. These localizations fit into the chromatic tower $$\cdots \to L_{h}\SO \to \cdots \to L_{1} \SO \to L_{0} \simeq \SO_{\Q_p}$$ and the chromatic convergence theorem of Hopkins and Ravenel \cite{hoprav} implies that the resulting filtration on $\pi_*(\SO)$ is exhaustive, that is, $\SO = \holim L_h \SO.$ 

Let $H$ be a one dimensional formal group of height $h$ over a characteristic $p$ field $k$, then the automorphism group of $H$, $\G_h := \Aut(H)$, acts on the functions of its deformation space, which we call $LT$. The action of $\G_h$ on $LT$ is called the {\color{Bittersweet}{Lubin-Tate action}}. We call {\color{Bittersweet}{Morava $K$-theory}}, $K(h)$, the cohomology theory associated to $H$, and {\color{Bittersweet}{Morava $E$-theory}}, $E_h$, the cohomology associated to the universal deformation of $H$. The group $\G_h$ is also referred to as the Morava stabilizer group.

For a finite CW complex $X$, the difference between $L_{h}(X)$ and $L_{h-1}(X)$ is measured by $L_{K(h)}(X).$ By the work of Devinantz-Hopkins \cite{devhop} \cite{hr}, we can access the $K(h)$-local category using the $\G_h$-equivariant homotopy theory of $E_h$. In particular we have a spectral sequence which begins with the information of the Lubin-Tate action. 
$$H^*(\G_{h},LT) \Longrightarrow \pi_*(E_h^{h\G_h}) \simeq \pi_*(L_{K(h)}X).$$

Directly computing the action of $\G_h$ on the spectrum $E_h$ is not tractable. By work of Henn \cite{henn}, we may hope to reduce to computing the action of finite subgroups of $\G_h$. Even computing finite subgroups is recalcitrant and requires tools from algebraic geometry. 

Subgroups with $p$-torsion are of particular interest because they capture $p$-torsion information in homotopy theory, and their spectral sequences are not algebraic.

\begin{theorem} \label{bouj} \cite{bouj} We write $h_i := \frac{h}{\varphi(p^i)},$ where the $\varphi$ is the totient function. If $p > 2$ and $h=(p-1)p^{k-1}m$ with $m$ prime to $p$. The group $\G_h$ has the following $k+1$ maximal finite subgroups: 
$$ \begin{cases}
\Z/(p^h-1) & \\ 
\Z/p^{i} \rtimes \Z/(p^{h_i}-1)(p-1) & \text{ for } 1 \leq i \leq k
\end{cases}$$ Heights of the form $h=(p-1)p^{k-1}m$ are the only heights with nontrivial $p$-torsion in the maximal finite subgroup, when $p$ is odd.
\end{theorem}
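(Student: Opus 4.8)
The plan is to pass from formal groups to $p$-adic division algebras and then to pure group theory. By Dieudonn\'e theory (equivalently the Lubin--Tate description of $\End H$), one has $\G_h = \Aut(H) \cong \OO_D^\times$, the unit group of the maximal order $\OO_D$ in the central division $\Q_p$-algebra $D$ of index $h$ (Hasse invariant $1/h$, dimension $h^2$). So the problem becomes the classification, up to conjugacy, of the maximal finite subgroups of $\OO_D^\times \subseteq D^\times$.

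The structural backbone is that for any finite $F \le D^\times$ the $\Q_p$-span $\Q_p[F] \subseteq D$ is a finite-dimensional domain, hence a division algebra, of index dividing $h$; and when $F$ is abelian $\Q_p[F]$ is a \emph{field} $K$ with $[K:\Q_p]\mid h$, forcing $F$ cyclic. Thus every finite abelian subgroup of $\G_h$ is the group $\mu(K)$ of roots of unity in a subfield $K\subseteq D$ of degree dividing $h$, and conversely every such $K$ embeds in $D$. Since $\Q_{p^h}\subseteq D$ and Teichm\"uller lifting gives $\mu_{p^h-1}\subseteq\Q_{p^h}^\times$, while any prime-to-$p$ cyclic subgroup $\mu_n$ needs $\mathrm{ord}_n(p)\mid h$ (hence $n\mid p^{\mathrm{ord}_n(p)}-1\mid p^h-1$), the largest finite subgroup of order prime to $p$ is exactly $\Z/(p^h-1)$. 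This accounts for the first entry and shows it is a maximal finite subgroup, as it is not contained in any of the $p$-torsion subgroups below.

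For the $p$-torsion — and in particular the final sentence — I would use that for odd $p$ every finite $p$-subgroup of $D^\times$ is cyclic: all of its abelian subgroups lie in subfields, hence are cyclic, and a finite $p$-group ($p$ odd) all of whose abelian subgroups are cyclic is itself cyclic (the generalized-quaternion exception only arises at $p=2$). So a $p$-subgroup is $\mu_{p^a}$ for some $a$, and it embeds in $D$ iff $\Q_p(\zeta_{p^a})$ does, iff $[\Q_p(\zeta_{p^a}):\Q_p]=\varphi(p^a)=p^{a-1}(p-1)$ divides $h$. Writing $h=(p-1)p^{k-1}m$ with $p\nmid m$, this holds exactly for $a\le k$; and if $p-1\nmid h$ then even $a=1$ is impossible, so $\G_h$ has no nontrivial $p$-torsion at all. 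Hence for odd $p$ the heights whose maximal finite subgroup has nontrivial $p$-torsion are precisely those divisible by $p-1$, i.e. those of the stated form, and then the top $p$-power order occurring is $p^k$.

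It remains to construct, for each $1\le i\le k$, the subgroup $\Z/p^i\rtimes\Z/(p^{h_i}-1)(p-1)$ and to show every finite subgroup of $\G_h$ is subconjugate to one of the $k+1$ listed groups. For the construction I would start from the cyclic group $\mu(F_i)$ of roots of unity in the maximal subfield $F_i=\Q_{p^{h_i}}(\zeta_{p^i})\subseteq D$ — of order $p^i(p^{h_i}-1)$, with unique (hence normal) Sylow $p$-subgroup $\mu_{p^i}$ — and enlarge it inside $D^\times$ by a finite-order element normalizing $F_i$ and inducing a generator of the tame inertia quotient of $\Gal(F_i/\Q_p)$; the obstruction to lifting Galois elements to finite order in $D^\times$ is exactly what limits this enlargement to an extra cyclic factor of order $p-1$ and yields the semidirect-product shape. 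For exhaustiveness one takes an arbitrary finite $F\le D^\times$, splits off its cyclic $p$-part, studies the division algebra $\Q_p[F]$ through its center, ramification, and residue extension, and uses Skolem--Noether to move $F$ into one of the standard positions. I expect this last step — the maximality and exhaustiveness argument, and especially the precise reason that one may adjoin the tame cyclotomic quotient of order $p-1$ and nothing more — to be the main obstacle; that is the arithmetic heart of the Hewett--Bujard classification, whereas everything preceding it is essentially a formal consequence of the division-algebra dictionary.
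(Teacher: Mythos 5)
This statement is quoted from the literature: the paper gives no proof of Theorem \ref{bouj}, it simply cites Bujard's classification (building on Hewett), so there is no internal argument to compare yours against. Judged on its own terms, your outline follows the standard route correctly as far as it goes: the identification $\G_h\cong\OO_D^\times$ for $D$ the division algebra of Hasse invariant $1/h$, the observation that finite abelian subgroups span subfields of degree dividing $h$ and hence are cyclic groups of roots of unity, the resulting bound $\mu_{p^h-1}$ on the prime-to-$p$ part, the cyclicity of finite $p$-subgroups for odd $p$, and the embedding criterion $\varphi(p^a)\mid h$ which correctly yields both the bound $a\le k$ and the final sentence of the theorem (nontrivial $p$-torsion occurs exactly when $(p-1)\mid h$). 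These parts are sound.

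However, as a proof there is a genuine gap, and you name it yourself: everything that makes the statement a classification of \emph{maximal} finite subgroups is deferred. Concretely, you do not show (i) that the normalizer construction over $F_i=\Q_{p^{h_i}}(\zeta_{p^i})$ actually produces a finite group of the stated shape --- i.e.\ that an order-$(p-1)$ lift of the relevant Galois element exists, commutes with $\mu_{p^{h_i}-1}$ so as to fuse into a cyclic complement of order $(p^{h_i}-1)(p-1)$, and that no larger subgroup of $\Gal(F_i/\Q_p)$ lifts to finite order; (ii) that these $k+1$ groups are pairwise non-subconjugate and exhaust the maximal finite subgroups up to conjugacy, which requires the Skolem--Noether normalization of an arbitrary finite subgroup; and (iii) maximality of $\Z/(p^h-1)$ itself --- your remark that it is not contained in the other listed groups does not rule out a larger unlisted finite subgroup containing it, so this too rests on the exhaustiveness step you postpone. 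In short, the proposal is an accurate strategy for the Hewett--Bujard theorem, with the correct easy half carried out, but the arithmetic core (lifting obstructions in $D^\times$ and the conjugacy/exhaustiveness argument) is asserted rather than proved.
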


\begin{remark} Note in particular that for $i=k$, the maximal finite subgroup is of the form $\Z/p^{k} \rtimes \Z/(p-1)^2$, and there is only one conjugacy class of this subgroup. 
\end{remark} 

This motivates us to consider the case of the subgroup $G' := \Z/p \rtimes \Z/(p-1)^2$, and present its model in such a way that we can inductively build from it models for $k>1$. We consider $k > 1$ in the next paper in this trilogy. 

Our goal in this paper is to compute $\pi_*(E_{p-1}^{\text{ }hG'})$ via a geometric model built using inverse Galois theory of ramified families of curves. We hope the reader is now interested in the Lubin-Tate action and aware of why this particular subgroup $G'$ is of interest.

\subsection{Algebraic Geometry Enters the Fray at height $2$: Elliptic Curves as Configuration Spaces}  We wish to understand the action of $G'$ on the ring representing the deformation stack of formal groups of height $p-1$, as this ring is $\pi_*(E_{p-1}).$ We start by discussing the first example, for height 2.

%An explicit definition of a geometric model is introduced in \cite{ray}.

\begin{comment}
When and how can we capture information about a group action on a moduli stack $\mc{N}$ by using a more understandable group action on a different moduli stack? One method is geometric modelling. Given $M, N$ prestacks, both carrying a $G$-action, and a functor $\mc{F} \colon M \to N$ which is a $G$-equivariant equivalence.
\[\begin{tikzcd}
	M & N
	\arrow["G", from=1-1, to=1-1, loop, in=145, out=215, distance=10mm]
	\arrow["{\mc{F}}", from=1-1, to=1-2]
	\arrow["\simeq"', from=1-1, to=1-2]
	\arrow["G", from=1-2, to=1-2, loop, in=325, out=35, distance=10mm]
\end{tikzcd}\] \noindent then, there is an isomorphism of quotient stacks
$$M \stackyq G \simeq N \stackyq G$$
\end{comment}

Elliptic curves appear in homotopy theory because they provide a geometric model of the Lubin-Tate action of finite subgroups at height 1 and 2 \cite{forms} \cite{moore2} \cite{chromsplit}. Why do elliptic curves give a geometric model at height 2 for maximal finite subgroups $G'$ of $\G_2$? We tell this story from a fresh Galois-theoretic viewpoint to ink the blueprint for $h=p-1$. By interpreting the elliptic curve model in terms of branch points on $\PP^1$, we show that we may understand the Lubin-Tate action explicitly as the permutations of points on $\PP^1$.  \begin{center}\includegraphics[width=7cm]{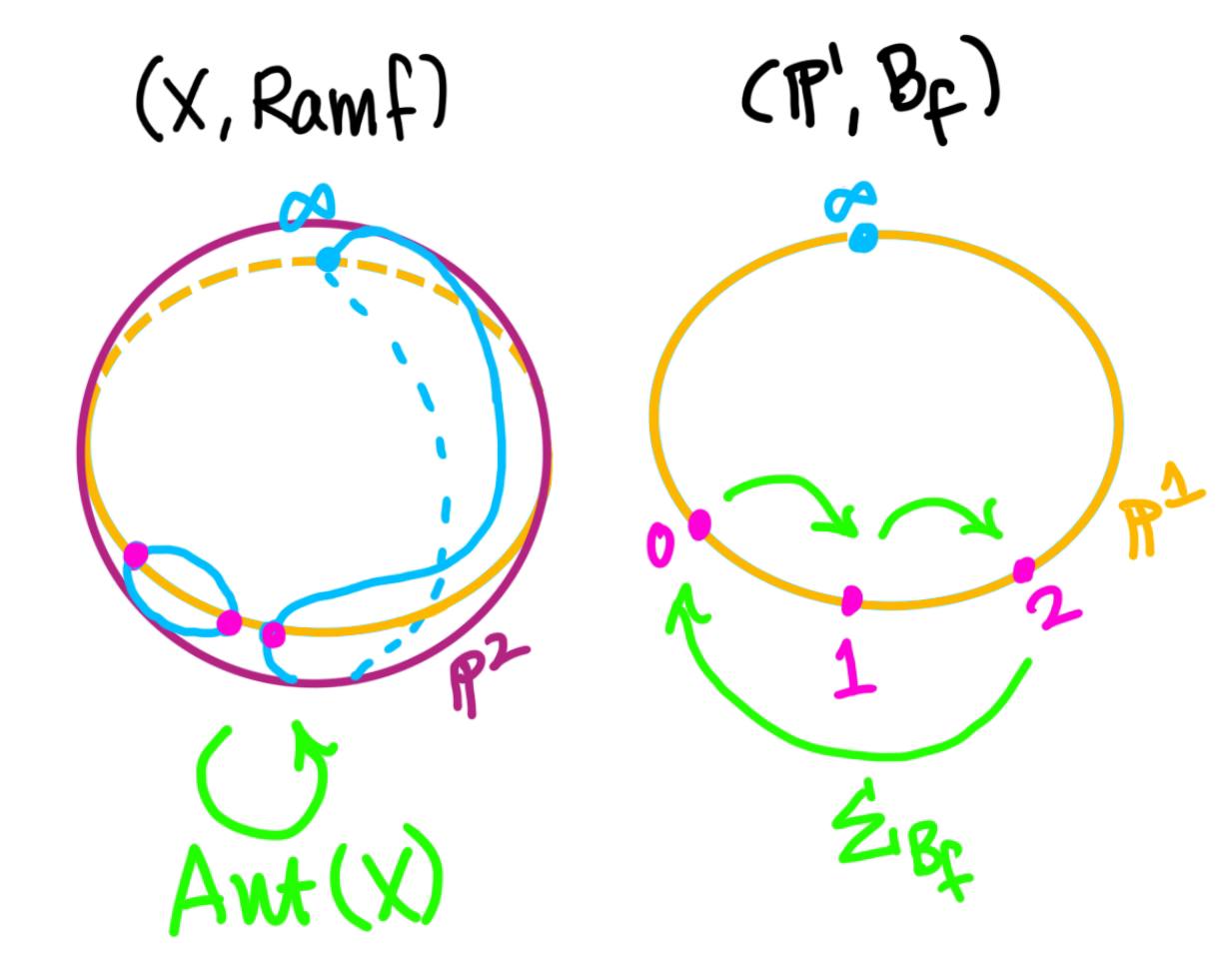}\end{center}

Let's observe how $G'$ determines the geometric model in our take on the elliptic curve case.

We define $\Def_{(X, G)}$ in Defn \ref{CurveStack} to be a moduli problem of curves $\mf{X}$ over $R$ with a $G' \subseteq \Aut_R(\mf{X})$ action, lifting $X$ and the $G'$-action on $X$ over a char $p$ field $k.$ 

\begin{center}
\includegraphics[width=7cm]{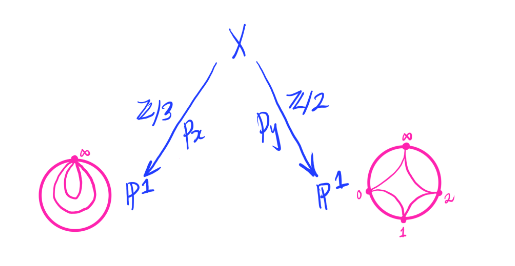}
\end{center} 

\begin{itemize} 
\item Being the minimal genus curve over with automorphism group $G' \simeq \Z/3 \rtimes \Z/4$ forces the curve $E$ to be supersingular, that is, to have formal group $\mc{F}(E)$ of height $2.$ This automorphism group also gives us a span of Galois covers of $\PP^1$ by quotienting.  $$\mathbb{P}^1 \xleftarrow{C_2} X \xrightarrow{C_3} \mathbb{P}^1.$$ 
\item Thanks to Serre-Tate \cite{st}, we know that deformations of an elliptic curve coincide with deformations of its formal group $\mc{F}(E)$. Note that $\mc{F}(E)$ is isomorphic to any other formal group of the same height. %Serre-Tate tells us that deformations of an abelian variety is equivalent to the deformations of their formal group.
\item The level $2$-structure $L$ on our elliptic curve gives us a deformation moduli problem representable by a ring $R$. The level $2$-structure coincides with the ramification divisor of the $C_2$-generically Galois cover $p_y: E \to E/C_2 \simeq \PP^1$. The action of $GL_2(\F_2) \simeq \Sigma_3$ on the level structure $L$ coincides with the action of $\Sigma_3$ by permutation of the branch points of $p_y$, and is isomorphic to the action of $\Aut(E)/C_2$ on $\OO(\Def_{(E, L)})$.
\item Due to the previous point, the deformation problem $\Def_{(E, C_2)}$ is easily understandable as an $\Aut(E)/C_2$-representation. In fact, $\Def_{(E, C_2)}$ carries an action of $G' \simeq \Aut(E) \simeq \Aut(E, C_2)$, since $C_2$ is central.
\end{itemize}

From this, we may conclude that the action of automorphisms of the elliptic curve on its deformation space models the Lubin-Tate action restricted to the subgroup $G'.$ $$\Def^\star_{(E, C_2)} \stackyq G' \simeq \Def^\star_{\mc{F}(E)} \stackyq G'.$$
\[\begin{tikzcd}
	& {G' := \Aut_k(E)} & {\Aut_k(\mc{F}(E))} \\
	  & {\Def^\star_{(E, C_2)}} & {\Def^\star_{\mc{F}(E)}} & {}
	\arrow[hook, from=1-2, to=1-3]
	\arrow[from=2-2, to=2-2, loop, in=55, out=125, distance=10mm]
	\arrow["\simeq", from=2-2, to=2-3]
	\arrow[from=2-3, to=2-3, loop, in=55, out=125, distance=10mm]
\end{tikzcd}\]

\noindent The equivalence above gives us the following by using that $\OO(\Def^\star_{\mc{F}(E)}) \simeq \pi_*(E_{2}).$ $$H^*(G', \OO(\Def^\star_{(E, C_2)})) \simeq H^*(G', \pi_*(E_{2})).$$

\begin{notation} We call the algebraic quotient (defn \ref{algquotient}) by $/$ and the stacky quotient by $\stackyq$. \end{notation}

\begin{remark} The reader might have expected the use of a $C_4$-generically Galois cover rather than $C_2$, but quotienting the elliptic curve by the group $C_4$ is not Galois. \end{remark}

\begin{remark} There is a global moduli stack which appears naturally here. The moduli stack of elliptic curves $\mc{M}_{1, 1}^{N}$ completes at a point $(E, L)$ to a deformation problem of the elliptic curve $(E, L)$. The global construction was used beautifully by Stojanoska \cite{vesna} to calculate the duality of $\mr{tmf}$, by crucially using that $\mr{Tmf}(2)^{GL_2(F_2)} \simeq \mr{Tmf}.$ We will leave further discussion of the global setting to another place and time. 
 \end{remark}

\subsection{Dreams of a Perfect World: Construction of Geometric Model for any Given $G'$}

We hunger for all heights at once. In a perfect world, there would be a universal construction which takes as input only the subgroup $G' \subseteq \G_h$, and gives as output a full geometric model. The group $G'$ would determine everything about the modelling stack, and construct naturally a $G'$-equivariant functor from it to deformations of a one-dimensional formal group of height $h$. In a perfect world, the following is true.

\begin{itemize} 
\item There is a minimal genus curve $X$ with automorphism group $G'$ isomorphic to a maximal finite subgroup $G' \subseteq \G_h$. 
\item The moduli stack $\Def_{(X, G)}$ is representable as a simple $G'$-representation, where $G \subseteq G'.$
\item There exists a functor $\mc{F} \colon \Def_{(X, G)} \to \mc{M}_{\mr{fg}_1}^{\leq h}$, such that this functor sends $X$ to a formal group law $\mc{F}(X)$ of dimension one and height $h$. 
\item This functor $\mc{F}$ is a $G'$-equivariant equivalence of formal moduli problems, where the action on the target is the Lubin-Tate action restricted to the subgroup $G' \subseteq \G_h$.
\end{itemize}

As we journey past the height 2 horizon, let us take a moment to acknowledge the shift in complexity in constructing a geometric model for the Lubin-Tate action past height 2. We can no longer can rely on Serre-Tate to give us an isomorphism of moduli problems. It is impossible to construct a one-dimensional variety $X$ whose formal group law is of height $h > 2;$ this cannot exist by Cartier duality. To get a one-dimensional formal group law $F$ of height $h>2$ whose behaviour is associated to an algebraic variety $Y$ of any dimension, we must toe a delicate line. Our $Y$ cannot split as an algebraic group, but \textit{must} split on the level of $p$-divisible groups. 

A curve cannot even have a group structure unless it is $\G_a, \G_m$ or an elliptic curve. To construct a geometric model out of a stack of curves, we'd need to construct curves whose Jacobians' formal groups contain a one-dimensional formal group of height $h$ compatibly in families, and we need our original moduli stack of curves to be equivalent to this puny one dimensional formal group we've split off. It is a lot to ask for, and yet, it fully works at $h=p-1$, and we have some evidence leading to the conjecture that it works at $h=p^{k-1}(p-1)$.

\subsection{Geometric Modelling at height $p-1$: Twisted Love of Multiplication and Addition}  

In height $p-1$, the construction from a perfect world exists, birthed by the twisted love of multiplication and addition, $\F_p \rtimes \F_p^\times$. Offered the group $G' \subset \G_h$ as the seed we sow the geometric model: from the curve $X$, down to the functor $\mc{F}$ which induces the equivalence. 
%In a perfect world, we have the ultimate construction. 
We follow the guide of the elliptic curve case. A conceptual insight of this paper is to use inverse Galois theory to construct the curve $X$, and to replace the notion of level structure with ramification data to work with curves without group structure.

\begin{center}
\includegraphics[width=10cm]{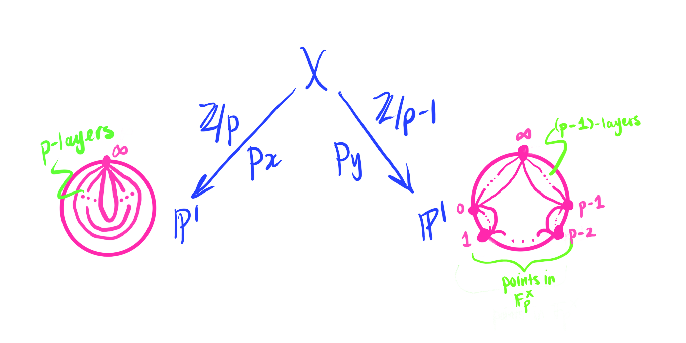}
\end{center} 

\begin{itemize} 
\item In Section \ref{curveprops}, we take $X$ to be the minimal genus curve with automorphism group $G \simeq C_p \rtimes C_{(p-1)^2}$. Quotienting gives us a roof of curves which are Galois covers. $$\mathbb{P}^1 \xleftarrow{C_{p-1}} X \xrightarrow{C_p} \mathbb{P}^1.$$ 
\item In Section \ref{sec:theoremb}, we show the moduli stack $\Def_{(X, C_{p-1})}$ for $G = C_{p-1}$ is easily representable by a ring associated to configuration spaces, where $$\Aut(X)/C_{p-1} \hookrightarrow \Sigma_p$$ injects into permutations of the set of branch points for the map $q: X \to X/C_{p-1}$ away from infinity. The stack $\Def_{(X, C_{p-1})}$ carries an action of $\Aut(X) \simeq \Aut(X, C_{p-1}).$ Theorem \ref{theoremb} describes this action, which we discuss in the next subsection \ref{theorembintro} of this introduction. 
\item In Section \ref{sec:effie}, we use $G'$ to construct a $G'$-equivariant functor $\mc{F} \colon \Def_{(X, C_{p-1})} \to \mc{M}_{\mr{fg}_1}^{\leq h}$ such that $\mc{F}(X)$ is a height $p-1$ formal group law. In section \ref{universal}, we show this functor induces a $G'$-equivalence of categories. 
\end{itemize}

The last two points are the main meat of the paper. We summarize this last point as Theorem \ref{theorema}, which we state first and describe the functor $\widehat{\mc{F}}^1$ built from $G'$ afterward. 

\begin{manualtheorem}{\ref{theorema}}{\color{blue}{ The functor $$\widehat{\mc{F}}^1: \Def_{(X, G)}^\star \to (\Def_{\mc{F}^1(X, G)}^{FG^{\text{ dim }1}})^\star$$ is a $G'$-equivariant equivalence, where $\mc{F}^1(X, G)$ is a formal group of dimension 1 and height $p-1$. The induced $G'$-action on $(\Def_{F^1(X, G)}^{FG^{\text{ dim }1}})^\star$ is the canonical Lubin-Tate $G'$-action, such that 
$\Def_{(X, G)}^\star \stackyq G' \to (\Def_{\mc{F}^1(X, G)}^{FG^{\text{ dim }1}})^\star \stackyq G'$.}}
\end{manualtheorem}

\begin{remark} A graded version of Theorem \ref{theorema} is given by Theorem \ref{gradedtheoremA}. \end{remark}

In other words, $\Def_{(X, G)}$ is a geometric model (in the sense of Defn \ref{geometricmodel}) for the canonical Lubin-Tate action of the maximal finite subgroup at height $p-1.$

\begin{remark} For a height $h$ one dimensional formal group law $G$, its $\star$-deformations are a $\G_h$ torsor over the moduli stack of height $h$ formal groups, $$\Def_F^{\star} \stackyq \G_h \simeq \widehat{\mc{H}}(h),$$ corresponding to the Devninatz-Hopkins formula above describing $E_h$-theory as a $\G_h$-Galois extension of the local sphere $E_h^{h\G_h} \simeq L_{K(h)}\SO$.  By quotienting by the subgroup $G' \subseteq \G_h$, we are considering a sub-Galois extension. \end{remark}

The bones of Theorem \ref{theorema} are Theorem \ref{splittydooda} and Theorem \ref{goldenfish}. Blessed by a god unknown, our lovely group $G'$ gives us a way to define a functor $\widehat{\mc{F}}^1$ such that $\widehat{\mc{F}}^1(X)$ is a formal group of dimension one and height $p-1.$ In fact, $G'$ determines the structure of $\Jac(X)^\wedge_e$ entirely. This is done in section \ref{hsplitting}. 

We define the functor $\widehat{\mc{F}}^1$ in Section \ref{sec:effie} by considering the formal group associated to the Jacobian of a curve and projecting onto one of the idempotent summands given by the idempotent decomposition with respect to the action of $G$.\footnote{As an aside, this functor can also be constructed far less canonically without using idempotent decomposition. We may build it directly from an isomorphism of rings representing the moduli problems using Yoneda. This is worth noting for more general cases where an idempotent decomposition may not be sufficiently fine.} We show in Section \ref{universal} that $\widehat{\mc{F}}^1$ is a $G'$-equivariant equivalence of formal moduli problems. We now state these bones and summarize their proofs.

\begin{manualtheorem}{\ref{splittydooda}} \color{blue}{(splitty dooda) Let $A$ be the p-divisible group of $\Jac_k(X)$, then $A$ has an $C_{p-1}$ idempotent integral decomposition
$$A \simeq \bigoplus_{\chi \in \Hom(C_{p-1}, \G_m)} A^{\chi^i}.$$
into $p-2$ summands of dimension $1, 2, ..., p-1$, all of height $p-1$, where $A^{\chi^i}$ is of dimension $i$. There is no component corresponding to the trivial character.}
\end{manualtheorem}

\begin{manualcorollary}{\ref{hsplitting}} \color{blue}{(h-splitting) The $p$-divisible group $A^{\chi^1}$ is connective and corresponds to a formal group of dimension 1 and height $p-1$.}
\end{manualcorollary}
The Dieudonn\'e module of a formal group associated to a variety $X$ can be described as it integral cristalline cohomology $H^1_{cris}(X, W(k))$, which we will henceforth call $H^1_{cris}$, this is a $W(k)$-module.  This module is flat which means it can be described fiberwise. If we can understand it mod $p$, $H^1_{cris}/p \simeq H^1_{dR}(X, k)$, and over its generic fiber $H^1_{cris}[\frac1p]$, then we understand $H^1_{cris}$. The eigenvalues of Frobenius of $X$, which determine $H^1_{cris}[1/p]$ are determined by showing an equivalence of correspondences between the Frobenius correspondence and a correspondence using the automorphisms of the curve $G'$, as in \cite{coleman}. We combine this with our understanding of the idempotent decomposition of $T_e^*\Jac(X) \simeq H^0(X, \Omega^1_X)$ given by the subgroup $G$, and win.

\begin{manualtheorem}{\ref{goldenfish}} \color{blue}{(golden fish) Let $\mc{U}$ be the universal object over $\Def_{(X, G)}^\star$, and $A$ the ring representing $\Def_{(X, G)}^\star$, then, the formal group $\widehat{\mc{F}}^1(\mc{U})$ over $A$ is a universal formal group law of height $p-1$.} \end{manualtheorem}

This is shown in section \ref{universal} by establishing a recognition principle to check if a formal group over a ring abstractly isomorphic to the Lubin-Tate ring is a universal formal group.

\subsection{Understanding Our Geometric Model $\Def_{(X, G)}$ as a Configuration Space}
\label{theorembintro}
Let $G' := \Aut(X) \simeq C_p \rtimes C_{(p-1)^2}$, and $G = C_{p-1}.$ Since $\Def_{(X, G)}$ carries an action of $\Aut(X, G) \simeq \Aut(X) =: G'$, see \ref{ex:autxgaction}, we wish to describe the ring representing it as a $G'$-representation. 

We approach this in two steps. The first step is establishing it as a $G'/G$-representation. We show more generally that deformations of any curve $X$ with a tame totally ramified action of $G$ are equivalent to deformations of its quotient $X/G$ and the branch points of $q:X \to X/G$. Further, the equivalence is $\Aut(X, G)/G$ equivariant. In our example, the $G'/G$-action on our deformation space $\Def_{(X, G)}$ can be understood as a restriction of the permutation action of the group $G'$ on the branch points of the map $q: X \to X/G.$ 

\begin{manualtheorem}{\ref{theoremb1}} \color{blue}{There is a $G'/G$-equivariant isomorphism of moduli problems $$L \colon \Def_{(X, C_{p-1})} \simeq \Def_{(\bb{P}^1, p+1)} \simeq  \Def_{(\bb{A}^1, p)}.$$} \end{manualtheorem}
\begin{center}\includegraphics[width=10cm]{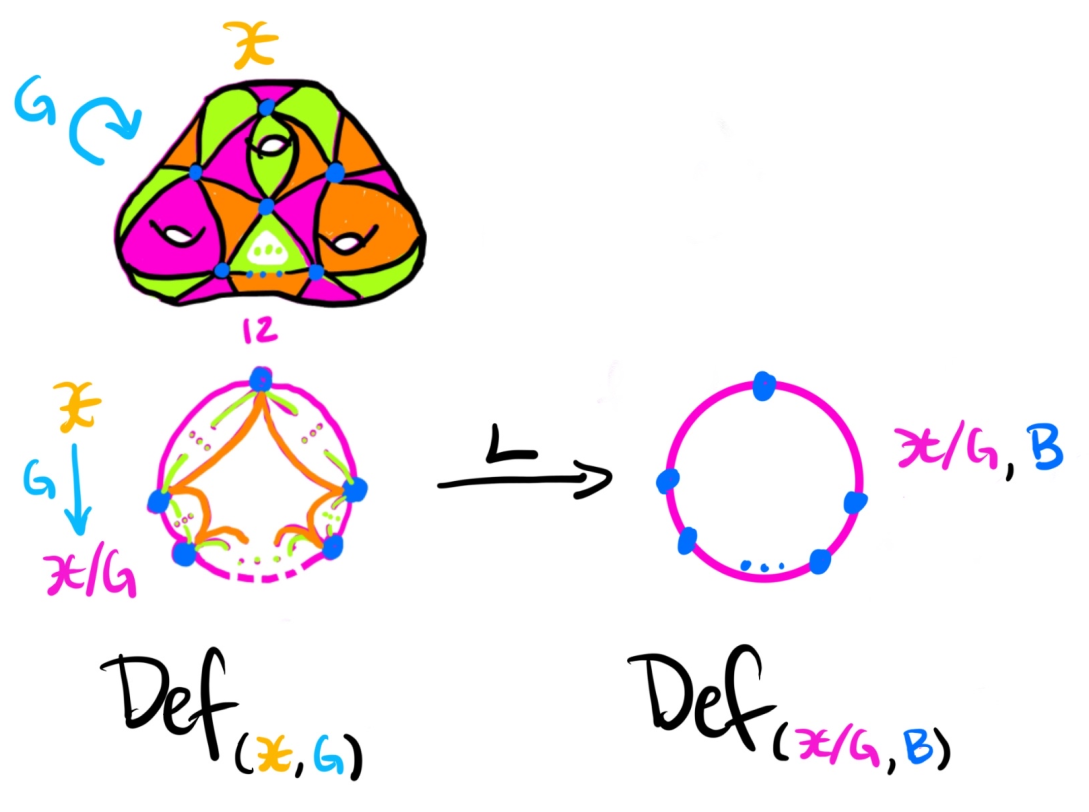}\end{center}

Inverse Galois theory behaves well with respect to deformation problems. Notice that $G$-Galois covers of curves with source $X$ are equivalent to $G$-actions on $X$. Under this equivalence, ramification points correspond to points with nontrivial stabilizer group. 

We now discuss the choice of $C_{p-1}$ in our deformation problem. Our curve $X$ in characteristic $p$ defined solely by having automorphism group $G'$ has a convenient tame scapegoat, $G := C_{p-1}$. The points of $X$ with nontrivial stabilizer under the action of $G'$ and under the action of its subgroup $G = C_{p-1}$ coincide. We use this to our advantage, work with $p_y: X \to X/C_{p-1}$ and encode the remaining $G'/G$ action through its action on the $p$ affine branch points $$G'/G \hookrightarrow \Sigma_{B_{\mr{aff}}}=\Sigma_p.$$ Thus recording of the action on branch points exactly encodes the corresponding $C_p$-cover.

Using this moduli problem of marked projective space established by Theorem \ref{theoremb1}, we capture the $G'/G$-action on $\Def_{\PP^1, p+1}$ by considering it as $\Def_{\PP^1, n+1}$ living in the global stack $\mr{Conf}(\PP^1, n+1)$ of configurations of points on $\PP^1$ without collisions. The affine group $\Aut(\A^1) \simeq \G_a \rtimes \G_m$ is the subgroup of $\Aut(\PP^1)$ fixing infinity, thus, $$\mr{Conf}(\bb{P}^1, n+1) \stackyq \Aut(\bb{P}^1) \simeq \mr{Conf}(\bb{A}^1, n) \stackyq \Aut(\bb{A}^1).$$

We opt here to be excessively specific for ease of use. The standard symmetric representation is $\lambda := W(k)\{y_0, .., y_{n-1}\}/(y_1 + \cdots y_{n-1})$ with $|y_i| = -2$, where the action of the symmetric group $\Sigma_n$ is by permutation of the basis $y_i$. Let $N=\prod_{i = 0}^{n-1} \sigma^i(y_0),$ where $\sigma$ generates $C_n$. Inverting $N$ is encoding that we don't allow the points in our configuration space to coincide. 

\begin{manualtheorem}{\ref{theoremb2}} \color{blue} The stack $\mr{Conf}(\bb{A}^1, n)$ is representable as a moduli problem, and the ring representing it is as a $\Sigma_n$-module $$R \simeq \Sym(\lambda)[N^{-1}] \stackyq \G_m,$$ where $\lambda$ is the standard $n-1$-dimensional representation of $\Sigma_n$.  \end{manualtheorem}

\begin{remark} The grading of the generators by $-2$ comes from the fact that the $\Sigma_n$-action on cohomology of the configuration space of $n+1$ points on $\PP^1$ coincides with the action of $\Sigma_n$ on the the cohomology of $\PP^1 - n$ points, which is trivial in degree 0 and in degree 2 is reduced regular representation. This is described in Section \ref{symmetriccohom}. 
\end{remark} 

Consider the subgroup $C_n \rtimes \Aut(C_n) \subseteq \Sigma_n$, and let $\overline{\rho}$ denote the corresponding restricted representation of $\lambda.$ 

\begin{remark}
The representation $\overline{\rho}$ may also be thought of as endowing the reduced regular representation of $C_n$ with its natural $\Aut(C_n)$-action. There is also a natural $C_{n^2}$-action on the reduced regular representatuon of $C_n$, discussed in more detail in section \ref{Cp-1section}. Using Lemma \ref{fullthang}, we may generalize Theorem \ref{theoremb2} from $G'/G$ to all of $G'$.  An alternative way to get the full $\Aut(X)$ action is shown in Lemma \ref{itsafuckingstack} combined with Lemma \ref{pastandfuture}.
\end{remark} 

Combining Theorems \ref{theoremb1} and \ref{theoremb2}, we get Theorem \ref{theoremb}.

\begin{manualtheorem}{\ref{theoremb}} \color{blue}{ Let $\Sym(\overline{\rho})[N^{-1}] \stackyq \G_m$ be as in Theorem \ref{theoremb2}. Let $\sigma$ be a generator of $\Z/p$, and $x_j := (1-\sigma^j)(y_i)$ for $1 \leq j \leq p-1$. Consider $I := (p, x_i)$ to be the $G'$-equivariant ideal $(p, x_1, ..., x_{p-1})$. The formal moduli problem $\Def_{(X, G)}$ is represented by $\Spf \Lambda \stackyq \G_m$, where the ring $\Lambda$ is as a $G'$-representation   
$$\Lambda \simeq \Sym(\overline{\rho})[N^{-1}]^\wedge_I.$$}
\end{manualtheorem}

\subsection{Upshot of Geometric Modelling: Runway Show}

Putting Theorems \ref{theorema} and \ref{theoremb} together, the coordinates corresponding to branch points of the map $q: X \to X/C_{p-1}$ in the ring representing our deformation problem are a choice for the Lubin-Tate basis. The mysterious Lubin-Tate action may therefore embedded in the action of $\Sigma_p$ on the configuration space of $p+1$ points on $\PP^1,$ which has $p-2$ deformation variables due to the 3-transitivity of $\Aut(\PP^1)$. 

Let $G' := C_p \rtimes C_{(p-1)^2}$, recall that this is the maximal finite subgroup of the Morava stabilizer group at height $p-1$.
\begin{manualtheorem}{\ref{theoremc}} \color{blue}{ \label{iso} Let $\tilde{\Lambda} := \Sym(\overline{\rho}) $. As $G'$-representations, $$(E_{(p-1)})_* \simeq \widetilde{\Lambda}[N^{-1}]^\wedge_{I} =: \Lambda,$$ where $N$ and $I$ are described in Theorem \ref{theoremb}. Further, let $\Delta := N^{-(p-1)^2}$, then $$H^*(G', E_*) \simeq  H^*(G', \widetilde{\Lambda})[\Delta^{-1}]^\wedge_{I} \simeq H^*(G', \widetilde{\Lambda})[\Delta^{-1}],$$ as shown in Lemma \ref{idealhop}.} 
\end{manualtheorem}

%\begin{remark} $$H^*(\Def_{\mc{F}}/G, \OO_{\Def_{\mc{F}}/G}) \simeq H^*(G/G', \Sym(\overline{\rho})[N^{-1}]^\wedge_I).$$ \end{remark}

\begin{manualtheorem}{\ref{theoremd}} \color{blue}{ \label{badbitch_standard} The Tate cohomology of the $G'$-module $(E_{(p-1)})_*$ is isomorphic to the Tate cohomology of the $G'$-module $\Lambda = \Sym(\overline{\rho})[\Delta^{\pm}]$, which is: 

$$\hat{H}^*(G', \Lambda) \simeq \Z_p[\alpha, \beta, \Delta^{\pm 1}]/\alpha^2,$$  

where $|\alpha| = (1, 2(p-1))$, $|\beta|=(2, 2p(p-1))$, and $|\Delta| = (0, 2p(p-1)^2)$.}
\end{manualtheorem}

\subsection{Prior Work at $h=(p-1)$}

The curve $X$ which is the minimal genus curve with automorphism group $C_p \rtimes C_{(p-1)^2}$ has affine equation $x^{p-1} = y^p -y$ over $\F_{p^{p-1}}$. An explicit stack deforming this curve equation, though not formally stated in their paper, was considered previously by Gorbunov-Mahowald \cite{gm}, and used by \cite{Rav} to solve the Kevaire invariant problem at all primes $p>5$. Hill constructed a global spectral stack $\mr{EO}_{p-1}$ upon a Hopf algebroid constructed using the explicit curve equations of \cite{gm}. In Lemma \ref{pastandfuture}, we show that an ordered marked version of Hill's stack has our stack $\Def_{(X, C_{p-1})}$ as an underlying local neighborhood. 

An equivalence of stacks between a local moduli problem associated to curves and the Lubin-Tate moduli problem at height $p-1$ was not previously shown, nor was the construction of the functor $\widehat{\mc{F}}^1$. We wish to highlight this surprising equivalence which is not given by Serre-Tate. Further, our approach avoids having to explicitly lift the action of $\Aut(X)$ to the universal curve. Our lift is automatically given by deformation theory, as shown in Lemma \ref{halo}.

\subsection{Comparison with Shimura Varieties and TAF}

Consider a stack $\mc{S}$ which is a PEL-Shimura variety for $U(1, h-1)$. This is a moduli stack of abelian varieties with extra structure. In particular, their formal groups are $h$ copies of the same height $h$ one-dimensional formal group. Thus, there is a natural projection from a given $s \in \mc{S}(k)$ to one copy of a height $h$ one dimensional formal group over $k$.  \[\begin{tikzcd}
	{\text{Hecke}} & {\Aut(s)} & {\Aut_k(F)} \\
	S & {\Def_s} & {\Def_{F}} \\
	\arrow["{\mr{Stab}}", from=1-1, to=1-2]
	\arrow[from=1-2, to=1-3]
	\arrow[from=2-1, to=2-1, loop, in=55, out=125, distance=10mm]
	\arrow["{(-)^\wedge_{s}}", from=2-1, to=2-2]
	\arrow[from=2-2, to=2-2, loop, in=55, out=125, distance=10mm]
	\arrow["\simeq", from=2-2, to=2-3]
	\arrow[from=2-3, to=2-3, loop, in=55, out=125, distance=10mm]
\end{tikzcd} \vspace{-30pt}\] 
This stack of $\mc{S}$ was first considered by Carayol in his exploration of the Jacquet-Langlands correspondence \cite{carayol90}, and later by Rapport-Zink in their consideration of $p$-adic period morphisms and non-archimedean uniformization theorems for general Shimura varieties \cite{rappoportzink}. The stack $\mc{S}$ is the underlying stack of the spectral stack of topological automorphic forms $\mr{TAF}$ which was constructed and considered by Behrens-Lawson and Hill \cite{taf} \cite{taf2} \cite{tafhill}.

Let $\mc{J}(\mf{U})$ denote the Jacobian of the universal curve $\mf{U}$ living over $(\Def_{(X, G)}^{\mr{Curve}_G})^\star$. There is no clear relation of $\mc{J}(\mf{U})$ to the Shimura variety $\mc{S}$ in the special case of $h=(p-1)$. The rings representing $\Def_{(X, G)}^\star$ and $\mc{S}$ are of the same dimension, but the fibers are of different dimensions: the former of dimension $g = \frac{(p-1)(p-2)}{2}$ and the latter of dimension $h^2 = (p-1)^2.$ 

A less naive comparison would be to construct a minimal Shimura variety containing a given abelian variety which arises from ramification data (including monodromy datum) as done in \cite{li2018newton}. The natural Shimura variety associated $\Def_{(X, G)}^{\mr{Curve}_G}$ is defined by $\mr{Sh}(\mu_{(p-1)^2}, \mf{f})$ as in \cite{deligne1986monodromy}. The art of constructing specific newton polygons using roots of unity via Shimura Varieties is discussed in depth in my masters thesis \cite{modelsformal}.

\begin{remark} The global object $\mr{Curve}_G$ in which we deform $\Def_{(X, G)}$ is the smooth locus of a Hurwitz stack. This is mentioned briefly in Section \ref{sec:pastandfuture}. One could then spectralize the compact stack toward higher height generalization of $\mr{tmf}$ that is an alternative to $\mr{taf}$ and the Hopf algebra of \cite{hillphd}. We do not discuss this in depth in this paper as it will take us too far afield.  
\end{remark}
%The global stacks that naturally appear for us are of the form $\mc{M}^{ord}$, considered previously by \rin{vesna} and \ref{hill}.

\subsection{Preview: Context in Height $p^{k-1}(p-1)$}

This purpose of this paper is to give a firm foundation toward using inverse Galois theory to pin down the Lubin-Tate action for all maximal finite subgroups with nontrivial $p$-torsion simultaneously. This will be the topic of the next paper in the series, and we give a teaser here. In prior work at height $p^{k-1}(p-1)$, Ravenel \cite{ravenel2008toward} used the curve $Z: x^{p^a} = y^p-y$ to approach the $C_p$-action, as the automorphisms of $Z$ contain at most $C_p$ and cannot contain $C_{p^k}$. 

We consider the minimal genus curve $Y$ with $\Aut(Y) \simeq C_{p^k} \rtimes C_{(p-1)^2}$, known in our tongue as an Artin-Schreier-Witt curve. These are inductively constructed via pullback of the Frobenius minus identity map between truncated Witt vectors, smoothed into a proper map using Garuti compactification. 

For these curves, we show an analogue of Theorem \ref{theoremb}: the ring representing the moduli problem $\Def_{(Y, \Z/p^{k-1}(p-1))}$ is associated to $\textstyle{(\Sym(\Ind_{C_p}^{C_{p^k}}\bar{\rho})[\Delta^{-1}])^\wedge_I}$ defined below as a $C_{p^k}$-representation. By assuming Theorem \ref{theorema} holds at higher height, which we have some evidence for, we get the following conjecture. The form stated below is due to the author. 

\begin{conjecture} [Hill--Hopkins--Ravenel, R.] \label{conj:action}
Let $p$ be an odd prime and let $\bb{W} := \bb{W}(\F_{p^{p^{k-1}(p-1)}})$.
Let $\bar{\rho} = \mr{coker}(\phi:\bb{W} \to \bb{W}[C_p])$ defined by
$\phi(1) = [1] + [\sigma] + \dots + [\sigma^{p-1}]$ for $C_p = \langle{\sigma}\rangle$, where the generators of
$\bb{W}[C_p]$ have degree $-2$.
Then there is a $\bb{W}[C_{p^k}]$-module isomorphism
$$ \pi_*(E_{p^{k-1}(p-1)}) \simeq
(\Sym(\Ind_{C_p}^{C_{p^k}}\bar{\rho})[\Delta^{-1}])^\wedge_I. $$
If $x_0$ denotes the image of $[1]\in \bar{\rho}$ in
$\Ind_{C_p}^{C_{p^k}}\bar{\rho}$ and $\gamma$ generates $C_{p^k}$, then
$\Delta = \prod_{i=0}^{p^k-1}\gamma^ix_0$ and $I = (p, x_0-\gamma x_0, \gamma x_0-\gamma^2 x_0,
\dots, \gamma^{p^k-1}x_0-\gamma^{p^k-2}x_0)$.
\end{conjecture}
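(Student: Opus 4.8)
\medskip\noindent\textbf{Proof strategy.} The statement is the height-$p^{k-1}(p-1)$ counterpart of Theorem~\ref{theoremc}, and the plan is to prove it by the two moves that gave Theorem~\ref{theoremc} at height $p-1$. The first move identifies $\pi_*(E_{p^{k-1}(p-1)})$ with the coordinate ring of a curve-deformation problem. One takes $Y$ to be the minimal genus Artin--Schreier--Witt curve with $\Aut(Y)\simeq C_{p^k}\rtimes C_{(p-1)^2}=:G'$, built by iterated pullback of the Frobenius-minus-identity isogeny of truncated Witt vectors and Garuti-compactified, and one lets $H:=\Z/p^{k-1}(p-1)\subseteq G'$ be the subgroup appearing in the conjecture (the product of the central tame $C_{p-1}$ with the index-$p$ subgroup $C_{p^{k-1}}\subseteq C_{p^k}$); $H$ is normal in $G'$, so $\Def^\star_{(Y,H)}$ carries a $G'$-action and hence, restricting along $C_{p^k}\hookrightarrow G'$, a $\bb{W}[C_{p^k}]$-module structure on its coordinate ring. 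Granting the promised extension of Theorem~\ref{theorema} to height $p^{k-1}(p-1)$ --- i.e.\ that the idempotent-summand functor $\widehat{\mc{F}}^1$ exhibits a $G'$-equivariant equivalence $\Def^\star_{(Y,H)}\stackyq G'\simeq(\Def^{FG\,\dim 1}_{\widehat{\mc{F}}^1(Y,H)})^\star\stackyq G'$ with $\widehat{\mc{F}}^1(Y,H)$ one-dimensional of height $p^{k-1}(p-1)$ --- one obtains $\pi_*(E_{p^{k-1}(p-1)})\simeq\OO(\Def^\star_{(Y,H)})$ as $\bb{W}[C_{p^k}]$-modules.

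The second move is to compute $\OO(\Def^\star_{(Y,H)})$ by the higher analogue of Theorems~\ref{theoremb1}--\ref{theoremb}. Quotienting $Y$ by the tame central $C_{p-1}\subseteq H$ gives an Artin--Schreier--Witt $C_{p^k}$-cover $Z:=Y/C_{p-1}\to\PP^1$ that is wildly ramified over a single point, and one wants to trade the deformation of $(Y,H)$ for a deformation of $\PP^1$ with a configuration of marked points. The mechanism is that lifting the wild $C_{p^k}$-cover $Z\to\PP^1$ from characteristic $p$ to $\bb{W}$ replaces the single wild branch point by a Kummer/cyclotomic configuration of $p^k$ marked points on $\PP^1$ carrying the ambient $C_{p^k}$-action (as in the passage between Artin--Schreier--Witt covers and $\mu_{p^k}$-covers); after quotienting by $\Aut(\PP^1)$ fixing $\infty$, equivalently inverting the non-collision element $\Delta=\prod_{i=0}^{p^k-1}\gamma^i x_0$, the coordinate ring becomes $\Sym$ of the reduced permutation representation on these points, and the $C_{p^k}$-action identifies that representation with $\Ind_{C_p}^{C_{p^k}}\bar\rho$. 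The $I$-adic completion records the gluing to the special fibre $Y/k$, where reduction mod $p$ recollapses the configuration and so forces $x_0\equiv\gamma x_0\equiv\cdots$, i.e.\ exactly $I=(p,\,x_0-\gamma x_0,\ldots,\gamma^{p^k-1}x_0-\gamma^{p^k-2}x_0)$; tracking $\bb{W}$-coefficients throughout then yields the claimed $\bb{W}[C_{p^k}]$-module isomorphism.

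The main obstacle is the first move: proving Theorem~\ref{theorema} at height $p^{k-1}(p-1)$. At $k=1$ that rested on Theorem~\ref{splittydooda}/Corollary~\ref{hsplitting}, a $C_{p-1}$-idempotent decomposition of the $p$-divisible group of $\Jac(X)$ into one-dimensional summands singling out a one-dimensional height-$(p-1)$ formal group, followed by the crystalline/Frobenius-eigenvalue recognition of Theorem~\ref{goldenfish}. For $k>1$ the group $H$ contains the wild $C_{p^{k-1}}$, which contributes no nontrivial idempotents in characteristic $p$, so the $C_{p-1}$-isotypic decomposition of $\Jac(Y)[p^\infty]$ is too coarse to isolate a one-dimensional summand; one must instead control the full $C_{p^k}\rtimes C_{(p-1)^2}$-action on $H^1_{cris}(Y)$ and show that a one-dimensional height-$p^{k-1}(p-1)$ formal sub-$p$-divisible group persists compatibly in the universal family over $\Def^\star_{(Y,H)}$. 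A secondary but still substantial difficulty, already present in the second move, is making precise the ``opening up'' of the wild branch point into the $C_{p^k}$-configuration: the deformation theory of wild ($C_{p^{k-1}}$-equivariant) covers is genuinely subtler than the tame branch-point bookkeeping of Theorem~\ref{theoremb1}, and extracting a clean moduli-of-marked-$\PP^1$ description matching $\Ind_{C_p}^{C_{p^k}}\bar\rho$ should require the Garuti/stable model of the cover together with the Coleman-style comparison of Frobenius with the automorphism action. Both points are the content of the sequel.
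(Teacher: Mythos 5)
The statement you are addressing is stated in the paper as a conjecture, not a theorem: the paper gives no proof, explicitly deferring it to the sequel, and derives the statement only by (a) announcing an analogue of Theorem \ref{theoremb} for the Artin--Schreier--Witt curve $Y$ with $\Aut(Y)\simeq C_{p^k}\rtimes C_{(p-1)^2}$, and (b) \emph{assuming} that Theorem \ref{theorema} holds at height $p^{k-1}(p-1)$. Your two-move outline is therefore not a proof but a strategy, and as a strategy it coincides with the paper's announced plan: the same curve $Y$, the same deformation problem $\Def_{(Y,\Z/p^{k-1}(p-1))}$, the same configuration-space/representation-theoretic description $(\Sym(\Ind_{C_p}^{C_{p^k}}\bar{\rho})[\Delta^{-1}])^\wedge_I$, and the same conditional appeal to a higher-height Theorem \ref{theorema}. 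You also correctly locate the genuine open points --- there is no useful idempotent decomposition once the wild $C_{p^{k-1}}$ enters, and the deformation theory of the wildly ramified cover is substantially harder than the tame branch-point bookkeeping of Theorem \ref{theoremb1} --- which is precisely why the statement remains a conjecture. One refinement worth noting: for the first obstacle the paper already proposes its intended workaround in the remark following Theorem \ref{theorema}, namely constructing the functor to one-dimensional formal groups directly from an invariant differential of the universal curve via the recognition principle (Theorem \ref{recognitionprinciple}) and Yoneda, bypassing idempotents entirely; your suggestion to control the full $C_{p^k}\rtimes C_{(p-1)^2}$-action on $H^1_{cris}(Y)$ is compatible with, but heavier than, that route. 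So: consistent with the paper's approach, correctly flagged gaps, but no proof is given here and none exists in the paper.
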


Using this conjecture, Belmont and the author \cite{belmont2025towards} have approached the last remaining Kevaire invariant problem at $p=3$, which uses the $C_9$ action on $E_6$.

\section*{Acknowledgements}

This paper is the second of three papers on expeditions during my PhD thesis, and I would like to thank Paul Goerss for advising me during graduate school and allowing me the great honour of being his last student. During that time I was supported as a fellow by the NSF GRFP under Grant Number DGE 1842165. As a postdoc, where I finished this paper, I was supported by the Dynamics–Geometry–Structure group at Mathematics M\"unster which is funded by the DFG under Germany's Excellence Strategy EXC 2044–390685587.

I would like to thank Sasha Shmakov, Sasha Petrov, Luc Illusie, Noah Riggenbach, Louzi Shi, Emanuel Reinecke, Barry Mazur, Maxime Ramzi, Tom Perutka, Anish Chedalavada, Sofia Marlasca Aparicio, Andrea Bianchi, Aristides Kontogeorgis, Rachel Preis, Julia Hartmann, and David Harbater for helpful conversation. I'd like to thank Jonathan Lubin for teaching me about formal groups when I was still an engineer. I'd like to thank Thomas Nikolaus, Christopher Deninger, Mike Hill, and Andres Mejia for encouraging and pushing me to overcome my perfectionism and finally share this story with the world.

%I would like to enclude a special fuck you to Doug Ravenel for stealing my result as his own and presenting it at my PhD advisors retirement conference, while simulataneously writing that he hadn't seen the proof in his recommendation letters. 

\part{Inverse Galois Theory: Designer Curves with Decoration}
\label{curveprops}

\section*{Overview}
Given a group $G$ and a field $k$, inverse Galois theory asks, and sometimes answers, the question: is there a curve $X$ such that $\Gal(k(X)/k) \simeq G$? We use inverse Galois theory to universally write down a curve $X$ with automorphism group $G' \simeq C_p \rtimes C_{(p-1)^2}$, Lemma \ref{autgroup}. 

Considering this curve $X$ as the source, we compute the ramification of the natural $C_p$ and $C_{p-1}$ covers that come with it in Section \ref{rammymammybranchywanchy}.

In Lemma \ref{actionsarecovers}, we show that $G$-Galois covers of curves with source $X$ are equivalent to $G$-actions on $X$ and introduce the notion of an algebraic quotient. Under this equivalence, ramification points correspond to points with nontrivial stabilizer group. Considering the action of the subgroup $G := C_{p-1} \subseteq \Aut_k(X) =: G'$ on $X$, the points of $X$ with nontrivial stabilizer group under the action of $G'$ and under the action of $G$ coincide, and we use this to our advantage to break the problem up.

In Section \ref{symmetricsultan}, we show how the action of $G'/G =: \Aut^f(X)$ acts on the set of branch points of $f: X \to X/G$, explicitly showing:
$$\Aut^f(X) \hookrightarrow \Sigma_{p}.$$
This concludes our analysis of the curve $X$ built from $G'$, with its natural decorations coming from the action of $G$, and sets us up to deform it in Section \ref{sec:theoremb}.

\section{Cyclic Covers of Curves}

\begin{defn}
We call a map between $f: X \to Y$ curves a {\color{Bittersweet}{$G$-Galois cover}} if it's a $G$-Galois extension of the corresponding function fields, $$\Gal_k(k(X)/k(Y)) \simeq G.$$ 
\end{defn}

\begin{remark} If we were virtuous of heart we would call it a ``generically $G$-Galois cover." As a map of curves, it is not \'etale at the ramification points of the cover. However, we will drop ``generically" in this paper. \end{remark}

Given a curve $Y$ with function field $k(Y)$, we wish to classify all $\Z/n$-covers of $Y$. This means all curves $X$ such that, $\Gal(k(X)/k(Y)) = \Aut(X \to Y) \simeq \Z/n.$ 

This classification varies based on whether or not $n$ is coprime to the characteristic of $k$. Let's consider the case of $\mr{char} \text{ } k = p$. If $p$ is coprime to $n$, the cover is on extension of the form $x^n = a$, where $a \neq b^n$ for any $b \in k(Y)$. If $p = n$, the cover is an extension the form $y^p - y = a$, where $a \neq b^p-b$ for any $b \in k(Y)$. 

Covers of $\A^1$ correspond to covers of $\bb{P}^1$ which are ramified over the point at infinity, and we may freely switch between them. 
 
\[\begin{tikzcd}
	X & {\A^1} & x && X & {\A^1} & x & {} \\
	Y & {\A^1} & {x^n} && Y & {\A^1} & {x^p-x}
	\arrow[from=1-1, to=1-2]
	\arrow["{\Z/n}"', from=1-1, to=2-1]
	\arrow["\lrcorner"{anchor=center, pos=0.125}, draw=none, from=1-1, to=2-2]
	\arrow["{(-)^n}", from=1-2, to=2-2]
	\arrow[maps to, from=1-3, to=2-3]
	\arrow[from=1-5, to=1-6]
	\arrow["{\Z/p}"', from=1-5, to=2-5]
	\arrow["\lrcorner"{anchor=center, pos=0.125}, draw=none, from=1-5, to=2-6]
	\arrow["{\mr{Fr}-\mr{Id}}", from=1-6, to=2-6]
	\arrow[maps to, from=1-7, to=2-7]
	\arrow["a"', from=2-1, to=2-2]
	\arrow["a"', from=2-5, to=2-6]
\end{tikzcd}\]

The curve $X$ over $\F_{p^{p-1}}$ is the simplest smooth curve $X$ with automorphism group $G' \simeq C_p \rtimes C_{(p-1)^2},$ as shown in Lemma \ref{autgroup}. This curve comes with a roof of Galois covers $\bb{P}^1 \xleftarrow{C_{p-1}} X \xrightarrow{C_p} \bb{P}^1$ with ramification as shown in Lemma \ref{rammymammy}.

\begin{lemma} \label{autgroup} \color{blue}{The curve $X$ over $\F_{p^{p-1}}$ with affine equation
$$X : y^p-y = x^{p-1}$$ has automorphism group $G' := \Z/p \rtimes \Z/(p-1)^2$ generated by: $$\sigma \colon (x, y) \mapsto (x, y+1) \qquad \tau \colon (x, y) \mapsto (\zeta^p x, \zeta^{p-1}y).$$

\noindent where $\zeta$ is a $(p-1)^2$-root of unity.} 
\end{lemma}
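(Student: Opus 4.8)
The plan is to verify the two claims separately: first that the displayed maps $\sigma$ and $\tau$ are indeed automorphisms of $X$ generating a group isomorphic to $\Z/p \rtimes \Z/(p-1)^2$, and second that no larger automorphism group is possible, i.e.\ that $X$ is the minimal-genus curve realizing this group (or at least that its full automorphism group is exactly $G'$). For the first half, one checks directly that $\sigma$ and $\tau$ preserve the affine equation: under $\sigma$, $y^p - y \mapsto (y+1)^p - (y+1) = y^p - y$ in characteristic $p$, so the equation is fixed; under $\tau$, the right side $x^{p-1} \mapsto \zeta^{p(p-1)} x^{p-1} = \zeta^{p(p-1)}x^{p-1}$ and the left side $y^p - y \mapsto \zeta^{p(p-1)}y^p - \zeta^{p-1}y$, and one needs $\zeta^{p(p-1)} = \zeta^{p-1}$, i.e.\ $\zeta^{(p-1)^2} = 1$, which holds by the choice of $\zeta$. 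Then I would compute the relation $\tau \sigma \tau^{-1}$: it sends $(x,y) \mapsto (\zeta^p x, \zeta^{p-1}(y+ \zeta^{-(p-1)}))$, wait — more carefully, $\tau\sigma\tau^{-1}(x,y) = \tau\sigma(\zeta^{-p}x, \zeta^{-(p-1)}y) = \tau(\zeta^{-p}x, \zeta^{-(p-1)}y + 1) = (x, y + \zeta^{p-1})$, so $\tau\sigma\tau^{-1} = \sigma^{\zeta^{p-1}}$ where $\zeta^{p-1}$ is interpreted in $\F_p^\times$ via the action on the $\F_p$-vector space of translations; since $\zeta$ has order $(p-1)^2$, $\zeta^{p-1}$ has order $p-1$ in $\mu_{p-1} \subset \F_p^\times$, giving the semidirect product structure $\Z/p \rtimes \Z/(p-1)^2$ with the $\Z/(p-1)^2$ acting through its quotient $\Z/(p-1)$. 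I would also record the orders: $\sigma$ has order $p$, and $\tau$ has order $(p-1)^2$ since $(x,y) \mapsto (\zeta^{pj}x, \zeta^{(p-1)j}y)$ is the identity iff $\zeta^{pj} = \zeta^{(p-1)j}=1$ iff $(p-1)^2 \mid j$ (using $\gcd(p, (p-1)^2)=1$ for the first condition).

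For the second half — that there are no further automorphisms — the standard approach is via the structure of the function field extension $k(X)/k(x)$. The curve $X$ is a cyclic degree-$p$ Artin–Schreier cover of $\PP^1_x$ branched only over $x = \infty$ (since $x^{p-1}$ has a pole only at infinity and the different is supported there), and simultaneously a cyclic degree-$(p-1)$ Kummer cover of $\PP^1_y$ via $x^{p-1} = y^p - y$. I would use the genus: by Riemann–Hurwitz for the Artin–Schreier cover, $2g - 2 = p(-2) + (p-1)(p)$ giving $g = \frac{(p-1)(p-2)}{2}$, consistent with what the paper quotes. For such a curve with $p \geq 3$ one has $g \geq 1$, so $\Aut(X)$ is finite. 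The key structural input is that $X$ has a unique point $P_\infty$ above $x = \infty$ (the Artin–Schreier cover is totally ramified there), and $P_\infty$ is the unique Weierstrass-type point or, more usefully, the unique totally ramified point of the $C_p$-cover; hence any automorphism of $X$ must fix $P_\infty$, because the $C_p \subset \Aut(X)$ is normalized (a Sylow $p$-subgroup argument: for $g$ small relative to $p$, a theorem going back to Stichtenoth gives that the wild part is normal, or one invokes that $\Aut(X)$ has a unique subgroup of order $p$ when $g < p(p-1)/2$ roughly). So $\Aut(X)$ embeds into automorphisms of the local ring at $P_\infty$, respecting the filtration by pole orders, and one extracts that it must preserve the function $x$ up to scaling and $y$ up to an affine transformation compatible with the equation — pinning down $\Aut(X) = \langle \sigma, \tau\rangle$ exactly. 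I would cite the relevant classification of automorphism groups of Artin–Schreier curves (Stichtenoth, or van der Geer–van der Vlugt, or the explicit treatment in the references) rather than reproving it.

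I expect the main obstacle to be the upper bound: showing $\Aut(X)$ is no bigger than $G'$, rather than the routine verification that $\sigma,\tau$ generate $G'$. The subtlety is that \emph{a priori} $X$ could have "extra" automorphisms not coming from the evident symmetries of the equation — this genuinely happens for special Artin–Schreier curves (e.g.\ the Hermitian curve) — so one must use the specific shape $y^p - y = x^{p-1}$ with exponent exactly $p-1$ (not $p+1$ or $q+1$) to rule that out. The cleanest route is: (i) show $p \mid \#\Aut(X)$ and the Sylow-$p$ is $\langle\sigma\rangle \cong \Z/p$, normal by a genus bound; (ii) pass to the quotient $X/\langle\sigma\rangle = \PP^1_x$, so $\Aut(X)/\langle\sigma\rangle$ acts on $\PP^1_x$ fixing the branch point $\infty$ and the branch divisor structure, landing it inside $\Aut(\PP^1, \infty) = \G_a \rtimes \G_m$; (iii) the Kummer description $x^{p-1} = y^p-y$ forces the $\G_a$-part to vanish and bounds the $\G_m$-part by $(p-1)^2$ (the $(p-1)$ from Kummer, another $(p-1)$ from the scaling on $x$). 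Alternatively one could just verify $g = (p-1)(p-2)/2$ and quote the minimal-genus classification directly if such a classification for $C_p \rtimes C_{(p-1)^2}$ is available in the literature. Either way, it's this uniqueness/maximality step — not the elementary semidirect-product bookkeeping — that carries the real content.
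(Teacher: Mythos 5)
Your verification half is exactly the paper's proof: the paper checks that $y\mapsto y+1$ preserves $y^p-y$ in characteristic $p$ and that $\tau$ preserves the equation because $p(p-1)\equiv p-1 \pmod{(p-1)^2}$, which is the same congruence you phrase as $\zeta^{p(p-1)}=\zeta^{p-1}$; your computation of $\tau\sigma\tau^{-1}=\sigma^{\,\zeta^{p-1}}$ and of the orders is a more explicit rendering of the paper's one-line remark that the interaction of $\zeta$ with both $x$ and $y$ produces the semidirect product. Where you genuinely diverge is in scope: the paper's proof stops after exhibiting the containment $\Z/p\rtimes\Z/(p-1)^2\subseteq \Aut_k(X)$ and never argues that there are no further automorphisms, whereas you correctly identify the upper bound as the substantive content of the statement as written and spend most of the proposal on it (normal Sylow-$p$ via a genus/Stichtenoth-type bound, every automorphism fixing the unique fixed point $P_\infty$ of $\sigma$, descent to $\Aut(\PP^1,\infty)=\G_a\rtimes\G_m$, then killing the $\G_a$-part and bounding the $\G_m$-part using the Kummer description). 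That strategy is the standard one in the Artin--Schreier literature and is plausible, but as written it is a plan resting on a citation you have not pinned down, so it is not yet a complete argument; note also two small points: finiteness of $\Aut(X)$ needs $g\geq 2$, i.e.\ $p\geq 5$ (for $p=3$ the curve is a supersingular elliptic curve and the statement must be read as automorphisms fixing $\infty$, since translations give infinitely many curve automorphisms), and the step ``every automorphism fixes $P_\infty$'' should be deduced from normality of $\langle\sigma\rangle$ together with the fact that $P_\infty$ is the unique fixed point of $\sigma$, exactly as you indicate. In short: your proposal reproduces the paper's argument for the part the paper actually proves, and goes beyond it on the maximality claim, which the paper leaves unaddressed.
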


\begin{proof} The $\Z/p \times \Z/(p-1)\subset \Aut_k(X)$ holds because its both an Artin-Schreier and Kummer cover. We explain this in more detail. The $\Z/p$ subgroup of the automorphism group $y \mapsto y+1$ fixing $x$, holds because $y^p-y = y(y-1)\cdots(y-(p-1))$, so $y \mapsto y+1$ simply permutes the roots over a field where $p=0$. The action by $\eta = \zeta^{(p-1)}$ which acts by $x \mapsto \eta x$, fixing $y$, holds because $\eta^{p-1} = 1.$ 

The interesting part is showing that $(x, y) \mapsto (\zeta^p x, \zeta^{p-1}y)$ is an automorphism. Plugging it in, this reduces to showing that $p(p-1) \mod (p-1)^2 \cong (p-1)$, which is true because $p(p-1) + p-1 = (p-1)^2$. This interaction of $\zeta$ with both $x$ and $y$ is what turns it into a semidirect product rather than a product.  
\end{proof}

\begin{remark} To reiterate: Let $\eta$ be a $(p-1)$st root of unity, if we considered only the action $x \mapsto \eta x$ and $y \mapsto y+1$, this would be $C_p \times C_{p-1}$. The twisting comes from considering the full action with the $(p-1)^2$ root.
\end{remark}

\section{Ramification Divisors and Algebraic Quotients of Curves}
\label{rammymammybranchywanchy}

In this section, we will define and introduce some basic techniques in algebraic geometry and local class field theory for the ease of interested topologists. 

We can consider $G$-actions on $X$ to be equivalent to $G$-covers with source $X$, the latter defined via the notion of algebraic quotient. This equivalence takes a $G$-action on $X$ and sends it to the cover $q: X \to X/G$ which quotients by this action. Under this equivalence, ramification points correspond to points with nontrivial stabilizer group. 

Let $X$ be a scheme of finite type over a field $k$, and let $G$ be a finite group acting on $X$ by algebraic automorphisms over $k$. We denote $\alpha_g$ the automorphism corresponding to $g \in G$. 
\begin{defn} \label{algquotient}
A quotient of $X$ by $G$ is a morphism $f: X \to Y := X/G$ with the following two properties: 
\begin{itemize} 
\item $f$ is $G$ invariant, that is, $f \circ \alpha_g = f$ for every $g \in G$
\item $f$ is universal with the property that every scheme $Z$ over $k$ and every $G$ invariant morphism $k: X \to Z$ there is a unique morphism $h: Y \to Z$ such that $h \circ \alpha_g \simeq k.$
\end{itemize}
\end{defn}

\begin{remark} The fibers of the map $f: X \to X/G$ are precisely the $G$ orbits of points. \end{remark}

\begin{lemma} \label{actionsarecovers} Let $X$ be a smooth proper curve over an algebraically closed field $k$ and $G$ a finite group. Every $G$ action induces a $G$-Galois branched cover of smooth proper curves $f: X \to Y$. \end{lemma}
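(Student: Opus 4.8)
The plan is to construct $Y := X/G$ by an affine-local gluing argument in the sense of Definition~\ref{algquotient}, check that it is a smooth proper curve, identify its function field with $k(X)^G$, and then read the branch locus off the stabilizers of the action.

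First I would build the quotient scheme. A smooth proper curve is projective, hence quasi-projective, and on a quasi-projective variety any finite set of closed points lies in a common affine open; intersecting the $G$-translates of such an open produces, for each $G$-orbit, a $G$-stable affine open $U = \Spec A$ containing it (the intersection is again affine since $X$ is separated). On each such $U$ set $U/G := \Spec A^G$. Since $A$ is a finitely generated $k$-algebra integral over $A^G$, it is module-finite over $A^G$, so $A^G$ is a finitely generated $k$-algebra and $\Spec A \to \Spec A^G$ is finite; a $G$-invariant morphism from $\Spec A$ to an affine scheme $Z$ is the same datum as a ring map $\mc{O}(Z) \to A$ with image in $A^G$, which is precisely the universal property required. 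These affine quotients glue along the (automatically $G$-stable) overlaps to a scheme $Y$ carrying a finite surjection $f \colon X \to Y$ whose fibers are the $G$-orbits, and $f$ represents $X/G$.

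Next I would check $Y$ is a smooth proper curve. Rings of invariants of normal rings are normal, and $\Frac(A^G) = k(X)^G$: a $G$-invariant fraction $a/b$ can be rewritten with denominator $\prod_{g \in G} g(b) \in A^G$, which forces the numerator into $A^G$ as well. Hence $Y$ is integral, normal, one-dimensional and of finite type over $k$, so it is a smooth curve, and since it is separated of finite type over $k$ and receives the surjection $f$ from the proper $k$-scheme $X$, it is proper over $k$. By Artin's theorem $k(X)/k(X)^G = k(X)/k(Y)$ is Galois with group $G$ --- here I take $G$ to act faithfully on $X$, as is implicit in the statement; if not, one replaces $G$ by its image in $\Aut_k(X)$ and the conclusion holds verbatim for that quotient. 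So $f$ is a $G$-Galois cover of smooth proper curves in the sense of the paper's definition. To see it is genuinely branched and to locate the branch points, note $f$ is a finite morphism from the regular scheme $X$ to the regular curve $Y$, hence flat; then $\deg f = [k(X):k(Y)] = |G|$, and for $y \in Y(k)$ the fiber $f^{-1}(y)$ is the orbit over $y$, so comparing $\sum_{x \mapsto y} e_x = |G|$ with $|G \cdot x|\cdot|\mr{Stab}(x)| = |G|$ shows $f$ is unramified over exactly those $y$ whose orbit consists of points with trivial stabilizer, and ramified over the images of the finitely many points of $X$ with nontrivial stabilizer. This gives the asserted identification of ramification points with points of nontrivial stabilizer.

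I expect the main obstacle to be the first step: producing the $G$-stable affine open cover, so that the categorical quotient genuinely exists as a \emph{scheme} (and not merely as an algebraic space or ringed space), and verifying that the gluing data are compatible. Everything afterwards --- normality and properness of $Y$, the function-field computation via Artin's theorem, and the stabilizer description of the branch locus --- is essentially formal, using only facts already gathered in this section.
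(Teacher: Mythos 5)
Your proposal is correct, but it takes a different route from the paper. The paper's proof is a two-line application of the anti-equivalence between smooth proper curves over $k$ and function fields of transcendence degree one over $k$: the $G$-action on $X$ gives a $G$-action on $\kappa(X)$, the extension $\kappa(X)/\kappa(X)^G$ is $G$-Galois (Artin), and since $\kappa(X)^G$ still has transcendence degree one, the equivalence hands back a smooth proper curve $Y = X/G$ and the cover $q\colon X \to Y$ --- no scheme-theoretic quotient construction is ever performed. You instead build $X/G$ directly by gluing the affine invariant rings $\Spec A^G$ over $G$-stable affine opens, then verify normality, properness, and the function-field identification by hand. Your approach is heavier but buys more: it produces the categorical quotient of Definition~\ref{algquotient} as an honest scheme with fibers the $G$-orbits, and it yields the ramification-equals-nontrivial-stabilizer statement (via $\sum_{x \mapsto y} e_x = |G|$ and orbit--stabilizer) inside the proof, whereas the paper asserts that identification separately and leaves the universal property of the quotient implicit in the function-field dictionary. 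The one caveat you already flag correctly: faithfulness of the action is needed for the Galois group to be $G$ itself (and for $\deg f = |G|$), and both arguments silently assume it; your remark that one otherwise replaces $G$ by its image in $\Aut_k(X)$ is the right fix. The existence of $G$-stable affine opens, which you identify as the main obstacle, is indeed fine here because a smooth proper curve over a field is projective.
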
 
%We have smoothness of algebraic quotients, even when the action is not free. 
\begin{proof} Recall the equivalence of categories between smooth proper curves over $k$ and function fields of transcendence degree one over $k$. We call $\kappa(X)$ the function field of the curve $X$. Given a $G$-action on $X$, this induces a $G$-action on $\kappa(X).$ Taking $G$-invariants of this action, we get a $G$-Galois field extension $\kappa(X)/\kappa(X)^G$. Since $G$ is a finite group, $\kappa(X)^G$ is also a field of transcendence degree one over $k$, thus, again applying the categorical equivalence, $\kappa(X)^G$ corresponds to a smooth proper curve $Y \simeq X/G$, and the Galois extension of function fields corresponds to a map of curves $q: X \to X/G.$ \end{proof}

Let $f: X \to Y$ be a finite cover, then $$0 \to f^*\Omega_Y^1 \to \Omega_X^1 \to \Omega^1_{X|Y} \to 0$$ is an exact sequence. 

\begin{remark} If $f$ is \'etale, $\Omega^1_X \simeq f^*\Omega^1_Y$. If $f$ is dominant, then $\Omega^1_X \supseteq f^*\Omega^1_Y$. \end{remark}

\begin{defn} 
Let \(f:X\rightarrow Y\) be a dominant morphism of schemes over an algebraically closed field \(k\) (or over \(\mathbb{Z}\)). The {\color{Bittersweet}{ramification divisor}} \(\mf{R}\) of \(f\) is the effective divisor on \(X\) given by
\begin{align*}
    \mf{R}=\sum_P \text{length}(\Omega_{X/Y})_P[P]
\end{align*}
where the sum is taken over closed points \(P\) of \(X\). The {\color{Bittersweet}{branch divisor}} is its image $B := f(R).$
\end{defn}

Let $v \in X$ be all points of $X$ with nontrivial stabilizer group with respect to an action of $G$. If the quotient $f: X \to X/G$ is a tame Galois map, $$\Omega^1_{X|Y} \simeq \sum_v I_v,$$ where $$\text{length}(I_v) = \#G_v-1.$$ 

The local rings of the generic points of the generic points $P$ are DVR's, and one can associate ramification indices $e_P$.  The standard definition of ramification divisor often implicitly assumes tameness. In that special case we get $\text{length}(\Omega_{X/Y})_P = e_P-1$ where $e_P$ is the degree of ramification of the local field. 

Otherwise, the degree of ramification $e_P < \text{length}(\Omega_{X/Y})_P$, and $\text{length}(\Omega_{X/Y})_P$ can be expressed in terms of the different as previously stated.  If $f$ is not tame, there is still an expression for the length of $I_v$ as the sum of all higher ramification groups $i$ of $\sum_i \#G_{v,i}-1$. We will not discuss this case here as it will only be used in the next paper.

\begin{lemma} \label{riemannhurwitz} (Riemann-Hurwitz) Given a map $f: X \to Y$ of curves over a ring, let $\mf{R}$ denote its ramification divisor. 
$$\Omega_X^1 \simeq \OO_X(\mf{R}) \otimes f^*(\Omega_Y^1)$$
\end{lemma}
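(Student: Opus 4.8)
The plan is to extract the isomorphism directly from the cotangent exact sequence $0 \to f^*\Omega_Y^1 \to \Omega_X^1 \to \Omega^1_{X/Y} \to 0$ recorded above, analyzing it one closed point at a time. First I would observe that a dominant morphism of smooth proper curves is generically \'etale — here separability is free, since the covers we care about are generically Galois — so $f^*\Omega_Y^1 \to \Omega_X^1$ is an isomorphism over the generic point of $X$. Both $f^*\Omega_Y^1$ and $\Omega_X^1$ are line bundles on the smooth curve $X$, hence torsion-free, so this forces $f^*\Omega_Y^1 \to \Omega_X^1$ to be injective, with cokernel $\Omega^1_{X/Y}$ a torsion sheaf supported on the finitely many ramification points.

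Next I would invoke the elementary structure of an inclusion $L \hookrightarrow M$ of line bundles on a smooth curve. Locally at a closed point $P$ with uniformizer $s$, trivialize $\Omega_X^1$ by $ds$ and $f^*\Omega_Y^1$ by $f^*\,dt$, where $t$ is a uniformizer of $\OO_{Y, f(P)}$; then $f^* \, dt = w\, s^{d_P}\, ds$ for some unit $w$ and some integer $d_P \ge 0$. Consequently $(\Omega^1_{X/Y})_P \simeq \OO_{X,P}/(s^{d_P})$ has length exactly $d_P$, which is by definition the coefficient of $[P]$ in the ramification divisor $\mf{R}$. In the tame case $d_P = e_P - 1$ as noted in the preceding discussion, but the argument never needs that: the length bookkeeping is the same in the wild case.

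Finally, I would assemble these local pictures: the inclusion $f^*\Omega_Y^1 \hookrightarrow \Omega_X^1$ corresponds to a global section of $\Omega_X^1 \otimes (f^*\Omega_Y^1)^{-1}$ whose divisor of zeros is precisely $\sum_P d_P [P] = \mf{R}$. Hence $\Omega_X^1 \otimes (f^*\Omega_Y^1)^{-1} \simeq \OO_X(\mf{R})$, which rearranges to $\Omega_X^1 \simeq \OO_X(\mf{R}) \otimes f^*\Omega_Y^1$, as claimed. The one point I would flag explicitly, and which is the only genuine subtlety, is the injectivity of $f^*\Omega_Y^1 \to \Omega_X^1$: it fails exactly when $f$ is inseparable, so the lemma should be read (as is implicitly intended here) for separable covers, in particular for the generically Galois covers $\PP^1 \xleftarrow{C_{p-1}} X \xrightarrow{C_p} \PP^1$ we use. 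Everything past that is a purely local computation of orders of vanishing, requiring no case split between tame and wild ramification.
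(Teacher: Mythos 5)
Your proof is correct and is essentially the paper's argument: both start from the cotangent exact sequence $0 \to f^*\Omega_Y^1 \to \Omega_X^1 \to \Omega^1_{X/Y} \to 0$, identify the torsion quotient with $\OO_{\mf{R}}$, and convert the resulting line-bundle inclusion into the divisor statement (the paper twists by $\Omega_X^\vee$ and dualizes, you read the inclusion as a section of $\Omega_X^1 \otimes (f^*\Omega_Y^1)^{-1}$ with zero divisor $\mf{R}$ --- the same divisor/line-bundle dictionary). Your version just makes explicit two points the paper folds into ``by the definition of $\mf{R}$'': the separability needed for injectivity of $f^*\Omega_Y^1 \to \Omega_X^1$, and the local computation showing $(\Omega^1_{X/Y})_P$ is cyclic of length $d_P$, both of which are correct and worth flagging.
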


\begin{proof} The skyscraper structure sheaf of the ramification divisor $\mf{R}$ is $\OO_\mf{R} \simeq \Omega^1_{X|Y}$ by the definition of $\mf{R}$, and thus, it fits into the exact sequence:

$$0 \to f^*\Omega_Y \to \Omega_X \to \OO_{\mf{R}} \to 0$$

Twisting by $\Omega_X^\vee$ (which is locally free and thus preserves exact sequences, where the dual is homming into $\OO_X$) we get:

$$f^*\Omega_Y \otimes \Omega^\vee_X \to \OO_X \to \OO_\mf{R}$$
\noindent This is a short exact sequence which is isomorphic to the following short exact sequence, $\msc{I}_\mf{R} \simeq \OO_X(-\mf{R})\to \OO_X \to \OO_\mf{R},$  where $\msc{I}_\mf{R}$ is the ideal sheaf defining the divisor. This implies that $$f^*\Omega_Y \otimes \Omega^\vee_X \simeq O_X(-\mf{R}).$$ \noindent Taking duals on both sides, $(f^*\Omega_Y)^\vee \otimes \Omega_X \simeq O_X(\mf{R}),$ giving us as desired \noindent $$\Omega_X^1 \simeq O_X(\mf{R}) \otimes f^*\Omega_Y.$$ %The statement for $T_X$ follows by taking the dual.  %The statement for the pushforward $f_*T_X$ follows from the adjunction $f_*f^* \simeq \mr{id}$.
\end{proof}

A method by which one practically computes degree of points in the ramification divisor hinges on the following projection construction.

\begin{construction} \label{makeamap}
If one's curve $X$ is embedded in $\bb{P}^2$ then for any point $x \in \bb{P}^2$ and any fixed hyperplane $L \subset \bb{P}^2$ there is a map from $X$ to $L$ given by the following. Here $L(x, p)$ is the line intersecting $x$ and $p$.  
\begin{align*} \pi_x: X &\to L\\
p & \mapsto  L(x,p) \cap L
\end{align*}
\end{construction}

\begin{lemma} Let $Y$ be of degree $d$. The degree of the map $\pi_x$ from construction \ref{makeamap} depends on if $x \in Y$ or $x \notin Y$. If $x \in Y$, the degree is $d$, if $x \notin Y$, the degree is $d-1.$   \end{lemma}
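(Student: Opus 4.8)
The plan is to read off $\deg \pi_x$ as the cardinality of a general fibre of the finite map $\pi_x \colon Y \to L$ of Construction \ref{makeamap}, where $Y \subset \PP^2$ is the plane curve of degree $d$, and to reduce this to an intersection count via B\'ezout's theorem. First I would fix a general point $q \in L$ and identify $\pi_x^{-1}(q)$. Since $\pi_x(p) = L(x,p) \cap L$, a point $p \in Y$ satisfies $\pi_x(p) = q$ exactly when the line $L(x,p)$ coincides with the single line $\ell_q := L(x,q)$ joining $x$ to $q$; hence $\pi_x^{-1}(q)$ is precisely the set of points of $Y$ lying on $\ell_q$. Assuming (as we may) that $x \notin L$, the lines $\ell_q$ sweep out the full pencil of lines through $x$ as $q$ varies over $L$, so $\ell_q$ is a general member of that pencil. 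By B\'ezout, the length of $Y \cap \ell_q$ is $d$, and for general $q$ these intersection points are reduced, distinct, disjoint from the singular locus of $Y$, and cut out by lines that are not tangent to $Y$. This reduces the computation to understanding which of the $d$ points of $Y \cap \ell_q$ move with $q$ and hence count as genuine preimages.

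Next I would carry out the case distinction according to the position of the centre $x$, which is exactly what fixes the degree. When $x \in Y$, the centre is a point of the curve lying on every line of the pencil, and the length-$d$ divisor $Y \cap \ell_q$ is recovered in its entirety as the fibre, so the general fibre has $d$ points and $\deg \pi_x = d$. When $x \notin Y$, the requirement that each line of the pencil pass through the fixed point $x$ lying off $Y$ imposes one condition on the incidence that absorbs a single unit of the intersection number, so that only $d-1$ of the $d$ points of $Y \cap \ell_q$ vary with $q$; the general fibre then has $d-1$ points and $\deg \pi_x = d-1$. In both cases the count is taken over a dense open subset of $L$, which is all that is needed to compute the degree of the finite morphism (after passing to the normalisation of $Y$, on which the \emph{a priori} rational map $\pi_x$ extends to an honest morphism).

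I expect the genericity bookkeeping to be routine: that a general line through $x$ meets $Y$ transversally in the asserted number of distinct smooth points follows from standard facts about pencils of lines together with B\'ezout, and $x \notin L$ guarantees $\pi_x$ is dominant onto $L$. The main obstacle, and the step deserving the most care, is the precise treatment of the base point of the pencil at the centre $x$: one must pin down exactly how the forced incidence at $x$ does or does not contribute a varying preimage of $q$, since this single point is what separates the two cases and produces the difference of one in the degree. Once the contribution of the base point is accounted for in each case, the two fibre counts $d$ and $d-1$ follow immediately.
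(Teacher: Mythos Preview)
Your case analysis is inverted, and the reasoning you give for each case does not hold up. When $x\in Y$, the point $x$ lies on every line $\ell_q$ of the pencil, so it contributes one of the $d$ intersection points in $Y\cap\ell_q$; but $x$ is \emph{not} a preimage of a general $q$, since the extension of $\pi_x$ to $x$ sends $x$ along the tangent line of $Y$ at $x$, which meets $L$ in one specific point, not in every $q$. Thus the general fibre is $(Y\cap\ell_q)\setminus\{x\}$, of size $d-1$, and $\deg\pi_x=d-1$. Conversely, when $x\notin Y$, the line $\ell_q$ still meets $Y$ in $d$ points by B\'ezout, and now \emph{all} of them are genuine preimages of $q$ because none of them is the centre $x$; hence $\deg\pi_x=d$. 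Your sentence ``the requirement that each line pass through $x$ lying off $Y$ imposes one condition that absorbs a unit of intersection number'' has no content: the incidence at $x$ is vacuous for $Y$ precisely because $x\notin Y$, so nothing is absorbed.

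Note that the lemma as stated in the paper has the two cases swapped relative to its own proof (and to the standard fact); the paper's proof and the subsequent application in Lemma~\ref{rammymammy} both use the correct version: projection from a point off the curve has degree $d$, and projection from a point on the curve has degree $d-1$. You appear to have reverse-engineered an argument to match the typo in the statement rather than checking which case actually loses a point of the fibre.
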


\begin{proof} 
If one's curve $X$ is embedded in $\bb{P}^2$ then for any point $x \in \bb{P}^2$ and any fixed hyperplane $L \subset \bb{P}^2$ there is a map from $X$ to $L$ given by the following. Here $L(x, p)$ is the line intersecting $x$ and $p$.  
\begin{align*} \pi_x: X &\to L\\
p & \mapsto  L(x,p) \cap L
\end{align*}
If $x \in \bb{P}^2$ is not in $X$, then the degree of the resulting map $\pi_x$ is equal to the degree $d$ of $X$. We restrict the domain of our map $\bb{P}^2 \backslash \{x \} \to \bb{P}^1$ to be $X \to \bb{P}^1$. The degree of the resulting map is equal to the degree of $X$, because a line intersects $C$ in $\deg X$ number of points.  In case of $x \in X$ we have a well-defined map on $X \backslash x$. This map can be continued to a well-defined map on the whole $C$. We do so as follows. Geometrically, we map $x$ along the tangent line at $x$ to X, now if you count the degree of the map it becomes $\deg X - 1$. This is because there are $\deg X$ points on every line, including $x$. 
\begin{center}\includegraphics[width=12cm]{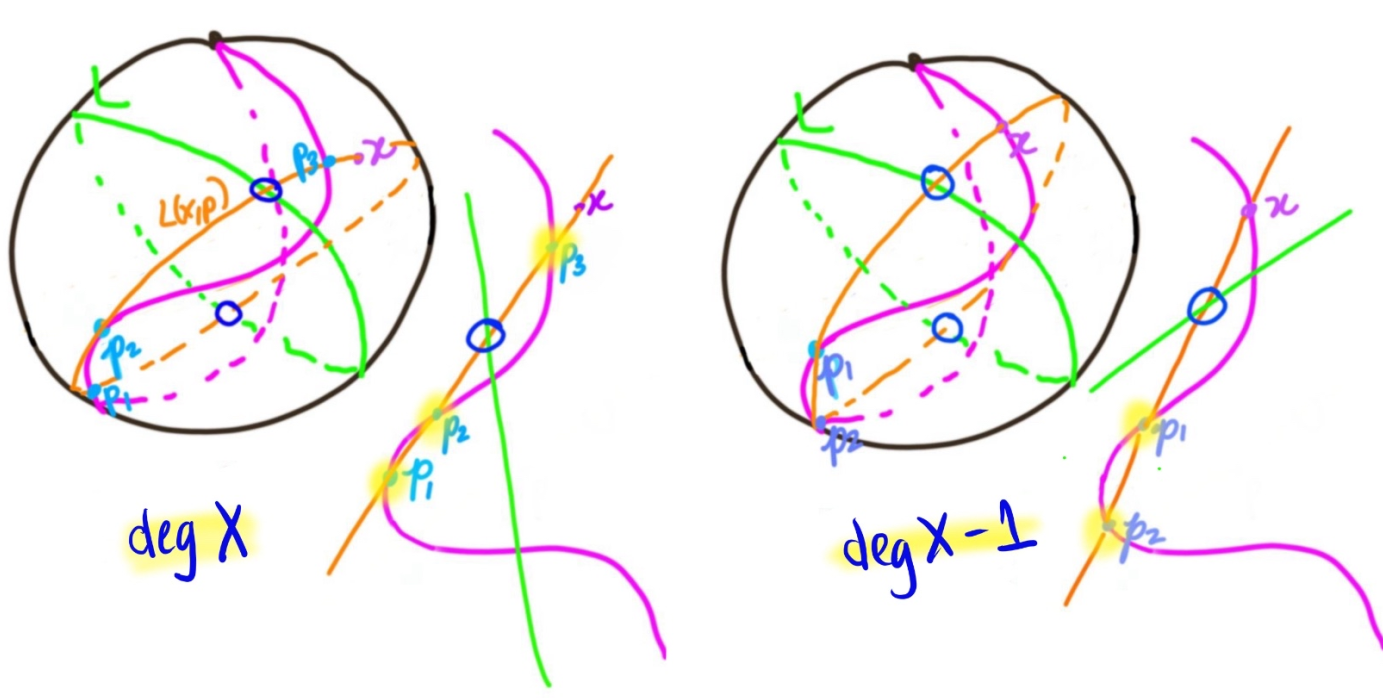}\end{center}
\end{proof}

\subsection{Example: Ramification Points and Quotients for Our Curve}

\begin{lemma} \label{rammymammy} \color{blue}{Let $X$ be the curve over $k = \F_{p^{p-1}}$ with affine form $$y^p-y = x^{p-1}.$$ \begin{itemize} 
\item The map $p_x$ is $\Z/p$-Galois and totally wildly ramified at one point, the point infinity on $\mathbb{P}^1$. That is, $$\mr{Ram}(p_x) := p[1:0:0] = p[\infty].$$
\item The map $p_y$ is $\Z/(p-1)$-Galois and tamely totally ramified, the support of its ramification divisor is $\mathbb{F}_p$ plus the point at infinity on $\mathbb{P}^1$. That is, \begin{align*} \mr{Ram}(p_y) := \sum_{i \in F_p} (p-2)[0:i:1] + (p-2)[1:0:0] = \sum_{P \in 
\{\F_p \cup \infty\}} (p-2)[P].\end{align*}
\end{itemize}}
\end{lemma}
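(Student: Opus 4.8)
The plan is to read off the two quotient maps from the generators of $G'$ in Lemma~\ref{autgroup} (invoking Lemma~\ref{actionsarecovers} to see they are honest Galois covers of smooth proper curves) and then apply the classical ramification theory of cyclic covers in its two regimes: Kummer theory for $p_y$ (degree $p-1$, prime to $p$) and Artin--Schreier theory for $p_x$ (degree $p$). Since $\sigma\colon(x,y)\mapsto(x,y+1)$ generates the $\Z/p$ and fixes $x$, the quotient $X/(\Z/p)$ is the projective $x$--line and $p_x$ is the cover $k(x)[y]/(y^p-y-x^{p-1})$ of $k(x)$; since $\eta=\zeta^{p-1}$ generates the $\Z/(p-1)$ acting by $x\mapsto\eta x$ and fixing $y$, the quotient $X/(\Z/(p-1))$ is the projective $y$--line and $p_y$ is the cover $k(y)[x]/(x^{p-1}-(y^p-y))$ of $k(y)$. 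Passing to the smooth projective model, a short computation with valuations shows there is a unique place over $x=\infty$, with $v(x)=-p$ and $v(y)=-(p-1)$; this place also lies over $y=\infty$ and is the point written $[1:0:0]$, while for $i\in\F_p$ the point $[0:i:1]$ is the point where $y=i$, $x=0$.

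For $p_y$ I would use the identity $y^p-y=\prod_{i\in\F_p}(y-i)$, valid over any field containing $\F_p$: in the Kummer equation $x^{p-1}=h(y)$ the function $h$ has exactly the simple zeros $y=i$, $i\in\F_p$, a single pole of order $p$ at $y=\infty$, and is a unit elsewhere. By the standard description of a Kummer cover $x^{n}=h(y)$ with $n=p-1$ prime to $\mr{char}\,k$, the cover is ramified precisely over the places where $p-1\nmid\mr{ord}(h)$, with ramification index $(p-1)/\gcd(p-1,\mr{ord}(h))$ there; since $\mr{ord}(h)\in\{1,-p\}$ at those places and $1$ and $p$ are coprime to $p-1$, each of them is tamely totally ramified with $e=p-1$, whence $\text{length}(\Omega_{X/Y})=e-1=p-2$. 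Summing over $\{y=i\}_{i\in\F_p}\cup\{y=\infty\}$ gives the second bullet; tameness is because $p\nmid p-1$, and $\Aut(X,\Z/(p-1))$--equivariance is automatic from functoriality of the quotient.

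For $p_x$ I would observe that $x^{p-1}$ is a polynomial, so the Artin--Schreier equation is \'etale over the affine $x$--line (the polynomial $y^p-y-c$ is separable, having derivative $-1$); hence the only ramification lies over $x=\infty$, where $x^{p-1}$ has a pole of order $m=p-1$ coprime to $p$, so the equation is already in reduced Artin--Schreier form there. By the reduced-form theory the extension is totally (wildly) ramified over $x=\infty$ with the single point $[1:0:0]$ above it, $e=p$, and different exponent $(p-1)(m+1)=(p-1)p$; equivalently, the ramification filtration has its unique jump at $m=p-1$, so $G_i=\Z/p$ for $0\le i\le p-1$ and $\text{length}(\Omega_{X/Y})_{[\infty]}=\sum_{i\ge0}(\#G_i-1)=(p-1)p$. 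Recording this multiplicity at the unique ramified point gives the first bullet. As an independent check I would run Riemann--Hurwitz (Lemma~\ref{riemannhurwitz}) from both sides: $2g-2=p(-2)+(p-1)p$ from $p_x$ and $2g-2=(p-1)(-2)+(p+1)(p-2)$ from $p_y$ both give $g=(p-1)(p-2)/2$, the expected genus of $X$.

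The hard part is the Artin--Schreier paragraph. The $p_y$ computation is routine tame bookkeeping, but for $p_x$ one must genuinely use the theory of wild ramification — pass to reduced form to see that $x=\infty$ is totally (not merely partially) ramified, and compute its different via $(p-1)(m+1)$ or the higher ramification groups rather than the naive tame value $e-1$ — and one must match the abstract ramified place with the named point $[1:0:0]$, which is invisible from the singular projective closure of $y^p-y=x^{p-1}$ in $\PP^2$ (there the line at infinity meets the curve only at $[1:0:0]$, with intersection multiplicity $p$). The Riemann--Hurwitz cross-check is the safeguard I would use to be sure the wild computation has not dropped a factor.
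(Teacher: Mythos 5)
Your proof is correct, and it reaches the lemma by a somewhat different route than the paper. The paper argues almost entirely through the group action: it reads the two covers off the generators of $\Aut(X)$, locates the ramification as the points with nontrivial stabilizer (the fixed point $[1:0:0]$ for $C_p$; the points $[0:i:1]$ and $[1:0:0]$ for $C_{p-1}$, found from $y^p-y=\prod_{i\in\F_p}(y-i)$), gets the degrees $p$ and $p-1$ from the projection-from-a-point construction, and assigns the tame coefficient $|G_x|-1=p-2$. You instead work in the function field: Kummer theory for $x^{p-1}=y^p-y$ (ramified exactly where $p-1\nmid \mathrm{ord}(h)$, giving $e=p-1$ at the $p+1$ places since $1$ and $p$ are prime to $p-1$) and reduced Artin--Schreier theory for $y^p-y=x^{p-1}$ (\'etale over the affine line, totally wildly ramified at the unique place over $x=\infty$ with $e=p$ and different exponent $(p-1)(m+1)=p(p-1)$), together with the valuation computation $v(x)=-p$, $v(y)=-(p-1)$ identifying that place with $[1:0:0]$, and a Riemann--Hurwitz cross-check from both sides. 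Your version buys more: the wild different and the ramification filtration jump at $m=p-1$, which the paper never computes, plus the genus consistency check; the paper's version is shorter because, with its convention, the tame bullet needs only $e-1$ and the wild bullet needs only ``totally ramified at the fixed point.'' Two small cautions. First, be explicit about which quantity the displayed coefficient in the first bullet records: as the paper's own proof makes clear, $p[\infty]$ is the ramification index (equivalently $p_x^*[\infty]=p[\infty]$, total ramification), not the length of $\Omega_{X/Y}$ at $\infty$, which by your (correct) computation is $p(p-1)$; your phrase ``recording this multiplicity'' immediately after computing the different could be read as asserting the coefficient $p(p-1)$, which would contradict the display even though both facts are true. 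Second, your parenthetical that the projective plane closure is singular at infinity is not right in characteristic $p$: for $F=y^p-yz^{p-1}-x^{p-1}z$ one has $F_z=-1$ at $[1:0:0]$, so the plane model is smooth there (indeed everywhere), which only makes the identification of the ramified place with $[1:0:0]$ easier; this does not affect the validity of your argument.
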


\begin{center}
\includegraphics[width=10cm]{being_vessel/ponion.png}\end{center}

\begin{proof}

\begin{itemize}
\item The group $C_p$ acts on $X$ by $y \mapsto y+1$ by Lemma \ref{autgroup}, leaving $x$ unaffected, and the projection $p_x$ is thus a $C_p$-generically Galois cover. The point $P = [1:0:0]$ is the point at infinity of $X \subset \bb{P}^2$. This $P$ is fixed by the $C_p$-action since it only has a nontrivial $x$ coordinate. The degree of $p_x$ is $p$, which is determined by the fact that the projection to $[x:z]$ is constructed as a projection from $[0:1:0]$, which is not a point on $X$. 
\item  The $C_{p-1}$-action acts by $x \to \eta x$, for $\eta$ a $p-1$ root of unity, leaving $y$ unaffected, and the projection $p_y$ is thus a $C_{p-1}$-cover. Note that $y^p-y = y(y-1)\cdots(y-(p-1))$. The ramification points of this map are then $[0:i:1]$ and $[1:0:0],$ where $0 \leq i \leq p-1.$ The degree of $p_y$ is $p-1$, which is determined by the fact that the projection to $[y:z]$ is constructed as a projection from $[1:0:0]$, which is a point on $X$. The ramification divisor has coefficient one less than the stabilizer at a point for tame maps. The stabilizer at every branched point is the group $C_{p-1}$ itself, thus, the ramification coefficient is $|G_x|-1 = (p-1)-1$. 
\end{itemize}
\end{proof}

\section{Representations of Symmetric Group via Ramified Covers}
\label{symmetricsultan} 

Riemann's existence theorem tells us that degree $d$ covers of $Y$ are equivalent to representations of $Y \hookrightarrow \pi_1(Y \backslash (y_1, ..., y_n)) \hookrightarrow \Sigma_d.$ In other words, we can capture the information of a degree $d$ cover via its local monodromy information. 

Let $G' := \Aut(X)$, and $G := C_{p-1}$. Here, we are looking at the action of $G'/G$ on the images of the points stabilized by the $G$ action, $(y_1, ..., y_n)$. One can think of this as using the $G$ cover to further encode the information of the remaining degree $p$ Artin-Schreier-Cover.

Since we have an action of $G' \simeq C_p \rtimes C_{(p-1)^2}$ on our curve $X$, we will now carefully distinguish the remaining action of $(C_p \rtimes C_{(p-1)^2})/C_{p-1}$ on the $C_{p-1}$-cover $f: X \to \bb{P}^1.$

\begin{defn}  Let $X$ be a curve with a $G$-action, corresponding to a cover $f: X \to X/G$. We define the restricted automorphism group $\Aut^f(X)$ of $X$ as the quotient of $\Aut(X)$ by $\Aut(f: X \to \bb P^1)$. 

$$\Aut(f: X \to \bb P^1) \hookrightarrow \Aut(X) \twoheadrightarrow \Aut^f(X).$$
\end{defn}

\begin{example}
Consider the image of $C_{(p-1)^2} \twoheadrightarrow C_{(p-1)^2}$ given by $\zeta \mapsto \zeta^{p-1}$. It is this subgroup $C_{(p-1)} \hookrightarrow C_{(p-1)^2}$ which corresponds to $\Aut(f: X \to \bb P^1) \simeq C_{p-1}.$

\begin{tikzcd}
\Aut(f: X \to X/G) \arrow[r, hook] & \Aut(X) \arrow[r, two heads]                 & \Aut^f(X)           \\
C_{p-1} \arrow[r, hook]            & C_p \rtimes C_{(p-1)^2} \arrow[r, two heads] & C_p \rtimes C_{p-1}
\end{tikzcd}
\end{example}

For the following lemmas, let $B_{\mr{aff}}$ be the branch points of the $C_{p-1}$-cover, $f: X \to \bb P^1$, without the point at infinity.

\begin{lemma}  The action of $\Aut(X)$ on $f: X \to \bb P^1$ preserves the set of branch points $B_{\mr{aff}}$ on $\PP^1$.  If an element $\alpha \in \Aut^f(X)$ on $f: X \to \bb P^1$ fixes the set of branch points $B_{\mr{aff}}$, then it must be the identity.
\end{lemma}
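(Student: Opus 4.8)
The plan is to push everything down to the induced action on the quotient $\PP^1 = X/G$, where $G = C_{p-1} = \Aut(f\colon X\to\PP^1)$, using the explicit data from Lemmas \ref{autgroup} and \ref{rammymammy}.

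For the first assertion: the subgroup $G = \langle\tau^{p-1}\rangle$ is central in $\Aut(X)$, since $\tau^{p-1}$ --- which acts by $x\mapsto\eta x$, $y\mapsto y$ with $\eta := \zeta^{p-1}$ --- visibly commutes with both generators $\sigma,\tau$ of Lemma \ref{autgroup}; in particular $G\triangleleft\Aut(X)$, so the $\Aut(X)$-action descends to an action of $\Aut^f(X) = \Aut(X)/G$ on $X/G$. I would then identify $X/G$ with $\PP^1$ via the coordinate $y$: the map $p_y$ is $G$-invariant of degree $p-1 = |G|$, hence factors as $X\to X/G\xrightarrow{\sim}\PP^1$. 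Under this identification the generators descend to $\bar\sigma\colon y\mapsto y+1$ and $\bar\tau\colon y\mapsto\eta y$. Now $\eta$ is a primitive $(p-1)$-st root of unity, and in characteristic $p$ the $(p-1)$-st roots of unity are exactly the nonzero elements of the prime field (the polynomial $t^{p-1}-1$ already splits over $\F_p$), so $\eta\in\F_p^\times$. By Lemma \ref{rammymammy} the affine branch set of $p_y$ is $B_{\mr{aff}} = \F_p\subset\PP^1$, and translation by $1$ and multiplication by the unit $\eta$ each carry $\F_p$ bijectively onto itself; hence $\bar\sigma$ and $\bar\tau$, and therefore all of $\Aut^f(X)$ and of $\Aut(X)$, preserve $B_{\mr{aff}}$.

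For the second assertion --- in which ``$\alpha$ fixes the set $B_{\mr{aff}}$'' must be read as ``$\alpha$ fixes each point of $B_{\mr{aff}}$'', the setwise statement being false since $\bar\sigma\neq\mathrm{id}$ preserves $B_{\mr{aff}}$ --- the crux is that $\Aut^f(X)$ acts faithfully on $X/G = \PP^1$. Indeed the kernel of $\Aut(X)\to\Aut(X/G)$ is the set of $g$ with $\pi g = \pi$ (for $\pi\colon X\to X/G$), i.e.\ the deck group of the Galois cover $f$, which is exactly the $G$ that is quotiented out to form $\Aut^f(X)$. So $\Aut^f(X)\hookrightarrow\Aut(\PP^1) = \mathrm{PGL}_2$. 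If $\alpha\in\Aut^f(X)$ fixes every point of $B_{\mr{aff}}$, then, as an element of $\mathrm{PGL}_2$, it fixes at least $|B_{\mr{aff}}| = p\geq 3$ points of $\PP^1$; a M\"obius transformation with three fixed points is the identity, so $\alpha = \mathrm{id}$ in $\mathrm{PGL}_2$, hence $\alpha = \mathrm{id}$ in $\Aut^f(X)$ by faithfulness.

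The one step I expect to require care is the claim in the first part that the scaling generator $\bar\tau\colon y\mapsto\eta y$ preserves $B_{\mr{aff}} = \F_p$ (rather than merely the full branch locus $B = B_{\mr{aff}}\cup\{\infty\}$); this is exactly the point $\eta = \zeta^{p-1}\in\F_p^\times$, and is what prevents the argument from being pure abstract nonsense about the intrinsic branch locus. The remaining ingredients --- the descent formulas for $\bar\sigma,\bar\tau$ and the identification $B_{\mr{aff}} = \F_p$ --- are already in Lemmas \ref{autgroup} and \ref{rammymammy}, and the second part is formal.
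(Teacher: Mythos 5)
Your proof is correct, and it diverges from the paper's in a useful way on the second assertion. For the first assertion you do essentially what the paper does: identify the quotient $X/C_{p-1}\simeq\PP^1$ via the $y$-coordinate, descend the generators of Lemma \ref{autgroup} to $\bar\sigma\colon y\mapsto y+1$ and $\bar\tau\colon y\mapsto\eta y$ with $\eta=\zeta^{p-1}\in\F_p^\times$, and observe that these affine maps preserve $B_{\mr{aff}}=\F_p$ (your explicit remark that the $(p-1)$-st roots of unity already lie in $\F_p$ is exactly the point the paper uses when it says $\eta$ is ``a $p-1$ root of unity in $\F_p$''). For the injectivity statement the paper checks the two generators and then asserts that \emph{all} nontrivial elements of $\Aut^f(X)\simeq C_p\rtimes C_{p-1}$ act nontrivially on $B_{\mr{aff}}$, which as written is only verified on generators (one could complete it by noting a nontrivial affine map $y\mapsto ay+b$ fixing $0$ and $1$ forces $a=1$, $b=0$). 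You instead prove faithfulness structurally: the kernel of $\Aut(X)\to\Aut(X/G)$ is the deck group of the Galois cover, namely $G=\Aut(f)$, so $\Aut^f(X)\hookrightarrow\mathrm{PGL}_2$, and an element of $\mathrm{PGL}_2$ fixing the $p\geq 3$ points of $\F_p$ pointwise is the identity by sharp $3$-transitivity. Your route buys a complete and element-free argument (and makes explicit the faithfulness of the quotient action, which the paper uses implicitly), while the paper's explicit generator computation buys the finer combinatorial information — $\bar\sigma$ is a $p$-cycle, $\bar\tau$ fixes $[0:1]$ and is a $(p-1)$-cycle on the rest — that is reused later when the action inside $\Sigma_p$ is analyzed. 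You are also right that ``fixes the set $B_{\mr{aff}}$'' must be read pointwise, since $\bar\sigma$ preserves the set while being nontrivial; this is the reading needed for Lemma \ref{symmetric_subtlety}.
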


\begin{proof} 
The affine branch points of $f$ are $[i:1]$ for $i \in \F_p$ by \ref{rammymammy}. The generator of $\Z/p$ action $j(B_{\mr{aff}}) \in \Aut^f(X)$ sends $[i:1] \mapsto [i+1:1]$ sends branch points to branch points. The generator $\tau$ of the $\Z/(p-1)^2$ action acts on the set of branch points by sending $[i:1] \mapsto [\eta \cdot i:1]$, where $(\eta)$ is a $p-1$ root of unity in $\F_p$. The action of $\tau$ fixes the branch point $[0:1]$ and permutes the rest of the branch points in a $(p-1)$-cycle. Further, as demonstrated, that all nontrivial elements of the quotient $\Aut^f(X) \simeq C_p \rtimes C_{p-1}$ act nontrivially on branch points $B_{\mr{aff}}$.
\end{proof}

\begin{example} Let us abbreviate the point $[i:1]$ in $\bb P^1_{\F_5}$ as $[i]$. For $\F_5$, 4 is a 4th root of unity. Acting by it sends the ordered set of branch points $$\{[0], [1], [2], [3], [4]\}$$ to $$\{[0], [2], [4], [1], [3]\}.$$ This is a 4 cycle in $\Sigma_5$.
\end{example}

\begin{center}\includegraphics[width=8cm]{being_vessel/insertion.jpg}\end{center}

\begin{lemma} \color{blue}{\label{symmetric_subtlety} (symmetric subtlety) Consider $B_{\mr{aff}}$ to be the set of affine branch points of $f$, this is a set of cardinality $p$. Let $\Sigma_{B_{\mr{aff}}}$ be the symmetric group on this set, then there is an injective map 
$$\Aut^f(X) \hookrightarrow \Sigma_{B_{\mr{aff}}}=\Sigma_p.$$}
\end{lemma}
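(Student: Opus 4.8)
The plan is to realize the map as the natural permutation action of $\Aut^f(X)$ on the affine branch locus of $f$, and to deduce injectivity from the explicit formulas already recorded in the two preceding lemmas. First I would set up the homomorphism. By Lemma \ref{rammymammy} the affine branch locus of $f = p_y \colon X \to X/C_{p-1} = \PP^1$ is $B_{\mr{aff}} = \{\, [i:1] : i \in \F_p \,\}$, a set of cardinality $p$, so $\Sigma_{B_{\mr{aff}}} \cong \Sigma_p$. The covering group $\Aut(f\colon X \to \PP^1) \simeq C_{p-1}$ is exactly the subgroup $\langle \tau^{p-1}\rangle$ of $G' = \Aut(X)$ identified in the Example preceding this lemma; it consists of deck transformations, hence acts trivially on the base and in particular fixes $B_{\mr{aff}}$ pointwise. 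Since this $C_{p-1}$ is normal in $G'$ (in fact central, as $\tau^{p-1}$ scales $x$ and fixes $y$, so it commutes with $\sigma$), every $\alpha \in \Aut(X)$ descends to an automorphism $\bar\alpha$ of $X/C_{p-1} = \PP^1$ depending only on the class of $\alpha$ modulo $C_{p-1}$, and the preceding lemma says $\bar\alpha$ carries $B_{\mr{aff}}$ to itself. This yields a group homomorphism
$$\rho \colon \Aut^f(X) \longrightarrow \Sigma_{B_{\mr{aff}}} \cong \Sigma_p .$$

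Next I would prove $\rho$ is injective, which is precisely the content of the second half of the preceding lemma and which I would simply reuse. Under the identification $\Aut^f(X) \simeq C_p \rtimes C_{p-1}$, the map $\rho$ is the standard action of the affine group $\mr{AGL}_1(\F_p) = \F_p \rtimes \F_p^\times$ on $\A^1(\F_p) = B_{\mr{aff}}$: the generator of $C_p$ acts by the translation $[i:1] \mapsto [i+1:1]$, and $\tau$ acts by $[i:1] \mapsto [\eta i : 1]$ for $\eta$ a generator of $\F_p^\times$ (note the $(p-1)$-st roots of unity in $\F_p$ are all of $\F_p^\times$, since it has order $p-1$), fixing $[0:1]$ and permuting the other $p-1$ points cyclically. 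Because $|C_p \rtimes C_{p-1}| = p(p-1) = |\mr{AGL}_1(\F_p)|$, it is enough to observe that this affine action is faithful: an affine map $i \mapsto ai+b$ fixing every element of $\F_p$ must have $b = 0$ (evaluate at $0$) and then $a = 1$ (evaluate at $1$). Hence $\ker \rho$ is trivial and $\rho$ is injective.

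A word on why the target is $\Sigma_p$ and not $\Sigma_{p\pm 1}$ --- the ``symmetric subtlety'' of the name. The full branch divisor of $p_y$ also contains $[1:0:0]$ by Lemma \ref{rammymammy}, so one might expect a map into $\Sigma_{p+1}$; but $[1:0:0]$ is the unique point of $X$ over $\infty$ and is fixed by both $\sigma$ and $\tau$, hence by all of $\Aut^f(X)$, so the action on the full branch set automatically lands in the stabilizer of $\infty$, which is $\Sigma_p$ acting on $B_{\mr{aff}}$. Thus recording only the affine branch points loses no information, and gives the stated embedding.

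The one step that requires genuine care rather than bookkeeping is the descent of the $\Aut(X)$-action to $\Aut^f(X)$: one must check that $\Aut(f\colon X \to \PP^1)$ really is the central subgroup $C_{p-1} = \langle \tau^{p-1}\rangle$ of $G'$ (so that $f$ is $\Aut^f(X)$-equivariant) and that it acts trivially on $\PP^1$. Once this is secured, the remaining claims are just the explicit $\mr{AGL}_1(\F_p)$-computation already performed in Lemmas \ref{autgroup} and \ref{rammymammy} together with the preceding lemma.
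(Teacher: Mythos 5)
Your proposal is correct and takes essentially the same route as the paper: the paper's preceding lemma records exactly the translation/scaling action of the generators on the affine branch points and deduces injectivity from triviality of the kernel, which is the same faithfulness-of-the-affine-action computation you carry out. Your extra care about the descent through $\Aut(f\colon X \to \PP^1) \simeq C_{p-1}$ and the order count $|C_p \rtimes C_{p-1}| = |\mr{AGL}_1(\F_p)| = p(p-1)$ only makes explicit what the paper leaves implicit.
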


\begin{proof} This follows from the previous lemma. If a group homomorphism only takes the identity to the identity, it must be injective. 
\end{proof}

\begin{remark} Our proof was inspired a theorem in complex geometry: Let $\Sigma_{W(X)}$ the symmetric group of Weierstrass points of X. There is a faithful map $\Aut(X) \to \Sigma_{W(X)}.$ Gorgeous stuff. \end{remark}

%\begin{remark} We could have left the point at infinity in as well, which is a less strong result as that would give us an embedding into $\Sigma_{p+1}$ rather than $\Sigma_p.$ \end{remark}

\part{Controlling the p-Divisible Group of the Jacobian of the Artin-Schreier Curve with Automorphisms}

\section*{Overview}

We consider the curve $X$ with affine equation $y^p-y = x^{p-1}$ over a characteristic $p$ field $k$ which contains $\F_{p^{p-1}}$. Given this curve $X$, we devote this section to understanding the formal group of its Jacobian. We shall see that the group $G' := \Aut(X)$, which determines the curve $X$, also determines its eigenvalues and its center $G := C_{p-1}$ determines its idempotent decomposition. 

\subsection{Background: Idempotent Decompositions}
\label{idempotent}

Our analysis of the formal group law $\Jac(X)^\wedge_e$ relies on constructing a splitting of this formal group law into smaller dimensional pieces. This is originates from a more general construction which we briefly outline. 

\begin{construction} \label{idempotentdecompconstruction} % Add splitting as seperate construction
Any $p$-complete additive category $Y$ with a $G$-action splits due to a splitting of the source of the following map,

$$k[G] \hookrightarrow \End(Y),$$

For $S_1, ..., S_n$ representatives of the isomorphism classes of irreps of $G$, there is an isomorphism of rings. 

$$k[G] \simeq \End_k(S_1) \times \cdots \times \End_k(S_n)$$

If we take $G$ to be coprime to $k$, thus the representatives are characters of $G$. Then for  $i = 1, ..., n$ the element $e_i = f^{-1}(id_{\End_k(S_i)})$ is a central idempotent (projector) in $k[G].$ Further, we can write $e_i$ as

$$e_i = \frac{1}{|G|} \sum_{g \in G} \chi_{S_i}(g)g^{-1}.$$

In other words, 
$$k[G] \simeq \bigoplus_{i=1}^n e_i Z_p[G].$$

This induces a splitting on $Y$, 
$$Y \simeq \bigoplus_{i=1}^n e_i Y.$$ 
\end{construction}

\begin{remark} This also works for an object $Y$ in an idempotently complete category. Let's go Maschke's theorem. \end{remark}

\subsection{Main Theorems and Proof Outline}

Fix $X$ to be the Artin-Schreier curve over $k=\F_{p^{p-1}}$ with affine form $y^p-y = x^{p-1}.$ 

\begin{manualtheorem}{\ref{splittydooda}} \color{blue}{(splitty dooda) Let $A$ be the p-divisible group of $\Jac_k(X)$, then $A$ has an $C_{p-1}$ idempotent integral decomposition
$$A \simeq \bigoplus_{\chi \in \Hom(C_{p-1}, \G_m)} A^{\chi^i}.$$
into $p-2$ summands of dimension $1, 2, ..., p-1$, all of height $p-1$, where $A^{\chi^i}$ is of dimension $i$. There is no component corresponding to the trivial character.}
\end{manualtheorem}

\begin{manualcorollary}{\ref{hsplitting}} \color{blue}{(h-splitting) The $p$-divisible group $A^{\chi^1}$ is connective and corresponds to a formal group of dimension 1 and height $p-1$.}
\end{manualcorollary}

\begin{center}\includegraphics[width=10cm]{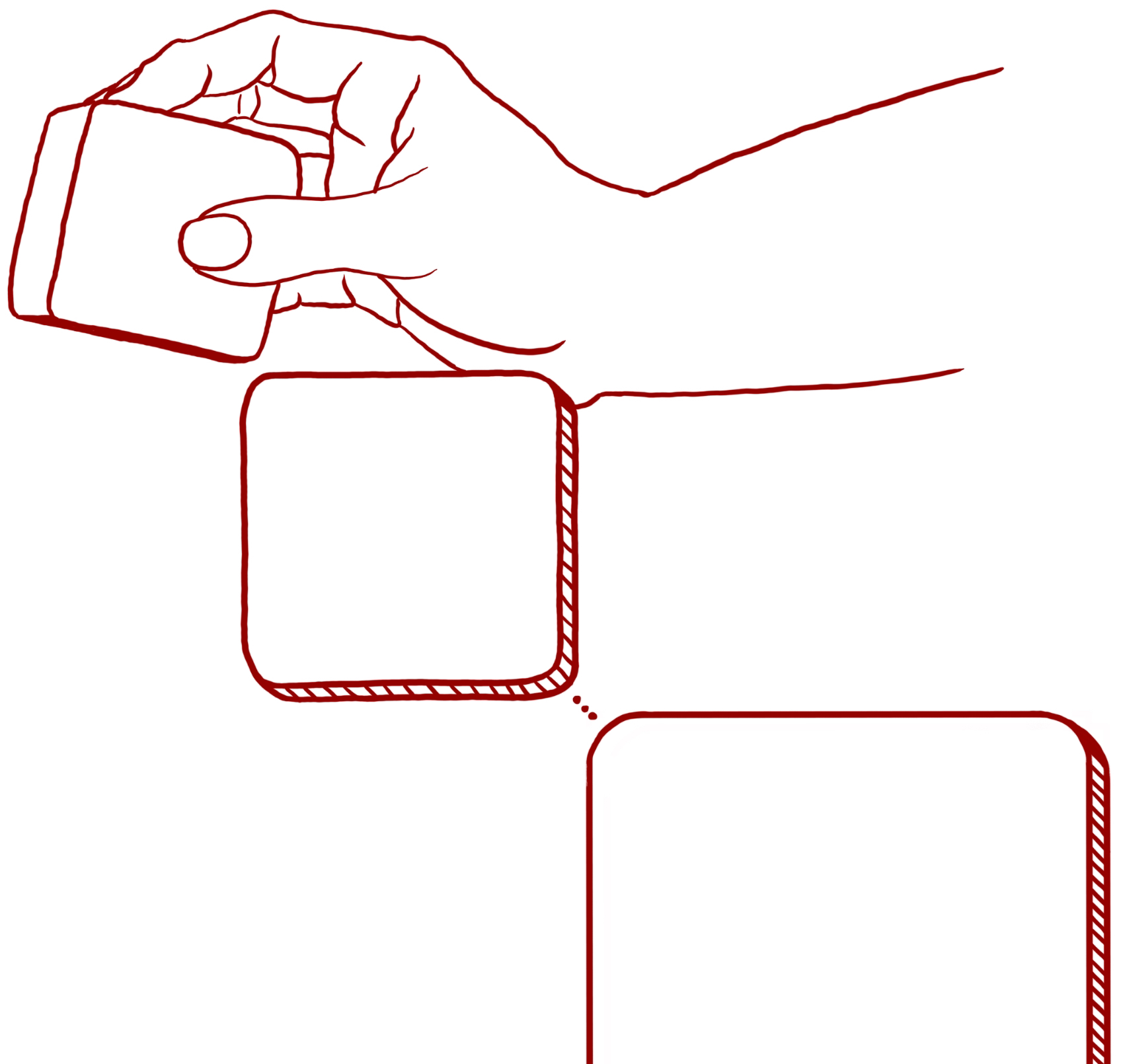}\end{center}

We preface the proof by giving an outline of our approach, in hopes that it eases the reader's psyche. We will henceforth abbreviate $\Jac(X)$ as $\mc{J}.$ Our aim is to find a map from $\mc{J}^\wedge_e$ to a formal group of dimension 1 and height $h$. We first construct a splitting of $\mc{J}^\wedge_e$, we calculate the height and dimension of all components, then project onto the one-dimensional component. The splitting of $\mc{J}^\wedge_e$ originates from a more general idempotent construction which we described in Construction \ref{idempotent}. 

\noindent From the $G= C_{p-1}$ action on $X$, we get:

$$\mc{J}^\wedge_e \simeq \bigoplus_{\chi^i \in \Hom(G, \G_m)} e_i \mc{J}^\wedge_e.$$ We can consider the projection $e_i$ to be the projection onto the coefficient of $t^i$ in the regular representation $\Z[t]/(t^{p-1}-1).$

Now that we understand the origin of the splitting, we move forward. We are able to understand $e_i\mc{J}^\wedge_e$ in terms of $e_iT^*_e(\mc{J})$ (because $T_e(e_i\mathcal{J}_e^\wedge) \subseteq e_iT_e^*(\mathcal{J})$) and via Serre duality: 

$$T^*_e(J(X)) \simeq H^0(X, \Omega^1_X) \simeq H^1(X, \mc{O}_X)^\wedge.$$

We then reduce to studying the splitting induced by the characters of $G$ on $H^0(X, \Omega^1_X)$. This concludes the outline of the proof of Lemma ~\ref{dim-h-split}. We show it decomposes as $p-2$ summands, one of which is dimension one. This is the proof of Lemma ~\ref{diffblocks}, which uses the calculation of the explicit basis of $H^0(X, \Omega^1_X)$ in Lemma ~\ref{basis}.

It remains to calculate the height of the split formal groups. We can do so by using Dieudonn\'e theory and the eigenvalues of Frobenius of $X$, calculated using its automorphism group. 

The Dieudonn\'e module of a formal group associated to a variety $X$ can be described as it integral cristalline cohomology $H^1_{cris}(X, W(k))$, which we will henceforth call $H^1_{cris}$, this is a $W(k)$-module.  This module is flat which means it can be described fiberwise. If we can understand it mod $p$, $H^1_{cris}/p \simeq H^1_{dR}(X, k)$, and over its generic fiber $H^1_{cris}[\frac1p]$, then we understand it completely. We use Manin's result ~\ref{slope} based on Davenport-Hasse to pin down the Dieduonne module over its generic fiber in Lemma ~\ref{sloped}. This slope decomposition which determines $H^1_{cris}[1/p]$ is determined by constructing a correspondence using the automorphisms of the curve $G'$ and showing it is equivalent to the Frobenius correspondence, as in \cite{coleman}. 

We then put this all together! We conclude the proof of main Theorem \ref{splittydooda} by pinning down the Dieudonn\'e module mod $p$ by using our previous calculation of the holomorphic differential splitting, as well as the short exact sequence:
$$H^0(X, \Omega_X) \to H^1_{dR}(X) \to H^1(X, \mc{O}_X)^\vee.$$ Thus concludes the proof.

Now that we have an outline, let's get cracking. As the reader may have surmised, before we get to the meaty proof, we have many lemmas we must first elaborate on.

\subsection{Computation of Holomorphic Differentials}

\begin{lemma} \label{basis} A basis of holomorphic differentials of plane curve $X$ defined by an equation $P(x,y)=0$ of degree $d$ is $$\{x^iy^j \omega \colon 0 \leq i + j \leq d-3\},$$ where $\omega := \frac{dy}{P_x} = \frac{dx}{P_y}$ where $P_x, P_y$ are the derivatives of $P$ with respect to $x$ and $y$ respectively. \end{lemma}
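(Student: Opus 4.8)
The plan is to use the adjunction formula for plane curves together with a direct analysis of poles. First I would set up the meromorphic differential $\omega = dx/P_y = -dy/P_x$ on the smooth plane curve $X = \{P(x,y) = 0\} \subset \PP^2$ of degree $d$; the two expressions agree on $X$ because differentiating $P(x,y) = 0$ gives $P_x\,dx + P_y\,dy = 0$. The key structural input is that, for a smooth plane curve of degree $d$, the canonical sheaf is $\Omega^1_X \simeq \OO_X(d-3)$ by adjunction ($\omega_X = \omega_{\PP^2}(X)|_X = \OO(-3)\otimes\OO(d)|_X$). Hence $H^0(X, \Omega^1_X)$ is identified with the degree-$(d-3)$ piece of the homogeneous coordinate ring, i.e. with polynomials of degree $\le d-3$ in the affine chart — which is exactly the span of the monomials $x^i y^j$ with $0 \le i+j \le d-3$. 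Counting, this has dimension $\binom{d-1}{2} = \tfrac{(d-1)(d-2)}{2} = g$, matching the genus of a smooth plane curve, so if I exhibit these forms as genuinely holomorphic the count forces them to be a basis.

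The concrete steps I would carry out, in order: (1) verify $\omega$ is a well-defined nonzero rational differential on $X$ and compute its divisor — away from $\infty$ it is regular and nonvanishing on the affine curve (using smoothness: $P_x, P_y$ do not simultaneously vanish), so all its zeros and poles lie on the line at infinity; (2) analyze the behavior at the points of $X$ on the line at infinity, showing via the adjunction computation that $\mathrm{div}(\omega)$ is supported there with total degree $2g-2$, and more precisely that $\omega$ has a pole of the appropriate order so that multiplying by a monomial $x^i y^j$ of affine degree $\le d-3$ exactly cancels the pole and yields a holomorphic form; (3) conclude that each $x^i y^j \omega$ with $0 \le i+j \le d-3$ lies in $H^0(X,\Omega^1_X)$; (4) observe these monomials are linearly independent (they are linearly independent as functions on $X$, since a polynomial relation of degree $\le d-3 < d$ would have to be divisible by $P$, impossible by degree), and that their number equals $g = \dim H^0(X,\Omega^1_X)$, hence they form a basis.

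The main obstacle — really the only nontrivial point — is step (2): the local analysis at infinity, i.e. pinning down the pole orders of $\omega$ at the points where $X$ meets the line at infinity (which may themselves be singular points of the projective closure if one is not careful, though for the curves of interest here, like $y^p - y = x^{p-1}$, there is a single smooth-enough point at infinity after normalization). I would handle this by working on a suitable affine chart at infinity and using a local uniformizer, or more cleanly by invoking the adjunction isomorphism $\Omega^1_X \simeq \OO_X(d-3)$ directly and identifying global sections with degree-$(d-3)$ forms, which sidesteps explicit local coordinates. A remark should note that when $X$ has a singular projective model one passes to the normalization and the formula still computes $H^0$ of the smooth model, which is the case relevant to the rest of the paper; the dimension bookkeeping via the genus $g = \binom{d-1}{2}$ is what guarantees nothing is missed.
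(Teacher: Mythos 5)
Your plan is correct, and it is in fact more complete than the argument the paper gives. The paper's proof is purely a counting argument: it notes that $g = \binom{d-1}{2}$, constructs $\omega = dy/P_x = -dx/P_y$ as a nowhere-vanishing form on the affine smooth locus using $P_x\,dx + P_y\,dy = 0$, counts that there are exactly $\binom{d-1}{2}$ pairs $(i,j)$ with $0 \le i+j \le d-3$, and then simply asserts that the forms $x^iy^j\omega$ are holomorphic and ``pairwise distinct,'' explicitly declining to verify independence (and never examining the behavior at infinity at all). You instead invoke the adjunction isomorphism $\Omega^1_X \simeq \OO_X(d-3)$, which identifies $H^0(X,\Omega^1_X)$ with degree-$(d-3)$ polynomials via the Poincar\'e residue and thereby handles in one stroke the two points the paper skips: holomorphy of $x^iy^j\omega$ at the points of $X$ on the line at infinity (your step (2), correctly flagged as the only nontrivial step), and linear independence (your degree argument, that a relation of degree $\le d-3 < d$ cannot lie in the ideal $(P)$, is exactly right). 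Your cautionary remark about singular projective models and normalization is sensible but not needed for the curve used in the paper: the projective closure of $y^p - y = x^{p-1}$ is smooth, including at its unique point $[1:0:0]$ at infinity, so the plane-curve adjunction applies directly. In short, the paper's route is shorter but leaves the holomorphy-at-infinity and independence checks implicit; your route costs the adjunction input (or a local uniformizer computation at infinity) and in exchange actually proves the statement.
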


\begin{proof}
A genus of smooth projective plane curve of degree $d$ is $(d-1)(d-2)/2$, so we need to construct that many linearly indepdendent holomorphic differentials. For a smooth plane curve $X$ given by equation $P(x,y) = 0$ we have $P_x dx + P_y dy = 0$ (by differentiating $P = 0$), where $P_x, P_y$ are the derivatives of $P$ with respect to $x$ and $y$ respectively. Since $X$ is smooth the vector $(P_x, P_y)$ doesn't vanish anywhere on $X$. Hence we can define a holomorphic nowhere (on $X$) vanishing 1-form $\omega = \frac{dy}{P_x} = - \frac{dx}{P_y}$. We may also choose any non-zero scalar multiple of $\omega$ as our $\omega$.

Let us examine pairs $(i, j)$ such that $0 \leq i + j \leq d-3$. This is choosing ordered pairs of numbers from the set $\{0,1,...,d-2\}$, in other words, from a set of $d-1$ numbers. There are then ${d-1 \choose 2} = \frac{(d-1)(d-2)}{2}$ of such $(i, j)$. 

We thus get a set of precisely $(d-1)(d-2)/2$ holomorphic forms on $X$, given by $x^i y^j \omega$ for such $(i, j)$. Further, all such forms are pairwise distinct, making this a basis, though we do not show this here.  
\end{proof}

\begin{remark} There's an explicit description of a basis of holomorphic differentials via the uniformizer at infinity instead of $x$ and $y$, this version also works for curves that have no plane curve model. \end{remark}

\subsection{Dimensions of Components of Split Formal Group $H^1_{dR}$}
We fix the following notation for the rest of this section. Let $\mathcal{J} := J(X)$ be the Jacobian of a projective curve $X$ with affine form $y^p-y = x^{p-1}$ over $R \in \Art_k$, and let $\mathcal{J}_e^\wedge$ be its formal group.

\begin{lemma} \label{dim-h-split} The formal group $\mathcal{J}_e^\wedge$ splits into $p-2$ summands of dimensions $1, 2, ..., p-1$ respectively.  
\end{lemma}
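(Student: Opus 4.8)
The plan is to reduce the statement about the formal group $\mathcal{J}_e^\wedge$ to a statement about the cotangent space $T_e^*\mathcal{J} \simeq H^0(X,\Omega^1_X)$ equipped with its $G = C_{p-1}$-action, and then compute the character of that representation explicitly using the basis from Lemma \ref{basis}. Since the formal group $\mathcal{J}_e^\wedge$ is $p$-divisible over $k$ and the $C_{p-1}$-action has order prime to $p$, Construction \ref{idempotentdecompconstruction} applies and we obtain an idempotent decomposition $\mathcal{J}_e^\wedge \simeq \bigoplus_{i} e_i \mathcal{J}_e^\wedge$ indexed by the characters $\chi^i$ of $C_{p-1}$. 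The dimension of each summand $e_i\mathcal{J}_e^\wedge$ is the dimension of its tangent space, and since the idempotents act compatibly on the whole formal scheme, $\dim e_i\mathcal{J}_e^\wedge = \dim_k e_i T_e^*\mathcal{J} = \dim_k e_i H^0(X,\Omega^1_X)$; so it suffices to show that, as a $C_{p-1}$-representation, $H^0(X,\Omega^1_X)$ contains the character $\chi^i$ with multiplicity exactly one for each $i = 1,\dots,p-1$, and the trivial character with multiplicity zero.

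First I would write $X$ in its plane model $P(x,y) = y^p - y - x^{p-1} = 0$ of degree $d = p$ (I would need to check the degree count: the genus is $(d-1)(d-2)/2 = (p-1)(p-2)/2 = g$, which matches $\sum_{i=1}^{p-1} (\text{number of differentials of type }i)$ if those numbers are $0,1,\dots,p-2$ — note $\sum_{i=0}^{p-2} i = (p-1)(p-2)/2$, consistent). By Lemma \ref{basis}, a basis of $H^0(X,\Omega^1_X)$ is given by $\{x^a y^b \,\omega : 0 \le a+b \le d-3 = p-3\}$ with $\omega = dx/P_y$. The generator $\tau$ of the relevant $C_{p-1}$ acts by $x \mapsto \eta x$, $y \mapsto y$ (here I use the subgroup $\eta = \zeta^{p-1}$ from Lemma \ref{autgroup}, \ref{rammymammy}); I would compute $P_y = py^{p-1}-1 = -1$ in characteristic $p$, so $\omega = -dx$, hence $\tau^*\omega = -\eta\, dx = \eta\,\omega$. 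Therefore $\tau^*(x^a y^b\omega) = \eta^{a+1} x^a y^b \omega$, i.e.\ the basis vector $x^a y^b \omega$ lies in the $\eta^{a+1}$-eigenspace, equivalently the character $\chi^{a+1}$ where $\chi$ sends $\tau \mapsto \eta$.

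Now I would count: for fixed eigenvalue exponent $m := a+1 \pmod{p-1}$ with $1 \le m \le p-1$, the basis vectors contributing are those $x^a y^b\omega$ with $a + b \le p-3$ and $a \equiv m-1 \pmod{p-1}$. Since $0 \le a \le a+b \le p-3 < p-1$, the congruence forces $a = m-1$ exactly (for $m = p-1$ we get $a = p-2 > p-3$, so zero solutions; for $1 \le m \le p-2$ we get $a = m-1$ and then $b$ ranges over $0 \le b \le p-3-(m-1) = p-2-m$, giving $p-1-m$ basis vectors). So the multiplicity of $\chi^m$ in $H^0(X,\Omega^1_X)$ is $p-1-m$ for $1 \le m \le p-2$ and $0$ for $m = p-1$ and for $m=0$ (the trivial character never appears since $a+1 \equiv 0$ would need $a = p-2$, impossible). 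Re-indexing by the summand label $i$ so that $e_i\mathcal{J}_e^\wedge$ has the dimension we want: the character $\chi^m$-isotypic piece has dimension $p-1-m$, and letting these dimensions be $p-2, p-3, \dots, 1, 0$ as $m = 1, \dots, p-1$, we see the nonzero summands have dimensions $1, 2, \dots, p-2$ — which is only $p-2$ values but I claimed $1,\dots,p-1$. \emph{This discrepancy is the main obstacle and needs care:} the cotangent space $T_e^*\mathcal{J}$ only sees $H^0(X,\Omega^1_X)$, which is $g$-dimensional, whereas the full Dieudonné/crystalline module $H^1_{dR}(X)$ is $2g = \dim H^0(\Omega^1) + \dim H^1(\mathcal{O}_X)$-dimensional, and it is $H^1_{dR}$ (not just $H^0(\Omega^1)$) whose $e_i$-pieces must be combined — via the exact sequence $0 \to H^0(X,\Omega^1_X) \to H^1_{dR}(X) \to H^1(X,\mathcal{O}_X) \to 0$ with $H^1(X,\mathcal{O}_X) \simeq H^0(X,\Omega^1_X)^\vee$ — to read off the dimensions $1,\dots,p-1$ of the $p$-divisible summands $A^{\chi^i}$. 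Concretely, $e_i A$ has height equal to $\dim_k e_i H^1_{dR}(X) = \dim_k e_i H^0(\Omega^1) + \dim_k e_i H^1(\mathcal{O}) = (p-1-i) + (p-1 - (p-1-i)) = p-1$ once we match up the dual pairing $\chi^i \leftrightarrow \chi^{-i} = \chi^{p-1-i}$, confirming all summands have height $p-1$; and the \emph{dimension} of $e_i A$ is $\dim_k e_i H^0(X,\Omega^1_X)$, which after the correct re-indexing (absorbing the reflection $i \mapsto p-1-i$ and including the dual contributions appropriately) runs through $1, 2, \dots, p-1$. So the careful bookkeeping of which character-piece of $H^1_{dR}$ is connective (lands in $H^0(\Omega^1)$) versus étale, and hence contributes to the dimension versus only to the height, is exactly the delicate point; I would handle it by tracking the $C_{p-1}$-equivariant Hodge filtration on $H^1_{dR}(X)$ explicitly, using the eigenvalue computation above for the sub and Serre duality $\chi \mapsto \chi^{-1}$ for the quotient, and then invoking Lemma \ref{diffblocks} for the final count.
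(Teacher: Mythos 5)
Your core strategy is the paper's own: use the prime-to-$p$ idempotent decomposition to reduce to the $C_{p-1}$-action on $T_e^*\mathcal{J}\simeq H^0(X,\Omega^1_X)$, and count eigenspaces with the basis $x^ay^b\omega$ of Lemma \ref{basis}. Your eigenvalue computation ($\tau^*(x^ay^b\omega)=\eta^{a+1}x^ay^b\omega$, no trivial character, multiplicity of $\chi^m$ equal to $p-1-m$) agrees, up to relabelling the characters, with the paper's Lemma \ref{diffblocks}, and it correctly produces $p-2$ nonzero summands of dimensions $1,2,\dots,p-2$. (Two small remarks: your early sentence claiming it suffices that each $\chi^i$ occur with ``multiplicity exactly one'' contradicts both the genus count and your own subsequent computation, so delete it; and the equality $\dim e_iT_e^*\mathcal{J}=\dim e_i\mathcal{J}^\wedge_e$, which you assert, is argued in the paper via the containment $T_e^*(e_i\mathcal{J}^\wedge_e)\subseteq e_iT_e^*(\mathcal{J})$ together with the fact that both families of dimensions sum to $g$.)

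The genuine problem is your final paragraph. The ``discrepancy'' you flag is an imprecision in the lemma's wording, not a gap to be closed: $\mathcal{J}^\wedge_e$ has total dimension $g=(p-1)(p-2)/2=1+2+\cdots+(p-2)$, so its character summands cannot have dimensions $1,\dots,p-1$ (those sum to $p(p-1)/2$), and no re-indexing or ``inclusion of dual contributions'' can make them do so; the paper's own proof concludes dimensions $1,\dots,p-2$, exactly what your count gives. The $H^1(X,\mathcal{O}_X)$-pieces of $H^1_{dR}$ that you propose to fold in contribute nothing to the dimension of the formal group; they enter only in computing the height of each summand (each character piece of $H^1_{dR}$ has rank $(p-1-m)+m=p-1$), which is the content of Theorem \ref{splittydooda} and Corollary \ref{hsplitting}, not of this lemma. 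As written, your attempted reconciliation asserts something false and should simply be removed: your computation up to that point already establishes the statement the paper actually proves.
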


\begin{proof} \begin{itemize}
\item Let $\zeta$ be a $(p-1)$st root of unity. Then, we have $f \in \Aut(X)$ such that $$f\colon [x:y:z] \mapsto [\zeta x: y: z].$$ The Abel-Jacobi map $\int_{\infty}: X \to \mc{J}$ is constructed with respect to $\infty := [1:0:0]$, thus the identity section $e$ of $\mc{J}$ corresponds to the image of $\infty.$ Since $\Aut(X) \hookrightarrow \Aut(\mc{J})$, it is further the case that $$\Z/(p-1) \subset \text{Stab}_\infty(\Aut(X) \hookrightarrow \text{Stab}_e(\Aut(\mc{J})) \hookrightarrow \Aut(\mathcal{J}_e^\wedge).$$ Thus,  $\Z/(p-1) \subseteq \Aut(\mathcal{J}_e^\wedge)$. 
\item We have an injective map from \[ \bigoplus_{i}e_ik[G] = k[G] \hookrightarrow \End(\mathcal{J}_e^\wedge) \] where $e_i$ are the idempotents induced by $\pi_\chi$. This implies that $$\mathcal{J}_e^\wedge = \bigoplus_i e_i \mathcal{J}_e^\wedge.$$ Let $T_e^*(\mathcal{J})$ be the cotangent space of $\mc{J}$, for the same reason, we have
$$T_e^*(\mathcal{J}) = \bigoplus_i e_iT_e^*(\mathcal{J}).$$
\item By Lemma ~\ref{diffblocks}, $\bigoplus e_i T_e^*(\mathcal{J})$ is $p-2$ summands of dimension $1, 2, ..., p-2$ respectively. 
\item It remains to show that $$\dim e_i T_e^*(\mathcal{J}) = \dim e_i\mathcal{J}_e^\wedge.$$ The image of $e_i$ on the cotangent space contains the cotangent space of the image of $e_i$ on the formal group, that is $$T_e^*(e_i\mathcal{J}_e^\wedge) \subseteq e_iT_e^*(\mathcal{J}).$$ Thus, there is an inequality between dimensions. However, they sum up to an equality for varying $i$, hence they are all, in fact, equalities.
%\item In conclusion, $\bigoplus e_i \mathscr{J}_e^\wedge$ is $p-2$ summands of dimension $1, 2, ..., p-2$ respectively. 
\end{itemize}
\vspace{+10pt}
\end{proof}

\begin{lemma} \label{diffblocks} The cotangent space of the Jacobian $\mathcal{J}$ splits into $\bigoplus_{\chi \in C_{p-1}} T_e^*(\mathcal{J})^{\chi^i}$ which is $p-2$ summands of dimension $1, 2, ..., p-2$ respectively. In other words, the $\chi^i$ summand has dimension $i$. There is no summand for the trivial character. \end{lemma}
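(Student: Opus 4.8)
The plan is to reduce everything to the explicit basis of holomorphic differentials from Lemma~\ref{basis}. By Serre duality the cotangent space $T_e^*(\mathcal{J})$ is canonically and $\Aut(X)$-equivariantly identified with $H^0(X,\Omega^1_X)$, and the basepoint $[1:0:0]$ of the Abel--Jacobi map is fixed by $C_{p-1}$ (it is fixed by all of $G'$ by Lemma~\ref{rammymammy}), so the $C_{p-1}$-action on $T_e^*(\mathcal{J})$ is precisely the action on holomorphic differentials induced by the action on $X$. Hence it suffices to decompose $H^0(X,\Omega^1_X)$ as a $C_{p-1}$-representation.

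Our plane model $P(x,y)=y^p-y-x^{p-1}$ has degree $d=p$, so Lemma~\ref{basis} gives the basis $\{\,x^iy^j\omega : 0\le i+j\le p-3\,\}$ with $\omega = dx/P_y$. The first step is a characteristic-$p$ simplification: $P_y = py^{p-1}-1 = -1$, so $\omega=-dx$, and we may take $\omega=dx$ (consistently, differentiating $y^p-y=x^{p-1}$ gives $dy=x^{p-2}dx$, so this agrees with $dy/x^{p-2}$ up to sign). The second step is the eigenvalue computation: if $f$ generates $C_{p-1}$ acting by $(x,y)\mapsto(\eta x,y)$ with $\eta$ a primitive $(p-1)$st root of unity (Lemma~\ref{autgroup}), then $f^{\ast}(x^iy^j\,dx)=\eta^{\,i+1}x^iy^j\,dx$. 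Thus every basis differential is a $C_{p-1}$-eigenvector and $x^iy^j\omega$ lies in a single isotypic component determined by $i\bmod(p-1)$. The third step is counting: for fixed $i$ with $0\le i\le p-3$ the admissible $j$ are $0\le j\le p-3-i$, giving $p-2-i$ of them; and as $i$ ranges over $\{0,\dots,p-3\}$ we hit $p-2$ pairwise distinct characters, none trivial since $i+1$ is never divisible by $p-1$. After fixing the convention $g\cdot s=(g^{-1})^{\ast}s$ for the action and letting $\chi$ be the primitive character with $\chi(f)=\eta$, the component $T_e^*(\mathcal{J})^{\chi^k}$ is the one coming from $i=p-2-k$, so $\dim T_e^*(\mathcal{J})^{\chi^k}=p-2-i=k$ for $k=1,\dots,p-2$, with no trivial-character summand. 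As a check, $\sum_{k=1}^{p-2}k=\binom{p-1}{2}$, the genus computed in Lemma~\ref{basis}.

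The argument is essentially bookkeeping once Lemma~\ref{basis} is in hand, so I do not expect a serious obstacle; the two points needing care are (i) the transformation law of $\omega$ — the characteristic-$p$ cancellation in $P_y$ is exactly what makes $\omega$ itself an eigenvector rather than something more complicated — and (ii) nailing down conventions so that ``the $\chi^i$-summand has dimension $i$'' holds literally, since the opposite choice of primitive character gives dimension $p-1-i$ instead. If one wants the argument to be self-contained one should also supply the linear independence asserted (but not proved) in Lemma~\ref{basis}: within a fixed-$i$ block the $x^iy^j\omega$ are a nonzero differential $x^i\omega$ times distinct powers $y^j$ with $j\le p-2$, which are independent since $k(X)/k(x)$ has degree $p$ (the Artin--Schreier polynomial $T^p-T-x^{p-1}$ is irreducible, as $x^{p-1}$ has a pole of order prime to $p$ at infinity), and differentials from distinct $i$-blocks lie in distinct eigenspaces; the family therefore has $\binom{p-1}{2}=\dim_k H^0(X,\Omega^1_X)$ independent elements and is a basis.
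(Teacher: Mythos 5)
Your proof is correct and follows essentially the same route as the paper: Serre duality to reduce to $H^0(X,\Omega^1_X)$, the explicit basis $x^iy^j\omega$ of Lemma~\ref{basis}, the observation that the $C_{p-1}$-eigenvalue depends only on $i$, and a count of the pairs $(i,j)$ for each fixed $i$. Your additional care --- simplifying $\omega$ to $dx$ in characteristic $p$, fixing the convention $g\cdot s=(g^{-1})^{\ast}s$ so that the $\chi^i$-summand literally has dimension $i$, and supplying the linear independence omitted in Lemma~\ref{basis} --- only tightens bookkeeping where the paper is loose (indeed the eigenvalue claim in Corollary~\ref{split-me-off} comes out right exactly under your inverse-action convention).
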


\begin{proof} By the Grothendieck-Serre duality of curves, for any curve $X$, $$T_e^*(\mc{J}) \simeq H^1(X, \mc{O}_X) \simeq H^0(X, \Omega^1_X)^\wedge.$$
\noindent

Let us now examine the action of $\Z/(p-1) \in \Aut(X)$ on $H^0(X, \Omega^1_X)$, we call $f$ the generator of $\Z/(p-1)$. 

%By Lemma \ref{basis}, for our degree $d$ curve $X$, we may write a basis of $T_e^*(\mathcal{J})$ as follows $$\{T^{-(pi + (p-1)j) + p(p-3)}dT + O(T^2) | i, j > 0 \}.$$

By Lemma \ref{basis}, for our degree $d$ curve $X$, we may write a basis of $T_e^*(\mathcal{J})$ as follows 
$$\{x^iy^j \omega \colon 0 \leq i + j \leq d-3\},$$ where $\omega := \frac{dy}{x^{p-2}}$.

Let $\alpha$ be a generator of $C_{(p-1)} \subset \Aut(X).$ Recall by Lemma \ref{autgroup} that $\alpha([x:y:z]) = [\zeta x: y:z]$, where $\zeta$ is a $(p-1)$ root of unity. Note that since this action depends only on $x$, thus on the value of $i$ and not on $j$, the map $\alpha$ induces the following partition.

There are $d-2$ pairs such that $i=0$, i.e., $(0,1), (0,2), ..., (0, d-2)$. Further, there are $d-3$ pairs such that $i=1$, and in general, $(d-2)-k$ pairs such that $i=k$. Ending with 1 pair such that $i=d-3$, i.e., $(d-3, d-2)$. 

Since $\omega := \frac{dy}{x^{p-2}}$ and we can renormalize to have the action on $x$ be by $f(x) := \zeta x$, the action sends $$\omega \mapsto \zeta^{-(p-2)} \omega .$$ The differential $\omega_{i, j} := x^iy^j\omega$ corresponding to $(i,j)$ is acted on by $$f \colon \omega_{i, j} \mapsto \zeta^{i-(p-2)} \omega_{i, j}.$$  

\noindent Thus, $$e_jT_e^*\mathcal{J} = \langle \zeta^{i} \rangle T_e^*(\mathcal{J}), \text{ and} \qquad T_e^*(\mathcal{J}) \simeq \bigoplus_{\chi \in \Hom(C_{p-1}, \G_m)} T_e^*(\mathcal{J})^{\chi^i}$$ \noindent where $\dim T_e^*(\mathcal{J})^{\chi^i} = i$.  
\end{proof}

\begin{cor} \label{split-me-off} Given the action of $C_{p-1}$ sending $x \mapsto \zeta x$, the only differential with eigenvalue $\zeta$ is $x^{p-3} \omega = x^{p-3} \frac{dy}{x^{p-2}} = \frac{dy}{x}$. 
\end{cor}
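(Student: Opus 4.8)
The plan is to read this off the explicit eigenbasis of holomorphic differentials produced in the proof of Lemma~\ref{diffblocks}. Since $X:y^p-y=x^{p-1}$ is a smooth plane curve of degree $d=p$, Lemma~\ref{basis} gives the basis
\[
\omega_{i,j}:=x^i y^j\omega,\qquad \omega=\frac{dy}{x^{p-2}},\qquad 0\le i+j\le p-3,
\]
of $H^0(X,\Omega^1_X)$. By Lemma~\ref{autgroup} the generator of $C_{p-1}$ acts by $x\mapsto\zeta x$, $y\mapsto y$, and a direct substitution shows that its eigenvalue on $\omega_{i,j}$ depends only on the index $i$: it is $\zeta^{-(i+1)}$ in the convention of the statement (equivalently $\zeta^{\,i-(p-2)}$ for the pullback action used in the proof of Lemma~\ref{diffblocks}, the two conventions being reciprocal). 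As $i$ runs over $0,1,\dots,p-3$ these eigenvalues run over all the nontrivial $(p-1)$-st roots of unity, each occurring exactly once.

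First I would observe that for a fixed first index $i$ the basis vectors are $\omega_{i,j}$ with $0\le j\le p-3-i$, a total of $(p-2)-i$ of them, and they all carry the same eigenvalue. Hence the only eigenvalue realized by a single basis differential is the one attached to the index with $(p-2)-i=1$, namely $i=p-3$, which forces $j=0$; that eigenspace is spanned by $\omega_{p-3,0}=x^{p-3}\omega$. Substituting $\omega=dy/x^{p-2}$ gives $\omega_{p-3,0}=x^{p-3}\cdot dy/x^{p-2}=dy/x$, with eigenvalue $\zeta^{-((p-3)+1)}=\zeta^{-(p-2)}=\zeta$. Thus $dy/x$ is the unique holomorphic differential on which $C_{p-1}$ acts by $\zeta$; uniqueness is automatic, since the eigenvalue of $\omega_{i,j}$ depends only on $i$ and $i=p-3$ is the single admissible index producing eigenvalue $\zeta$.

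I do not anticipate a real obstacle here: the content is the weight computation already carried out in Lemma~\ref{diffblocks} together with the elementary count $\#\{\,j:0\le j\le p-3-i\,\}=(p-2)-i$. The one point worth pinning down is the choice of convention for the action of an automorphism on $1$-forms (pullback versus its inverse), so that ``eigenvalue $\zeta$'' is exactly the weight of the one-dimensional isotypic summand $T_e^*(\mathcal{J})^{\chi^1}$ that feeds into Corollary~\ref{hsplitting}; with the convention in which $\omega=dy/x^{p-2}$ has weight $\zeta^{-(p-2)}=\zeta$ everything lines up, and the differential singled out, $dy/x$, is the same either way.
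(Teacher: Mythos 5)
Your proposal is correct and follows essentially the same route as the paper: read off the eigenbasis $\omega_{i,j}=x^iy^j\omega$ from Lemma~\ref{basis}, note the eigenvalue depends only on $i$, and identify $i=p-3$ (forcing $j=0$) as the unique index giving eigenvalue $\zeta$, hence $dy/x$. Your explicit handling of the pullback-versus-inverse convention is in fact slightly more careful than the paper's, which writes $\zeta^{p-3-(p-2)}=\zeta$ where the literal pullback exponent gives $\zeta^{-1}$, so your remark resolves that sign bookkeeping cleanly.
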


\begin{proof}
As stated in the proof of the lemma above, the differential $\omega_{i, j} := x^iy^j\omega$ corresponding to $(i,j)$ is acted on by $$f \colon \omega_{i, j} \mapsto \zeta^{i-(p-2)} \omega_{i, j}.$$ There is only one element in the basis with the largest allowed power of $x$, which is $x^{p-3}\omega,$ which is acted on by $\zeta^{p-3 - (p-2)} = \zeta$. 
\end{proof}

It's worth noting the following corollary for fun, though we will not need it. 

\begin{cor} (extra) The $C_{(p-1)}$-idempotent decomposition of $\mc{J}^\wedge_e$ coincides with the $C_{(p-1)^2}$-idempotent decomposition of $\mc{J}^\wedge_e$.  
\end{cor}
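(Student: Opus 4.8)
The plan is to exploit that $C_{p-1}$ is precisely the unique subgroup of order $p-1$ inside the cyclic group $C_{(p-1)^2}$, so the two idempotent systems are automatically nested. For a character $\chi$ of $C_{p-1}$ the primitive idempotent $e_\chi$ equals $\sum_{\psi} f_\psi$, the sum over the characters $\psi$ of $C_{(p-1)^2}$ restricting to $\chi$; applying both to $\mc{J}^\wedge_e$ shows that the $C_{(p-1)^2}$-decomposition is a refinement of the $C_{p-1}$-decomposition, so the corollary is equivalent to the assertion that this refinement is trivial, i.e.\ that for each $i$ exactly one $\psi$ over $\chi^i$ has $f_\psi \mc{J}^\wedge_e \neq 0$. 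By Theorem \ref{splittydooda} and Corollary \ref{hsplitting} the summand $e_{\chi^i}\mc{J}^\wedge_e$ is (the connected $p$-divisible group underlying) $A^{\chi^i}$, so it suffices to show each $A^{\chi^i}$ is $C_{(p-1)^2}$-isotypic.

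The key input is that the slope computation inside the proof of Theorem \ref{splittydooda} (the Frobenius-correspondence argument feeding Lemma \ref{sloped}) exhibits $A^{\chi^i}$ as \emph{isoclinic} of slope $i/(p-1)$. Hence $\End^0(A^{\chi^i})$ is a central simple $\Q_p$-algebra of $\Q_p$-dimension $(p-1)^2$, and the residual $C_{(p-1)^2}$-action is carried by the single element $\overline{\tau}:=\tau|_{A^{\chi^i}}$, a root of unity of order dividing $(p-1)^2$ in $\End(A^{\chi^i})$. It generates the commutative étale subalgebra $\Q_p[\overline{\tau}]\cong \Q_p[t]/(\mu_{\overline{\tau}})$, where $\mu_{\overline{\tau}}$ divides $t^{(p-1)^2}-1$. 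The claim I would prove is that $\mu_{\overline{\tau}}$ is \emph{irreducible} over $\Q_p$; granting that, $\Z_p[\overline{\tau}]$ is an order in the degree-$(p-1)$ unramified extension $\Q_p(\mu_{(p-1)^2})$, hence a local ring with only the trivial idempotents, so the images in $\End(A^{\chi^i})$ of the $f_\psi$ lying over $\chi^i$ are all $0$ save one (which is the identity). That gives $C_{(p-1)^2}$-isotypy of $A^{\chi^i}$, and with the first paragraph the two decompositions coincide.

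The step I expect to be the genuine obstacle is the irreducibility of $\mu_{\overline{\tau}}$: concretely, it says the eigenvalues of Frobenius occurring in the $\chi^i$-block of $H^1_{\mathrm{cris}}$ form a single orbit under raising to the $p$th power (equivalently a single $\Gal(\Q_p(\mu_{(p-1)^2})/\Q_p)$-orbit). When $\gcd(i,p-1)=1$ this is free, since then $i/(p-1)$ is already in lowest terms and $A^{\chi^i}$ is a simple $p$-divisible group, hence indecomposable under any group action; the real work is the cases $\gcd(i,p-1)>1$, where $\End^0(A^{\chi^i})$ is a matrix algebra over a division algebra and so \emph{does} contain nontrivial idempotents, and one must check that $\overline{\tau}$ avoids them. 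I would settle this by using the explicit Jacobi-/Gauss-sum description of the Frobenius eigenvalues on $H^1_{\mathrm{cris}}$ attached to pairs (a character of the $C_p$-cover, a character of $C_{p-1}$) — the same Davenport–Hasse input behind Lemma \ref{sloped} — and verifying the required transitivity of the $p$-power map on the relevant character set block by block; pinning down this combinatorics is exactly where the content of the corollary lies.
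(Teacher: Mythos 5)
Your route is genuinely different from the paper's: the paper's own argument is a one-line computation, letting a generator $\tau$ of $C_{(p-1)^2}$ act on the explicit basis $x^iy^j\omega$ of $H^0(X,\Omega^1_X)$ from Lemma \ref{basis} and asserting that the resulting partition of the basis is the same as the $C_{p-1}$-partition of Lemma \ref{diffblocks}; no slopes, endomorphism algebras, or Gauss-sum combinatorics appear. Your reduction is correctly set up (the $C_{(p-1)^2}$-idempotents refine the $C_{p-1}$-ones, so it suffices to show each summand $A^{\chi^i}$ is isotypic), and your observation that this is automatic when the slope $i/(p-1)$ is in lowest terms, because an isoclinic $p$-divisible group whose height equals the slope denominator is indecomposable, is sound. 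But the decisive step --- irreducibility over $\Q_p$ of the minimal polynomial of $\overline{\tau}$ on each block, equivalently that the $\tau$-eigenvalues in a block form a single $p$-power orbit --- is exactly what you do not prove, and deferring it as ``combinatorics to be pinned down'' is a genuine gap rather than a routine verification.

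Moreover, that step fails, so the strategy cannot be completed as stated. Pulling back as in Lemma \ref{diffblocks} gives $\tau^*(x^iy^j\omega)=\zeta^{\,pi+(p-1)(j+1)-p(p-2)}\,x^iy^j\omega$, and the exponent is congruent to $(i+1)+(p-1)(i+j+1)$ modulo $(p-1)^2$: it depends on $j$. Already for $p=5$ the differentials $dy/x^2$ and $y\,dy/x^2$ lie in the same $C_4$-isotypic block but carry $\tau$-eigenvalues $\zeta^{10}$ and $\zeta^{14}$ (with $\zeta$ of order $16$), which lie in the distinct Frobenius orbits $\{\zeta^2,\zeta^{10}\}$ and $\{\zeta^6,\zeta^{14}\}$; hence on that block $\Q_p[\overline{\tau}]$ is not a field, the two rational idempotents of $\Z_p[C_{16}]$ attached to these orbits both act nonzero, and the block splits further $C_{(p-1)^2}$-equivariantly. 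Your coprimality argument does cover the summands with $\gcd$ equal to one, in particular the one-dimensional summand $A^{\chi^1}$ relevant to Corollary \ref{hsplitting}, but not the others. Note also that the same computation is in tension with the proof given in the paper: rescaling the homogeneous coordinates $[x:y:z]$ does not change the induced action on affine coordinates, hence not on the $\omega_{i,j}$, so the asserted $j$-independence does not follow. Since this corollary is flagged as extra and is not used elsewhere, nothing downstream is affected, but the statement should either be restricted (e.g.\ to the summands whose character index is coprime to $p-1$) or the eigenvalue computation revisited before any proof along your lines --- or the paper's --- can close.
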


\begin{proof} Consider $f$ to be a generator of $C_{(p-1)^2} \subset \Aut(X).$ Recall by Lemma \ref{autgroup} that $f([x:y:z]) = [\eta^{p}x: \eta^{p-1}y:z]$, where $\eta$ is a $(p-1)^2$ root of unity. We can divide out, to get $$f([x:y:z]) := [\eta^{p-(p-1)} x: y: \eta^{-(p-1)}z],$$ thus considering it as  only an action on $x$, then $f(x) = \eta^{-1}x$. Note that since this action depends only on $x$, thus on the value of $i$ and not on $j$, the map $f$ induces the same partition as in Lemma \ref{diffblocks}.
\end{proof}

\subsection{Isogeny Decomposition Using Eigenvalues of Frobenius}
\label{eigenvalues}
We now describe its slope decomposition, which is a description of $H^1_{cris}$ up to isogeny, i.e. $H^1_{cris}[1/p]$. These are the eigenvalues of Frobenius. 

Let us consider $\chi$ to be a multiplicative character, and $\phi$ to be an additive character. An additive character is a character $\phi$ of the additive group $\F_q$ so $\phi(a+b) = \phi(a)\phi(b)$ for all $a$ and $b$. A multiplicative character is a character $\chi$ of the unit group $\F_q^\times$ extended to be $0$ on nonunits in order to be defined on the whole ring, while preserving the rule $\chi(a)\chi(b) = \chi(ab).$ For a multiplicative character $\chi$ on $\F_q$, its Gauss sum is $G(\chi) = \sum_{a \in \F_q} \chi(a) \zeta^{\mr{tr}(a)}$ where $\mr{Tr}: \F_q \to \F_p = \Z/p\Z$ is the trace mapping, and $\zeta$ is a a $p$th root of unity. 

Let $t \in \F_{p}^\times$ and lets choose a lift $\tilde{t}$ to $W(k)$. We define $\phi_i(t) = \zeta^{i \tilde{t}}$, and $\chi_j(t)= - \tilde{t}^{-j}$.
\begin{theorem} (Thm 4.1 \cite{manin}) \label{slope}  The eigenvalues of Frobenius on the curve $X: y^p- y = x^{p-1}$ are sums of the following form. 

$$\tau(\phi_i, \chi_j)=\sum_{t \in \F_p^{\times}} \phi_i(t)\chi_j(t).$$

\noindent where $1 \leq i \leq p-1$ and $1 \leq j \leq p-2$. 
\end{theorem}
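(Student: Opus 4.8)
\emph{Proof proposal.} The plan is to compute the zeta function of $X$ --- which is already defined over $\F_p$ --- by counting points over every finite extension and reading off the Frobenius eigenvalues via the Lefschetz trace formula; this is Manin's theorem, and the paper flags it as resting on Davenport--Hasse, which is exactly the tool that appears below. The affine model $y^p-y=x^{p-1}$ is smooth since its $y$-partial is the unit $-1$, and by Lemma~\ref{rammymammy} the $\Z/p$-cover $p_x$ is totally ramified over $\infty$, so the smooth compactification $X$ adds a single, $\F_p$-rational, point. Hence $\#X(\F_{p^m}) = 1 + \#\{(x,y)\in\F_{p^m}^2 : y^p-y=x^{p-1}\}$ for all $m\ge 1$.

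First I would run the character-sum count. Writing $\psi_i$ for the additive character $a\mapsto\zeta_p^{\,i\,\mr{Tr}_{\F_{p^m}/\F_p}(a)}$ of $\F_{p^m}$, orthogonality turns the solution count of the Artin--Schreier equation into a sum over $i$; and since $p\equiv 1\pmod{p-1}$ gives $(p-1)\mid(p^m-1)$ for every $m$, the fibres of $x\mapsto x^{p-1}$ are resolved by the $p-1$ multiplicative characters $\chi$ of $\F_{p^m}^\times$ with $\chi^{p-1}=\chi_0$. Isolating the trivial characters (and using $g_{p^m}(\chi_0,\psi_i)=-1$ for $i\ne 0$) produces
$$\#X(\F_{p^m}) = p^m + 1 + \sum_{i=1}^{p-1}\ \sum_{\substack{\chi\ne\chi_0\\ \chi^{p-1}=\chi_0}} g_{p^m}(\chi,\psi_i), \qquad g_{p^m}(\chi,\psi_i):=\sum_{u\in\F_{p^m}^\times}\chi(u)\psi_i(u).$$
Every such $\chi$ is trivial on $\ker\!\left(N\colon\F_{p^m}^\times\to\F_p^\times\right)=(\F_{p^m}^\times)^{p-1}$, hence factors as $\chi'\circ N$ for a unique character $\chi'$ of $\F_p^\times$; the Hasse--Davenport relation $-g_{p^m}(\chi'\!\circ\! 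N,\,\psi\!\circ\!\mr{Tr}) = \left(-g_p(\chi',\psi)\right)^m$ then rewrites the count entirely over $\F_p$:
$$\#X(\F_{p^m}) = p^m + 1 - \sum_{i=1}^{p-1}\ \sum_{\chi'\ne\chi_0}\left(-g_p(\chi',\psi_i)\right)^m, \qquad m\ge 1.$$

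Next I would compare with the Lefschetz trace formula $\#X(\F_{p^m}) = p^m + 1 - \sum_k \alpha_k^m$, where $\alpha_1,\dots,\alpha_{2g}$ are the Frobenius eigenvalues on $H^1$ and $2g=(p-1)(p-2)$. Equality of the power sums $\sum_k\alpha_k^m = \sum_{i,\chi'}\left(-g_p(\chi',\psi_i)\right)^m$ for all $m$ forces equality of the two multisets (Newton's identities, or uniqueness of the zeta function), so $\{\alpha_k\}=\{-g_p(\chi',\psi_i) : i\ne 0,\ \chi'\ne\chi_0\}$. Running $i$ over the $p-1$ nontrivial additive characters and $\chi'=\omega^{-j}$, with $\omega$ the Teichm\"uller character, over the $p-2$ nontrivial multiplicative characters gives precisely $(p-1)(p-2)=2g$ terms, and unwinding the normalization $\phi_i(t)=\zeta^{\,i\tilde t}$, $\chi_j(t)=-\tilde t^{-j}$ identifies $-g_p(\omega^{-j},\psi_i)$ with $\tau(\phi_i,\chi_j)=\sum_{t\in\F_p^\times}\phi_i(t)\chi_j(t)$, which is the claim.

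The crux is extracting the \emph{individual} eigenvalues rather than merely their sum: this is why the count must be run over all $\F_{p^m}$ at once (over $\F_p$ the equation degenerates, $x^{p-1}\in\{0,1\}$, and the eigenvalues only sum to $0$), and why the signs entering through Hasse--Davenport and the $g(\chi_0,\psi)=-1$ correction must be tracked with care. A more structural alternative --- closer to the $G'$-equivariant spirit of this part of the paper --- decomposes $H^1(X)$ under the $\mu_{p-1}\times\Z/p$-action coming from the two projections $p_y,p_x$ and identifies each isotypic summand with the $H^1$ of $\A^1$ with coefficients in a rank-one sheaf (a Kummer sheaf twisted by an Artin--Schreier sheaf) whose Frobenius trace is by construction a Gauss sum; this reproves the formula and makes the equivariance of the resulting slope decomposition transparent. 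In either approach the finer $p$-adic valuations of the $\tau(\phi_i,\chi_j)$ needed downstream in Lemma~\ref{sloped} are obtained afterwards from Stickelberger's congruence (or the Gross--Koblitz formula).
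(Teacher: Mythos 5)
Your argument is correct, and it is worth noting that the paper itself does not prove Theorem~\ref{slope} at all: it quotes Manin (Thm 4.1 of \cite{manin}), records that the result goes back to Davenport--Hasse, and in the accompanying remark points to Coleman's proof \cite{coleman}, which builds a correspondence out of $\Aut(X)$, shows it agrees with Frobenius as a correspondence, and reads the eigenvalues off $H^1_{\et}(X)$. What you supply is the classical self-contained route underlying the citation: orthogonality of additive and multiplicative characters to count affine points of $y^p-y=x^{p-1}$ over every $\F_{p^m}$ (with the single rational point at infinity from Lemma~\ref{rammymammy}), the Hasse--Davenport lifting relation to express all the counts through Gauss sums over $\F_p$, and uniqueness of the eigenvalue multiset from the power sums via the zeta function. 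The bookkeeping checks out: the $(p-1)(p-2)=2g$ Gauss sums match the genus, the factorization of $\chi$ through the norm is justified since $\ker N=(\F_{p^m}^\times)^{p-1}$, and the sign is consistent with the paper's convention because $\chi_j(t)=-\tilde t^{-j}$ already carries the minus sign of $-g_p(\omega^{-j},\psi_i)$. The trade-off between the two routes: yours is elementary and makes the extraction of individual eigenvalues (not just their sum) transparent, while Coleman's correspondence argument --- which your closing sketch via Kummer-twisted Artin--Schreier summands of $H^1$ essentially reproduces --- delivers directly the statement the paper actually leans on, namely that $\Aut(X)$ determines the eigenvalues equivariantly, which feeds into the isotypic analysis of Theorem~\ref{splittydooda}. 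One convention to flag: the paper takes $X$ over $\F_{p^{p-1}}$, so its Frobenius could be read as the $p^{p-1}$-power map, whose $H^1$-eigenvalues are the $(p-1)$-st powers of your $\tau(\phi_i,\chi_j)$; the $p$-adic slopes used in Lemma~\ref{sloped} are the same either way, so nothing downstream is affected.
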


\begin{remark} This is originally a theorem of Davenport-Hasse. A beautiful proof of this by Coleman \cite{coleman} constructs a correspondence associated to $\Aut(X)$ which is equivalent to Frobenius as a correspondence. This determines the eigenvalues of Frobenius as an operator acting on $H^1_{et}(X)$. In this way the automorphisms of the curve $X$ entirely determine its eigenvalues.
\end{remark}

We now give the slopes of our $p$-divisible group and the height decomposition of our formal group up to isogeny.  

\begin{cor} \label{sloped} The slopes of the p-divisible group associated to the curve $X$ are $\{ 1/(p-1), 2/(p-1), ..., (p-2)/(p-1) \}$. \end{cor}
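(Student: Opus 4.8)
The plan is to read off the Newton polygon of $H^1_{\mr{cris}}(X)$ directly from the Frobenius eigenvalues furnished by Theorem~\ref{slope}, using Stickelberger's theorem to compute $p$-adic valuations. Since $y^p-y=x^{p-1}$ has coefficients in $\F_p$, the curve $X$ and its Jacobian are already defined over $\F_p$, the relevant Frobenius is the $p$-power Frobenius, and the slopes of the $p$-divisible group $\Jac(X)[p^\infty]$ are precisely the valuations $v_p(\tau(\phi_i,\chi_j))$, normalized by $v_p(p)=1$, as $(i,j)$ ranges over $1\le i\le p-1$, $1\le j\le p-2$. (Extending scalars from $\F_p$ to $\F_{p^{p-1}}$ replaces $F$ by $F^{p-1}$ and rescales every valuation by $p-1$, hence does not change the set of slopes; this is why passing to $\F_{p^{p-1}}$ for the sake of the automorphisms costs us nothing here.)

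Next I would reduce each eigenvalue to an ordinary Gauss sum over $\F_p$ depending on $j$ alone. Writing $\phi_i(t)=\zeta^{it}$ with $\zeta$ a primitive $p$-th root of unity and substituting $t\mapsto i^{-1}t$ in $\tau(\phi_i,\chi_j)=\sum_{t\in\F_p^\times}\phi_i(t)\chi_j(t)$, together with the multiplicativity of the Teichmüller lift, gives $\tau(\phi_i,\chi_j)=\tilde{i}^{\,j}\,\tau(\phi_1,\chi_j)$; here $\tilde{i}^{\,j}$ is a $(p-1)$-st root of unity, hence a $p$-adic unit. Moreover $\tau(\phi_1,\chi_j)=-\sum_{t}\tilde t^{\,-j}\zeta^{t}$ is, up to sign, the classical Gauss sum attached to the multiplicative character $t\mapsto \tilde t^{\,-j}$, i.e.\ to the $j$-th power of the (inverse) Teichmüller character. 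Therefore $v_p(\tau(\phi_i,\chi_j))$ depends only on $j$ and equals the valuation of that Gauss sum.

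The computation is then completed by Stickelberger's theorem: for $1\le j\le p-2$ the Gauss sum above has $\mf{P}$-adic valuation $j$ for the prime $\mf{P}$ above $p$ in $\Q(\zeta_p,\zeta_{p-1})$, i.e.\ its normalized valuation is $j/(p-1)$. Letting $j$ run over $1,\dots,p-2$ yields exactly the set $\{1/(p-1),2/(p-1),\dots,(p-2)/(p-1)\}$ of slopes, each occurring with multiplicity $p-1$ (one for each admissible $i$), which is in any case forced by $\sum(\text{slopes})=g$ and the autoduality $H^1\simeq (H^1)^{\vee}(1)$. I expect the one genuinely fiddly point to be the convention bookkeeping in this last step — getting the Stickelberger exponent to land on $j/(p-1)$ rather than $(p-1-j)/(p-1)$, and keeping straight which signs and roots of unity are $p$-adic units — but since $j\mapsto p-1-j$ merely permutes $\{1,\dots,p-2\}$, the \emph{set} of slopes asserted by the corollary is insensitive to this ambiguity, so the conclusion is robust.
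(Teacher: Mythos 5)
Your proposal is correct and follows essentially the same route as the paper: take the Frobenius eigenvalues $\tau(\phi_i,\chi_j)$ from Theorem~\ref{slope} and apply Stickelberger (the paper uses the congruence $\tau(\phi_i,\chi_j)\equiv -j^{-1}\lambda^{j} \bmod \lambda^{j+1}$ with $\lambda=1-\zeta$, $v_p(\lambda)=1/(p-1)$) to get $v_p(\tau(\phi_i,\chi_j))=j/(p-1)$, hence slopes $j/(p-1)$ for $1\le j\le p-2$, each with multiplicity $p-1$. Your extra bookkeeping (reducing to $j$ alone via a unit factor, and checking the $\F_p$ versus $\F_{p^{p-1}}$ normalization) is sound but not needed beyond what the paper records.
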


\begin{proof} We use the eigenvalues above. The key observation of Stickelberger is that for $\lambda = 1-\zeta$, 
$$\tau(\phi_i, \chi_j)=-j^{-1}\lambda^j \mod \lambda^{j+1}.$$

Since $v_p(\lambda) = 1/(p-1)$, this means that
$v_p(\tau(\phi_i, \chi_j)) = \frac{j}{p-1}.$ So we get $(p-1)$ copies of each $1 \leq j \leq (p-2).$ Each of these eigenvalues has multiplicity $p-1.$
\end{proof}

We now show how to go from this slope decomposition up to isogeny to an integral decomposition. 

\subsection{Decomposition of $H^1_{cris}(X)$ Gives Formal Group Law}

\label{bigboysection}
We have collected our lemmas. We are now ready to prove Theorem \ref{splittydooda} and most importantly Corollary \ref{hsplitting}.

\begin{manualtheorem}{A.1} \label{splittydooda} \color{blue}{(splitty dooda) Let $A$ be the p-divisible group of $\Jac_k(X)$, then $A$ has an $C_{p-1}$ idempotent integral decomposition
$$A \simeq \bigoplus_{\chi \in \Hom(C_{p-1}, \G_m)} A^{\chi^i}.$$
into $p-2$ summands of dimension $1, 2, ..., p-1$, all of height $p-1$, where $A^{\chi^i}$ is of dimension $i$. There is no component corresponding to the trivial character.}
\end{manualtheorem}

\begin{proof} Let $\chi$ be a multiplicative character of $\Z/(p-1)$. We use Theorem \ref{diffblocks}, which tells us that $H^0(X, \Omega^1_X) \simeq \bigoplus_{j \in \Z/(p-1)} H^0(X, \Omega^1_X)^{\chi^j}$ breaks up, where the components acted on by $\chi^j$ are $j$-dimensional. 

Further, by Serre duality, $$H^0(X, \Omega^1_X) \simeq H^1(X, \mc{O}_X)^{\wedge}.$$

Because of the dual in this isomorphism, the summand acted on by $\chi^j$ is sent to the summand $\chi^{-j}$, this is crucial for what follows. In particular, the summand of $H^1(X, \mc{O}_X)^{\chi^j}$ is $(p-1)-j$-dimensional. 

\begin{align*}
H^0(X, \Omega^1_X)^{\chi^j} \to H^1(X, \mc{O}_X)^{\chi^{-j}} \\
H^0(X, \Omega^1_X)^{\chi^{-j}} \to H^1(X, \mc{O}_X)^{\chi^{j}}
\end{align*}

Since $H^1_{dR}(X)$ is an extension of $H^1(X,\mc{O}_X)$ by $H^0(X,\Omega^1_X)$ as a $\Z/{p-1}$-representation, the summand $H^1_{dR}(X)^{\chi^j}$ has dimension $j+p-1-j=p-1$ for every $1 \leq j \leq p-2$. We use the following short exact sequence.

$$H^0(X, \Omega^1_X) \to H^1_{dR}(X) \to H^1(X, \mc{O}_X)^\vee.$$

It follows that in the decomposition $H^1_{cris}=\bigoplus (H^1_{cris})^{\chi^j}$ every summand has rank $p-1$ as a $W(k)$-module. By Manin's slope formulas (Lemma \ref{slope}), the slope $1/(p-1)$ appears at least once in $H^1_{cris}$. Therefore it appears in some $(H^1_{cris})^{\chi^j}$, which is to say that $(H^1_{cris})^{\chi^j}[1/p]$ has $D_{1/(p-1)}$ as a direct summand. 

Since the dimension of $(H^1_{cris})^{\chi^j}[1/p]$ is $(p-1)$ and not larger, we get that $D_{1/(p-1)}$ is actually all of $(H^1_{cris})^{\chi^j}[1/p]$. So $(H^1_{cris})^{\chi^j}$ is the integral Dieudonn\'e module of some $p$-divisible group, and the sum of all slopes appearing is $(p-1)*1/(p-1)=1$ so the dimension of this $p$-divisible group is one. The same argument works for all $(p-1)*j/(p-1)=j$ where $0 < j \leq p-1$, giving a $p$-divisible group of dimension $j$ and height $p-1$.
\end{proof}

We explicitly draw attention to the dimension one formal group, the spoils of our labor. 

\begin{manualcorollary}{A.1'} \label{hsplitting} \color{blue}{Given the $C_{p-1}$-decomposition $$H^1_{cris}(X, \Omega^1_X) \simeq \bigoplus_{j \in \Hom(C_{p-1}, \G_m)} H^1_{cris}(X, \Omega^1_X)^{\chi^j},$$ the Dieudonn\'e module of the formal group of dimension $1$ and height $h$ is the summand of $H^1_{cris}(X, \Omega_X^1)$ corresponding to $j=1$.}
\end{manualcorollary}

In other words, the formal group $A^{\chi^1}$ is dimension 1 and height $p-1$.

\begin{proof} We can conclude that $j=1$, from the proof of Lemma \ref{diffblocks}. This is because a 1-dimensional p-divisible group has to have a 1-dimensional Lie algebra, and its Lie algebra is dual to $F^1_{Hodge}$ on $(H^1_{cris})^{\chi^j}/p=(H^1_{dR})^{\chi^j}$ but by Lemma \ref{diffblocks} this $F^1_{Hodge}$ has dimension $j$. \end{proof}

\part{Stacky Background}

\section{Recap: Geometric Modelling}
\begin{defn} \label{geometricmodel}
Given pre-stacks $\mc{M}$ and $\mc{N}$, consider a natural transformation $\mc{F}: \mc{M} \to \mc{N}$. Consider an object $X \in \mc{M}(k)$ and the corresponding object $\mc{F}(X)$ in $\mc{N}(k)$, this induces a functor $$\mc{F}^\wedge: \Def_X^{\mc{M}} \to \Def_{\mc{F}(X)}^{\mc{N}}.$$ 

\noindent If $\mc{F}^\wedge$ is a $G$-equivariant equivalence, then we say the following tuple is a {\color{Bittersweet}{geometric model}} for $\Def_{\mc{F}(X)}^{\mc{N}} \in \mr{Stk}^G$ as a $G$-stack $$(X \in \mc{M}(k), \Def_{X}^{\mc{M}} \in \mr{Stk}^G, \mc{F}^\wedge: \Def_X^{\mc{M}} \to \Def_{\mc{F}(X)}^{\mc{N}}).$$
\end{defn} 

\begin{remark} It is also sufficient to have the tuple with $\mc{F}: \mc{M} \to \mc{N}$ such that $\mc{F}^\wedge$ is an equivalence. This is strictly stronger. 
\end{remark} 

We have constructed $(X, G)$ in Section \ref{curveprops}, $\mc{M} := \mr{Curve}_G$ in Section \ref{gstacks} and its corresponding deformation stack $\Def_{(X, G)}^{\mr{Curve}_G}$ (alternative global stacks are discussed in section \ref{pastandfuture} and \ref{sec:config}). What remains is to construct the functor $\mc{F}$ \ref{idempotentfunctor} and demonstrate its $G'$-equivariant equivalence in section \ref{universal}. After this, we can conclude, dear readers, that we have ourselves a geometric model!

\begin{theorem} \label{criterion} \cite{ray} Having a geometric model $$(X \in \mc{M}(k), \Def_{X}^{\mc{M}} \in \mr{Stk}^G, \mc{F}: \mc{M} \to \mc{N})$$ of $\Def_{\mc{F}(X)}^{\mc{N}}$ as a $G$-stack constructs an equivalence of stacks $$\Def_{X}^\mc{M} \stackyq G \simeq \Def_{\mc{F}(X)}^\mc{N} \stackyq G.$$
\end{theorem}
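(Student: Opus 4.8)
The plan is to derive the statement as a purely formal consequence of the hypothesis, using only the fact that the stacky quotient is a colimit. By Definition~\ref{geometricmodel} a geometric model equips us with a morphism $\mc{F}^\wedge \colon \Def_X^{\mc{M}} \to \Def_{\mc{F}(X)}^{\mc{N}}$ that is $G$-equivariant and whose underlying morphism of prestacks is an equivalence; the stacky quotients are, by definition, the colimits over $BG$ of these two prestacks regarded as objects of $\Fun(BG, \mr{PreStk})$. The strategy is: (i) promote $\mc{F}^\wedge$ to an equivalence inside the $\infty$-category $\Fun(BG, \mr{PreStk})$; (ii) apply the functor $\colim_{BG} \colon \Fun(BG,\mr{PreStk}) \to \mr{PreStk}$; (iii) if one wants the statement literally for \emph{stacks} rather than prestacks, postcompose with stackification.

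For step (i), the evaluation functor $\Fun(BG, \mr{PreStk}) \to \mr{PreStk}$ at the single object of $BG$ detects equivalences, since equivalences in a functor $\infty$-category are detected objectwise. The hypothesis that $\mc{F}^\wedge$ is $G$-equivariant says precisely that it is a morphism in $\Fun(BG,\mr{PreStk})$, and the hypothesis that its underlying map is an equivalence then upgrades it to an equivalence there. Concretely this amounts to producing a $G$-equivariant homotopy inverse, extracted from the non-equivariant one by coherently averaging, but the abstract argument makes this automatic.

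For step (ii), any functor preserves equivalences, so $\colim_{BG}(\mc{F}^\wedge)$ is an equivalence. Unwinding $\colim_{BG}$ as the geometric realization of the bar construction $[n] \mapsto \Def_X^{\mc{M}} \times G^{\times n}$, this map is the realization of the level-wise map $\mc{F}^\wedge \times \id_{G^{\times n}}$, which is a level-wise equivalence; either description yields
\[
\Def_X^{\mc{M}} \stackyq G \;=\; \colim_{BG}\Def_X^{\mc{M}} \;\xrightarrow{\ \sim\ }\; \colim_{BG}\Def_{\mc{F}(X)}^{\mc{N}} \;=\; \Def_{\mc{F}(X)}^{\mc{N}} \stackyq G .
\]
For step (iii), stackification is a left adjoint, hence commutes with $\colim_{BG}$, and is a localization, hence preserves equivalences, so the displayed equivalence descends to the corresponding quotient stacks.

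The argument is entirely formal, so the only point deserving care — the ``main obstacle'', such as it is — is keeping everything homotopy-coherent: that $\stackyq$ genuinely denotes the homotopy colimit over $BG$ (not a naive orbit quotient) and that the equivalence supplied by the geometric model is $G$-equivariant \emph{as a morphism in} $\Fun(BG,\mr{PreStk})$, with all higher coherences, rather than merely equivariant up to unspecified homotopy. In the cases relevant to this paper the $G$-action is a strict action of a finite group on an affine formal scheme $\Spf R$, so $\stackyq G = [\Spf R / G]$ and no coherence subtlety arises: an isomorphism of $G$-equivariant rings induces an isomorphism of quotient stacks directly.
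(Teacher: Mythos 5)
Your argument is correct, and it is the expected one: a geometric model hands you a morphism $\mc{F}^\wedge$ in $\Fun(BG,\mr{PreStk})$ whose underlying map is an equivalence, equivalences in a functor category are detected objectwise, and $\colim_{BG}$ followed by stackification preserves equivalences, giving $\Def_X^{\mc{M}}\stackyq G\simeq \Def_{\mc{F}(X)}^{\mc{N}}\stackyq G$. Note, however, that this paper does not prove Theorem~\ref{criterion} at all; it is imported from \cite{ray}, so the only comparison available is with the machinery from \cite{ray} that the paper does quote, namely the ``skydive'' identification $\Def_X^G\simeq \Def_X^\star\stackyq G$ used in Lemma~\ref{halo}. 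That suggests the source argument runs through an explicit torsor-style identification of the stacky quotient with the enlarged deformation problem $\Def^G$ (which is what the paper actually exploits later, e.g.\ in Corollaries~\ref{robot time} and~\ref{group robot time}), whereas your route stays purely formal at the level of homotopy colimits over $BG$. Your version is more general and shorter; the torsor route buys a concrete moduli-theoretic description of the quotient that the downstream cohomology computations use. The one point you rightly isolate --- that equivariance must be coherent, i.e.\ that $\mc{F}^\wedge$ really is a morphism of objects with $G$-action rather than merely commuting with the action up to unspecified isomorphism --- is genuinely the crux, and your observation that in this paper the actions are strict actions of a finite group on $\Spf R$ (Defn~\ref{autxaction}) disposes of it; only the aside about ``coherently averaging'' is off the mark, since no averaging is involved, but nothing in your proof depends on it.
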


Recall that $\Def_X^G$ is equivalent to $\Def_X^\star \stackyq G$, as shown in \cite{ray} (sky dive).

\begin{cor} \cite{ray} \label{robot time} (robot time) If $\Def_X^\star \simeq \Def_{\mc{F}X}^\star$ is an equivalence, and $\mc{F}$ is $G$-equivariant, then $$H^*_{coh}(\Def_X^G, \OO_{\Def_X^G}) \simeq H_{coh}^*(\Def_{\mc{F}X}^G, \OO_{\Def_{\mc{F}X}^G}).$$
\end{cor}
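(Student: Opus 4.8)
The plan is to deduce this purely formally from Theorem~\ref{criterion} and the identification $\Def_{(-)}^G \simeq \Def_{(-)}^\star \stackyq G$ recalled above from \cite{ray}. First I would feed the hypothesis into the quotient construction: since $\mc{F}$ is $G$-equivariant and the comparison $\Def_X^\star \to \Def_{\mc{F}X}^\star$ is an equivalence of prestacks carrying compatible $G$-actions, functoriality of the stacky quotient $(-)\stackyq G$ produces an equivalence $\Def_X^\star \stackyq G \xrightarrow{\sim} \Def_{\mc{F}X}^\star \stackyq G$. Composing with the equivalences $\Def_X^G \simeq \Def_X^\star \stackyq G$ and $\Def_{\mc{F}X}^G \simeq \Def_{\mc{F}X}^\star \stackyq G$ gives an equivalence of $G$-stacks $\Phi \colon \Def_X^G \xrightarrow{\sim} \Def_{\mc{F}X}^G$. (Equivalently, one simply cites Theorem~\ref{criterion}, which packages exactly this descent of a geometric model to an equivalence of quotient stacks.)

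Second, I would observe that $\Phi$ automatically intertwines the structure sheaves: for any equivalence of (pre)stacks the pullback of the structure sheaf is the structure sheaf, so $\Phi^*\OO_{\Def_{\mc{F}X}^G} \simeq \OO_{\Def_X^G}$ canonically. Coherent cohomology $H^*_{coh}(-,\OO)$ is the cohomology of the derived global sections of $\OO$ computed in $\QCoh$, hence is a functor on the $(\infty\text{-})$category of stacks equipped with quasicoherent sheaves that carries equivalences to isomorphisms. Applying this functor to $\Phi$ together with the identification $\Phi^*\OO_{\Def_{\mc{F}X}^G} \simeq \OO_{\Def_X^G}$ yields the desired isomorphism $H^*_{coh}(\Def_{\mc{F}X}^G,\OO_{\Def_{\mc{F}X}^G}) \simeq H^*_{coh}(\Def_X^G,\OO_{\Def_X^G})$.

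The one step that genuinely needs care — and the point I would expect to be the real obstacle — is verifying that the equivalence $\Def_X^\star \simeq \Def_{\mc{F}X}^\star$ is $G$-equivariant in a strong enough sense to apply $(-)\stackyq G$, rather than $G$-equivariant only up to unspecified higher coherence. This compatibility is precisely what must be supplied by the construction of $\widehat{\mc{F}}^1$ and its equivariance in Section~\ref{universal}; once it is in hand, the remainder is the formal nonsense above: functoriality of quotient stacks and equivalence-invariance of coherent cohomology of the structure sheaf.
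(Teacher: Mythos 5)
Your argument is correct and matches the route the paper intends: this corollary is only cited from \cite{ray}, and the surrounding text derives it exactly as you do, by combining Theorem~\ref{criterion} with the identification $\Def_X^G \simeq \Def_X^\star \stackyq G$ and the invariance of coherent cohomology of the structure sheaf under equivalences of stacks. Your flagged caveat about the strength of the $G$-equivariance is the same point the paper discharges via the equivariance of $\widehat{\mc{F}}^1$ (Lemma~\ref{stabilizerplay} and Section~\ref{universal}), so nothing further is needed.
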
 

\begin{cor}\label{group robot time} \cite{ray} (group time) If $\Def_X^{\star}\simeq \Def_{\mc{F}X}^\star$ is an equivalence, and $\mc{F}$ is $G$-equivariant, then $$H^*(G, \mc{O}({\Def_X^\star})) \simeq H^*(G, \mc{O}(\Def_{\mc{F}X}^\star)).$$
\end{cor}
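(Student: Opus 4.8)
The plan is to reduce the statement to the bare functoriality of group cohomology in its module variable, after upgrading the hypothesised $G$-equivariant equivalence of formal moduli problems to an honest isomorphism of the representing rings as $G$-modules. In other words, where Corollary \ref{robot time} records that a $G$-equivariant equivalence $\mc{F}$ identifies the \emph{quotient stacks}, here we want to descend one step further, to the category of $G$-modules, and then apply $H^*(G,-)$.

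Concretely I would proceed in three steps. \textbf{Step 1 (affineness and the $G$-action on functions).} Recall that $\Def_X^\star$ and $\Def_{\mc{F}X}^\star$ are pro-representable — in the case at hand by rings abstractly isomorphic to the Lubin--Tate ring, in general by $\Spf$ of a complete local ring — which is precisely what makes $\mc{O}(\Def_X^\star)$ meaningful. Transporting the $G$-action on the prestack $\Def_X^\star$ through the Yoneda lemma (using that $G$ is a group, so inversion cures the change of variance) makes $A := \mc{O}(\Def_X^\star)$ a $G$-ring, in particular a $\Z[G]$-module, and likewise for $B := \mc{O}(\Def_{\mc{F}X}^\star)$. \textbf{Step 2 (the equivalence is a $G$-module isomorphism).} The natural transformation $\mc{F}^\wedge \colon \Def_X^\star \to \Def_{\mc{F}X}^\star$ is an equivalence of (pro-)representable prestacks, hence by Yoneda is induced by an isomorphism $\varphi \colon B \xrightarrow{\ \sim\ } A$ of the representing rings; the hypothesis that $\mc{F}^\wedge$ is $G$-equivariant says exactly that $\varphi$ intertwines the $G$-actions constructed in Step 1, so $\varphi$ is an isomorphism of $G$-rings, a fortiori of $\Z[G]$-modules. \textbf{Step 3 (apply $H^*(G,-)$).} Group cohomology is a functor from $\Z[G]$-modules (respectively $G$-rings, if one also wants the cup product) to graded abelian groups; applying it to $\varphi$ yields the asserted isomorphism $H^*(G,\mc{O}(\Def_X^\star))\simeq H^*(G,\mc{O}(\Def_{\mc{F}X}^\star))$.

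The one step with genuine content is Step 2: one must verify that an abstract $G$-equivariant equivalence of prestacks — a natural transformation that is an equivalence, together with the coherence data witnessing compatibility with the two $G$-actions — really is realised by a single ring isomorphism compatible with the actions on the nose, rather than only up to a coherent system of homotopies. In the discrete, $1$-categorical setting this collapses automatically: $\Def^\star$ is representable by an honest ring, so Yoneda turns the equivalence and its equivariance data into a strict isomorphism of $G$-rings with no higher coherence left over. If instead one works in the graded ($\G_m$-equivariant) or derived enhancement, the cleanest fix is to observe that $H^*(G,-)$ is the derived functor of $G$-invariants, so it depends only on the $G$-equivariant isomorphism class of the coefficient object; alternatively, combine Corollary \ref{robot time} with the identification $H^*_{coh}(\Def_X^\star \stackyq G,\, \mc{O}) \simeq H^*(G,\mc{O}(\Def_X^\star))$ — coming from the Čech/bar presentation of the quotient stack $[\Spf A/G]$, under which $\QCoh$ is $A$-modules with $G$-action and global sections is $(-)^G$ — together with $\Def_X^G \simeq \Def_X^\star \stackyq G$, to arrive at the same conclusion.
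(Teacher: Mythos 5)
Your argument is correct. A caveat on comparison: the paper does not actually prove Corollary \ref{group robot time} in this document — it is quoted from \cite{ray} — so the only visible intended route is the surrounding logic, namely Theorem \ref{criterion} plus the identification $\Def_X^G \simeq \Def_X^\star \stackyq G$ (``sky dive'') and Corollary \ref{robot time}: the $G$-equivariant equivalence identifies the quotient stacks, and since $\Def_X^\star$ is pro-representable (formal affine), coherent cohomology of $\Spf A \stackyq G$ is computed by the bar/\v{C}ech presentation as $H^*(G,\mc{O}(\Def_X^\star))$. That is exactly the ``alternative'' you sketch in your last sentence, so your proposal contains the paper's apparent derivation as a special case. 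Your primary route — use Yoneda to turn the $G$-equivariant equivalence of pro-representable functors into a $G$-equivariant isomorphism $\varphi\colon \mc{O}(\Def_{\mc{F}X}^\star)\to\mc{O}(\Def_X^\star)$ of rings, then apply functoriality of $H^*(G,-)$ — is more elementary and bypasses the quotient-stack formalism entirely, at the cost of the strictness/coherence point in Step 2, which you correctly identify and which does collapse here because the relevant deformation problems are discrete and pro-representable (Theorem \ref{schlessinger}, Theorem \ref{discreteanddecorated}, and Lemma \ref{gradedlazard} on the formal-group side); note also that both routes, like the statement itself, tacitly require this representability to make $\mc{O}(\Def_X^\star)$ with its $G$-action meaningful.
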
 

There are many variants of moduli problems related to formal groups. We may work with both graded and ungraded one-dimensional formal groups equally, both defined and discussed in \cite{ray}, and we will use the notation of $\mc{M}_{\mr{fg}_1}^{\spadesuit} := \mc{M}_{\mr{fg}_1} \text{ or }\mc{M}_{\mr{fg}_1}^{s}$. Let $\Def_F^\star$ denote the deformation of $F \in \mc{M}_{\mr{fg}_1}^{\spadesuit}(k)$ in $\mc{M}_{\mr{fg}_1}^{\spadesuit}$.

We build a geometric model for the $G'$-action on $\Def_F^\star$ described in \cite{ray}, where $G'$ is the maximal finite subgroup of the Morava Stabilizer group of height $p-1.$ The ungraded case is Theorem \ref{theorema}, and the graded case is Theorem \ref{theoremb}.

\section{Moduli Stacks of $G$-Curves}
\label{gstacks}

\subsection{Deformation Functors}
Let $k$ be a char $p$ field. Let $\widehat{\Art}_k$ be the category of complete local algebras with a finitely generated maximal ideal and specified map to the residue field $k$. The literature often calls this $\mr{Pro}({\mr{CAlg}^{\mr{aug}}_k})$. In the rest of this document, we denote the base change of an object $\mf{X} \times_{\Spec R} \Spec k$ as $\mf{X}|_k$.

We define a deformation moduli problem in a prestack $\mc{F}$ where we allow morphisms to reduce to a subset of automorphisms of $X$. 

\begin{defn} \label{deform} Let $\mc{F}$ be a functor $\mc{F}: \widehat{\Art}_k \to \mathrm{Grpd}$ and $X \in \mc{F}(k)$, we consider the functor $\Def^G_X: \widehat{\Art}_k \to \mathrm{Grpd}$. Given $G \subseteq \Aut(X)$, the groupoid \color{blue}{$\Def_X^G(R)$} has
\begin{itemize} 
\item as objects tuples $$\{ \mf{X} \in \mc{F}(R), \iota: \mf{X}|_{k} \simeq X \},$$ 
\item as morphisms: maps $\phi: \mf{X} \to \mf{X}'$ such that there exists $g \in G$ for which the following diagram commutes:
\[
\begin{tikzcd}
\mf{X}|_k \arrow[d, "\iota"] \arrow[r, "\phi|_k"] & \mf{X}'|_k \arrow[d, "\iota'"] \\
X \arrow[r, "g", color={rgb,255:red,92;green,214;blue,214}]                                 & X                             
\end{tikzcd}
\] 
\end{itemize}\end{defn} 

We will denote $\Def_X := \Def_X^{\Aut(X)}.$ When the prestack $\mc{F}$ we are deforming in is unclear, we will specify it by writing $\Def_X^{\mc{F}}$.

Historically, morphisms which reduce to the identity on the residue field are referred to as star-isomorphisms. As a notational convention, we will refer to the special case of $\Def_X^{\text{id}}$ as $\Def^\star_X$. 

The main action of interest arises as the action of automorphisms of an object $Y$ on the deformation stack of the object $Y$. We define this action now.

\begin{defn} \label{autxaction} The group $\Aut_k(X)$ acts on on $\Def_{X}^{G},$ as follows, for $g' \in \Aut_k(X)$: 

\begin{itemize} 
\item on objects, it sends $(\mf{X}, \mf{X}|_k \xrightarrow{\iota} X)$ to the object $(\mf{X}, {\color{red}{\mf{X}|_k \xrightarrow{\iota} X \xrightarrow{g'} X}})$,

\item on morphisms, it sends morphisms to themselves on $\mf{X} \xrightarrow{\phi} \mf{X}'$ such that the following diagram commutes:

\[\begin{tikzcd}
	{\mf{X}|_k} & {\mf{X}'|_k} \\
	X & X
	\arrow["{\phi|_k}", from=1-1, to=1-2]
	\arrow["{g' \circ \iota}"', color=red, from=1-1, to=2-1]
	\arrow["{g' \circ \iota'}", color=red, from=1-2, to=2-2]
	\arrow["{g}", from=2-1, to=2-2]
\end{tikzcd}\]
\end{itemize}
\end{defn}

This action commutes with isomorphisms in $\Def_{X}^\star.$ %Thus, if $\Def_X$ is representable, we obtain a right action on $\pi_0\Def_X^\star$ and a left action on the ring representing it, if its representable.

\begin{remark} The group $\Aut(X, k)$, which allows for nontrivial action on the base ring, acts similarly on $\Def_{X}^G$. In the rest of the discussion in this subsection, one can consider either $\Aut(X,k)$ or $\Aut_k(X)$. We stick to the case $\Aut_k(X)$. \end{remark}

\begin{defn} Let $q: Y \to X$ be a morphism of categories fibered in groupoids over some base category. The group $\Aut_Y(X)$ of automorphisms of $Y$ over $X$ consists of equivalence classes of pairs $(f, \psi)$, where $f: Y \to Y$ is a $1$-morphism of groupoids and $\psi: q \to qf$ is a $2$-morphism. Two such pairs $(f, \psi)$ and $(f', \psi')$ are equivalent if there is a $2$-morphism $\phi: f \to f'$ so that $\psi'\phi = \psi.$ The composition law reads $$(g, \psi)(f, \phi) = (gf, (f^*\psi)\phi).$$
\end{defn}

\begin{defn} \label{homo} There is a homomorphism $\Aut(Y) \to \Aut_{\Def_Y}(\Def_Y^\star)$ sending $\sigma$ to $f_{\sigma}$, where $f_{\sigma}$ is the transformation $(G, i) \to (G, i\sigma),$ as in Defn \ref{autxaction}.
\end{defn}

\begin{lemma} \label{halo} (halo) Given a pro-representable moduli problem $\Def_Y^\star$, pro-represented by a ring $R$, then the homomorphism in Defn \ref{homo} induces an isomorphism of automorphism groups $\Aut(Y) \simeq \Aut_{\Def_Y}(\Def_Y^\star).$ \end{lemma}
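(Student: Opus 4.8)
The plan is to show directly that the homomorphism $\Aut(Y)\to\Aut_{\Def_Y}(\Def_Y^\star)$, $\sigma\mapsto(f_\sigma,\psi^\sigma)$, of Definition~\ref{homo} is bijective, with pro-representability doing all the work. First I would fix the universal deformation $(\mf{U},\iota_\mf{U})$ over $R$, so that pro-representability means $\Def_Y^\star$ is equivalent to the discrete functor $\Hom_{\widehat{\Art}_k}(R,-)$. Two consequences will be used throughout: (i) \emph{rigidity} — every object of $\Def_Y^\star$ has trivial automorphism group, so the identity endofunctor has no nontrivial natural automorphisms and any endo-equivalence of $\Def_Y^\star$ is determined, up to a \emph{unique} natural isomorphism, by its value on $(\mf{U},\iota_\mf{U})$; and (ii) the functor $q\colon\Def_Y^\star\to\Def_Y$ is the identity on objects and the faithful (non-full) inclusion on morphisms, and the canonical $2$-morphism $\psi^\sigma\colon q\Rightarrow qf_\sigma$ has, at each object $(\mf{Y},\iota)$, underlying scheme map $\id_{\mf{Y}}$, the only nontrivial datum being the witnessing element $\sigma$.

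For injectivity, suppose $(f_\sigma,\psi^\sigma)$ is equivalent to $(\id,\id)$, so that there is a $2$-morphism $\Theta\colon f_\sigma\Rightarrow\id$ compatible with $\psi^\sigma$ in the sense of the definition of $\Aut_Y(X)$. I would evaluate at $(\mf{U},\iota_\mf{U})$: compatibility forces the underlying scheme map of $\Theta_{(\mf{U},\iota_\mf{U})}$ to agree with that of $\psi^\sigma_{(\mf{U},\iota_\mf{U})}$, namely $\id_{\mf{U}}$, since morphisms of $\Def_Y$ are determined by their underlying map. But $\Theta_{(\mf{U},\iota_\mf{U})}$ is by construction a morphism \emph{in $\Def_Y^\star$} from $(\mf{U},\sigma\iota_\mf{U})$ to $(\mf{U},\iota_\mf{U})$, so the $\star$-condition of Definition~\ref{deform} (witness $=\id$) combined with the fact that its reduction mod $\mf{m}_R$ is $\id$ reads $\sigma\circ\iota_\mf{U}=\iota_\mf{U}$, whence $\sigma=\id_Y$. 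It is essential here that $\Theta_{(\mf{U},\iota_\mf{U})}$ lies in $\Def_Y^\star$ and not merely in $\Def_Y$; this is exactly where the ``$\star$'' is used.

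For surjectivity, given $(f,\psi)\in\Aut_{\Def_Y}(\Def_Y^\star)$, I would write $f(\mf{U},\iota_\mf{U})=(\mf{V},\kappa)$ and unpack $\psi_{(\mf{U},\iota_\mf{U})}\colon(\mf{U},\iota_\mf{U})\to(\mf{V},\kappa)$ in $\Def_Y$ as an $R$-isomorphism $\phi\colon\mf{U}\xrightarrow{\sim}\mf{V}$ together with a witness $g\in\Aut(Y)$ satisfying $g\circ\iota_\mf{U}=\kappa\circ\phi|_k$. That equation says precisely that $\phi$ is a $\star$-isomorphism $(\mf{U},g\iota_\mf{U})\xrightarrow{\sim}(\mf{V},\kappa)$, so $f(\mf{U},\iota_\mf{U})\simeq(\mf{U},g\iota_\mf{U})=f_g(\mf{U},\iota_\mf{U})$ in $\Def_Y^\star$; by (i) this upgrades to an isomorphism of endofunctors $f\simeq f_g$ through a unique natural isomorphism. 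The remaining point is to promote this to an equivalence of pairs $(f,\psi)\sim(f_g,\psi^g)$: the natural isomorphism $f\Rightarrow f_g$ was produced at $(\mf{U},\iota_\mf{U})$ exactly so as to intertwine $\psi$ with $\psi^g$ there, and since $\Def_Y^\star$ is pro-represented by $R$, compatibility at every other object follows. Hence the homomorphism is surjective, and the lemma follows.

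The hard part will be the last step of surjectivity: rigidity (i) reduces both directions to the single object $(\mf{U},\iota_\mf{U})$, but surjectivity is a statement about the \emph{pair} $(f,\psi)$, so one must know that the natural isomorphism $f\simeq f_g$ handed to us by representability has no slack to be incompatible with the $2$-morphisms. Conceptually the cleanest packaging would be to invoke $\Def_Y\simeq\Def_Y^\star\stackyq\Aut(Y)$ from \cite{ray}, which exhibits $q$ as an $\Aut(Y)$-torsor over a connected base, so that $\Aut_{\Def_Y}(\Def_Y^\star)$ is its deck group $\Aut(Y)$; making ``deck group of a stacky torsor'' precise is once more exactly the rigidity input.
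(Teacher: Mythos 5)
Your argument is correct in substance, but it takes a genuinely different route from the paper. The paper's proof is a two-line reduction: the inclusion $\mathrm{id} \hookrightarrow \Aut(Y)$ induces the map $\Def_Y^\star \to \Def_Y^{\Aut(Y)} = \Def_Y$, and the lemma is restated as the claim that this map is an $\Aut(Y)$-torsor, which is imported wholesale from the companion paper \cite{ray} (the ``skydive'' lemma) --- exactly the packaging you mention in your closing sentence as the ``cleanest'' option. What you do instead is verify bijectivity of $\Aut(Y) \to \Aut_{\Def_Y}(\Def_Y^\star)$ by hand: rigidity of the $\star$-problem (no nontrivial automorphisms, since pro-representability makes $\Def_Y^\star$ discrete) pins everything down at the universal object $(\mf{U},\iota_\mf{U})$, the $\star$-condition forces $\sigma = \id_Y$ for injectivity, and the witness $g$ extracted from $\psi_{(\mf{U},\iota_\mf{U})}$ (which is unique, as $g = \kappa\circ\phi|_k\circ\iota_\mf{U}^{-1}$) gives surjectivity. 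Your route is self-contained and more elementary --- it does not lean on the external torsor result --- at the cost of the bookkeeping you yourself flag: promoting $f\simeq f_g$ at the universal object to an equivalence of pairs $(f,\psi)\sim(f_g,\psi^g)$ requires propagating the compatibility equation in $\Def_Y$ (whose morphisms are not rigid) to every object; this does go through, but only via the uniqueness of the $\star$-isomorphism identifying an arbitrary object with a pullback of $\mf{U}$ together with naturality of $\psi$, $\psi^g$, and the comparison transformation under base change, and that step deserves to be spelled out rather than asserted from pro-representability alone. The paper's approach buys brevity and conceptual clarity (deck transformations of a torsor) at the price of depending on \cite{ray}; yours buys independence from that reference at the price of exactly this compatibility argument.
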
 

\begin{proof} Let us consider the inclusion of the identity into the group $\mr{id} \hookrightarrow \Aut(Y)$. This inclusion induces a map of moduli problems $\Def_Y^\star \to \Def_Y^{\Aut(Y)} =: \Def_Y$. The desired statement is equivalent to the claim that this map is an $\Aut(Y)$-torsor, which was shown in Lemma skydive \cite{ray}. %The map $\Aut(Y) \to \Aut_{\Def_Y}(\Def_Y^\star)$ is an injection, which is clear from the definition of the map given in Defn \ref{homo}. We now show it is a surjection. Fix an element $(G, \iota) \in \Def_Y^\star$. Let's write $(f, \psi) \in \Aut_{\Def_Y}(\Def_Y^\star)$, then $\psi$ gives an isomorphism of formal groups $\phi_G: (G, \iota \sigma) \to f(G, \iota)$. Since the 
\end{proof} 

%\begin{remark} An equivalent approach follows from the argument in Prop 7.18 in \cite{goerss}, which uses only Lemma 7.13, both of which work for general discrete formal moduli problems. \end{remark}

\subsection{$G$-Stacks of Curves}
\label{gstacksofcurves}
\begin{defn} We define a curve as a smooth proper scheme of relative dimension one over a base scheme.\end{defn} 

\begin{defn} \label{CurveStack}
We define the prestack $$\mr{Curve}_{G}: \mr{CRing} \to \mathrm{Grpd},$$ such that $\mr{Curve}_{G}(R)$ is the groupoid with 
\begin{itemize} 
\item objects: tuples $(\mf{X}_{/R}, G)$ consisting of a curve $\mf{X}$ over $R$ with a $G$-action $G \to \Aut_R(\mf{X}).$ 
\item morphisms: $G$-equivariant maps between curves. 
\end{itemize}
\end{defn}

\begin{remark} If $f: R \to S,$ and $C$ is a curve over $\Spec(R),$ then $f^\ast C$ is a curve over $\Spec(S)$. Since $f^\ast(G \times_{\Spec(R)} C) = G \times_{\Spec(S)} f^\ast C$, the pull-back curve has $G$-action. Thus, we have a prestack. In the previous sentence, $G$ is shorthand for $\Spec(\mr{map}(G,R))$ or $\Spec(\mr{map}(G,S))$ respectively. \end{remark}

\begin{example} \label{ex:autxgaction} (of Defn \ref{autxaction})
We will be interested in the action of $\Aut(X, k, G)$ on $\Def_{(X, G)}^{\star}$. Let's see how it changes as $G$ changes.
\begin{itemize} 
\item If $G \simeq \Aut(X, k)$, then $\Aut(X, k, \Aut(X, k)) \simeq Z(\Aut(X,k))$, where $Z$ denotes the center of the group. 
\item If $G \simeq \text{id}$, then $\Aut(X, k, \text{id}) \simeq \Aut(X, k)$.
\item If $G \subseteq Z(\Aut(X, k)),$ then $\Aut(X, k, G) \simeq \Aut(X, k)$, since $g \in G$ commutes with every element of $\Aut(X, k).$
\end{itemize}

Thus, there is an $\Aut(X, k)$ action on $\Def_{(X, G)}$ when $G$ is in the center of $\Aut(X, k).$ This will come in handy for us.
\end{example}

\subsection{Local and Marked Moduli Problems}
\label{localgstacks} 

As usual for the source of our formal moduli problems, we work with $\widehat{\Art}_k$, the category of complete $k$-algebras with given reduction to the residue field $(R, \iota: R/\mf{m} \to k)$.

We define the moduli problem of divisors of a curve. 

\begin{defn} Given a curve $X$ and with a divisor $D\in\Div(X)$, define the prestack $$\Def_{(X ; D)}: \widehat{\Art}_k \to \mathrm{Grpd},$$ such that $\Def_{(X ; D)}(R)$ is the groupoid with
\begin{itemize} 
\item objects $\colon$ tuples $(\mf{X},\mf{D})$ consisting of
\begin{itemize} 
\item a curve $\mf{X}$ over $R$ 
\item a divisor $\mf{D}\in \Div(\mf{X})$ 
\item a map $\iota: \mf{X}|_k \simeq X$ witnessing the lift of the curve which induces $\iota^*: D \simeq \mf{D}|_k$ witnessing the lift of the divisor.
\end{itemize}
\item morphisms are pairs $\colon$ $(\mf{X},\mf{D}) \to (\mf{X'},\mf{D}')$ consisting of a morphism $f: \mf{X} \to \mf{X}'$ such that the induced morphism on divisors $f^*: \Div(\mf{X}') \to \Div(\mf{X})$ sends $\mf{D}' \mapsto \mf{D}$, and such that $f|_k$ reduces to the identity on $X$. 
\end{itemize}
\end{defn}

\begin{defn} \label{LightYagami} The functor $$L: \Def_{(X, G)} \to \Def_{(X/G; B)}$$ is described as follows. Given a $G$-equivariant deformation of $X$, $(\mf{X}, G)$, consider its corresponding algebraic quotient $q: \mf{X} \to \mf{X}/G$. Let $B$ be the reduced branch points of the map $q$, i.e., the reduced branch divisor. Regard the pair $(\mf{X}/G ; B)$ of the quotient curve and its branch points.
\end{defn}

\begin{center}\includegraphics[width=10cm]{being_vessel/lightyagami.png}\end{center}

%\iffy{might want to add lemma that its smooth?}

Next, we define a moduli problem of $G$-representations of discs in char $p$.

\begin{defn} Fix a finite subgroup $G$ of $\Aut(k[[T]])$, this inclusion can be considered as a representation $$
\rho_G \colon G \hookrightarrow \Aut(k[[T]]).$$ Consider the deformation problem $$\Def_{\rho_G} \colon \widehat{\Art}_k \to \mathrm{Grpd},$$ \noindent such that $\Def_{\rho_G}(A)$ has as 
\begin{itemize} 
\item objects $\colon$ representations $$\widetilde{\rho}_G \colon G \to \Aut (A[[T]])$$ \noindent in $\mr{Rep}(A[[T]])$, such that the composition of $\widetilde{\rho}_G$ with $\Aut(A[[T]]) \to \Aut(k[[T]])$ is the representation $\rho_G.$
\item morphisms $\colon$ isomorphisms of representations.
\end{itemize}
\end{defn}

\begin{construction} \label{stabilizer} Given a curve $X$ over $k$ with an action of $G$, consider every point $x$ with a nontrivial stabilizer group $G_x$. For each such point, the restriction of the action to the disc around $x$ gives a map $$G_x \to \Aut(\OO_{X, x}) \simeq \Aut(k[[T]]).$$
\noindent We define the associated composite local deformation problem as $$\Def_{(X, G)}^{\mr{loc}} :=\prod_i \Def_{\rho_{G_{x_i}}}$$.
\end{construction}

\begin{defn} \label{localization} The localization functor $$Loc: \Def_{(X, G)} \to \Def_{(X, G)}^{\mr{loc}}.$$ is defined by considering for every deformation $(\mf{X}_{/A}, G \subseteq \Aut_A(\mf{X}), \iota)$, all points $\mf{x} \in X$ with nontrivial stabilizer. For each such point, the restriction of the action to the disc around $\mf{x}$ gives a map $$\rho_{G_\mf{x}}: G_{\mf{x}} \to \Aut(A[[T]]),$$ where the lift of $G$ and the information of $\iota$ ensure we have the appropriate reduction property to a corresponding representation $\rho_{G_x}: G_x \hookrightarrow \Aut(k[[T]])$ for a point $x \in X$ with nontrivial stabilizer $G_x \simeq G_{\mf{x}}$. Taking the product of all points with nontrivial stabilizer gives us an element of $\Def_{(X, G)}^{\mr{loc}}$ as desired. 
\end{defn}

\begin{center}\includegraphics[width=10cm]{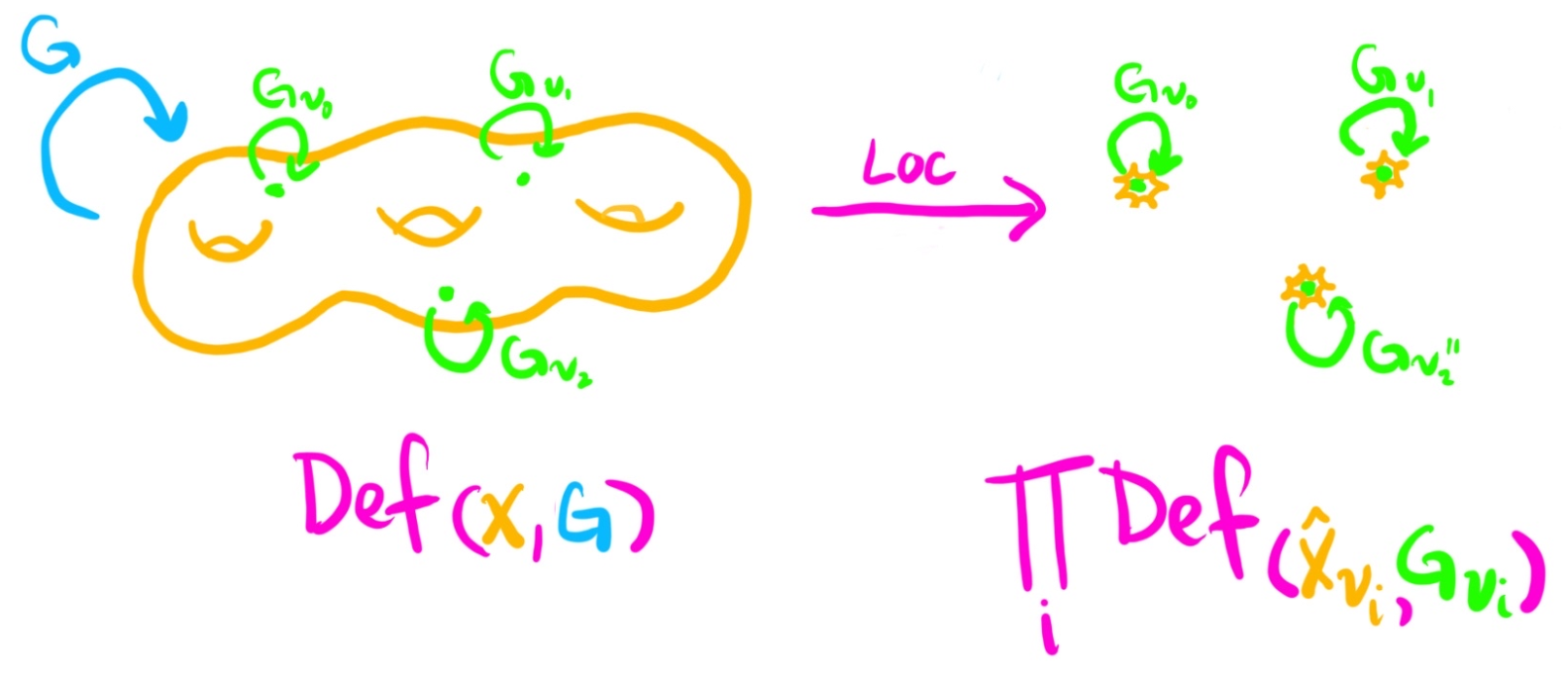}\end{center}

\begin{lemma} (\cite{bm} Theorem 3.3.5) (\cite{itsyg} Lemma 2.9)
$\mr{Loc}$ is formally smooth. \end{lemma}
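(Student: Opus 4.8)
The plan is to verify the infinitesimal lifting criterion for formal smoothness directly, reducing it to two cohomological statements — surjectivity on tangent spaces and injectivity on obstruction spaces for the map induced by $\mr{Loc}$ — and then to obtain those from the local–global principle in equivariant deformation theory together with the vanishing of higher coherent cohomology on a curve.

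\textbf{First,} I would unwind the lifting criterion. Fix a small extension $0 \to (\epsilon) \to A' \to A \to 0$ in $\widehat{\Art}_k$. Then $\mr{Loc}$ is formally smooth precisely when, given $\xi \in \Def_{(X,G)}(A)$ and a lift $\eta'$ of $\mr{Loc}(\xi)$ to $\Def_{(X,G)}^{\mr{loc}}(A')$, there is $\xi' \in \Def_{(X,G)}(A')$ over $\xi$ with $\mr{Loc}(\xi') = \eta'$. Both functors carry a tangent–obstruction theory; write $T^1,T^2$ for tangent and obstruction spaces, with subscripts $\mr{gl}$ for $\Def_{(X,G)}$ and $\mr{loc}$ for $\Def_{(X,G)}^{\mr{loc}} = \prod_j \Def_{\rho_{G_{x_j}}}$ of Construction \ref{stabilizer}. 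By naturality of obstruction classes, $\mr{ob}(\xi)\in T^2_{\mr{gl}}\otimes(\epsilon)$ maps under $\mr{Loc}$ to the obstruction of $\mr{Loc}(\xi)$ in $T^2_{\mr{loc}}\otimes(\epsilon)$, which vanishes since $\eta'$ exists; and when a lift $\xi''$ does exist, the lifts of $\xi$ form a torsor under $T^1_{\mr{gl}}\otimes(\epsilon)$ surjecting onto the $T^1_{\mr{loc}}\otimes(\epsilon)$–torsor of lifts of $\mr{Loc}(\xi)$ as soon as $T^1_{\mr{gl}}\to T^1_{\mr{loc}}$ is onto (add to $\xi''$ the lift of the discrepancy between $\mr{Loc}(\xi'')$ and $\eta'$). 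So it suffices to show $T^1_{\mr{gl}}\twoheadrightarrow T^1_{\mr{loc}}$ and $T^2_{\mr{gl}}\hookrightarrow T^2_{\mr{loc}}$.

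\textbf{Next,} I would set up the local–global sequence on the quotient curve. Write $\pi\colon X\to Y:=X/G$ with reduced branch locus $B\subset Y$; by Lemma \ref{actionsarecovers} the points of $X$ with nontrivial stabilizer lie over $B$. Equivariant deformations of $(X,G)$ are governed by a tangent complex $\mc{T}$ on the curve $Y$ (in the tame case of this paper just the coherent sheaf $(\pi_*\mc{T}_X)^{G}$ of $G$-invariant vector fields), and $\Def_{(X,G)}^{\mr{loc}}$ by its completions at the finitely many points of $B$. There is a distinguished triangle $\mc{T}_0\to\mc{T}\to\bigoplus_{y\in B}(i_y)_*\mc{T}_y$ on $Y$ in which each $(i_y)_*\mc{T}_y$ is supported at $y$ with $H^1,H^2$ equal to the tangent and obstruction spaces of $\Def_{\rho_{G_{x_j}}}$ — so that $\mathbb{H}^i\big(Y,\bigoplus_y(i_y)_*\mc{T}_y\big)=T^i_{\mr{loc}}$ — while $\mc{T}_0$, the "purely global" part recording deformations of $(X,G)$ that leave the cover undisturbed near $B$, is coherent on $Y$ (in the tame case a line bundle, essentially $\mc{T}_Y(-B)$). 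Taking hypercohomology yields
\[ \cdots \to \mathbb{H}^1(Y,\mc{T}_0)\to T^1_{\mr{gl}}\to T^1_{\mr{loc}}\to \mathbb{H}^2(Y,\mc{T}_0)\to T^2_{\mr{gl}}\to T^2_{\mr{loc}}\to \mathbb{H}^3(Y,\mc{T}_0)\to\cdots . \]
Since $Y$ is a curve, $\mathbb{H}^i(Y,\mc{T}_0)=0$ for $i\ge 2$, which forces $T^1_{\mr{gl}}\twoheadrightarrow T^1_{\mr{loc}}$ and $T^2_{\mr{gl}}\hookrightarrow T^2_{\mr{loc}}$; combined with the first step this proves $\mr{Loc}$ is formally smooth.

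\textbf{The main obstacle} is making the triangle of the second step precise — identifying $\mc{T}_0$ and checking that the stalk of the equivariant tangent theory at $y\in B$ really recovers the tangent–obstruction theory of $\Def_{\rho_{G_{x_j}}}$ from Construction \ref{stabilizer}. In the tame setting used here ($G=C_{p-1}$, $p\nmid|G|$) this is elementary: averaging over $G$ is exact, so invariants split off, each local action $T\mapsto\zeta T$ is rigid, $T^\bullet_{\mr{loc}}=0$, and the statement degenerates to the (independently clear) fact that $\Def_{(X,G)}$ is unobstructed, $T^2_{\mr{gl}}=H^2\big(Y,(\pi_*\mc{T}_X)^G\big)=0$. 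For wildly ramified $G$, needed in the sequel, one replaces $\mc{T}$ and $\mc{T}_0$ by the suitable cotangent complexes carrying $G$-symmetry and runs the same argument, as in \cite{bm} (Thm.\ 3.3.5) and \cite{itsyg} (Lemma 2.9); what does not change is the key input, that a complex supported on the curve $Y$ has vanishing hypercohomology in degrees $\ge 2$.
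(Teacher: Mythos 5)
The paper does not actually prove this lemma: it is quoted from the literature, with the proof outsourced to \cite{bm} (Th\'eor\`eme 3.3.5) and \cite{itsyg} (Lemma 2.9). Your argument is, in substance, the standard argument behind those citations, so there is no conflict with anything in the paper — and indeed the ingredients you invoke are already assembled here: the identification of the "local part" as a skyscraper with stalks $H^q(G_x, \widehat{T}_{X,x})$ is Lemma \ref{moodstabilizer}, the exact sequence relating $\Ext^1_G(\Omega_X,\OO_X)$, $H^1(Y,q_*^GT_X)$ and $H^0(Y,R^1q_*^GT_X)$ is used in Theorem \ref{breakitup}, and the vanishing $H^{\geq 2}(Y,q_*^GT_X)=0$ for the curve $Y$ is exactly the key input. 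Two caveats. First, the genuinely technical point is the one you flag yourself: identifying the edge/connecting maps of your hypercohomology sequence with $d(\mr{Loc})$ and with a compatible obstruction comparison; the paper relies on \cite{bm} (Lemma 3.3.2 and Th.\ 3.3.5) for precisely this, so citing those references for it is fair, but be aware that without it your reduction to "tangent surjective, obstruction injective" is only formal. Second, your closing slogan ("a complex supported on the curve $Y$ has vanishing hypercohomology in degrees $\ge 2$") is false for a general complex — $\mathbb{H}^2(Y,\mc{T})$ is the global obstruction space and need not vanish in the wild case; what your argument actually uses, correctly, is that $\mc{T}_0$ is a coherent sheaf concentrated in degree $0$ on a one-dimensional scheme, so $\mathbb{H}^{\ge 2}(Y,\mc{T}_0)=0$. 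With that phrasing fixed, the proposal is a faithful reconstruction of the cited proof, and your observation that in the tame case the statement collapses to unobstructedness of $\Def_{(X,G)}$ (since $T^\bullet_{\mr{loc}}=0$ by Lemma \ref{moodstabilizer}(ii)) matches how the lemma is actually used in this paper.
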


\subsection{Graded Version} To access $\pi_*(E_{(p-1)})$ rather than just $\pi_0(E_{(p-1)})$, we will work with deformations of formal groups with a Lie algebra structure. We mention the corresponding stack on the curve side. 

Let $G$ be a finite group. Given a $G$-action on a curve $\mf{X}$, this will induce a $G$-action on $\mf{g} := H^0(\mf{X}, \Omega^1_\mf{X})$, and thus an idempotent decomposition $$\mf{g} \simeq \bigoplus_{\chi \in \Hom(G, \G_m)} \mf{g}^{\chi^i}.$$ \noindent The notion of idempotent decomposition is discussed in more detail in section \ref{idempotent}.

\begin{defn} \label{gradedboi}
We define the prestack $$\mr{Curve}_{G}^{\mr{Lie }\chi^i}: \mr{CRing} \to \mathrm{Grpd},$$ such that $\mr{Curve}_{G}^{\mr{Lie }\chi^i}(R)$ is the groupoid with 
\begin{itemize} 
\item objects: tuples $(\mf{X}_{/R}, \mf{g}^{\chi^i}, G)$ consisting of a curve $\mc{X}$ over $R$, $\mf{g}^{\chi^i}$, and a $G$-action $G \to \Aut(R,\mf{X})$
\item morphisms: $G$-equivariant maps between curves. 
\end{itemize}
\end{defn}

\part{Theorem A: Functors and Lubin-Tate Representations}

\label{sec:theoremA}
In this section, we construct a functor taking curves to 1-dimensional formal group laws, and show that this functor is an equivariant equivalence. We construct the functor using idempotent decomposition (Defn \ref{idempotentfunctor}), and we understand it once constructed using the tangent space directly - working with the invariant differential (Lemma \ref{heartgoldenfish}). This gives us a geometric model of the Lubin-Tate action.

\subsection{Recap: Criterion for Modeling the Lubin-Tate Action}
Before we get started, we summarize the main relevant background from \cite{ray}. 

\subsubsection{Background: Equivariant Functors Induced on Deformation Problems}

\begin{defn} \label{stacky}
Given pre-stacks $\mc{M}$ and $\mc{N}$, consider a natural transformation $$\mc{F}: \mc{M} \to \mc{N}.$$ Consider an object $X$ in $\mc{M}(k)$ and its image $\mc{F}(X)$ in $\mc{N}(k)$, this induces a functor $$\mc{F}^{\wedge}: \Def_X^{\mc{M}} \to \Def_{\mc{F}(X)}^{\mc{N}}.$$
\end{defn}

\begin{defn} The functor $\mc{F}^{\wedge}$ is $G$-equivariant if $G$ is preserved under $\mc{F}$, i.e., 

\[\begin{tikzcd}
	G & G \\
	{\Aut_k(X)} & {\Aut_k(\mc{F}(X))} \\
	{\Def_X} & {\Def_{F(X)}}
	\arrow["\simeq", from=1-1, to=1-2]
	\arrow[hook, from=1-1, to=2-1]
	\arrow[hook', from=1-2, to=2-2]
	\arrow["{\mc{F}}", from=2-1, to=2-2]
	\arrow[from=3-1, to=3-1, loop, in=55, out=125, distance=10mm]
	\arrow["{\mc{F}^{\wedge}}", from=3-1, to=3-2]
	\arrow[from=3-2, to=3-2, loop, in=55, out=125, distance=10mm]
\end{tikzcd}\] \noindent where the action of an object on its deformation problem is defined in Defn \ref{autxaction}.
\end{defn}

\section{Definition of the Functor}
\label{sec:effie}
\begin{construction} \label{fall}
Let $\Jac$ be the functor that takes a curve $\mf{X}$ to $\Jac(\mf{X})$, (in $\mr{AbVar})$ Then, let us consider its formal group $\Jac(\mf{X})^\wedge_e$. 

We define the following functor by composing the Jacobian and completion of the identity:

\begin{align*} 
\mc{F}^{all} \colon \mr{Curve}_{G} & \to  \mr{AbVar}_G \to \mr{FG}_G \\
(\mf{X}, G) & \to (\Jac(\mf{X}), G) \to (\Jac(\mf{X})^\wedge_e, G).
\end{align*}
\end{construction}

\begin{remark} 
We can construct a similar functor which remembers the decoration on the curve to remember level structure on the formal group The analog of $\mr{Curve}_G$ becomes $\mc{M}_{g, (n, G)}$, and $\mr{AbVar}_G$ becomes $\mr{AbVar}_{(n, G)}$, as defined in Section \ref{sec:pastandfuture}. 
\end{remark}

\begin{defn} The functor in Construction \ref{fall} induces a functor on deformation problems in the manner of Defn \ref{stacky} $$\widehat{\mc{F}}^{all}: \Def_{(X, G)} \to \Def_{\mc{F}^{all}(X, G)}^{{FG}_G}.$$ 
\end{defn}

\begin{lemma} \label{equivarianceofidempotent} Consider a group $G'$ such that $G \subseteq Z(G')$ lies in its center. Let $\mc{C}$ be an additive category with a $G'$ action. 

Let $\mc{I}^i$ be the functor defined by projecting onto the $i$th idempotent component of a $G$-equivariant idempotent decomposition of $\mc{C}$, as in Construction \ref{idempotentdecompconstruction}.
$$\mc{I}^i \colon \mc{C} \to e_i\mc{C}$$

\noindent Then, $\mc{I}^i$ is $G'$-equivariant. 
\end{lemma}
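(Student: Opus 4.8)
The plan is to track exactly where the hypothesis $G \subseteq Z(G')$ is used and to see that it is precisely what forces the projector $\mc{I}^i$ to commute with the $G'$-action. First I would unwind the data. An action of $G'$ on the additive category $\mc{C}$ is a collection of additive autoequivalences $F_{g'} \colon \mc{C} \to \mc{C}$ for $g' \in G'$, together with coherence isomorphisms $F_{g'} \circ F_{h'} \simeq F_{g'h'}$. Restricting along $G \hookrightarrow G'$ and feeding this into Construction \ref{idempotentdecompconstruction} produces the central idempotents $e_i = \frac{1}{|G|}\sum_{g \in G}\chi_i(g)\, g^{-1}$ in $k[G]$ (resp.\ $\Z_p[G]$), realized on $\mc{C}$ as idempotent endomorphisms $e_{i,Y}\colon Y \to Y$ natural in $Y$; the functor $\mc{I}^i$ is the projection $Y \mapsto \im(e_{i,Y})$ onto the summand $e_i\mc{C}$. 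Proving $G'$-equivariance of $\mc{I}^i$ means producing a $G'$-action on $e_i\mc{C}$ for which each $F_{g'}$ restricts, compatibly in $g'$, and the square relating $\mc{I}^i$ to the two actions commutes up to natural isomorphism.

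The key step is the identity $F_{g'}(e_{i,Y}) \simeq e_{i,\,F_{g'}Y}$, read through the coherence data. Since $e_i$ is a $k$-linear combination of the endomorphisms induced by the elements $g \in G$ and $F_{g'}$ is additive (hence compatible with such linear combinations of morphisms), it suffices to check that $F_{g'}$ commutes with the endomorphism induced by each $g \in G$. That is the chain of natural isomorphisms
\[
F_{g'} \circ F_g \;\simeq\; F_{g'g} \;=\; F_{gg'} \;\simeq\; F_g \circ F_{g'},
\]
in which the middle equality is nothing but $gg' = g'g$, i.e.\ centrality of $g$ in $G'$. Combining over $g$ with the coefficients $\chi_i(g)$ yields a natural isomorphism $F_{g'} \circ e_i \simeq e_i \circ F_{g'}$.

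From here the conclusion is formal. The subcategory $e_i\mc{C}$ is the essential image of $\mc{I}^i$, equivalently the full subcategory of objects on which $e_{i,(-)}$ is isomorphic to the identity; the displayed commutation shows $F_{g'}$ carries this subcategory into itself, so $F_{g'}$ restricts to an autoequivalence of $e_i\mc{C}$, and since $F_{g'}(\im(e_{i,Y})) \simeq \im(e_{i,F_{g'}Y})$ the square
\[
\begin{tikzcd}
\mc{C} \arrow[r, "F_{g'}"] \arrow[d, "\mc{I}^i"'] & \mc{C} \arrow[d, "\mc{I}^i"] \\
e_i\mc{C} \arrow[r, "F_{g'}"'] & e_i\mc{C}
\end{tikzcd}
\]
commutes up to natural isomorphism, coherently in $g' \in G'$. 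This exhibits the factorization $G' \to \Aut(e_i\mc{C})$ making $\mc{I}^i$ a $G'$-equivariant functor in the sense of the equivariance diagram above.

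I expect the only real obstacle to be bookkeeping rather than mathematical content: one must handle the coherence isomorphisms of the pseudo-action consistently, so that "$F_{g'}(e_{i,Y})$" is genuinely the conjugated idempotent and the commutation is coherent in $g'$. If one is content to work with a strict action — harmless here, since only the functor $\mc{I}^i$ up to equivalence matters — the argument collapses to: $F_{g'}$ is $k[G]$-linear because $G$ is central in $G'$, hence fixes every central idempotent of $k[G]$, hence preserves each summand. It is worth recording in the proof that centrality is essential and not cosmetic: for non-central $g'$ the functor $F_{g'}$ conjugates $e_i$ to the idempotent attached to $\chi_i$ precomposed with conjugation by $g'$, so the $G'$-action then permutes the summands $e_i\mc{C}$ rather than fixing each one, and $\mc{I}^i$ fails to be equivariant.
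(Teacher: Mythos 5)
Your proposal is correct and takes essentially the same approach as the paper: the paper's (one-line) proof also rests on the observation that centrality of $G$ in $G'$ forces $g'(e_i\mc{C}) = e_i g'(\mc{C})$ for every $g' \in G'$, which is exactly the commutation $F_{g'} \circ e_i \simeq e_i \circ F_{g'}$ you derive from $g'g = gg'$; your write-up simply makes the coherence bookkeeping and the linearity of the action explicit.
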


\begin{proof} 
This is because $G$ sits inside of the center of $G'$, which means that for any $g \in G'$, $g(e_i\mc{C}) = e_ig(\mc{C})$.
\end{proof}

It's time to put our functors together. 

\begin{construction} Considering the action of $G' \simeq \Aut(X,G)$ on $\Def_{(X, G)}$ described in defn \ref{autxaction}, this induces an action on $\Def_{\mc{F}^{all}(X)}^{\mr{FG}}$. Let us say that the formal group corresponding to $e_i\Jac(X)^\wedge_e$ is of dimension $d_i$,
then, 
\begin{align*} 
\mc{F}^i \colon \Def_{(X, G)} &\xrightarrow{\widehat{\mc{F}}^{all}} \Def_{\mc{F}^{all}(X, G)}^{\mr{FG}_{G}} \xrightarrow{\mc{I}^i} e_i\Def_{\mc{F}^{all}(X, G)}^{\mr{FG}_{G}} \hookrightarrow \mr{FG}^{\text{dim } d_i}\\
(\mf{X}, G) & \to (\Jac(\mf{X})^\wedge_e, G) \to (e_i\Jac(\mf{X})^\wedge_e, G) \to e_i\Jac(\mf{X})^\wedge_e
\end{align*} defined by composing the Jacobian and completion of the identity, and then projecting onto the i'th component of the idempotent decomposition. 
\end{construction}

\begin{remark} The functor $\mc{I}^i$ lands in $\mr{FG}_{G}^{\text{dim } d_i}$ by checking the idempotent decomposition of the tangent space of the deformation problem, which is necessarily compatible with the $G$-idempotent decomposition in Theorem \ref{splittydooda} since $\Def_{\mc{F}^{all}(X, G)}^{FG_G}$ is a deformation problem lifting $G$. \end{remark} 

\begin{defn} \label{idempotentfunctor} The functor constructed above induces a functor on deformation problems. $$\widehat{\mc{F}}^i: \Def_{(X, G)} \to \Def_{\mc{F}^i(X)}^{{FG}^{\dim d_i}}.$$ 
\end{defn}

\begin{lemma} \label{stabilizerplay} Let $G' \simeq \Aut(X, G)$. If $G'$ is isomorphic to $G_P$, where $G_P$ is the stabilizer group of the point $P$ used in the Abel-Jacobi map, then the functor $\widehat{\mc{F}}^i$ is $G'$-equivariant, where $G'$ acts on $\Def_{\widehat{\mc{F}}^i(X, G)}$ by the inclusion $$G_P \hookrightarrow \Aut_k(\widehat{\OO}_{X, P}) \hookrightarrow \widehat{\mc{F}}^i(X).$$ \end{lemma}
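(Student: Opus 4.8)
The plan is to deduce the equivariance of $\widehat{\mc{F}}^i$ by factoring it as $\widehat{\mc{F}}^i = \mc{I}^i \circ \widehat{\mc{F}}^{all}$, handling the two factors separately, and then matching the $G'$-action this produces on the target with the concrete formal-disc description in the statement. First I would treat $\widehat{\mc{F}}^{all}$: functoriality of the Jacobian and of completion at the identity section makes $\mc{F}^{all}$ a functor of $G$-curves, so by Defn~\ref{stacky} the induced map $\widehat{\mc{F}}^{all}\colon \Def_{(X,G)} \to \Def_{\mc{F}^{all}(X,G)}^{FG_G}$ intertwines the action of $G' \simeq \Aut(X,G)$ on the source from Defn~\ref{autxaction} (post-composing the rigidification $\iota$ with $g'$) with the action on the target induced by the homomorphism $G' \to \Aut(\mc{F}^{all}(X))$, $g' \mapsto \Jac(g')^\wedge_e$: a deformation $(\mf{X}, g'\circ\iota)$ is carried to $\widehat{\mc{F}}^{all}(\mf{X},\iota)$ with its rigidification post-composed by $\Jac(g')^\wedge_e$, which is exactly $g'$ acting on $\widehat{\mc{F}}^{all}(\mf{X},\iota)$.

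For the second factor, recall from Example~\ref{ex:autxgaction} and the structure of $G' = C_p \rtimes C_{(p-1)^2}$ that $G = C_{p-1}$ is central in $G'$. Hence Lemma~\ref{equivarianceofidempotent} applies and the projection $\mc{I}^i$ onto the $i$-th summand of the $G$-idempotent decomposition of Theorem~\ref{splittydooda} is $G'$-equivariant; in particular $\Jac(g')^\wedge_e$ respects that decomposition. Composing the two equivariant functors shows $\widehat{\mc{F}}^i$ is $G'$-equivariant, with $G'$ acting on $\Def_{\mc{F}^i(X)}^{FG^{\dim d_i}}$ through the homomorphism $\rho\colon G' \to \Aut(e_i\Jac(X)^\wedge_e) = \Aut(\mc{F}^i(X))$ given by restricting $g' \mapsto \Jac(g')^\wedge_e$ to the $i$-th summand.

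It remains to identify $\rho$ with the composite $G_P \hookrightarrow \Aut_k(\widehat{\OO}_{X,P}) \hookrightarrow \Aut(\mc{F}^i(X))$, and this is where the hypothesis $G' \simeq G_P$ is used. Since the Abel--Jacobi map $\iota_P\colon X \to \Jac(X)$ is based at the point $P$ fixed by all of $G' = G_P$, it is $G_P$-equivariant ($\iota_P(g'Q) = [g'Q - g'P] = g'_*[Q-P]$), and on completions it is a closed immersion of pointed formal schemes $\widehat{X}_P \hookrightarrow \Jac(X)^\wedge_e$ compatible with both $G_P$-actions. Composing with the projection to the $i$-th summand and, in the case of interest $i = 1$, invoking Corollary~\ref{split-me-off} together with the dimension count of Lemma~\ref{dim-h-split}, this composite $\widehat{X}_P \to e_1\Jac(X)^\wedge_e$ is an isomorphism of one-dimensional formal discs; transporting the formal group law along it identifies $\mc{F}^1(X)$ with $\Spf\widehat{\OO}_{X,P}$ carrying a formal group structure (the description recorded in Lemma~\ref{heartgoldenfish}), and under this identification $\rho(g')$ is exactly the automorphism of $\widehat{\OO}_{X,P}$ induced by $g'$. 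So $\rho$ factors through $\Aut_k(\widehat{\OO}_{X,P})$ as claimed.

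I expect the main obstacle to be this last identification, i.e.\ checking that $\widehat{X}_P \to e_1\Jac(X)^\wedge_e$ is an isomorphism: this reduces to showing that the differential spanning the $\chi^1$-eigenline, the class of $\tfrac{dy}{x}$ from Corollary~\ref{split-me-off}, restricts to a generator of the cotangent space $\mf{m}_P/\mf{m}_P^2$ at $P = \infty$, which is the one place where the ramification profile at $\infty$ from Lemma~\ref{rammymammy} genuinely enters. (For $i > 1$ the target $\mc{F}^i(X)$ is $i$-dimensional, and the statement should be read as asserting that $\rho$ factors through the automorphisms of the formal subgroup spanned by the image of $\widehat{X}_P$; only $i = 1$ is needed for Theorem~\ref{theorema}.)
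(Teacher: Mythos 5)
Your core argument is the same as the paper's: factor $\widehat{\mc{F}}^i = \mc{I}^i \circ \widehat{\mc{F}}^{all}$, get equivariance of $\widehat{\mc{F}}^{all}$ from the injection $\Aut(X)\hookrightarrow\Aut(\Jac(X))$ together with Defn~\ref{stacky}, and get equivariance of $\mc{I}^i$ from centrality of $G$ via Lemma~\ref{equivarianceofidempotent}. Where you diverge is the last step: the paper simply cites Lemma~\ref{halo} and Defn~\ref{stacky} to say the action on the target is ``the specified one,'' never mentioning $P$ again, whereas you explicitly identify the induced homomorphism $\rho\colon G' \to \Aut(\mc{F}^i(X))$ with the composite $G_P \hookrightarrow \Aut_k(\widehat{\OO}_{X,P}) \hookrightarrow \Aut(\mc{F}^i(X))$ via the $G_P$-equivariance of the Abel--Jacobi map based at the fixed point $P$. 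This buys something real: it is the only place the hypothesis $G' \simeq G_P$ visibly does work, which the paper's own proof leaves implicit, and it makes the statement's formal-disc description of the target action verifiable rather than definitional. The one step you leave open --- that $\widehat{X}_P \to e_1\Jac(X)^\wedge_e$ is an isomorphism of formal discs, i.e.\ that the $\chi^1$-differential $\tfrac{dy}{x}$ generates the cotangent line at $P=\infty$ --- is not a gap in substance: the coordinate computation of Lemma~\ref{splitdiff}, with uniformizer $Y = y/x$ at infinity, exhibits $\tfrac{dy}{x} = (1 + \text{higher order terms})\,dY$, so the differential is nonvanishing at $P$ and your identification goes through. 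Your parenthetical reading for $i>1$ is also the sensible one, and only $i=1$ is needed downstream.
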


\begin{remark} We take $P$ to be infinity unless otherwise specified. \end{remark}

\begin{proof} The functor $\mc{F}^{all}$ is $G'$-equivariant because automorphisms of a curve inject into automorphisms of its Jacobian. The functor $\mc{I}^i$ is equivariant because the $G'$-idempotent decomposition of the formal group of the Jacobian preserves the $G'$-action for nontrivial $i$ by Lemma \ref{equivarianceofidempotent}. Therefore, by Lemma \ref{halo} the functor $\widehat{\mc{F}}^i$ is $G'$-equivariant for the specified action of $G'$, as defined in Defn \ref{stacky}.
\end{proof}

We have our functor, and we can now state the main theorem of this section. Let $X$ be the Artin-Schreier curve in Section \ref{curveprops}. Let $G' := \Aut(X) \simeq C_p \rtimes C_{(p-1)^2},$ and $G = C_{p-1}.$

\begin{manualtheorem}{\ref{theorema}}  \color{blue}{ The functor $$\widehat{\mc{F}}^1: \Def_{(X, G)}^\star \to (\Def_{\mc{F}^1(X, G)}^{FG^{\text{ dim }1}})^\star$$ is a $G'$-equivariant equivalence, where $\mc{F}^1(X, G)$ is a formal group of dimension 1 and height $p-1$. The induced $G'$-action on $(\Def_{F^1(X, G)}^{FG^{\text{ dim }1}})^\star$ is the canonical Lubin-Tate $G'$-action. }
\end{manualtheorem}

In other words, $((X,G) \in \mr{Curve}_G, \Def_{(X, G)} \in \mr{Stk}^{G'}, \widehat{\mc{F}}^1)$ is a geometric model for the Lubin-Tate $G'$-action in the sense of Defn \ref{geometricmodel}.

\begin{remark} (Important!) We can alternatively define the functor from $\Def_{(X, G)} \to \Def_{\mc{F}^1(X)}$ without relying on idempotent decomposition. Let $F_U$ be the formal group law defined by the invariant differential $dy/x$ of the universal curve over $\Def_{(X, G)}^\star$. This is well defined because $R \subset R \otimes \Q$ by the recongition principle, this formal group $F_U$ gives an isomorphism $A \to R$ between the rings representing the respective moduli problems which we may check on tangent spaces. By Yoneda, this defines a functor $\Def_{(X, G)} \to \Def_{\mc{F}^1(X)}$. We can still check $G'$-equivariance of this functor because the $G'$-action is unique up to conjugation. This is imperative for the higher height case, not discussed in this paper, where we don't get an idempotent decomposition. We may simply use knowledge of a holomorphic differential of the curve to construct this functor! \end{remark}

\section{Formal Group Law Associated to Universal Curve} 
\label{universal}
This section is dedicated to showing the following theorem. Let $\widehat{\mc{F}}^1$ be the functor from Defn \ref{idempotentfunctor}, and let the curve $X$, and the groups $G' \supseteq G$ be as in Section \ref{curveprops}.

\begin{manualtheorem}{\ref{goldenfish}} \color{blue}{(golden fish) Let $\mc{U}$ be the universal object over $\Def_{(X, G)}^\star$, and $A$ the ring representing $\Def_{(X, G)}^\star$, then, the formal group $\widehat{\mc{F}}^1(\mc{U})$ over $A$ is a universal formal group law of height $p-1$.} \end{manualtheorem}

\begin{lemma} \label{determined} \cite{Hezy} (Functional Equation-Integrality Lemma, Section 2) \label{funky}  The invariant differential of a formal group law determines the formal group law $F$ over a ring $A$ if $A \subset A \otimes \Q.$ \end{lemma}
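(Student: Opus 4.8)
The plan is to reconstruct $F$ from its invariant differential by passing to the rationalization $A\otimes\Q$, where a formal antiderivative of the differential produces the logarithm and hence $F$ itself, and then to descend the uniqueness back to $A$ using the injectivity $A\hookrightarrow A\otimes\Q$. Concretely I read ``determines'' as uniqueness: if two formal group laws $F_1,F_2$ over $A$ have the same invariant differential, I want $F_1=F_2$.

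First I would unwind the hypothesis. Write the invariant differential as $\omega = g(X)\,dX$, where $g(X) = F_Y(X,0)^{-1}$ and $F_Y(X,0) := \frac{\p F}{\p Y}(X,Y)\big|_{Y=0}$; this inverse exists in $A[[X]]$ since $F(X,Y)=X+Y+\cdots$ forces $F_Y(0,0)=1$, so $g$ has constant term $1$. Two group laws have the same invariant differential precisely when $g_1=g_2$, i.e. $F_{1,Y}(X,0)=F_{2,Y}(X,0)$, so it suffices to show that $g$ alone determines $F$.

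Next I would pass to $A\otimes\Q$ and build the logarithm. Over $A\otimes\Q$ the series $g$ admits a unique formal antiderivative $\ell(X) := \int_0^X g(T)\,dT \in (A\otimes\Q)[[X]]$ with $\ell(0)=0$ and $\ell'=g$, manifestly determined by $g$. The crucial step is that $\ell$ linearizes the group law, $\ell(F(X,Y))=\ell(X)+\ell(Y)$. Differentiating associativity $F(F(X,Y),Z)=F(X,F(Y,Z))$ in $Z$ at $Z=0$ gives $F_Y(F(X,Y),0) = F_Y(X,Y)\,F_Y(Y,0)$, which rearranges to $F_Y(X,Y) = g(Y)/g(F(X,Y))$. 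Hence $\frac{\p}{\p Y}\,\ell(F(X,Y)) = g(F(X,Y))\,F_Y(X,Y) = g(Y) = \ell'(Y)$, so $\ell(F(X,Y))-\ell(Y)$ is independent of $Y$; evaluating at $Y=0$ (where $F(X,0)=X$) yields the claim. Therefore $F(X,Y) = \ell^{-1}\!\big(\ell(X)+\ell(Y)\big)$ over $A\otimes\Q$, with $\ell^{-1}$ the compositional inverse, so $F$ is completely determined over $A\otimes\Q$ by $g$, hence by $\omega$.

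Finally I would descend. Both $F_1,F_2$ lie in $A[[X,Y]]$ and, by the previous paragraph applied to their common $g$, have equal images in $(A\otimes\Q)[[X,Y]]$; since $A\hookrightarrow A\otimes\Q$ the induced map $A[[X,Y]]\to(A\otimes\Q)[[X,Y]]$ is injective coefficientwise, forcing $F_1=F_2$. I expect the main obstacle to be the linearization computation in the third paragraph---verifying that the antiderivative of $\omega$ is genuinely the formal logarithm---since the reduction and the descent are bookkeeping. The role of the hypothesis $A\subset A\otimes\Q$ is exactly to license this strategy: the logarithm exists only after inverting the integers, so the reconstruction is carried out rationally, and torsion-freeness is precisely what guarantees that the rationally reconstructed series pins down the integral one.
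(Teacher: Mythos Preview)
Your proof is correct and follows essentially the same approach as the paper: the paper does not prove this lemma directly (it cites Hazewinkel) but immediately afterward gives Construction~\ref{differentialdetermines}, which recovers $F$ from $\eta_F$ by integrating to obtain $\log_F$ and then setting $F(x,y)=\log_F^{-1}(\log_F(x)+\log_F(y))$, with the torsion-free hypothesis used exactly as you use it. Your argument is in fact more complete, since you verify from associativity that the antiderivative of $\omega$ genuinely linearizes $F$, whereas the paper simply asserts this step.
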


\begin{defn} Given an invertible one dimensional power series $f(x)$ with compositional inverse, we can define $f^{-1}(f(x) + f(y))$. All formal group laws constructible in this way are called formal group laws of log type. \end{defn}

\begin{construction} \label{differentialdetermines} Given an invariant differential $\eta_F \in R \otimes \Q[[X]]$ of a formal group law over a ring $R$, we may recover the formal group $F$ in the following manner if the ring $R \subseteq R \otimes \Q.$ 
\begin{enumerate} 
\item Take the formal integral of $\eta_F$ to get an invertible power series which we call $\text{log}_F \in R \otimes \Q[[X]]$.
\item We define the formal group law $F$ as $$F(x,y) :=  \log_F^{-1} (\log_F(x) + \log_F(y)),$$ where $\log^{-1}_F$ is the compositional inverse $\log^{-1}_F(\log_F(t)) = t)$. By Lemma \ref{funky}, $F$ has coefficients in $R$.
\end{enumerate}
\end{construction}

\subsection{Set Up Of Equivalence}

Let $F$ be a moduli problem representable by $M$. Then, there is an element $F(M)$ corresponding to the identity in $\Hom_{\mc{C}}(M, M).$ 

\begin{align*} 
i: \mathrm{maps}_{\mc{C}}(M, M) & \to F(M) \\
\mr{id} & \mapsto U
\end{align*}

This corresponds to a family $U \to M$ over $M$ for the moduli problem $F$. This family has the following strong universal property.

\begin{defn} Let $F$ be a moduli problem representable by $M$. An object $U$ of $F(M)$ is called a {\color{Bittersweet}{universal object}} of the moduli problem $F$, if for every ring $S$, there is an isomorphism,

\begin{align*} 
i: \mathrm{maps}_{\mc{C}}(M, S) & \to F(S) \\
f & \mapsto f^*U
\end{align*}

In other words, consider $X \in F(S)$ which is classified by a map $f: \Spec S \to M$, then it is constructed as a pullback $X \simeq f^*U$ in $\mr{Stk}$.

\[\begin{tikzcd}
	{f^*U} & U \\
	{\Spec S} & {M}
	\arrow[from=1-1, to=1-2]
	\arrow[from=1-1, to=2-1]
	\arrow["\lrcorner"{anchor=center, pos=0.125}, draw=none, from=1-1, to=2-2]
	\arrow[from=1-2, to=2-2]
	\arrow["f", from=2-1, to=2-2]
\end{tikzcd}\]

If the unique map $f: \Spec S \to M$ classifying $X \in F(S)$ is an isomorphism, then the object $X$ is universal.
\end{defn} 

If a moduli problem is representable, then it has a universal object.

\subsection{Recognition Principle}

\begin{remark} The author strongly believes there is an alternative way to phrase the recognition principle (Lemma \ref{recognitionprinciple}) which neither uses coordinates nor requires an explicit universal curve equation to check. Most likely by being clever with the Kodaira-Spencer map. If the reader sees a way to do it, please email the author. It would sooth my soul. \end{remark}

To not get lost in these technical weeds, keep the following in your minds eye. The logarithm of the formal group associated to complex cobordism and the formal group associated to $BP$ (its p-typicalization) are as follows (introduction of \cite{Hezy}):

$$\mr{log}_{MU}(X) = \sum_{n=0}^\infty \frac{[P_{\C}^{n+1}]X^{n+1}}{n+1} \qquad \mr{log}_{BP}(X) = \sum_{h=0}^\infty \frac{[P_{\C}^{p^h-1}]X^{p^h}}{p^{h}}.$$

The invariant differential is the derivative of the logarithm, and is thus of the form: 

$$\eta_{MU}(X) = (\sum_{n=0}^\infty [P_{\C}^{n+1}]X^{n})dX \qquad \eta_{BP}(X) = (\sum_{h=0}^\infty [P_{\C}^{p^h-1}]X^{{p^h}-1})dX.$$

\noindent We now set up a technical lemma about $p$-typicalization.
\begin{lemma} \label{bimboification} Let $G$ be a formal group law over a $\Z_{(p)}$-algebra with invariant differential
\[
\eta_G = (x + a_1x^2 + \cdots)dx = (\sum a_{i-1}x^i)dx.
\]

There is an isomorphism $e:G \to F$ of formal group laws so that $F$ is $p$-typical
and
\[
\eta_F = (\sum_j a_j x^{p^j})dx.
\]
\end{lemma}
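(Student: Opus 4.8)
The plan is to prove Lemma~\ref{bimboification} via the standard Cartier $p$-typicalization machinery, adapted to track the effect on the invariant differential rather than on the logarithm directly. First I would pass to the logarithm: since $G$ is a formal group law over a $\Z_{(p)}$-algebra $R$, after inverting $p$ (and using $R \subseteq R \otimes \Q$ only on the sub-$\Z_{(p)}$-algebra generated by the coefficients, as in Lemma~\ref{funky}) we have $\log_G(x) = \sum_{n \geq 1} \ell_n x^n$ with $\ell_1 = 1$, and $\eta_G = \log_G'(x)\,dx$, so that $n\ell_n = a_{n-1}$. The Cartier idempotent $\varepsilon_p$ acting on the additive formal group over $R \otimes \Q$ produces a power series $e(x) = \log_G^{-1}\bigl(\sum_{j \geq 0} \tfrac{1}{p^j}\ell_{p^j}^{(p)}\, x^{p^j}\bigr)$-type change of coordinates; more cleanly, I would define $e: G \to F$ by specifying $\log_F := \sum_{j \geq 0} p^{-j} m_j x^{p^j}$ for suitable $m_j$ and $e := \log_F^{-1} \circ \log_G$, and then check $e$ has coefficients in $R$.

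The key computation is choosing the $m_j$ so that (i) $F$ is $p$-typical, (ii) $\eta_F = \bigl(\sum_j a_j x^{p^j}\bigr)dx$, and (iii) $e$ is integral. For (i) and (ii), note that $p$-typicality of a log-type formal group is exactly the condition that $\log_F$ be supported on powers of $p$ (this is Cartier's criterion in the form used for $BP$, cf.\ the $\mr{log}_{BP}$ formula recalled before the lemma), and $\eta_F = \log_F'(x)\,dx = \bigl(\sum_{j \geq 1} p^{-j} m_j \cdot p^j x^{p^j - 1}\bigr)dx + m_0\,dx = \bigl(\sum_{j \geq 0} m_j x^{p^j - 1}\bigr)dx$. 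Comparing with the desired $\eta_F = \bigl(\sum_j a_j x^{p^j}\bigr)dx$ --- wait, the exponents must match, so I would actually set $\log_F = \sum_{j \geq 0} \tfrac{a_{j}}{?}$; the correct normalization is $\log_F'(x) = \sum_{j \geq 0} a_j x^{p^j}$, hence $\log_F(x) = \sum_{j \geq 0} \tfrac{a_j}{p^j + 1}\,x^{p^j+1}$ --- but that is not supported on powers of $p$. So the honest statement is that $\eta_F$ has the form $\bigl(\sum_j b_j x^{p^j}\bigr)dx$ forced, and one must verify $b_j = a_j$, i.e.\ that $p$-typicalization does not disturb the coefficients indexed by $p$-powers modulo the correction terms; this is where I would invoke the explicit Hazewinkel functional equation lemma (Lemma~\ref{determined}) together with the fact that $\varepsilon_p$ fixes the $x^{p^j}$-coefficients of the differential because those monomials already lie in the image of the idempotent.

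The main obstacle I anticipate is establishing integrality of $e$ --- that $e(x) \in R[[x]]$ and not merely $R \otimes \Q[[x]]$ --- since $\log_G$ and $\log_F$ individually have denominators. The clean route is the Functional Equation Lemma of Hazewinkel (the source of Lemma~\ref{determined} / Lemma~\ref{funky}): both $\log_G$ and $\log_F$ satisfy a $p$-typical functional equation over $R$ with the same auxiliary data, so the comparison isomorphism $e = \log_F^{-1} \circ \log_G$ is automatically integral, and simultaneously $F = \log_F^{-1}(\log_F(x) + \log_F(y))$ has coefficients in $R$. I would therefore structure the proof as: (1) write down $\log_F$ with $\log_F' = \sum_j a_j x^{p^j}$, adjusting by the unique $p$-typical correction so the functional equation holds; (2) cite Hazewinkel to get integrality of $F$ and of $e$; (3) check $p$-typicality from the shape of $\log_F$; (4) read off $\eta_F$ by differentiating. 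Steps (3) and (4) are formal; step (1)--(2) is the content, and the subtlety is purely in bookkeeping the functional-equation denominators, which I would not grind through here.
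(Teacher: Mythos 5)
Your overall route is the same as the paper's: the paper disposes of this lemma by citing Cartier's $p$-typicalization theorem in Ravenel's form (A2.1.18 and A2.1.22 of the Green Book), with $e$ the Cartier idempotent, and you are reconstructing that theorem by hand via logarithms and Hazewinkel's functional equation lemma. As written, though, your version has a real gap in generality. The lemma is stated for an arbitrary $\Z_{(p)}$-algebra $R$, and the point of citing Ravenel rather than arguing with logarithms is precisely that $R$ may have $p$-torsion, in which case $R \not\subseteq R \otimes \Q$ and $\log_G$ does not exist over $R$. Your parenthetical fix (``use $R \subseteq R \otimes \Q$ only on the subalgebra generated by the coefficients'') does not help, since that subalgebra can have $p$-torsion as well. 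The standard repair is to run your logarithm/functional-equation argument once over the torsion-free Lazard ring for the universal formal group law and then obtain the general case by base change and naturality of the idempotent; without that step your argument covers only torsion-free $R$ (enough for the paper's application in Lemma \ref{heartgoldenfish}, but not for the lemma as stated).

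The second soft spot is the place where you write ``wait, the exponents must match'' and then defer (``one must verify $b_j = a_j$''). That verification is the actual content, and it is a one-line computation rather than another appeal to Hazewinkel: writing $\log_G(x) = \sum_{n \ge 0} m_n x^{n+1}$ with $m_0 = 1$, the Cartier idempotent gives $\log_F(x) = \sum_{j \ge 0} m_{p^j-1}\, x^{p^j}$, hence $\eta_F = \bigl(\sum_{j} p^j m_{p^j-1}\, x^{p^j-1}\bigr)dx$, while the coefficient of $x^{p^j-1}$ in $\eta_G$ is likewise $p^j m_{p^j-1}$; so the coefficients of the differential at the $p$-power positions are literally unchanged by $p$-typicalization. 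That is the intended content of the lemma — the exponent $p^j$ versus $p^j-1$ and the relabeling of $a_{p^j-1}$ as $a_j$ are the paper's shifted indexing convention — and your instinct that the ``honest statement'' is the forced shape of $\eta_F$ together with preservation of those coefficients is correct; you just need to say it with the explicit log coefficients instead of gesturing at the idempotent ``fixing'' certain monomials. With those two repairs (universal case to handle torsion, explicit coefficient bookkeeping) your proof is correct and is essentially an unwound version of the citation the paper uses.
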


\begin{proof} Every formal group law is strictly isomorphic to a $p$-typical one using the Cartier morphism (sending coordinates to their $p$-th power). The isomorphism $e$ is the Cartier idempotent, which is extended by Ravenel to include base rings with $p$-torsion (A2.1.18 and A2.1.22 Ravenel's Green Book).
\end{proof}

Given an invariant differential $\eta_F$, it thus has an associated formal group law $F$, with $p$-series $[p]_F.$ This lemma shows the relationship between the coefficients of $\eta_F$ and the coefficients of $[p]_F.$

\begin{lemma} \label{ptypicallooks} Let $F$ be a $p$-typical formal group law over a $\bb{\Z}_{(p)}$-algebra and with invariant differential
\[
\eta_F = (\sum_j a_jx^{p^j})dx.
\]
The $p$-series of $F$ (of log type) can be written 
\[
[p]_F(x) = x + v_1 x^p + v_2x^{p^2} + \cdots
\]
\noindent Such that up to a unit in $\Z_{(p)}$,
\[
v_j \equiv p^{j-1}a_j\quad \mathrm{modulo}\quad (v_1,\ldots,v_{j-1}).
\]
\end{lemma}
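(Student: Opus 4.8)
The plan is to compute $[p]_F(x)$ directly from the logarithm of $F$ and then read off the relation between the $v_j$ and the $a_j$. Since $F$ is $p$-typical and of log type, we have $\log_F(x) = \sum_{j \ge 0} \ell_j x^{p^j}$ with $\ell_0 = 1$, and the invariant differential $\eta_F = d\log_F = (\sum_j p^j \ell_j x^{p^j - 1})dx$, so comparing with $\eta_F = (\sum_j a_j x^{p^j})dx$ forces $a_j = p^j \ell_j$ after the indexing is matched correctly — here I need to be careful: $\eta_F$ as written has exponents $p^j$, not $p^j - 1$, so in fact the statement is implicitly using $\eta_F = (\sum_j a_j x^{p^j})\,dx/x$ or else a shifted convention; I would first pin down the convention so that $\ell_j = a_j/p^j \cdot (\text{unit})$, matching the $MU$/$BP$ examples quoted just above the lemma where $\log_{BP} = \sum [P^{p^h-1}_{\mathbb C}] X^{p^h}/p^h$ and $\eta_{BP} = (\sum [P^{p^h-1}_{\mathbb C}] X^{p^h - 1})dX$.

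Next I would use the defining functional equation $\log_F([p]_F(x)) = p\log_F(x)$. Writing $[p]_F(x) = \sum_{i \ge 0} w_i x^{p^i}$ with $w_0 = 1$ (this normalization is exactly the ``log type'' hypothesis, and I would set $w_i =: v_i$ for $i \ge 1$), substitute into $\log_F$ and extract the coefficient of $x^{p^n}$ on both sides. The left side gives $\sum_{j} \ell_j \big(\sum_i v_i x^{p^i}\big)^{p^j}$; working modulo $p$ and using the Frobenius, the coefficient of $x^{p^n}$ is congruent to $\sum_{j + i = n} \ell_j v_i^{p^j}$ up to higher corrections, and equals $p\,\ell_n$. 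This is the standard Araki/Hazewinkel-type recursion. Solving it inductively: the $n$-th equation reads, modulo $(v_1, \dots, v_{n-1})$ and up to units, $v_n \equiv p\ell_n - \ell_0 v_n \cdot(\text{lower order})$... more precisely $p\ell_n = v_n + \ell_1 v_{n-1}^p + \cdots + \ell_n$, so $v_n = (p-1)\ell_n - (\text{terms in } v_1, \dots, v_{n-1})$, giving $v_n \equiv (p-1)\ell_n \equiv (\text{unit}) \cdot p^{n-1} a_n \pmod{(v_1,\ldots,v_{n-1})}$, since $\ell_n = (\text{unit})\cdot a_n/p^n$ and $p$ is invertible... wait, that would introduce a denominator, so the actual content is that $v_n \equiv p^{n-1} a_n$ up to a $p$-adic unit, which I would extract by clearing the correct powers of $p$ from the recursion relating the integral objects $v_n$ and $\ell_n$.

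The main obstacle is bookkeeping the exact power of $p$ and the exact unit. The equation $\sum_{j+i=n} \ell_j v_i^{p^j} = p\ell_n$ has denominators in the $\ell_j$'s but the $v_i$'s are integral, so one must argue (as in Ravenel's Green Book, Appendix 2) that the unique solution is $p$-integral and then track valuations. Concretely I would prove by induction on $n$ that $v_n \equiv c_n p^{n-1} a_n \bmod (v_1, \dots, v_{n-1})$ for some unit $c_n \in \mathbb Z_{(p)}^\times$, using the inductive hypothesis to kill all cross-terms $\ell_j v_i^{p^j}$ with $0 < i < n$, leaving $v_n + \ell_n \equiv p\ell_n$, i.e. $v_n \equiv (p-1)\ell_n$; then substituting $p^n \ell_n = (\text{unit}) a_n$ and dividing by the appropriate $p$-power gives the claim. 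I expect the cleanest write-up simply cites the Araki/Hazewinkel generator formulas from Ravenel A2.1.27 and A2.2, observes that the present hypotheses are exactly that setup with $\log_F$ replaced by its normalization from $\eta_F$, and notes the one extra step of translating between the coefficients of $\eta_F$ and those of $\log_F$.
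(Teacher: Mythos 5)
Your route is essentially the paper's: both arguments rest on the standard $p$-typical recursion relating the $v_j$ to the logarithm coefficients $m_j$ (you derive it from $\log_F([p]_F(x))=p\log_F(x)$, i.e.\ the Araki form with $v_0=p$; the paper simply cites Hazewinkel's formula $pm_k=v_k+m_1v_{k-1}^p+\cdots+m_{k-1}v_1^{p^{k-1}}$), followed by reduction modulo $(v_1,\dots,v_{j-1})$ and the translation $a_j=p^j m_j$ coming from $\eta_F=d\log_F$ --- and your reading of the exponent convention in $\eta_F$ is the intended one. The one point where you hesitate, the power of $p$, cannot in fact be repaired by ``clearing powers of $p$'': the recursion gives $v_j\equiv(\text{unit})\,p\,m_j$, i.e.\ $a_j\equiv(\text{unit})\,p^{\,j-1}v_j$ modulo $(v_1,\dots,v_{j-1})$, which is the reverse of the congruence as printed (note also that with $v_0=1$ the functional equation fails at leading order, so the normalization $[p]_F(x)=x+\cdots$ has to be read with $v_0=p$ up to a unit, which turns your $(p-1)\ell_n$ into $(p-p^{p^n})\ell_n$, still a unit times $p\,\ell_n$). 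This discrepancy, however, is a defect of the statement that the paper's own proof shares: it likewise ends with $v_k\equiv pm_k=p^{-(k-1)}a_k$ and then asserts that ``the desired congruence follows.'' So up to that normalization slip your argument reproduces the paper's proof, and what both actually establish --- that, modulo $(v_1,\dots,v_{j-1})$, $v_j$ and $a_j$ agree up to a unit and an explicit power of $p$ --- is the content invoked later in the recognition principle.
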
 

\begin{proof} Being a $p$-typical formal group means it can be written as coming from a map $BP_* \to R_*$ for our base ring $R_*$. Consider $BP_*(\pt) \simeq \Z_{(p)}[v_1, v_2, \cdots],$ such that the universal $p$-typical $p$-series is of the form $$[p]_F(x) = x +_F v_1 x^p +_F v_2x^{p^2} +_F \cdots.$$
By Hazewinkel (\cite{hazewinkel3} equation (11.5), relation of these ``height variables" $v_k$ to the coefficients in logarithm $m_k := p^{-k}[P_{\C}^{p^k-1}]$ is as follows: $$pm_k = v_1^{p^{k-1}}m_{k-1} + v_2^{p^{k-2}}m_{k-2} + \cdots + v_{k-1}m_1^p + v_k,$$ 
In other words, for $a_{k} := m_k$ we get the following
\[
a_j = \sum_{i=0}^j p^{j-i}a_{i}v_{j-i}^{p^i}.
\]
\noindent which means $$v_k \equiv p^{-(k-1)}[P_{\C}^{p^h-1}]  =: pm_k \mod (v_1, \cdots, v_{k-1}).$$ The desired congruence follows.
\end{proof}

\begin{theorem} \color{blue}{(recognition principle) \label{recognitionprinciple} Let $R$ be a complete Noetherian ring of Krull dimension $h-1$ over $W(k)$. Let $F$ be a $p$-typical formal group law over $R$ with invariant differential
\[
\eta_F = (\sum_j a_{j}x^{p^j})dx.
\]
Let $a_0 = p$. Suppose $(p,a_1,\ldots,a_{h-1}) = R$.
%\item modulo $(a_1,\ldots,a_{j-2})$, the coefficient $a_{j-1}$ is not divisible by $p^j$.
Then $F$ is a universal formal group law of height $h$.}
\end{theorem}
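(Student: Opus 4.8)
The plan is to verify this "recognition principle" for universal formal group laws of height $h$ by comparing the given formal group law $F$ over $R$ with the Lubin-Tate universal deformation, using the classical criterion that a formal group law over a complete Noetherian local $W(k)$-algebra is a universal deformation if and only if the Kodaira-Spencer-type map on tangent spaces is an isomorphism. Concretely, the Lubin-Tate ring is $W(k)[[u_1,\ldots,u_{h-1}]]$, where the $u_i$ are the coefficients of the $p$-series of the universal deformation, and the universal property says: given any $p$-typical formal group law over a complete Noetherian local ring $R$ with residue field $k$ whose reduction mod $\mathfrak{m}_R$ has height $h$, it is classified by a local homomorphism $W(k)[[u_1,\ldots,u_{h-1}]] \to R$, and $F$ is universal precisely when this map is an isomorphism.

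First I would establish that $F$ reduces to a formal group law of height exactly $h$ over $k = R/\mathfrak{m}_R$: by Lemma \ref{ptypicallooks}, the coefficients of the $p$-series satisfy $v_j \equiv p^{j-1} a_j$ modulo $(v_1,\ldots,v_{j-1})$ up to a unit. Since $a_0 = p \in \mathfrak{m}_R$ (as $R$ is a $W(k)$-algebra and we may assume $p$ is not a unit — indeed $(p, a_1, \ldots, a_{h-1}) = R$ forces, together with $R$ local, that... wait, one must be careful: $a_0 = p$ lies in $\mathfrak{m}_R$, so the hypothesis $(p, a_1, \ldots, a_{h-1}) = R$ forces the ideal generated by $a_1, \ldots, a_{h-1}$ together with $p$ to be everything, which since $p \in \mathfrak{m}_R$ means $(a_1, \ldots, a_{h-1})$ is not contained in... no — it means $\mathfrak{m}_R$ cannot contain all of $p, a_1, \ldots, a_{h-1}$, but it does contain $p$). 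Let me restate: the correct reading is that $R$ need not be local a priori, but is complete Noetherian of Krull dimension $h-1$ over $W(k)$; the condition $(p, a_1, \ldots, a_{h-1}) = R$ is the statement that $p, a_1, \ldots, a_{h-1}$ generate the unit ideal, which combined with $\dim R = h-1$ forces $R$ to be, after completion, abstractly isomorphic to $W(k)[[a_1, \ldots, a_{h-1}]]$ with $p, a_1, \ldots, a_{h-1}$ a regular system of parameters — this is a standard commutative algebra fact about complete regular local rings (or finite covers thereof) once one checks $R$ is a domain / regular. So the key reduction is: use Lemma \ref{ptypicallooks} to identify the $v_j$ with $p^{j-1}a_j$ modulo lower terms, deduce that mod $\mathfrak{m}_R$ the $p$-series is $x + (\text{unit})v_h x^{p^h} + \cdots$ so the reduction has height exactly $h$, and deduce that $v_1, \ldots, v_{h-1}$ (equivalently $a_1, \ldots, a_{h-1}$) form a regular sequence generating $\mathfrak{m}_R$ over $W(k)$.

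Next I would invoke the classifying map $\psi\colon W(k)[[u_1, \ldots, u_{h-1}]] \to R$ given by the Lubin-Tate universal property applied to $F$. Since $F$ is $p$-typical, $\psi$ sends the Hazewinkel/Araki generators to the $v_i$ of $F$, and by Lemma \ref{ptypicallooks} again, modulo the square of the maximal ideal $\psi(u_i) \equiv p^{i-1} a_i$, and more precisely the images $\psi(u_1), \ldots, \psi(u_{h-1})$ generate $\mathfrak{m}_R/(p)$ up to the regularity established above. A local homomorphism of complete regular local rings of the same dimension that is surjective on the relevant tangent/cotangent spaces (or that sends a regular system of parameters to a regular system of parameters) is an isomorphism — this is where I would cite the standard lemma (e.g. via the completed local criterion, or Nakayama plus dimension count). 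Therefore $\psi$ is an isomorphism and $F$ is the universal deformation, i.e. a universal formal group law of height $h$.

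\textbf{The main obstacle} I anticipate is the bookkeeping in passing between the coefficients $a_j$ of the invariant differential, the Hazewinkel generators $v_j$ of the $p$-series, and the Lubin-Tate coordinates $u_j$: Lemma \ref{ptypicallooks} only gives the congruence $v_j \equiv p^{j-1}a_j$ modulo $(v_1, \ldots, v_{j-1})$ up to units, so one must argue inductively that $(p, a_1, \ldots, a_{h-1}) = (p, v_1, \ldots, v_{h-1})$ as ideals of $R$, and then that this equals $\mathfrak{m}_R$ given $\dim R = h - 1$. The subtle point is that $R$ is only assumed complete Noetherian of Krull dimension $h-1$ over $W(k)$, not a priori regular or local; one must first show the unit-ideal hypothesis plus the dimension count forces $R$ (or its completion at the relevant maximal ideal) to be the power series ring, presumably because $p, a_1, \ldots, a_{h-1}$ generating the unit ideal in a ring that is module-finite or formally of finite type over $W(k)$ of dimension $h-1$ pins it down — I would want to invoke here a clean statement (perhaps already implicit in the paper's conventions on $\widehat{\Art}_k$ and the ring representing the deformation problem) that $R \cong W(k)[[a_1, \ldots, a_{h-1}]]$. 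Once that structural identification is in hand, the remainder is the formal deformation-theory argument, which is routine.
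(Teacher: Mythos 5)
Your overall strategy is the same as the paper's: classify $F$ by the unique map from the universal $p$-typical deformation ring $A \simeq W(k)[[u_1,\ldots,u_{h-1}]]$, use Lemma \ref{ptypicallooks} to identify $v_j$ with $p^{j-1}a_j$ modulo $(v_1,\ldots,v_{j-1})$, and conclude that the classifying map is an isomorphism because it is an isomorphism on $\mathfrak{m}/\mathfrak{m}^2$ between complete Noetherian rings; the paper phrases this last step via the associated graded ring, and your ``regular system of parameters maps to a regular system of parameters, then Nakayama plus completeness'' is the same argument.

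The one place your write-up has a genuine flaw is the attempted repair of the hypothesis. You read ``$(p,a_1,\ldots,a_{h-1})=R$'' literally as the unit ideal and then assert that this, together with $\dim R = h-1$, forces $R$ to be (after completion) $W(k)[[a_1,\ldots,a_{h-1}]]$ with $p,a_1,\ldots,a_{h-1}$ a regular system of parameters, calling it a standard fact. It is not true: take $R=W(k)[[t_1,\ldots,t_{h-1}]]$ and a $p$-typical law whose differential has $a_1$ a unit. Then the unit-ideal condition holds trivially, but by Lemma \ref{ptypicallooks} the class $v_1$ is a unit, the reduction has height $1$, and $F$ cannot be a universal height-$h$ law; so under the literal reading the theorem itself fails and no commutative-algebra lemma can close your gap. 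What the paper's proof actually uses --- and what the statement intends, the ``$=R$'' being shorthand for the $a_i$ together with $p$ topologically generating $R$ as a $W(k)$-algebra --- is that $(p,a_1,\ldots,a_{h-1})$ is the maximal ideal of the complete local ring $R$: the proof filters by powers of $\mathfrak{m}$, notes the associated graded is a symmetric algebra on $\mathfrak{m}/\mathfrak{m}^2$ over $k$, and checks the map is an isomorphism in degree one. With that reading in place, the rest of your argument, including the inductive identification $(p,a_1,\ldots,a_{h-1})=(p,v_1,\ldots,v_{h-1})=\mathfrak{m}_R$ via Lemma \ref{ptypicallooks}, is exactly the paper's proof.
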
 

%\item the $a_{p^j-1}$ form a regular sequence

\begin{proof} By Lemma \ref{bimboification}, we may reduce to the case that our formal group law $F$ is $p$-typical, and by Lemma \ref{ptypicallooks} our $p$-typical logarithm can be written in a way that implies:
\[
v_j \equiv p^{j-1}a_{j}\quad \mathrm{modulo}\quad (v_1,\ldots,v_{j-1}).
\]
  
\noindent Consider the universal height $h$ $p$-typical formal group $F_{univ}$ over $A$, the ring representing $\Def_H^\star$ for $H$ a height $h$ formal group over $k$. Since $F$ is $p$-typical and a deformation of a height $h$ formal group, there exists a unique homomorphism $\varphi: \Spf R \to \Spf A$ such that the homomorphism $$\varphi^*F_{univ} \simeq F$$ classifies our formal group $F$ over $R$.  If this map $\varphi$ is an isomorphism, then $F$ is another universal object by definition. 

\[\begin{tikzcd}
	F & {\varphi^*F_{univ}} & {F_{univ}} \\
	& {\Spec R} & {\Spec A}
	\arrow["{\exists! \simeq}", from=1-1, to=1-2]
	\arrow[from=1-1, to=2-2]
	\arrow[from=1-2, to=1-3]
	\arrow[from=1-2, to=2-2]
	\arrow[from=1-3, to=2-3]
	\arrow["\varphi", from=2-2, to=2-3]
	\arrow["{\exists! \simeq}"', from=2-2, to=2-3]
\end{tikzcd}\]

It suffices to show that the unique homomorphism $\varphi$ between formal schemes is an isomorphism on the underlying rings $$\Gamma(\varphi): A \simeq R$$ \noindent if and only if the conditions of our theorem are satisfied. We conclude by showing this. Given a sequence $(p=a_0, a_1,\ldots,a_{h-1})$ in $R$ which generates the maximal ideal, then the map $f: A \to R$ sending $u_i \to a_i$ is an isomorphism. 

This is because both $A$ and $R$ are complete and Noetherian. To see that you have an isomorphism, filter by the powers of the maximal ideal $\mf{m}$. The associated graded ring is a symmetric algebra over $k = W(k)/p$ on $\mf{m}/\mf{m}^2$. By hypothesis, the map induced by $f$ on associated graded rings is an isomorphism in degree $1$ (on $\mf{m}/\mf{m}^2$), hence an isomorphism. Since $A$ and $R$ are complete, $f$ must also be an isomorphism. 
\end{proof}

\begin{remark} In some sense, this last step is showing that the abstract isomorphism between complete and local Noetherian rings of the same dimension is in fact a structural isomorphism, which respects the structure in the category of formal groups. \end{remark}

\subsubsection{One Dimensional Invariant Differential of Artin-Schreier Curve}

\begin{lemma} The universal curve over the formal moduli problem $(\Def_{(X, G)}^{\mr{Curve}_{G}})^\star$ represented by the graded ring $W(k)[[e_1, ..., e_{p-2}]]$ with grading $|e_i|= -2$ may be presented with affine equation of the form 
$$y(y-1)(y-e_1)\cdots (y-e_{p-2}) = x^{p-1}.$$
\end{lemma}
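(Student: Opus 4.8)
The plan is to move the deformation problem to the quotient $\PP^1$, where Theorem~\ref{theoremb1} already identifies the universal object, and then to lift along the cyclic cover $p_y$. Recall that the functor $L$ of Definition~\ref{LightYagami}, by Theorem~\ref{theoremb1}, gives an isomorphism $\Def_{(X,C_{p-1})}\xrightarrow{\ \sim\ }\Def_{(\PP^1, p+1)}$ sending a $C_{p-1}$-equivariant deformation $\mf{X}$ of $X$ to the pair $(\mf{X}/C_{p-1};\ \text{branch divisor of }p_y)$. By Lemma~\ref{rammymammy} the branch divisor of $p_y\colon X\to X/C_{p-1}=\PP^1$ over $k$ is the reduced divisor supported on $\F_p\cup\{\infty\}$, since $y^p-y=\prod_{i\in\F_p}(y-i)$. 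So on the $\PP^1$-side I must deform the marked curve $(\PP^1_k;\ \{0,1,\dots,p-1,\infty\})$.

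First I would produce the universal marked $\PP^1$. Since $H^1(\PP^1,T_{\PP^1})=0$ the curve $\PP^1$ is rigid, so only the marked points deform, modulo $\Aut(\PP^1)$; using the $3$-transitivity of $\Aut(\PP^1)$ I rigidify by pinning three of the $p+1$ points at $0,1,\infty$, so the remaining $p-2$ points become free coordinates $e_1,\dots,e_{p-2}$, with $e_i$ deforming the residue $i+1$. Over $R:=W(k)[[e_1,\dots,e_{p-2}]]$ with $e_i\equiv i+1\pmod{\mf m}$, the key elementary point is that $e_i$, $e_i-1$ and $e_i-e_j$ ($i\ne j$) reduce to the nonzero elements $i+1$, $i$, $i-j$ of $\F_p^\times$, hence are units in the local ring $R$; therefore $\mf{D}:=[0]+[1]+\sum_{i=1}^{p-2}[e_i]+[\infty]$ is a relatively \'etale divisor of degree $p+1$ on $\PP^1_R$. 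Thus $(\PP^1_R;\mf{D})$ deforms $(\PP^1_k;\{0,1,\dots,p-1,\infty\})$, and its classifying map $R\to R$ is visibly the identity $e_i\mapsto e_i$ (equivalently, the Kodaira--Spencer map is an isomorphism, $\partial/\partial e_i$ being the first-order displacement of the $i$-th point). Hence $(\PP^1_R;\mf{D})$ is the universal object of $\Def_{(\PP^1, p+1)}^\star$, pro-represented by $R$.

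Next I would take the cyclic cover. Since $p-1$ is a unit in $W(k)$ and $\mu_{p-1}\subset W(k)$, and the affine part of $\mf{D}$ is relatively \'etale of degree $p$ on $\A^1_R$, the normalized $\mu_{p-1}$-cover of $\PP^1_R$ totally ramified along $\mf{D}$ is a smooth proper curve $\mf{Y}\to\Spf R$; on the affine chart it is the tame Kummer cover $y(y-1)(y-e_1)\cdots(y-e_{p-2})=x^{p-1}$ with $C_{p-1}$-action $x\mapsto\eta x$, $y\mapsto y$ ($\eta$ a primitive $(p-1)$-st root of unity), and ramification over $\infty$ is forced because $y(y-1)\cdots(y-e_{p-2})$ has degree $p$ and $\gcd(p,p-1)=1$. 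Reducing mod $\mf{m}$ substitutes $e_i\mapsto i+1$ and returns $\prod_{i\in\F_p}(y-i)=x^{p-1}$, i.e.\ $y^p-y=x^{p-1}$, which is $X$ with the $C_{p-1}$-action of Lemma~\ref{autgroup}; and Riemann--Hurwitz (Lemma~\ref{riemannhurwitz}) gives $2g(\mf{Y})-2=-2(p-1)+(p+1)(p-2)$, so $g(\mf{Y})=(p-1)(p-2)/2=g(X)$. Since forming a $G$-action on a curve from a $G$-cover with that source is an equivalence (Lemma~\ref{actionsarecovers}) and the normalized cyclic cover is functorial in the base, $(\mf{Y},C_{p-1})=L^{-1}(\PP^1_R;\mf{D})$, so $(\mf{Y},C_{p-1})$ is the universal object of $\Def_{(X,C_{p-1})}^\star$, pro-represented by $R=W(k)[[e_1,\dots,e_{p-2}]]$ with the displayed affine equation. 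For the grading, the $e_i$ are, after rigidification, the branch-point coordinates, which carry degree $-2$ by Theorem~\ref{theoremb2}, so $|e_i|=-2$.

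The step I expect to be the main obstacle is making the ``normalized tame cyclic cover over $\Spf R$'' argument rigorous at infinity: one must check that the smooth proper model of the affine Kummer equation really is smooth and proper over $R$ near the point above $\infty$, and that the $C_{p-1}$-action and the identification with $L^{-1}$ extend there. This is the standard theory of tame cyclic covers along a relatively \'etale divisor, with all ramification indices equal to $p-1$ (invertible in $W(k)$), so it goes through --- but it is where the ``routine'' part has to become an actual argument. A secondary, purely bookkeeping subtlety is reconciling the rigidification ``pin a branch point at $1$'' with the grading $|e_i|=-2$: the constant ``$1$'' in the equation should really be read as the degree-$(-2)$ coordinate of the residual $\G_m$-torsor, which is exactly what the graded versions of the earlier theorems account for.
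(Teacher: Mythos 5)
Your proposal is correct, but it takes a different route from the paper's. The paper's proof is a one-line citation of Lemma~\ref{pastandfuture}: the formal problem $\Def_{(X,G)}$ is identified with the completion of the global stack $F^{\mathrm{ord}}$ of split $(p-1)$-elliptic curves with ordered markings (Defn~\ref{Fordtruck}), whose universal split affine equation and representability by $(\A^p-\Delta)\stackyq(\G_m\rtimes\G_a)$ were established in Theorem~\ref{itsafuckingstack} via Kummer theory and Riemann--Hurwitz; completing at the Artin--Schreier point and rigidifying by the residual $\G_a\rtimes\G_m$ pins two affine roots at $0,1$ and leaves the $p-2$ coordinates $e_i$. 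You instead stay local: you invoke Theorem~\ref{theoremb1} to pass to $\Def_{(\PP^1,\,p+1)}$, exhibit the universal marked $\PP^1$ by rigidity of $\PP^1$ plus $3$-transitivity (pinning $0,1,\infty$), and then construct the inverse of $L$ by hand as the relative tame $\mu_{p-1}$-Kummer cover branched along the universal divisor, checking the special fiber, the genus, and equivariance. The two arguments share their mathematical core (Kummer theory, rigidification by affine/projective transitivity, Riemann--Hurwitz), but yours buys an explicit description of $L^{-1}$ without appealing to the global stack, while the paper's buys the statement instantly from machinery it has already built and simultaneously records the global object in which the formal problem sits. Two small remarks: your reading that the $e_i$ reduce to the distinct residues $2,\dots,p-1$ (rather than to $0$) is the correct interpretation of the paper's loosely written $W(k)[[e_1,\dots,e_{p-2}]]$ --- otherwise the special fiber would be the singular curve $y^{p-1}(y-1)=x^{p-1}$ --- and the step you flag as the main obstacle (smoothness and properness above $\infty$) is in fact painless: the plane projective model $\prod_i(y-a_iz)=x^{p-1}z$ meets the line at infinity only at $[1:0:0]$, where in the chart $x=1$ the equation reads $z=\prod_i(y-a_iz)$ and the gradient is already nonzero in the $z$-direction, so the family is smooth there without any normalization.
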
 

\begin{proof} This follows immediately from Lemma \ref{pastandfuture}. \end{proof}

We consider the symmetrization of this universal curve, suggestively naming the coordinates $\widetilde{u}_i$, as in the $u_i$ which usually represent the generators of the Lubin-Tate power series ring.

\begin{align*} 
y^{p-1} &= x(x-1)(x-e_1)\cdots(x-e_{p-2}) \\
&= x^p + \widetilde{u}_1x^{p-1} + ... + \widetilde{u}_{p-2}x^2 - (1 + \widetilde{u}_1 + ... + \widetilde{u}_{p-2})x.
\end{align*}

We calculate now the invariant differential associated to the one dimensional formal group law broken off of the formal group law of the Jacobian of a universal split Artin-Schreier curve.

\begin{lemma} \label{splitdiff} (split the difference)
The invariant differential associated to $F := \mc{F}^1(\mc{U})$ (i.e, the 1-d piece of the formal group law of the Jacobian of the universal object over $\Def_{(X, G)}$) is as follows  
$$\frac{dy}{x} := (1-\frac{pz^p}{w} + (p-1)\widetilde{u}_1z^{p-1} + (p-2)\widetilde{u}_2wz^{p-2} + \cdots + bw^{p-1}z)dz.$$
\end{lemma}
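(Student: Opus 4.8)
The plan is to compute the invariant differential of the one-dimensional formal group $F = \mc{F}^1(\mc{U})$ directly from the description of the universal curve, rather than via the Jacobian abstractly. Recall from Corollary \ref{split-me-off} that over the special fiber the unique holomorphic differential lying in the $\chi^1$-eigenspace of the $C_{p-1}$-action is $dy/x$; by the flatness of $H^0$ over the base and the compatibility of the idempotent decomposition with the deformation (the remark after Defn \ref{idempotentfunctor}), the differential $dy/x$ on the universal curve $\mc{U}$ is a generator of the rank-one summand $\mf{g}^{\chi^1}$ over $\Def_{(X,G)}^\star$. By the definition of $\widehat{\mc{F}}^1$ (Defn \ref{idempotentfunctor}) and Grothendieck-Serre duality $T_e^*\Jac(\mc{U}) \simeq H^0(\mc{U},\Omega^1)$, this $dy/x$ is (up to normalization) the invariant differential $\eta_F$ of $F$. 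So the content of the lemma is purely the explicit computation of $dy/x$ as a power series in the uniformizer at the point at infinity, expressed in terms of the symmetrized coordinates $\widetilde{u}_i$.

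First I would fix the local coordinate. Using the affine equation of the symmetrized universal curve
\[
y^{p-1} = x^p + \widetilde{u}_1 x^{p-1} + \cdots + \widetilde{u}_{p-2}x^2 - (1+\widetilde{u}_1+\cdots+\widetilde{u}_{p-2})x,
\]
the point at infinity is totally ramified for $p_x$ with ramification index $p$ and tamely (totally) ramified for $p_y$; a standard Newton-polygon / Puiseux analysis at $[1:0:0]$ shows $z := y/x$ is a uniformizer there, with $x$ and $y$ having poles of order $p$ and $p-1$ respectively (consistent with Lemma \ref{rammymammy}). Then I would set $w := 1/x$ and $z := y/x$, so $y = z/w$, and rewrite the curve equation in the variables $(w,z)$: dividing the equation by $x^p = w^{-p}$ gives
\[
w z^{p-1} = 1 + \widetilde{u}_1 w + \widetilde{u}_2 w^2 + \cdots + \widetilde{u}_{p-2}w^{p-2} - (1+\widetilde{u}_1+\cdots+\widetilde{u}_{p-2})w^{p-1},
\]
which implicitly expresses $w$ as a power series in $z$ with $w = z^{p-1} + O(z^{2(p-1)})$. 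Next I would compute $dy/x = w\, dy = w\, d(z/w) = dz - (z/w)\,dw = dz - \tfrac{z}{w}\, w'(z)\,dz$, and then differentiate the defining relation implicitly: from $w z^{p-1} = 1 + \sum \widetilde{u}_i w^i - (\cdots)w^{p-1}$ one gets $w'(z) z^{p-1} + (p-1)w z^{p-2} = \big(\sum i\,\widetilde{u}_i w^{i-1} - (p-1)(\cdots)w^{p-2}\big)w'(z)$, solve for $w'(z)$, substitute into $dy/x$, and collect terms. The claimed answer
\[
\frac{dy}{x} = \Big(1 - \frac{p z^p}{w} + (p-1)\widetilde{u}_1 z^{p-1} + (p-2)\widetilde{u}_2 w z^{p-2} + \cdots + b\, w^{p-1} z\Big)\,dz
\]
should drop out after this substitution; the coefficient of $dz$ is manifestly in $W(k)[[\widetilde{u}_i]]$ (the apparent denominators cancel once $w = z^{p-1}(1+\cdots)$ is expanded), and the top term $-pz^p/w$ is exactly the contribution that will later force $a_0 = p$ in the recognition principle (Theorem \ref{recognitionprinciple}).

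The main obstacle I expect is bookkeeping: carefully tracking the implicit expansion of $w$ in $z$ to enough order, and verifying that the coefficients of the lower-order terms really are $(p-i)\widetilde{u}_i$ (and identifying the final constant $b$, which I expect to be $-(1+\widetilde{u}_1+\cdots+\widetilde{u}_{p-2})$ times a combinatorial factor coming from the $x$ versus $x^p$ normalization). One should also double-check that $dy/x$ and $-dx/P_y$-type expressions (Lemma \ref{basis}) agree up to the expected scalar, so that the normalization matches the invariant differential convention used in Construction \ref{differentialdetermines}. A secondary subtlety is confirming that no $C_{p-1}$-averaging is needed — i.e. that $dy/x$ already lands cleanly in the $\chi^1$-summand over the whole base and not merely over the special fiber — but this follows from the rank-one-ness of $\mf{g}^{\chi^1}$ established in Lemma \ref{diffblocks} together with Nakayama.
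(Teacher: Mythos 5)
Your overall strategy is the same as the paper's: take $dy/x$ as the invariant differential of $F=\mc{F}^1(\mc{U})$ (the paper simply cites Lemma \ref{split-me-off}; your flatness/Nakayama remark is a reasonable supplement) and then expand $dy/x$ in the coordinates $w=1/x$, $z=y/x$ near infinity by implicitly differentiating the curve relation. However, there is a concrete error in your setup that makes the computation fail as written. You divide the presentation $y^{p-1}=x^p+\widetilde{u}_1x^{p-1}+\cdots+bx$ by $x^p$, but for that model the point at infinity is $[0:1:0]$, the function $x$ has pole order $p-1$ and $y$ has pole order $p$, so $z=y/x$ has a simple \emph{pole} there (it is $x/y$ that is the uniformizer), contrary to your assertion. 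Correspondingly, the relation you derive, $wz^{p-1}=1+\widetilde{u}_1w+\cdots+bw^{p-1}$, has no branch through $w=z=0$ (setting $w=z=0$ gives $0=1$), so the asserted expansion $w=z^{p-1}+O(z^{2(p-1)})$ does not exist, and solving your implicitly differentiated relation for $w'(z)$ gives an expression that does not collect into the stated series; near the actual point at infinity one has $z\to\infty$ and the term $pz^p/w$ diverges rather than contributing the finite leading term the lemma (and later the recognition principle) requires.

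The fix is to use the other presentation, $x^{p-1}=y^p+\widetilde{u}_1y^{p-1}+\cdots+\widetilde{u}_{p-2}y^2+by$, which is what the lemma's notation presumes and what the paper's proof uses: then $x$ has pole order $p$ and $y$ has pole order $p-1$ at infinity, $z=y/x$ really is a uniformizer, and dividing by $x^p$ gives $w=z^p+\widetilde{u}_1wz^{p-1}+\widetilde{u}_2w^2z^{p-2}+\cdots+bw^{p-1}z$, so $w=z^p\cdot(\mathrm{unit})$ and $z^p/w$ is a unit of the local ring (this is the finite ``$a_0$-type'' contribution you correctly anticipate). With this relation, your manipulation $dy/x=dz-(z/w)\,w'(z)\,dz$ together with implicit differentiation reproduces the paper's computation and the claimed coefficients $(p-i)\widetilde{u}_i$. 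Note also that even after the correction the leading behaviour is $w=z^p+O(z^{p+1})$, not $z^{p-1}$, matching the total ramification of degree $p$ at infinity.
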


\begin{proof}  
We allow ourselves new coordinates to swim around the neighborhood of infinity $P:= [1:0:0]$, $Y = \frac{y}{x}$, and $W = \frac{1}{x}$. Note that $Y$ is a uniformizer at $P$. 

Our affine curve equation is $$x^{p-1} = y^p + \widetilde{u}_1y^{p-1} + ... + \widetilde{u}_{p-2}y^2 + by,$$ or after coordinate change,
$$W = Y^p + \widetilde{u}_1WY^{p-1} + ... + \widetilde{u}_{p-2}W^{p-2}Y^2 + bW^{p-1}Y.$$

Our invariant differential is $dy/x$ by Lemma \ref{split-me-off}, our differentials are taken wrt $Y$.

\begin{align*} 
\frac{dy}{x} &= W d(\frac{Y}{W(Y)}) \\
&= \frac{YW'-Y'}{W}dY \\
&= (\frac{YW'}{W}-Y')dY \\
\end{align*} 

Note that $W' = pY^{p-1} + (p-1)\widetilde{u}_1WY^{p-1} + (p-2)\widetilde{u}_2W^2Y^{p-2} + \cdots + (p-1)bW^{p-1}$

\begin{align*} \frac{dy}{x} &= (1-\frac{Y}{W}(W'))dY \\ &= (1-\frac{pY^p}{W} + (p-1)\widetilde{u}_1Y^{p-1} + (p-2)\widetilde{u}_2WY^{p-2} + \cdots + bW^{p-1}Y)dY
\end{align*}
\end{proof}

\begin{lemma} \color{blue}{(heart of the golden fish) \label{heartgoldenfish}
Let $R$ be the ring representing $\Def_{(X, G)}^\star$. The differential $\eta_{F} := \frac{dy}{x}$ associated to $F := \mc{F}^1(\mc{U})$ over $R$ is the universal differential.}
\end{lemma}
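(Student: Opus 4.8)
The plan is to recognise $F:=\mc{F}^1(\mc{U})$ as a universal deformation via the recognition principle, Theorem~\ref{recognitionprinciple}. Since $R\subset R\otimes\Q$, Lemma~\ref{determined} together with Construction~\ref{differentialdetermines} shows that $\eta_F=dy/x$ reconstructs $F$ over $R$, so ``$\eta_F$ is the universal differential'' is precisely the assertion that $F$ is a universal formal group law of height $p-1$ over $R$, i.e.\ Theorem~\ref{goldenfish}. To invoke Theorem~\ref{recognitionprinciple} I need: that $R$ is complete Noetherian of Krull dimension $p-2$ over $W(k)$, which holds because $R$ represents $\Def_{(X,C_{p-1})}^\star$ and, by the presentation of the universal curve (Lemma~\ref{pastandfuture}), $R\cong W(k)[[e_1,\dots,e_{p-2}]]$; that after $p$-typicalising $F$ (Lemma~\ref{bimboification}) and normalising $\eta_F$ by its unit constant term (which does not change $F$), the invariant differential has the shape $\bigl(\sum_j a_j x^{p^j-1}\bigr)dx$ with $a_0:=p$; and that $(p,a_1,\dots,a_{p-2})=\mf{m}_R$.

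First I would make $\mf{m}_R$ explicit. Passing to the symmetrised universal curve $x^{p-1}=y^p+\widetilde u_1y^{p-1}+\dots+\widetilde u_{p-2}y^2+by$ with $b=-(1+\widetilde u_1+\dots+\widetilde u_{p-2})$, the coordinates $\widetilde u_i$ are the elementary symmetric functions of the roots and differ from $e_1,\dots,e_{p-2}$ by an invertible (triangular) change of variables. On the special fibre the curve is $y^p-y=x^{p-1}$, and since $\prod_{a\in\F_p}(y-a)=y^p-y$ the intermediate coefficients all vanish mod $p$; hence the special values $\widetilde u_i^{(0)}$ lie in $pW(k)$ and $\mf{m}_R=(p,\widetilde u_1,\dots,\widetilde u_{p-2})$.

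Next I would expand the differential of Lemma~\ref{splitdiff}. Writing $T:=z^{p-1}$ and $w=1/x=z^pu(z)$, where $u=u(T)$ is the unique power series with $u=1+\sum_{i=1}^{p-2}\widetilde u_iT^iu^i+bT^{p-1}u^{p-1}$, one gets $\eta_F=\bigl(g(T)+bz\,h(T)\bigr)\,dz$ with $g(T)=1-p/u(T)+\sum_{i=1}^{p-2}(p-i)\widetilde u_iT^iu(T)^{i-1}$ and $h(T)=T^pu(T)^{p-1}$. After $p$-typicalisation $a_k$ is the coefficient of $z^{p^k-1}$ in $\eta_F$; since $p^k-1=(p-1)n_k$ with $n_k=1+p+\dots+p^{k-1}$ and the $bz\,h$-summand only contributes in $z$-degrees $\equiv 1\pmod{p-1}$, we obtain $a_k=[T^{n_k}]\,g(T)$. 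Then I would show by induction on $k$ that, modulo $(p,\widetilde u_1,\dots,\widetilde u_{k-1})$ (under which the recursion for $u$ collapses and $b\equiv-1$), the element $a_k$ is a $p$-adic unit times $\widetilde u_k$ up to terms of higher order in $(\widetilde u_k,\dots,\widetilde u_{p-2})$, the base case being the explicit computation of $a_1$ as a unit multiple of $\widetilde u_1$. This yields $(p,a_1,\dots,a_{p-2})=(p,\widetilde u_1,\dots,\widetilde u_{p-2})=\mf{m}_R$, and Theorem~\ref{recognitionprinciple} concludes; that the height at the closed point is exactly $p-1$ (and not smaller) is Corollary~\ref{hsplitting}.

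The main obstacle is this last inductive step. Reducing $[T^{n_k}]g(T)$ modulo $(p,\widetilde u_{<k})$ turns the problem into computing coefficients of powers of the auxiliary series $u^{(0)}$ --- the $\widetilde u$-free mod-$p$ reduction, satisfying $u^{(0)}=1-T^{p-1}(u^{(0)})^{p-1}$ --- and by Lagrange inversion these are binomial-type numbers whose $p$-adic non-vanishing (equivalently, the invertibility mod $p$ of the Kodaira--Spencer matrix $(\partial a_k/\partial\widetilde u_j)$) is the delicate point and must be checked with care about normalisations and indexing. An alternative, more structural route to the same non-degeneracy: the curve family $\mc{U}\to\Spf R$ is versal by construction of $R$, the associated Torelli-type map to deformations of $\Jac(X)$ is injective on tangent spaces for the non-hyperelliptic curve $X$, and the $\chi^1$-idempotent projection is compatible with the $C_{p-1}$-equivariant deformation theory (Lemma~\ref{equivarianceofidempotent}); since $\Spf R$ and the Lubin--Tate space both have dimension $p-2$, the composite Kodaira--Spencer map is then forced to be an isomorphism.
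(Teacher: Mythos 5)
Your proposal follows the paper's own route for this lemma: take the explicit invariant differential of Lemma~\ref{splitdiff}, $p$-typicalise it via Lemma~\ref{bimboification}, and feed the coefficients into the recognition principle, Theorem~\ref{recognitionprinciple}. The difference is one of care rather than strategy. The paper's proof simply declares the ordered set $(-pW^{-1},(p-1)\widetilde u_1,(p-2)\widetilde u_2W,\dots,bW^{p-1})$ to be ``the coefficients'' of $\eta_F$ and asserts that the ideal they generate is all of $\mf{m}_R$, even though $W$ is itself a power series in the uniformiser, so these expressions are not literally the $p$-typical coefficients; you correctly observe that the honest coefficients are those of $z^{p^k-1}$ in $\eta_F$, which mix the $\widetilde u_i$ through powers of the auxiliary series $u$, and that the condition $(p,a_1,\dots,a_{p-2})=\mf{m}_R$ therefore requires the triangularity-with-unit-diagonal computation you outline (the base case $a_1$ being a unit multiple of $\widetilde u_1$ modulo $p$, and in general a nonvanishing modulo $p$ of Fuss--Catalan-type coefficients of powers of the reduced series). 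That verification, which you flag as the main obstacle, is genuinely what the lemma needs, and the paper does not carry it out either --- so your attempt is a refinement of, not a deviation from, the published argument, and your identification of $\mf{m}_R=(p,\widetilde u_1,\dots,\widetilde u_{p-2})$ via the symmetric-function change of variables is a step the paper leaves implicit. One caution about your fallback argument: equal dimensions of $\Spf R$ and the Lubin--Tate space do not by themselves force the composite of the equivariant Torelli map with the $\chi^1$-idempotent projection to be an isomorphism on tangent spaces; injectivity of that composite is precisely the nondegeneracy in question, so this route is not a shortcut around the coefficient computation (nor is it the paper's argument).
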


\begin{proof}
We may rewrite this invariant differential of our formal group $F$ as the invariant differential of a $p$-typical group using Lemma ~\ref{bimboification}, where the coefficients $\eta_{F}$ are as an ordered set, $$(a_1, a_2, ..., a_{h-1}) := (-pW^{-1}, (p-1)\widetilde{u}_1, (p-2)\widetilde{u}_2W, \cdots, 2\widetilde{u}_{p-2}W^{p-2}, bW^{p-1}).$$ The ideal formed by this set indeed generates $R$. By Lemma \ref{recognitionprinciple}, this is a universal differential of height $h$. 
\end{proof} 

\begin{manualtheorem}{A.2} \label{goldenfish} \color{blue}{(golden fish) Let $\mc{U}$ be the universal object over $\Def_{(X, G)}^\star$, and $R$ the ring representing $\Def_{(X, G)}^\star$, then, the formal group $\widehat{\mc{F}}^1(\mc{U})$ over $R$ is a universal formal group law of height $p-1$.} \end{manualtheorem}

\begin{proof} Consider the tangent space of $\widehat{\mc{F}}^1(\mc{U})$, we showed in Lemma \ref{heartgoldenfish} that its differential satisfies the recognition principle in Theorem \ref{recognitionprinciple} and is thus a universal differential. By Lemma \ref{determined}, the invariant differential determines the formal group by Construction \ref{differentialdetermines}, which tells us that $\widehat{\mc{F}}^1(\mc{U})$ is a universal formal group of height $p-1$. \end{proof}

We have all the pieces now, let us reap the benefits. Let $\widehat{\mc{F}}^1$ be the functor from Defn \ref{idempotentfunctor}. Let $X$ be the Artin-Schreier curve in Section \ref{curveprops}, where $G' := \Aut(X) \simeq C_p \rtimes C_{(p-1)^2},$ and $G = C_{p-1}.$ 

\begin{manualtheorem}{A} \label{theorema} \color{blue}{The functor $$\widehat{\mc{F}}^1: \Def_{(X, G)} \to \Def_{\mc{F}^1(X, G)}^{FG^{\text{ dim }1}}$$ is a $G'$-equivariant equivalence, where $\mc{F}^1(X, G)$ is a formal group of dimension 1 and height $p-1$. The induced $G'$-action on $\Def_{F^1(X, G)}^{FG^{\text{ dim }1}}$ is the canonical Lubin-Tate action.}
\end{manualtheorem}

In other words, $((X,G) \in \mr{Curve}_G, \Def_{(X, G)} \in \mr{Stk}^{G'}, \widehat{\mc{F}}^1)$ is a geometric model for the Lubin-Tate $G'$-action in the sense of Defn \ref{geometricmodel}.

\begin{proof} By Lemma \ref{hsplitting}, $\mc{F}^1(X, G)$ is a formal group of dimension 1 and height $p-1$. Consider the universal curve $\mc{U}$ over $\Def_{(X, G)}^\star$. We showed in Theorem \ref{goldenfish} that $\widehat{\mc{F}}^1(\mc{U})$ is a universal formal group of height $p-1$, which provides a structure preserving equivalence between the ring $R$ representing $\Def^\star_{(X, G)}$ and the ring $A$ representing $\Def^\star_{\mc{F}^1(X, G)}$, which gives us an equivalence of stacks via Yoneda. Given a $G'$-action on $R$, and an isomorphism of stacks $R \simeq A$, we have a $G'$-action on $A$, but we'd like to ensure that this action to comes from Defn \ref{stacky}. Let's hammer it home. By Lemma \ref{stabilizerplay}, the functor $\widehat{\mc{F}}^1$ is $G'$-equivariant, where the $G'$-action on the right acts by $G' \hookrightarrow \Aut_k(\mc{F}^1(X, G))$ as defined in the lemma. There is only one conjugacy class of $G'$ in $\Aut_k(\mc{F}^1(X, G))$ by Lemma \ref{bouj}. Thus, the induced action must be the Lubin-Tate action.

%(The corresponding $G'$-action on the functor is given by Defn \ref{stacky} together with Lemma \ref{halo}.)

%By Devinantz-Hopkins \cite{devinatz1995action} Prop 2.19 and Remark 2.21, the map in Lemma \ref{halo} induces an isomorphism $\Aut(F^1(X, G)) \simeq \Aut(A)$. There is only one conjugacy class of $G'$ in $\Aut(A)$ by Lemma \ref{bouj}. Thus, given a $G'$-action on $A$, it . 
\end{proof} 
%this is still iffy as stated... I mean one option would be 
%, and for an ungraded formal group it has kernel the center $Z(\bb{W}(F_{p^{p-1}}))$.

\subsubsection{Graded Theorem A}

We are ultimately interested in computing the cohomology of the graded Lubin-Tate ring $R_*$, as this is the one converging to the local homotopy groups of spheres. We remind the reader of the graded formal group setting. An excellent discussion of invariant differentials of formal groups is given Section 4 in Goerss \cite{goerss}. 

\begin{defn} The moduli of one dimensional formal groups with trivialized Lie algebra $\mc{M}_{\mr{fg}_1}^{\mr{Lie} \simeq \mr{triv}}$ is the \'etale sheaf that associates to any ring $R$ the groupoid of pairs $(G, \phi)$ where $G \to \Spec R$ is a formal group and $\phi: \omega_G \simeq R$ is the trivialization of its sheaf of invariant differentials. \end{defn}

\begin{lemma} \cite{ray} \label{gradedlazard} (graded lazard) Fixing a formal group $F \in \mc{M}_{\mr{fg}_1}^{\mr{Lie} \simeq \mr{triv}}(k)$ of height $h$, the formal moduli problem $\Def_{F}^{\star}$ is represented by a graded ring $R_*$ which is noncanonically isomorphic to 
$$R_* \simeq W(k)[[u_1, ..., u_{h-1}]][u^{\pm}],$$  \noindent where $|u_i|=0$ and $|u|= -2.$ 
\end{lemma}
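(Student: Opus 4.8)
\emph{Proof plan.} The plan is to deduce the graded statement from the ungraded Lubin--Tate theorem by recognizing that a trivialization of the Lie algebra is precisely a $\G_m$-torsor datum over the ungraded moduli problem. First I would record the torsor structure: for any ring $R$ and any formal group $G$ over $R$, the sheaf of invariant differentials $\omega_G$ is an invertible $R$-module, and the trivializations $\phi\colon \omega_G \simeq R$ form a torsor under $\G_m(R)=R^\times$ by rescaling. Hence the forgetful morphism $\mc{M}_{\mr{fg}_1}^{\mr{Lie}\simeq\mr{triv}} \to \mc{M}_{\mr{fg}_1}$ is a $\G_m$-torsor, and passing to the formal neighborhood of $F$ identifies $\Def_F^\star$ computed in $\mc{M}_{\mr{fg}_1}^{\mr{Lie}\simeq\mr{triv}}$ with the pullback along this torsor of $\Def_F^\star$ computed in $\mc{M}_{\mr{fg}_1}$.

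Next I would invoke the ungraded result --- the classical theorem of Lubin and Tate, reproved here via the recognition principle (Theorem \ref{recognitionprinciple}) applied to the universal object --- that $\Def_F^\star$ in $\mc{M}_{\mr{fg}_1}$ is pro-represented by $\mr{LT} := W(k)[[u_1,\dots,u_{h-1}]]$, with the $u_i$ in degree $0$. Pulling the universal deformation back along the $\G_m$-torsor tensors the representing ring with the coordinate ring $W(k)[u^{\pm}]$ of $\G_m$ over $W(k)$, so that $\Def_F^\star$ in $\mc{M}_{\mr{fg}_1}^{\mr{Lie}\simeq\mr{triv}}$ is pro-represented by
$$R_* \;\simeq\; \mr{LT}[u^{\pm}] \;=\; W(k)[[u_1,\dots,u_{h-1}]][u^{\pm}],$$
where the adic (formal) directions are the $u_i$ while $u$ is a unit. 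The residual $\G_m$-action supplies the grading; normalizing the weight of $\omega_F$ against the standard convention (the Lie algebra of a complex-orientable formal group contributes a $\pi_2$) forces $u$ into degree $-2$ and leaves the deformation parameters $u_i$ in degree $0$. As in Lazard's theorem the isomorphism is noncanonical, depending on a choice of coordinate and normalization.

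The main obstacle is the bookkeeping where the grading meets pro-representability. Concretely I would need to check that the $\G_m$-action on the pulled-back deformation problem fixes the $u_i$ --- which holds because the forgetful map is $\G_m$-equivariant with trivial action on the target, so the deformation parameters have weight $0$ --- and that the completion is taken only in the $u_i$-direction, so that $u$ survives as an invertible weight $(-2)$ element rather than being absorbed into a completed ring. I would also confirm compatibility of this $\G_m$-torsor picture with the curve-side graded stacks $\mr{Curve}_G^{\mr{Lie}\,\chi^i}$ of Definition \ref{gradedboi}, so that the graded version of Theorem \ref{theorema} follows afterwards by simply adjoining $u$ on both sides. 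Everything beyond these checks is formal once the torsor description and the ungraded theorem are in hand.
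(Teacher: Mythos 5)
The paper does not prove this lemma at all --- it is imported wholesale from \cite{ray} --- so there is no internal proof to compare against; your torsor argument is the standard route and, as far as this paper is concerned, it is sound. Two small points of care. First, your parenthetical that the ungraded Lubin--Tate representability is ``reproved here via the recognition principle'' is not quite right: Theorem \ref{recognitionprinciple} presupposes the existence of the universal deformation ring $A$ representing $\Def_H^\star$ and only detects when a formal group over an abstractly isomorphic ring is universal, so the ungraded input must be the classical Lubin--Tate theorem (or \cite{ray}) rather than anything established in this paper. Second, the step ``pulling back along the torsor tensors the representing ring with $W(k)[u^{\pm}]$'' uses that $\omega$ of the universal deformation is a trivializable line bundle over the local pro-ring $W(k)[[u_1,\dots,u_{h-1}]]$; this is true and is exactly the source of the noncanonicity, and it also confirms your reading that the trivialization is free torsor data (weight one under $\G_m$, hence $u$ invertible in degree $-2$) rather than data pinned at the special fiber --- had the $\star$-condition been imposed on the trivialization as well, one would instead obtain a $(1+\mathfrak m)$-torsor and an extra formal variable, so the bookkeeping check you flag (completion only in the $u_i$-direction, $u$ surviving as a unit) is precisely the right thing to verify.
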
 

Let's consider the curve counterpart of the graded formal group moduli problem defined in Defn \ref{gradedboi}. Using this moduli stack, we can reprove Theorem \ref{theorema} in the graded setting to get an equivariant equivalence of graded rings. 

\begin{manualtheorem}{A'} \label{gradedtheoremA} \color{blue}{(Graded Theorem A) The functor $\widehat{\mc{F}}^1_{\mf{Lie}}$ is a $G'$-equivariant equivalence of formal moduli problems, where the $G'$-action on the right is the Lubin-Tate action. 
$$\widehat{\mc{F}}^1_{\mf{Lie}} \colon \Def_{(X, G)}^{\mr{Curve}_{G}^{\mr{Lie }\chi^1}} \to \Def_{F}^{\mc{M}_{\mr{fg}_1}^{\mr{Lie} \simeq \mr{triv}}}.$$}
\end{manualtheorem} 

\begin{proof} This is a corollary of Theorem \ref{theorema}. It remains to show that the invariant differential $\frac{dy}{x}$ living over $\Def_{(X, G)}^{\mr{Curve}_{G}^{\mr{Lie }\chi^1}}$ is sent under $\widehat{\mc{F}}^1_{\mf{Lie}}$ to an invariant differential for the universal formal group law, which was demonstrated by Lemma \ref{goldenfish}. \end{proof}

\begin{remark} Intuitively, the holomorphic differentials of the curve are the cotangent space of the Jacobian, which defines the invariant differentials of its formal group. \end{remark}

We finish this subsection by reviewing three flavors of prior work regarding the approach of the graded Lubin-Tate action, and the choice of grading therein. 

We may present $R_*$ with the basis $(uu_1, ..., uu_{h-1}, u^{\pm 1})$ as opposed to $(u_1, ..., u_{h-1}, u^{\pm 1})$ in Lemma \ref{gradedlazard}) -- that is, we consider all generators to be degree $-2$. This was popularized by Hopkins and \cite{hillphd}. The calculation of the cohomology of $\mr{Config}(\A^1, p)$ in subsection \ref{symmetriccohom} having the standard representation in degree $-2$ suggests that by working with $\Def_{(X, G)}^{\mr{Curve}_G} \simeq \Def_{(\A^1, p)}$ we are implicitly using the convention of Hopkins. 

The work of Gorbounov-Mahowald \cite{gm} ignores the grading entirely and works only with a power series $g$ acting on the basis $(u_1,..., u_{h-1})$ in degree 0. They then separately deduce the action on the invariant differential $u$ as acting by multiplication by $g'(0)$. This methodology is discussed in detail in the introduction of \cite{gorbounovsymonds}.

Hovey-Strickland \cite{hoveystrickland} take a similar approach to Hopkins, working with $(vv_1, ..., vv_{h-1})$, but they assume the grading of the variables comprising the graded ring representing the moduli problem is $|vv_i| = -2(p^i-1)$ rather than $-2,$ which can be understood as them working with the cohomology of tensor powers of the line bundle defined by the universal differential $\omega^{\otimes p^i-1}$. 

\part{Theorem B: Moduli Stacks of $G$-Curves}
\label{sec:theoremb}
\section*{Overview}

The aim of this section is to understand the $\Aut(X, G)$-action on $\Def_{(X, G)}$. We will do this in two main stages. 

We show in Lemma \ref{grouptobranch} that deformations of any curve $X$ with a tame totally ramified action of $G$ are equivalent to deformations of its quotient $X/G$ and the branch points of $q:X \to X/G$. Further, the equivalence is $\Aut(X, G)/G$ equivariant. In our example, the $G'/G$-action on our deformation space $\Def_{(X, G)}$ can be understood as a restriction of the permutation action of the group $G'$ on the branch points of the map $q: X \to X/G.$ 
% we show that this equivalence of deformation problems is $G'/G$ equivariant. 

\begin{manualtheorem}{\ref{theoremb1}} \color{blue}{There is a $G'/G$-equivariant isomorphism of moduli problems $$L \colon \Def_{(X, C_{p-1})} \simeq \Def_{(\bb{P}^1, p+1)} \simeq  \Def_{(\bb{A}^1, p)}.$$} \end{manualtheorem}
\begin{center}\includegraphics[width=10cm]{being_vessel/lightyagami.png}\end{center}

The second phase is to understand the action of $G'/G$ on $\Def_{(\bb{A}^1, p)}$, which we do by passing to a global moduli of configuration spaces of $n$ points on $\A^1$. We also connect this analysis to past work on other global stacks related to $\Def_{(X, G)}$ in Section \ref{pastandfuture}.

\begin{center}\includegraphics[width=8cm]{being_vessel/insertion.jpg}\end{center}

\begin{manualtheorem}{\ref{theoremb2}} \color{blue} The stack $\mr{Conf}(\bb{A}^1, n)$ is representable as a moduli problem, and the ring representing it is as a $\Sigma_n$-module $$R \simeq \Sym(\lambda)[N^{-1}] \stackyq \G_m,$$ where $\lambda$ is the standard $n-1$-dimensional representation of $\Sigma_n$. \end{manualtheorem}

Combining Theorems \ref{theoremb1} and \ref{theoremb2}, we get the $G'/G$-action on $\Def_{(X, G)}$ stated fully in Theorem \ref{theoremb}. The full $G'$-action is discussed in Lemma \ref{fullthang}.

\section{Theorem B.1: Moduli Stacks of $G$-Curves As Configuration Spaces of Ramification Data}

\label{sec:theoremb1}

Before we get started on showing Theorem \ref{theoremb1}, let us fully justify the choice of $C_{p-1}$ in our deformation problem. 

Inverse Galois theory behaves will with respect to deformation problems. Recall that $G$-Galois covers of curves with source $X$ are equivalent to $G$-actions on $X$. Under this equivalence, ramification points correspond to points with nontrivial stabilizer group. Our curve $X$ in characteristic $p$ defined solely by having automorphism group $G'$ has a convenient tame scapegoat, $G := C_{p-1}$. The points of $X$ with nontrivial stabilizer under the action of $G'$ and under the action of its subgroup $G = C_{p-1}$ coincide. We use this to our advantage, work with $p_y: X \to X/C_{p-1}$ and encode the remaining $G'/G$ action through its action on the $p$ affine branch points $$G'/G \hookrightarrow \Sigma_{B_{\mr{aff}}}=\Sigma_p.$$ Thus recording of the action on branch points exactly encodes the corresponding $C_p$-cover. 

\subsection{Background on Equivalence of Deformation Problems via Cotangent Complex}

Formal moduli problems are controlled by their cotangent complexes. This was made precise in characteristic zero by Lurie-Pridham \cite{SAG}, and in finite and mixed characteristic by Brantner-Mathew \cite{tangentequiv}. It is beautifully described in the introduction of the latter. 

The deformation functors defined in Section \ref{gstacks} and \ref{localgstacks} are formal moduli problems in the sense of Lurie (see SAG \cite{SAG} 16.5 and 19.4 for details). 

\begin{theorem} (Prop 12.2.2.6 \cite{SAG}) Let $\mc{F}: \Def_X \to \Def_Y$ be a functor of formal moduli problems. Suppose that $\mc{F}$ induces an equivalence of tangent complexes. Then $\mc{F}$ is an equivalence.
\end{theorem}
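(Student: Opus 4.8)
The plan is to reduce everything to the defining property of a formal moduli problem — that it carries square-zero extensions to pullback squares — and then run an obstruction-theoretic induction, using the tangent complex to control each elementary step. First I would recall the role of the tangent complex: for a formal moduli problem $\mc{G}$ one sets $T_{\mc{G}}$ to be the spectrum with $\Omega^\infty(T_{\mc{G}}[n]) \simeq \mc{G}(k \oplus k[n])$, and there is a natural equivalence $\mc{G}(k \oplus M) \simeq \mathrm{Map}_k(M^\vee, T_{\mc{G}})$ for every dualizable $k$-module $M$, depending functorially only on $T_{\mc{G}}$. Hence the hypothesis that $\mc{F}$ induces an equivalence $T_{\Def_X} \xrightarrow{\sim} T_{\Def_Y}$ already gives that $\mc{F}$ is an equivalence on every trivial square-zero algebra $k \oplus M$.

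Next I would invoke the structural fact (Lurie, SAG §12.1) that each $R \in \widehat{\Art}_k$ sits in a finite tower $R = R_0 \to R_1 \to \cdots \to R_m = k$ in which every $R_i \to R_{i+1}$ is a square-zero extension, classified by a derivation $R_{i+1} \to k \oplus M_i[1]$ with $M_i$ finite, so that $R_i \simeq R_{i+1} \times_{k \oplus M_i[1]} k$. Since $\Def_X$ and $\Def_Y$ are formal moduli problems they send these to pullback squares,
$$\Def_X(R_i) \simeq \Def_X(R_{i+1}) \times_{\Def_X(k \oplus M_i[1])} \Def_X(k),$$
and likewise for $Y$, compatibly under $\mc{F}$. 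Now induct along the tower starting from $R_m = k$, where $\mc{F}$ is an equivalence since both sides are contractible. At the inductive step $\mc{F}$ is an equivalence on $R_{i+1}$ by hypothesis, on $k$ trivially, and on $k \oplus M_i[1]$ by the tangent-complex input; since a map of pullback diagrams that is an equivalence on all three corners is an equivalence on the limit, $\mc{F}$ is an equivalence on $R_i$. Climbing the tower gives that $\mc{F}$ is an equivalence on every $R \in \widehat{\Art}_k$, i.e. an equivalence of formal moduli problems.

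The hard part is not the induction — which is formal once the framework is set up — but checking that the deformation functors actually occurring in this paper genuinely are formal moduli problems: that they preserve the relevant pullbacks of Artinian algebras (Schlessinger/Lurie gluing) and have tangent complex with enough finiteness for the displayed equivalence $\mc{G}(k\oplus M)\simeq \mathrm{Map}_k(M^\vee,T_{\mc{G}})$ to hold, and that the passage to pro-Artinian (complete) test rings is harmless so that the finite induction controls the whole functor. Since we are in characteristic $p$, the conceptual backbone here is Brantner–Mathew's extension of Lurie–Pridham (formal moduli problems $\leftrightarrow$ partition Lie algebras, under which $T_{\mc{G}}$ is the shifted underlying complex of the associated Lie algebra and equivalences are detected on underlying complexes) rather than the characteristic-zero dg-Lie story; with that input, the argument above is exactly SAG Proposition 12.2.2.6 applied in our setting.
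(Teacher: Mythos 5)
The paper offers no proof of this statement---it is quoted verbatim as a citation to Lurie (SAG, Prop.\ 12.2.2.6)---and your sketch is a faithful reconstruction of the standard argument given there: handle trivial square-zero extensions $k\oplus M$ via the tangent complex (formally, by a cell induction on finite $M$, which your displayed equivalence $\mc{G}(k\oplus M)\simeq \mathrm{Map}_k(M^\vee,T_{\mc{G}})$ packages), then climb the finite tower of small extensions of an Artinian algebra using that formal moduli problems convert such extensions into pullback squares. So your proposal is correct and takes essentially the same route as the cited source, including the correct caveat that in characteristic $p$ the framework is Brantner--Mathew rather than Lurie--Pridham.
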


\subsection{Our Tangent Complexes}
\label{tangentcomplexes}

We will only work with smooth objects $X$ with an action of a finite group $G$ in this paper, in which case the equivariant cotangent complex collapses to $\pi_0(\mc{L}^G_{X/k}) \simeq \Omega_X,$ where the latter is considered as an $G-\OO_X$-module equipped with the $G$-action inherited from $X$. We refer the reader to Illusie \cite{illusie} and Adeel Khan's notes on the general equivariant cotangent complex \cite{khan}. 

\begin{defn} An equivariant extension of $\OO_Y$ by $\mc{I}$ is a short exact sequence of sheaves $$0 \to \mc{I} \to \mc{E} \to \OO_Y \to 0$$ \noindent where we consider $\mc{I}$ to be sheaf of ideals with square zero and a subsheaf of $\mc{E}$. 
\end{defn}

These $G$-equivariant extensions allow us to classify deformations of $G$-equivariant schemes. We write $\Hom_G$ as short-hand for $\Hom_{G-\OO_X}$ internal to $G$-equivariant quasi-coherent sheaves on $X$.

\begin{cor} \label{tangentstuff} (Cor 2.3 \cite{wewers}) (i) The group of automorphisms of any fixed equivariant extension of $Y$ by $\mc{I}$ is canonically isomorphic to $\Hom_G(\Omega_{Y/S}, \mc{I}).$ \newline
\noindent (ii) The group of isomorphism classes of equivariant extensions of $Y$ by $\mc{I}$ is canonically $\Ext^1_G(\mc{L}_{Y/S}, \mc{I})$. \newline \noindent (iii) (Theorem 3.3 \cite{wewers}) The obstruction to deforming $Y$ equivariantly by $\mc{I}$ is canonically $\Ext^2_G(\mc{L}_{Y/S}, \mc{I})$. 
\end{cor}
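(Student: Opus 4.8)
The plan is to recognize $G$-equivariant square-zero extensions of $Y/S$ as ordinary square-zero extensions on the quotient stack $\mc{Y} := [Y/G]$, and then invoke the standard cotangent-complex classification of such extensions. Under the equivalence $\QCoh(\mc{Y}) \simeq \{\,G\text{-equivariant }\OO_Y\text{-modules}\,\}$, an equivariant extension $0 \to \mc{I} \to \mc{E} \to \OO_Y \to 0$ of $\OO_Y$ by the equivariant module $\mc{I}$ corresponds exactly to a square-zero extension of $\OO_{\mc{Y}}$ by $\mc{I}$ relative to $S$ (where $G$ acts trivially on $S$), and conversely. So all three assertions reduce to computing $\mr{Exal}_S(\mc{Y}, \mc{I})$, the automorphism group of a fixed extension, and the obstruction to existence, on the stack $\mc{Y}$.

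First I would quote the general statement in the form valid for algebraic stacks (Illusie's formalism for ringed topoi, refined to stacks by Olsson and by Aoki, or the derived version in \cite{SAG}): for a morphism $f \colon \mc{Y} \to \mc{S}$ and $\mc{I} \in \QCoh(\mc{Y})$, the set of isomorphism classes of square-zero extensions of $\mc{Y}/\mc{S}$ by $\mc{I}$ is canonically $\Ext^1_{\OO_{\mc{Y}}}(\mc{L}_{\mc{Y}/\mc{S}}, \mc{I})$; the automorphisms of any fixed one that are the identity on $\mc{I}$ and on the reduction form a group canonically isomorphic to $\Hom_{\OO_{\mc{Y}}}(\mc{L}_{\mc{Y}/\mc{S}}, \mc{I}) = \Ext^0_{\OO_{\mc{Y}}}(\mc{L}_{\mc{Y}/\mc{S}}, \mc{I})$; and the obstruction to extending a given first-order deformation of the special fiber lies in $\Ext^2_{\OO_{\mc{Y}}}(\mc{L}_{\mc{Y}/\mc{S}}, \mc{I})$. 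The proof is the classical one: square-zero extensions are locally trivial and locally unique, the sheaf of automorphisms is $\mc{H}om(\Omega, \mc{I})$, and globalizing via hypercohomology of $R\mc{H}om(\mc{L}_{\mc{Y}/\mc{S}}, \mc{I})$ produces exactly these $\Ext$-groups; the automorphism computation is the direct check that an automorphism of a fixed extension is $\mr{id} + D$ for an equivariant $S$-derivation $D \colon \OO_Y \to \mc{I}$, and such derivations are $\Hom_G(\Omega_{Y/S}, \mc{I})$.

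Next I would identify $\mc{L}_{\mc{Y}/S}$ as an equivariant object on $Y$. Since $G$ is a finite constant group, $\pi \colon Y \to \mc{Y} = [Y/G]$ is a $G$-torsor, hence representable, faithfully flat and (being a finite constant group, $G$ is étale over $S$) formally étale; the transitivity triangle then gives $\pi^*\mc{L}_{\mc{Y}/S} \simeq \mc{L}_{Y/S}$, so $\mc{L}_{\mc{Y}/S}$ is precisely the equivariant cotangent complex $\mc{L}^G_{Y/S}$ of the excerpt, namely $\mc{L}_{Y/S}$ with the $G$-action inherited from $Y$. Therefore $\Ext^i_{\OO_{\mc{Y}}}(\mc{L}_{\mc{Y}/S}, \mc{I})$ is, by definition, the $i$-th $\Ext$ in the abelian category of $G$-equivariant $\OO_Y$-modules, i.e. $\Ext^i_G(\mc{L}_{Y/S}, \mc{I})$. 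Since $Y \to S$ is smooth, $\mc{L}_{Y/S} \simeq \Omega_{Y/S}$ is concentrated in degree $0$, so $\Ext^0_G(\mc{L}_{Y/S}, \mc{I}) = \Hom_G(\Omega_{Y/S}, \mc{I})$ and $\Ext^1_G(\mc{L}_{Y/S}, \mc{I}) = \Ext^1_G(\Omega_{Y/S}, \mc{I})$, yielding (i) and (ii); (iii) is the $\Ext^2_G$ statement, where one keeps the cotangent-complex viewpoint (equivalently $\Omega_{Y/S}$ with its $G$-action) and notes that, in contrast with the non-equivariant $\Ext^2$ over a curve, this group need not vanish — the group-cohomology contributions being visible in the spectral sequence $H^p(G, \Ext^q_{\OO_Y}(\Omega_{Y/S}, \mc{I})) \Rightarrow \Ext^{p+q}_G(\Omega_{Y/S}, \mc{I})$.

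I expect the main obstacle to be making the passage to $\mc{Y} = [Y/G]$ rigorous: invoking (or reproving) the stacky form of Illusie's classification of square-zero extensions and pinning down $\mc{L}_{[Y/G]/S} \simeq \mc{L}^G_{Y/S}$ with its equivariant structure. If one prefers to avoid stacks entirely, the essentially equivalent alternative is to redo Illusie's construction internally in the Grothendieck abelian category of $G$-equivariant $\OO_Y$-modules, building $\mc{L}^G_{Y/S}$ from a simplicial resolution of $\OO_Y$ by free $\OO_Y[G]$-algebras; this is bookkeeping-heavy but conceptually routine, and is presumably how \cite{wewers} obtains the corollary from his preceding lemmas. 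As a sanity check, for $G$ trivial all three items collapse to the familiar formulas $\Hom(\Omega_{Y/S}, \mc{I})$, $\Ext^1(\mc{L}_{Y/S}, \mc{I})$, $\Ext^2(\mc{L}_{Y/S}, \mc{I})$.
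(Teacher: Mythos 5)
The paper does not actually prove this corollary: it is quoted from Wewers (Cor.~2.3 and Thm.~3.3 of \cite{wewers}), so there is no internal argument to compare against, only the citation. Your sketch is a correct route to the statement, and it differs in packaging from the cited source: you pass to the quotient stack $[Y/G]$, identify $G$-equivariant square-zero extensions of $\OO_Y$ with square-zero extensions of $\OO_{[Y/G]}$ via $\QCoh([Y/G])\simeq \QCoh^G(Y)$, use that $Y\to[Y/G]$ is a torsor under the finite constant (hence \'etale) group $G$ to get $\pi^*\mc{L}_{[Y/G]/S}\simeq\mc{L}_{Y/S}$, and then quote the stacky form of Illusie's classification, with smoothness of $Y/S$ collapsing $\mc{L}_{Y/S}$ to $\Omega_{Y/S}$ for (i) and (ii). Wewers (and Bertin--M\'ezard \cite{bm}) instead run Illusie's machinery directly in the $G$-equivariant ringed topos of $Y$ --- precisely the alternative you mention at the end, and really the same computation, since the \'etale topos of $[Y/G]$ is the equivariant topos of $Y$. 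What the stacky phrasing buys is that all three items become literal instances of the standard cotangent-complex yoga ($\Hom$, $\Ext^1$, $\Ext^2$ against $\mc{L}_{[Y/G]/S}$); the cost is that the structure morphism $[Y/G]\to S$ is not representable, so you must cite deformation theory for algebraic stacks (Olsson/Aoki) rather than Illusie for ringed topoi. Two points to make explicit if you write this up: the identification $\mc{L}_{[Y/G]/S}\simeq\mc{L}^G_{Y/S}$ uses that a torsor under a finite constant group scheme is finite \'etale even when $p$ divides $|G|$ (fixed points of the action are irrelevant at the stacky level, only for the coarse quotient $Y/G$); and in (iii) you should state exactly which problem the class in $\Ext^2_G(\mc{L}_{Y/S},\mc{I})$ obstructs, namely the existence of a flat $G$-equivariant lift along a square-zero extension of the base, as in Theorem 3.3 of \cite{wewers}.
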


\begin{defn} We define the {\color{Bittersweet}{tangent sheaf}} $T_X$ as $Hom_{\OO_X}(\Omega_X, \OO_X),$ and the {\color{Bittersweet}{equivariant tangent sheaf}} as $Hom_{G}(\Omega_X, \OO_X)$.
\end{defn}

We begin by stating the nonequivariant results.
\begin{theorem} \label{schlessinger} [2.6.1 \cite{sernesi}] 
(i) For any algebraic scheme $X$, $\Def_X^\star$ satisfies Schlessinger's criterion and is thus pro-representable. 
(ii) If $X$ is nonsingular, there is a canonical identification of $k$-vector spaces $$T(\Def_X) := \Def_X(k[e]/e^2) = H^1(X, T_X).$$ 
\end{theorem}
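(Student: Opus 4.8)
The plan is to apply Schlessinger's representability criterion to the set-valued functor $\underline{\Def}^\star_X$ obtained from the groupoid-valued functor of Definition \ref{deform} (with $G$ trivial) by passing to isomorphism classes, checking his conditions (H1)--(H4), and then to recognize part (ii) as the tangent-space computation that also underlies (H2). Note that $\underline{\Def}^\star_X(k)$ is automatically a single point, since the framing $\iota$ pins down the fibre over $k$ to be $X$ itself.

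Conditions (H1), (H2), (H4) all rest on flat gluing along Milnor squares: if $A'\to A\leftarrow A''$ with $A''\to A$ surjective, a flat algebra over $A'\times_A A''$ is the same datum as a flat algebra over $A'$, one over $A''$, and an isomorphism of their reductions to $A$. Applying this to the structure sheaves on the common underlying space of $X$, one finds that the comparison map $\eta\colon\underline{\Def}^\star_X(A'\times_A A'')\to\underline{\Def}^\star_X(A')\times_{\underline{\Def}^\star_X(A)}\underline{\Def}^\star_X(A'')$ is surjective (choose any gluing over $A$), which is (H1); for $A=k$ and $A''=k[\varepsilon]$ the gluing over $k$ is forced to be the identity because the deformations carry the framing, so $\eta$ is a bijection, which is (H2); and for $A'=A''$ with $A''\to A$ a small extension, injectivity of $\eta$ reduces to the statement that every $\star$-isomorphism of deformations over $A$ lifts along $A'\to A$, which for nonsingular $X$ follows by lifting the corresponding derivations on an affine cover and exponentiating, and in Sernesi's generality it is handled by his lemma on lifting isomorphisms along small extensions; this is (H4). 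Condition (H3), $\dim_k\underline{\Def}^\star_X(k[\varepsilon])<\infty$, follows from part (ii) once $X$ is nonsingular, and for general algebraic $X$ from the fact that the tangent space is $\Ext^1_{\OO_X}(\mc{L}_{X/k},\OO_X)$, a finite-dimensional $k$-space when $X$ is proper (or after the appropriate completion), since $T_X$ is coherent. Granting (H1)--(H4), pro-representability is Schlessinger's theorem.

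For (ii), fix an affine open cover $\{U_i\}$ of the nonsingular $X$. Each $U_i$ is smooth affine, hence every deformation of $U_i$ over $k[\varepsilon]/\varepsilon^2$ is trivial, isomorphic to $U_i\times_k k[\varepsilon]$, and the automorphisms of this trivial deformation reducing to the identity over $k$ are precisely $\id+\varepsilon D$ for $D\in\Gamma(U_i,T_X)=\Der_k(\OO_X(U_i),\OO_X(U_i))$. A first-order deformation of $X$ therefore amounts to gluing the trivial local ones by automorphisms $\id+\varepsilon D_{ij}$ of $U_{ij}\times_k k[\varepsilon]$; the cocycle condition on triple overlaps forces $\{D_{ij}\}$ to be a \v{C}ech $1$-cocycle for $T_X$, and changing the local trivializations by $\id+\varepsilon E_i$ modifies $D_{ij}$ by the coboundary $E_j-E_i$. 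This yields a bijection $\underline{\Def}^\star_X(k[\varepsilon]/\varepsilon^2)\cong\check{H}^1(\{U_i\},T_X)=H^1(X,T_X)$ — flatness of the glued family being automatic, since it is locally free — and one checks it is $k$-linear for the intrinsic $k$-vector-space structure that (H1) and (H2) put on the tangent space of a Schlessinger functor.

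The step I expect to be the real obstacle is (H4): showing in the stated generality that $\star$-isomorphisms of deformations lift along small extensions. In characteristic $p$ this is bound up with the possible non-reducedness of automorphism group schemes, and it is exactly where the precise meaning of ``algebraic scheme'' and the framing convention are needed; by contrast the patching arguments for (H1)--(H2) and the \v{C}ech computation for (ii) are routine.
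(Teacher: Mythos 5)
The paper itself gives no proof of this statement: it is quoted from Sernesi \cite{sernesi}, so the only comparison available is with the standard argument you are reconstructing, and your outline does follow that route — Schlessinger's conditions checked by flat gluing over fibre products of Artin rings, and the \v{C}ech identification of the tangent space for (ii). Your treatment of (H1), (H2), of (H3) (with the properness/finiteness caveat you rightly flag, since for a general algebraic scheme the tangent space $\Ext^1_{\OO_X}(\Omega_X,\OO_X)$ need not be finite dimensional), and the locally-trivial-plus-cocycle computation of $\Def_X(k[\epsilon]/\epsilon^2)=H^1(X,T_X)$ for nonsingular $X$ are all sound and are essentially the argument in the cited source.

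The genuine gap is (H4), and it is not just ``the hard step'': as set up, it cannot be closed. Lifting a $\star$-isomorphism of deformations along a small extension $0\to I\to A'\to A\to 0$ is only solvable locally on a nonsingular affine cover; the local lifts differ on overlaps by sections of $T_X\otimes_k I$, and the resulting class in $H^1(X,T_X)\otimes_k I$ is a genuine obstruction to patching them into a global lift. ``Exponentiating derivations'' does not remove this (and in characteristic $p$ the exponential is anyway only available for square-zero extensions, though that is the lesser issue), and there is no unconditional lemma in Sernesi that lifts isomorphisms along small extensions — that is precisely where hypotheses enter. In fact (H4) fails in general, so unconditional pro-representability as asserted in (i) is not provable: Schlessinger's theorem with (H1)--(H3) yields only a hull (a semiuniversal formal deformation), and pro-representability needs an additional hypothesis such as $H^0(X,T_X)=0$, which by induction on length forces every $\star$-automorphism of a deformation to be trivial and makes (H4) vacuous. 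This is also how the result is actually used later in the paper: the relevant curves have genus at least $2$, hence no infinitesimal automorphisms, and pro-representability in that setting is quoted from \cite{bm} and \cite{itsyg}. Your write-up should therefore either add such a hypothesis (or, more generally, the condition that $\star$-automorphisms lift along small extensions) to (i), or weaken its conclusion from pro-representability to the existence of a hull.
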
 

\begin{lemma} \label{squirmingpoints} (squirming points)  \cite{sernesi} (Prop 3.4.17)  The tangent space of deformations of a fixed divisor $D$ on a variety $X$ is $$T(\Def_{X, D}) \simeq H^1(X, T_X(-D)).$$ 
\end{lemma}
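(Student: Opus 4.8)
The plan is to identify first-order deformations of the pair $(X,D)$ with a single sheaf-cohomology group, and then to recognize that group as $H^1(X, T_X(-D))$. By Schlessinger's criterion, applied to the pair exactly as in Theorem~\ref{schlessinger}(i), the functor $\Def_{X,D}$ is pro-representable, so by definition $T(\Def_{X,D})$ is the set of isomorphism classes of objects of the groupoid $\Def_{X,D}(k[\epsilon]/(\epsilon^2))$. Introduce the logarithmic tangent sheaf $T_X(-\log D) \subseteq T_X$, the subsheaf of vector fields $v$ with $v(\mathcal{I}_D) \subseteq \mathcal{I}_D$; it fits into a short exact sequence
$$0 \to T_X(-\log D) \to T_X \to N_{D/X} \to 0,$$
where $N_{D/X} = \OO_D(D)$ is the normal sheaf of $D$, a skyscraper sheaf supported on $D$. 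The main input is Sernesi's Proposition 3.4.17 \cite{sernesi}: for $X$ smooth and $D \subset X$ a reduced Cartier divisor, first-order deformations of the pair $(X,D)$ are classified by $H^1(X, T_X(-\log D))$, with obstructions in $H^2$.

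I would recall the proof of this in the \v{C}ech picture, since on a curve it is short. Choose an affine open cover $\{U_i\}$ fine enough that $D|_{U_i} = V(f_i)$ for some $f_i \in \Gamma(U_i, \OO_X)$, with $f_i = u_{ij} f_j$ on $U_{ij}$ for units $u_{ij}$. A deformation of the pair over $k[\epsilon]/(\epsilon^2)$ is obtained by gluing the trivial pieces $(U_i \times \Spec k[\epsilon],\, V(f_i))$ along overlaps by automorphisms $\theta_{ij} = \mathrm{id} + \epsilon\, \partial_{ij}$ with $\partial_{ij} \in \Gamma(U_{ij}, T_X)$; requiring $\theta_{ij}$ to carry $V(f_j)$ to $V(f_i)$ forces $\partial_{ij}(f_i) \in (f_i)$, i.e.\ $\partial_{ij} \in \Gamma(U_{ij}, T_X(-\log D))$. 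The cocycle condition holds on triple overlaps, and two such data are isomorphic exactly when their cocycles differ by $\{\eta_i - \eta_j\}$ with $\eta_i \in \Gamma(U_i, T_X(-\log D))$, since an automorphism $\mathrm{id} + \epsilon \eta_i$ of $(U_i \times \Spec k[\epsilon], V(f_i))$ preserves $V(f_i)$ precisely when $\eta_i(f_i) \in (f_i)$. That every deformation of the underlying curve $X$ extends to a deformation of the pair, and that no additional data is lost, both follow from $H^1(X, N_{D/X}) = 0$, which is automatic because $N_{D/X}$ is a skyscraper sheaf on a curve. Hence $T(\Def_{X,D}) \cong H^1(X, T_X(-\log D))$.

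It then remains to identify $T_X(-\log D)$ with $T_X(-D)$. Since $D$ is reduced, near each point $p$ of $\supp(D)$ we may choose a uniformizer $z$ with $\mathcal{I}_D = (z)$ locally; a local derivation $g\,\partial_z$ preserves $(z)$ precisely when $z \mid g$, so the logarithmic vector fields are exactly $\mathcal{I}_D \cdot T_X = T_X(-D)$. Globalizing gives $T_X(-\log D) = T_X(-D)$, and therefore $T(\Def_{X,D}) \simeq H^1(X, T_X(-D))$, as claimed. I expect the one genuinely delicate point to be the bookkeeping in the \v{C}ech step: keeping straight which gluing automorphisms and which self-isomorphisms actually respect the divisor, so that $T_X(-\log D)$—and not all of $T_X$, nor a version corrected by a contribution from $N_{D/X}$—controls the problem. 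This is precisely what Sernesi's Proposition 3.4.17 packages; the only curve-specific ingredients one adds are the vanishing $H^1(X, N_{D/X}) = 0$ and the elementary local computation above.
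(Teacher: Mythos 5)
Your argument is correct for the situation in which the paper actually uses this lemma, and it is worth noting that the paper itself gives no proof at all: Lemma \ref{squirmingpoints} is stated as a citation of Sernesi (Prop.\ 3.4.17), with a remark pointing to Deligne--Mumford. So your proposal is a self-contained reconstruction of the standard argument rather than a divergence from the paper's proof. What you do is the usual two-step: identify first-order deformations of the pair $(X,D)$ with $H^1$ of the logarithmic tangent sheaf $T_X(-\log D)$ via a \v{C}ech cocycle computation (gluing trivial local deformations by automorphisms $\mathrm{id}+\epsilon\,\partial_{ij}$ that preserve the local equations of $D$), and then identify $T_X(-\log D)$ with $T_X(-D)$ by the local computation with a uniformizer. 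The \v{C}ech bookkeeping is right, and the side remark about $H^1(X,N_{D/X})=0$ is harmless (it addresses surjectivity of the forgetful map to $\Def_X$, which is not needed for the tangent-space identification itself).

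Two caveats, neither fatal for the paper's use of the lemma but worth flagging. First, your final identification $T_X(-\log D)=T_X(-D)$ genuinely requires $\dim X=1$: in higher dimension a log vector field need only have its normal component vanish along $D$, so $T_X(-D)\subsetneq T_X(-\log D)$ and the formula in the lemma, read literally for an arbitrary ``variety,'' is not correct as written; the correct general statement uses $T_X(-\log D)$ (equivalently $T_X\langle D\rangle$). Second, the identification also uses that $D$ is reduced (for $D=m[p]$ with $m$ prime to the characteristic one gets $T_X(-D_{\mathrm{red}})$ instead). Both hypotheses hold where the paper invokes the lemma --- $X/G\simeq\PP^1$ with the reduced branch divisor $B$, as in Corollary \ref{kernelsanders} --- so your proof, explicitly restricted to curves and reduced $D$, establishes exactly the statement the paper needs, and in fact delimits its range of validity more carefully than the statement itself does.
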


\begin{remark} This is also in \cite{deligne1969irreducibility}. \end{remark}

As stated in Lemma \ref{tangentstuff}, $G$-equivariant extensions of the morphism $X \to S$ by a quasicoherent $G$-$\mc{O}_Y$-module $\mc{F}$ are classified by the group $\Ext^1_G(\mc{L}_{X/S}, \mc{F})$. This is the basis for all results on equivariant deformations of $X \to S$.

\begin{theorem} (i) (Lemma 2.5, 2.6) \cite{itsyg} \label{discreteanddecorated} $(\Def_{(X, G)}^{\mr{Curve}_G})^\star$ is discrete and pro-representable when $g \geq 2$ and $X$ is smooth. \newline \noindent 
(ii) (Prop 2.15) \cite{itsyg} $\Def_{(X, G, Q_1)}^\star$ is discrete and pro-representable when $X$ is a smooth genus one curve with marked point $Q_1,$ i.e., $(X, Q_1)$ is an elliptic curve. 
\end{theorem}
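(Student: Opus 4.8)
The plan is to deduce both parts from Schlessinger's representability criterion together with the equivariant deformation-theoretic package recalled in Corollary \ref{tangentstuff} (Wewers) and the collapse of the equivariant cotangent complex of a smooth curve; this is the content of Lemma 2.5--2.6 and Proposition 2.15 of \cite{itsyg}, and the argument runs as follows.

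\emph{Discreteness, part (i).} For a smooth curve $X$ over $k$ the equivariant cotangent complex is concentrated in degree zero, $\mc{L}^G_{X/k} \simeq \Omega_X$ with its inherited $G$-action (Illusie \cite{illusie}, Khan \cite{khan}). By Corollary \ref{tangentstuff}, for a surjection $R' \twoheadrightarrow R$ in $\widehat{\Art}_k$ with square-zero kernel a finite-dimensional $k$-vector space $\mc{I}$, the automorphism group of any object of $\Def^\star_{(X,G)}(R')$ acting trivially on its image in $\Def^\star_{(X,G)}(R)$ is $\Hom_G(\Omega_X, \mc{I}\otimes_k \OO_X) \cong (H^0(X, T_X)\otimes_k \mc{I})^G$. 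When $g \geq 2$ we have $\deg T_X = 2-2g < 0$, hence $H^0(X, T_X)=0$ and this group vanishes. Inducting along the Artinian tower, every deformation over every $R \in \widehat{\Art}_k$ has trivial star-automorphism group, i.e.\ $\Def^\star_{(X,G)}$ takes values in discrete groupoids.

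\emph{Pro-representability, part (i).} Conditions (H1)--(H3) of Schlessinger are the standard consequences of the problem being governed by the perfect equivariant cotangent complex $\mc{L}^G_{X/k}$ with finite-dimensional tangent space $\Ext^1_G(\Omega_X,\OO_X)\cong H^1(X,T_X)^G$ (finiteness via the local-to-global spectral sequence and $H^0(X,T_X)=0$); these already give a pro-representable hull. Condition (H4), which upgrades the hull to genuine pro-representability, is automatic once infinitesimal automorphisms vanish, which we just established. Hence $\Def^\star_{(X,G)}$ is pro-representable.

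\emph{Part (ii), and the main difficulty.} An elliptic curve has $T_X \simeq \OO_X$, so $H^0(X,T_X)=k\neq 0$ and the bare functor is not discrete; fixing the marked point $Q_1$ replaces $T_X$ by $T_X(-Q_1)\simeq\OO_X(-Q_1)$ throughout the above (Lemma \ref{squirmingpoints}), and $H^0(X,\OO_X(-Q_1))=0$, so again all star-automorphisms vanish, discreteness follows, $H^1(X,T_X(-Q_1))$ is finite-dimensional, and (H1)--(H4) go through verbatim. The only place where the wildness of $G$ (which has $p$-torsion in characteristic $p$) could interfere is in the equivariant $\Ext$ computations and the local-to-global spectral sequence, but everything we need is controlled by the underlying $\OO_X$-module $T_X$ (resp.\ $T_X(-Q_1)$), whose negative degree already kills $H^0$, so the group cohomology of $G$ only ever enters through invariants of the zero module and causes no trouble here. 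The genuine obstruction-theoretic subtlety of wildness surfaces only in the $\Ext^2_G$ computations for the higher-$p$-rank curves of the sequel, and is not needed for the present statement.
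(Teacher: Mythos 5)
Your argument is correct: vanishing of $H^0(X,T_X)$ (resp.\ $H^0(X,T_X(-Q_1))$) kills infinitesimal star-automorphisms via Corollary \ref{tangentstuff}, giving discreteness by induction along small extensions, and together with the finite-dimensional tangent space this yields Schlessinger's (H1)--(H4) and hence pro-representability. The paper itself gives no proof of this statement, importing it from \cite{itsyg}, and your reconstruction is exactly the standard argument underlying the cited lemmas, so there is nothing of substance to compare or correct.
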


\begin{theorem} [Prop 3.2.1 \cite{bm}] If $X$ is non-singular, there is a canonical identification of $k$-vector spaces $$T(\Def_{(X, G)}) \simeq R^1\Gamma^G(X, T_X) \simeq \Ext^1_G(\Omega_X, \OO_X).$$ \end{theorem}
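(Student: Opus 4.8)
The plan is to unwind the definition of the tangent space, invoke the classification of equivariant square-zero extensions, and then convert the resulting equivariant $\Ext$-group into equivariant cohomology of the tangent sheaf via a degenerating Grothendieck spectral sequence.

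First I would observe that $T(\Def_{(X,G)}) = \Def_{(X,G)}(k[\epsilon]/\epsilon^2)$ is, by Theorem~\ref{discreteanddecorated}, a discrete set: namely the set of $G$-equivariant flat lifts of $X \to \Spec k$ to $\Spec k[\epsilon]/\epsilon^2$ with the reduction fixed. Since the square-zero ideal $(\epsilon)$ is free of rank one over $k$, its pullback to $X$ is $\OO_X$, so such a lift is exactly a $G$-equivariant square-zero extension of $X/k$ by the $G$-$\OO_X$-module $\OO_X$ in the sense of the discussion preceding Corollary~\ref{tangentstuff}. By Corollary~\ref{tangentstuff}(ii) (from \cite{wewers}) the isomorphism classes of these form the group $\Ext^1_G(\mc{L}_{X/k}, \OO_X)$, canonically and with its natural $k$-vector-space structure (Baer sum). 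Non-singularity of $X$ makes the equivariant cotangent complex discrete with $\mc{L}_{X/k} \simeq \Omega_X$, as recalled in Section~\ref{tangentcomplexes}, so this group is $\Ext^1_G(\Omega_X, \OO_X)$, the second identification in the statement.

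Next I would produce the comparison $\Ext^i_G(\Omega_X, \OO_X) \simeq R^i\Gamma^G(X, T_X)$, valid for every $i$ and in particular $i=1$. Working in the abelian category of $G$-equivariant quasi-coherent $\OO_X$-modules, which has enough injectives, one factors $\Hom_G(\Omega_X, -)$ as $\mc{F} \mapsto \mathcal{H}om_{\OO_X}(\Omega_X, \mc{F})$ followed by $\Gamma^G(X, -)$. Since $X$ is smooth, $\Omega_X$ is locally free of finite rank, so the first functor is exact and sends injective equivariant sheaves to $\Gamma^G$-acyclic ones (locally it is just a finite direct sum of copies of the identity). Hence the Grothendieck spectral sequence of the composite collapses, giving $\Ext^i_G(\Omega_X, \mc{F}) \simeq R^i\Gamma^G(X, \mathcal{H}om_{\OO_X}(\Omega_X, \mc{F}))$; taking $\mc{F} = \OO_X$ and using $\mathcal{H}om_{\OO_X}(\Omega_X, \OO_X) = T_X$ yields the theorem, naturally in all the data. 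As a sanity check, feeding $G$ trivial into $R^i\Gamma^G = R^i\Gamma$ recovers $T(\Def_X) = H^1(X, T_X)$ of Theorem~\ref{schlessinger}(ii).

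I expect the main obstacle to be the homological bookkeeping in the last step: one must check that "equivariant $\Ext$" is genuinely the derived functor of $\Hom_G$ on the equivariant module category — not the hypercohomology of $G$ acting on a non-equivariant $\Ext$-complex — and that $\mathcal{H}om_{\OO_X}(\Omega_X, -)$ really preserves $\Gamma^G$-acyclicity of injectives there. A clean way to avoid subtleties about injective equivariant sheaves is to fix a $G$-stable affine open cover of $X$, which exists because $q \colon X \to X/G$ is finite, hence affine, so the pullback of any affine cover of the quotient curve $X/G$ is $G$-stable and affine. On such a cover an equivariant first-order deformation is literally an equivariant \v{C}ech $1$-cocycle of $G$-twisted derivations valued in $T_X$, and both $\Ext^1_G(\Omega_X, \OO_X)$ and $R^1\Gamma^G(X, T_X)$ are computed by the same equivariant \v{C}ech complex, so the identification becomes immediate and manifestly canonical.
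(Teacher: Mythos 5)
Your proposal is correct and follows essentially the same route as the paper's source for this statement (the paper quotes it from Bertin--M\'ezard \cite{bm} without reproving it, and its own background --- Cor.~\ref{tangentstuff} and the derived functor $R\Gamma^G$ of Section~\ref{equivariant sheaves} --- is exactly the toolkit you assemble): first-order $G$-deformations are $G$-equivariant square-zero extensions of $X$ by $\OO_X$, classified by $\Ext^1_G(\Omega_X,\OO_X)$ since $\mc{L}_{X/k}\simeq \Omega_X$ for $X$ smooth, and the degenerate Grothendieck spectral sequence for $\Hom_G(\Omega_X,-)=\Gamma^G\circ\mathcal{H}om_{\OO_X}(\Omega_X,-)$ (using that $\Omega_X$ is locally free, so $-\otimes T_X$ preserves injectives in the equivariant category) identifies this with $R^1\Gamma^G(X,T_X)$, which is also how the equivariant \v{C}ech/group-cochain double complex of your last paragraph computes it. The only superfluous step is the appeal to Theorem~\ref{discreteanddecorated} (which assumes $g\geq 2$): the tangent space is by definition the set of isomorphism classes of objects over $k[\epsilon]/\epsilon^2$, so no discreteness statement is needed there.
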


\begin{lemma} \label{lowbadboys} [Prop 3.3.2] \cite{bm} Let $R$ be the points in $X$ with nontrivial stabilizer group under the action of a finite group $G$. Let $\widehat{T}_{X, x} \simeq k[[T]]\frac{d}{dT}$ be the tangent sheaf completed at a point $x$, where $T$ is the uniformizer at that point. The tangent space of $\Def_{(X, G)}^{\mr{loc}}$ is $$T(\Def_{(X, G)}^{\mr{loc}}) \simeq \bigoplus_{x \in R} H^1(G_{x}, \widehat{T}_{X, x}).$$ 
\end{lemma}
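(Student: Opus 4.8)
The plan is to reduce to a single ramification point and then run the standard principle that first-order deformations of a homomorphism into an ambient automorphism group are classified by the first cohomology of $G$ with coefficients in the relevant module of infinitesimal automorphisms. By Construction~\ref{stabilizer}, $\Def_{(X,G)}^{\mr{loc}}$ is by definition the product $\prod_{x\in R}\Def_{\rho_{G_x}}$, and evaluation on the dual numbers $k[\epsilon]/(\epsilon^2)$ commutes with finite products, so $T(\Def_{(X,G)}^{\mr{loc}})\simeq\bigoplus_{x\in R}T(\Def_{\rho_{G_x}})$. It therefore suffices to show that for a single local representation $\rho\colon G_x\hookrightarrow\Aut_k(k[[T]])$ one has $T(\Def_{\rho})\simeq H^1\big(G_x,\widehat T_{X,x}\big)$, where $\widehat T_{X,x}=\Der_k^{\mathrm{cont}}(k[[T]])=k[[T]]\tfrac{d}{dT}$ is equipped with the $G_x$-action pulled back along $\rho$.

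Next I would identify the ambient infinitesimal automorphisms. Let $N=\ker\big(\Aut_{k[\epsilon]}(k[\epsilon][[T]])\to\Aut_k(k[[T]])\big)$ be the group of continuous $k[\epsilon]$-algebra automorphisms reducing to the identity modulo $\epsilon$. Writing such an automorphism as $T\mapsto T+\epsilon g(T)$ with $g\in k[[T]]$ and using $\epsilon^2=0$, the multiplicativity constraint says exactly that $\delta:=g\tfrac{d}{dT}$ is a continuous derivation; hence $\mathrm{id}+\epsilon\delta\mapsto\delta$ is an isomorphism $N\xrightarrow{\,\sim\,}\widehat T_{X,x}$, and conjugation of $N$ by $\rho(g)$ corresponds under it to the natural (pullback-of-vector-fields) $G_x$-action on $\widehat T_{X,x}$. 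Note that the constant term $g(0)$ is unconstrained, so one really gets the full module $k[[T]]\tfrac{d}{dT}$ and not its submodule $Tk[[T]]\tfrac{d}{dT}$, matching the statement.

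Then comes the cocycle computation. The trivial lift $\widetilde\rho_0$, with $\widetilde\rho_0(g)$ the $k[\epsilon]$-linear extension of $\rho(g)$ to $k[\epsilon][[T]]$, always exists, so every object of $\Def_\rho(k[\epsilon]/(\epsilon^2))$ can be written uniquely as $\widetilde\rho(g)=(\mathrm{id}+\epsilon\, c(g))\circ\rho(g)$ for a function $c\colon G_x\to N\simeq\widehat T_{X,x}$. Expanding $\widetilde\rho(gh)=\widetilde\rho(g)\widetilde\rho(h)$ and commuting $\rho(g)$ past $\mathrm{id}+\epsilon\,c(h)$, which introduces the twist ${}^{g}c(h):=\rho(g)c(h)\rho(g)^{-1}$, shows that $\widetilde\rho$ is a homomorphism precisely when $c$ is a $1$-cocycle; and conjugating $\widetilde\rho$ by an element $\mathrm{id}+\epsilon\,b\in N$ (an isomorphism of deformations over $k[\epsilon]/(\epsilon^2)$) changes $c$ by the coboundary $g\mapsto {}^{g}b-b$. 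Hence the set of isomorphism classes $T(\Def_\rho)$ equals $Z^1(G_x,\widehat T_{X,x})/B^1(G_x,\widehat T_{X,x})=H^1(G_x,\widehat T_{X,x})$ as a $k$-vector space, and summing over $x\in R$ gives the lemma; this is the argument of \cite{bm}.

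The main obstacle is the bookkeeping in the middle two steps: verifying that the twist appearing in the cocycle relation genuinely is the natural $G_x$-action on $\widehat T_{X,x}$ (rather than some variant), and being careful enough with the topological-algebra points---continuity of automorphisms and derivations of $k[[T]]$, completed base change $\widehat{\otimes}_k$---that the naive group-cohomology formalism applies verbatim. Everything else is the routine translation ``infinitesimal automorphisms of the ambient object $\leftrightarrow$ coefficient module, multiplicativity $\leftrightarrow$ cocycle, change of lift $\leftrightarrow$ coboundary.''
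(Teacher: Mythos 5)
Your argument is correct and is essentially the proof from Bertin--M\'ezard that the paper itself merely cites for this lemma: the splitting of $T(\Def_{(X,G)}^{\mr{loc}})$ over the points of $R$ via Construction \ref{stabilizer}, the identification of the kernel of $\Aut_{k[\epsilon]}(k[\epsilon][[T]])\to\Aut_k(k[[T]])$ with the full module $k[[T]]\tfrac{d}{dT}$, and the cocycle/coboundary bookkeeping giving $H^1(G_x,\widehat{T}_{X,x})$. The only point worth making explicit is that isomorphisms of local deformations are taken to be conjugations by automorphisms reducing to the identity (the $\star$-type convention of \cite{bm}), which is exactly what your coboundary step uses.
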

\subsection{Equivariant Sheaves and Equivariant Sheaf Cohomology}
\label{equivariant sheaves}

In order to define the tangent complexes of our deformation problems, we first must introduce equivariant sheaves and their cohomology.

\begin{defn}
Given an action $a: G \times_S X \to X$ of a group scheme $G$ on a scheme $X$ an equivariant sheaf $\mc{F}$ on $\mc{X}$ is a sheaf of $\OO_X$-modules together with an isomorphism of $\OO_{G \times X}$ modules 
$$a^*\mc{F} \simeq p_2^*\mc{F}.$$
\end{defn}

\begin{defn} We define the $G$-equivariant global sections as the right adjoint of the functor which constructs trivial $G$-sheaves.

\begin{comment} 
\[\begin{tikzcd}
	{\QCoh(*)} & \perp & {\QCoh^G(X)}
	\arrow["{\Gamma^G}"', curve={height=18pt}, tail reversed, no head, from=1-1, to=1-3]
	\arrow["{\mr{triv}}", curve={height=-18pt}, from=1-1, to=1-3]
\end{tikzcd}\]
\end{comment} 
\[
\begin{tikzcd}
\QCoh(*) \arrow[rr, "\mr{triv}", bend left] & \perp & \QCoh^G(X) \arrow[ll, "\Gamma^G", bend left]
\end{tikzcd}
\]

The right adjoint of the forgetful functor from $\QCoh^G(X) \to \QCoh(X)$ forgetting the $G$-action is the coinduced module of the trivial action, $\widetilde{(-)} \colon \mc{F} \mapsto a_*p_2^*\mc{F} =: \widetilde{\mc{F}}$ (this coincides with the induced module since $G$ is finite).

\begin{comment}
\[\begin{tikzcd}
	{\QCoh(X)} & \perp & {\QCoh^G(X)}
	\arrow["{\mc{U}}"', curve={height=18pt}, tail reversed, no head, from=1-1, to=1-3]
	\arrow["{\widetilde{(-)}}", curve={height=-18pt}, from=1-1, to=1-3]
\end{tikzcd}\]
\end{comment} 
\[ \begin{tikzcd}
\QCoh(X) \arrow[rr, "\widetilde{(-)}", bend left] & \perp & \QCoh^G(X) \arrow[ll, "U", bend left]
\end{tikzcd} \]
\end{defn}

We denote the algebraic quotient of $X$ by a finite $G$ action as $Y := X/G$. The stacky quotient we refer to as $X\stackyq G$. 

\begin{construction}
   Given a finite $G$ group scheme acting on $X$, the quotient map $q: X \to X/G$ factors through $X \stackyq G$ uniquely by the universal property that all maps from $X$ to objects with trivial $G$ action must factor uniquely through $X\stackyq G.$

\[\begin{tikzcd}
	X \\
	& {X\stackyq G} \\
	{X/G}
	\arrow["g", from=1-1, to=2-2]
	\arrow["f"', from=1-1, to=3-1]
	\arrow["h", from=2-2, to=3-1]
\end{tikzcd}\]

Equivariant sheaves on $X$ are sheaves on the stacky quotient of $X$ by $G$, that is, $\QCoh(X\stackyq G) \simeq \QCoh^G(X)$. These maps induce via pushforward a map from $G$-sheaves on $X$ to sheaves on $Y := X/G.$ 

\[\begin{tikzcd}
	{\QCoh^G(X)} & {\QCoh(X\stackyq G)} & {\QCoh(X/G)}
	\arrow["{\simeq}", from=1-1, to=1-2]
	\arrow["{q^G_*}", from=1-2, to=1-3]
\end{tikzcd}\]

Given a $G$-Galois covering $f: X \to Y$, and $\mc{F}$ a $G-\OO_X$ module, we may construct a sheaf of $\OO_Y$ modules $q^G_*\mc{F}.$
\end{construction}

\begin{remark} If $X \to X/G$ is an \'etale map, then $\QCoh(X \stackyq G) \simeq \QCoh(X/G)$. \end{remark}

\[\begin{tikzcd}
	{\QCoh^G(X)} & {\QCoh(X/G)} & {\QCoh(*)} \\
	{\mc{F}} & {q_*^G\mc{F}} & {\Gamma(X/G, q_*^G\mc{F})} \\
	{\mc{F}} && {\Gamma^G(X, \mc{F})}
	\arrow["{q_*^G}", from=1-1, to=1-2]
	\arrow["{\Gamma^G}", curve={height=-24pt}, from=1-1, to=1-3]
	\arrow["\Gamma", from=1-2, to=1-3]
	\arrow[maps to, from=2-1, to=2-2]
	\arrow[no head, from=2-1, to=3-1]
	\arrow[shift left, no head, from=2-1, to=3-1]
	\arrow[maps to, from=2-2, to=2-3]
	\arrow["\simeq", from=2-3, to=3-3]
	\arrow[maps to, from=3-1, to=3-3]
\end{tikzcd}\]

The composite map is the constant sheaf on $X$ with trivial action, and its right adjoint is equivariant global sections $\Gamma^G$. Right adjoints are commutative, so we may consider the right adjoint to the map $f$, which gives us a sheaf on $Y$ whose global sections are the equivariant global sections.

\begin{defn} We define $R^q\Gamma^G(X, \mc{F})$ to be the equivariant cohomology of  $\mc{F} \in \QCoh^G(X)$. Sometimes we denote this by $H^q(G, \mc{F})$. \end{defn}

\noindent Since $$\Gamma^G(X, \mc{F}) \simeq \Gamma(X/G, q_*^G(\mc{F})),$$ i.e., $\Gamma^G_X \simeq \Gamma_{X/G} \circ q_*^G$ we may use the Grothendieck spectral sequence for compositions of functors. We denote $Y := X/G$. The spectral sequence collapses to give us the following short exact sequence:

$$H^1(Y,q_*^G(\mc{F})) \to R^1\Gamma^G_X(\mc{F}) \to H^0(Y, R^1(q_*^G\mc{F}))$$

\subsection{Analysis of Pushforwards of Stacky Curves}
Let $X$ be a curve with an action of a finite group $G$. We will quickly set up some foundational lemmas about stacky curves to work with the map $q^G_*: \mr{QCoh}^G(X) \to \mr{QCoh}(X/G)$ induced by the map $\pi: X \stackyq G \to X/G.$ For a thorough foundational treatment, please consult Section 4 of \cite{kobin}. 

\begin{remark} The floor functions in Kobin \cite{kobin} and Bertin-Mezard \cite{bm} Section 4 should be replaced by ceiling functions, as noted in Remark 1.6.2 of \cite{katocornel}.\end{remark}

Let $\mc{Y}$ be a stacky curve, and $Y$ be its coarse counterpoint, in our case, $\mc{Y} := \mc{X} \stackyq G$, and $Y := X/G.$ Consider the map $h: \mc{Y} \to Y$. 

\begin{defn} \label{stackydegree} The degree of a divisor $D = \sum_P n_P[P]$ on a stacky curve is the formal sum $\deg(D) = \sum_P \frac{n_P}{|G_P|}$ where $|G_P|$ is the degree of the stabilizer group at $P$. \end{defn}

\begin{lemma} \label{koby}  (Lemma 4.10 \cite{kobin}) Given a stacky curve $\mc{X}$ with coarse space morphism $\pi \colon \mc{X} \to X$, there's an isomorphism $$\pi_*\OO_{\mc{X}}(D) \to \OO_X(\lceil D \rceil).$$  \end{lemma}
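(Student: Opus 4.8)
The plan is to establish the isomorphism \'etale-locally on the coarse space $X$. First I would observe that both $\pi_*\OO_{\mc X}(D)$ and $\OO_X(\lceil D\rceil)$ are naturally $\OO_X$-submodules of the constant sheaf of rational functions $\underline{K(X)}$ on $X$: the stacky curve $\mc X$ is generically a scheme (equal to $X$ away from finitely many points), so $K(\mc X) = K(X)$ and $\OO_{\mc X}(D) \subseteq \underline{K(\mc X)}$; since $\pi$ is proper with geometrically connected fibers, $\pi_*\underline{K(\mc X)} = \underline{K(X)}$, and $\OO_X(\lceil D\rceil)$ is a fractional ideal inside $\underline{K(X)}$ as well. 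Thus the claimed isomorphism can be taken to be ``the identity on rational functions,'' and proving it amounts to checking, stalk by stalk, an equality of $\OO_{X,x}$-submodules of $K(X)$. This reduces the statement to the completed local ring of a stacky point $\bar P \in X$.

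Next I would install the standard local model. Around $\bar P$, tameness --- which I may assume here, or else restrict to tame stacky curves as in the remark above on ceiling versus floor --- gives $\mc X \simeq [\Spf k[[t]]/\mu_e]$ with $\mu_e$ acting by scaling, $t \mapsto \zeta t$ for $\zeta \in \mu_e$, whose coarse space is $\Spf k[[s]]$ with $s = t^e$. Under the equivalence between quasi-coherent sheaves on $[\Spf k[[t]]/\mu_e]$ and $\mu_e$-equivariant $k[[t]]$-modules, the coarse space morphism $\pi_*$ is the invariants functor $M \mapsto M^{\mu_e}$ (because the coarse space of a quotient by a finite group is the spectrum of the invariant subring), and $\OO_{\mc X}(D)$ for $D = m[P]$ supported at the stacky point corresponds to the fractional ideal $t^{-m}k[[t]]$ equipped with its tautological weight grading, the monomial $t^j$ having $\mu_e$-weight $j \bmod e$.

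Then the computation is elementary: $(t^{-m}k[[t]])^{\mu_e}$ is the $k[[s]]$-span of the monomials $t^j$ with $j \geq -m$ and $e \mid j$, hence equals $s^{-n}k[[s]]$ where $n$ is the appropriate rounding of $m/e$ --- the round-up $\lceil m/e\rceil$ with the sign conventions of this paper, which is exactly the coefficient of $\lceil D\rceil$ at $\bar P$ (this is the place where the floor-versus-ceiling discrepancy flagged in the remark, following \cite{katocornel}, must be resolved carefully). For a general divisor $D$ I would split $D = D_0 + D_1$ with $D_1$ supported at the stacky points and $D_0$ supported on the scheme locus, so that $\OO_{\mc X}(D) \cong \OO_{\mc X}(D_1)\otimes \pi^*\OO_X(D_0)$; the projection formula together with $\pi_*\OO_{\mc X} = \OO_X$ pulls $D_0$ through $\pi_*$ untouched, and $\lceil D\rceil = D_0 + \lceil D_1\rceil$, reducing to the single-stacky-point case just treated. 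Gluing the local identifications is automatic since off the stacky locus each is the identity on $\underline{K(X)}$.

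The main obstacle I anticipate is not conceptual but a matter of getting the bookkeeping exactly right: matching the exponent that drops out of $(t^{-m}k[[t]])^{\mu_e}$ to the coefficient of $\lceil D\rceil$ on the nose, which requires pinning down simultaneously the sign convention in the definition of $\OO_{\mc X}(D)$, the normalization of orders of vanishing at a stacky point, and the rounding operation --- precisely the subtlety that the preceding remark warns about and that \cite{katocornel} corrects. A secondary point worth a sentence is the tameness (equivalently, cyclicity and invertibility of the stabilizer orders) needed for the clean local model; in the setting of this paper this is harmless, since the relevant stacky structure comes from the tame group $C_{p-1}$.
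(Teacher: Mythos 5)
Your overall strategy is the standard one (and, as far as this paper is concerned, the only proof on offer: the paper does not prove this lemma, it cites Kobin's Lemma 4.10 together with the floor-versus-ceiling remark). Reducing to an \'etale-local statement inside the constant sheaf of rational functions, invoking the tame local model $[\Spec k[[t]]/\mu_e]$ with coarse space $\Spec k[[s]]$, $s=t^e$, identifying $\pi_*$ with $\mu_e$-invariants, and handling the non-stacky part by the projection formula is exactly how the result is proved in the literature, and the tameness you flag is indeed all that is used in this paper ($G=C_{p-1}$).

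The gap is in the one step where the lemma has content: the rounding. With $\OO_{\mc X}(m[P])$ corresponding to $t^{-m}k[[t]]$, the invariant monomials are the $t^j$ with $j\ge -m$ and $e\mid j$, and the least such $j$ is $-e\lfloor m/e\rfloor$, so
$$\bigl(t^{-m}k[[t]]\bigr)^{\mu_e}=s^{-\lfloor m/e\rfloor}k[[s]],$$
i.e.\ the honest output of your computation is $\pi_*\OO_{\mc X}(D)\simeq\OO_X(\lfloor D\rfloor)$, not $\OO_X(\lceil D\rceil)$; your assertion that the invariants are $s^{-\lceil m/e\rceil}k[[s]]$ is false whenever $e\nmid m$ and $m>0$ (take $m=1$, $e=2$: the invariants are $k[[s]]$, not $s^{-1}k[[s]]$). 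You flag the bookkeeping as the danger point but then resolve it in the wrong direction, so as written you would be ``verifying'' a statement that fails for divisors with positive coefficients at stacky points. The ceiling in the paper's statement is only compatible with the computation when the divisor is written as minus an effective one: since $\lfloor -\beta/e\rfloor=-\lceil \beta/e\rceil$, the floor formula gives $\pi_*\OO_{\mc X}(-E)\simeq\OO_X(-\lceil E\rceil)$ for $E$ effective, which is precisely how the lemma is applied to $D=-\mf R$ in Lemma \ref{diamondbeneath}. A correct write-up should either prove the floor statement and record this sign translation, or restrict the ceiling statement to anti-effective divisors; the rest of your argument (splitting $D=D_0+D_1$, projection formula, gluing inside $\underline{K(X)}$) goes through unchanged.
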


The following result is easily extended to wild cases and cases not totally ramified, we discuss the tame totally ramified case for cleanliness of exposition.

\begin{lemma} \label{diamondbeneath} (diamond beneath) 
For $X$ a totally ramified curve with a tame action of $G$, let $\mf{R}$ be the unreduced ramification divisor, let $B$ be the the reduced branch divisor (i.e., image of the points with nontrivial stabilizer groups $G_x$ under the quotient map $q: X \to X/G$). Then, $$q_*^G\OO_X(-\mf{R}) \simeq \OO_Y(-B).$$
\end{lemma}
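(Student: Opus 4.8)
The plan is to reduce the claimed isomorphism $q_*^G \OO_X(-\mf{R}) \simeq \OO_Y(-B)$ to the already-available statement Lemma \ref{koby} by factoring the $G$-equivariant pushforward through the stacky quotient. Recall from Section \ref{equivariant sheaves} that $\QCoh^G(X) \simeq \QCoh(\mc{Y})$ where $\mc{Y} := X \stackyq G$, and that the $G$-pushforward $q_*^G$ is by definition the composite $\QCoh^G(X) \simeq \QCoh(\mc{Y}) \xrightarrow{h_*} \QCoh(Y)$ along the coarse-space map $h \colon \mc{Y} \to Y$. So I would first compute the image of $\OO_X(-\mf{R})$ under the equivalence $\QCoh^G(X) \simeq \QCoh(\mc{Y})$, and then apply Lemma \ref{koby} (with its ceiling-function caveat from the remark) to push down to $Y$.

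The key computation is therefore the middle step: identifying the equivariant line bundle $\OO_X(-\mf{R})$ on $X$ with an explicit divisorial line bundle $\OO_{\mc{Y}}(-\mathfrak{D})$ on the stacky curve $\mc{Y}$, where $\mathfrak{D}$ is supported on the stacky points. Since the action of $G$ is tame and totally ramified at each point $x$ of the support of $\mf{R}$, with stabilizer $G_x$, the ramification divisor is $\mf{R} = \sum_x (|G_x|-1)[x]$ by the length formula recalled right before Lemma \ref{riemannhurwitz}. Under $X \to \mc{Y}$, each ramification point $x$ maps to a stacky point $\mathfrak{p}$ of isotropy $|G_x|$, and the pullback relation $q^*[\,\mathfrak{p}\,] = |G_x|\cdot[x]$ on $X$ means the natural divisor on $\mc{Y}$ lying under $[x]$ has stacky degree $1/|G_x|$. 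I expect that $\OO_X(-\mf{R})$, as a $G$-equivariant sheaf, corresponds to $\OO_{\mc{Y}}\big(-\sum_{\mathfrak{p}}\tfrac{|G_{\mathfrak{p}}|-1}{|G_{\mathfrak{p}}|}[\mathfrak{p}]\big)$; equivalently, $\OO_{\mc{Y}}(-B')$ where $B' = \sum_{\mathfrak{p}} \frac{|G_{\mathfrak{p}}|-1}{|G_{\mathfrak{p}}|}[\mathfrak{p}]$ uses the stacky-degree convention of Definition \ref{stackydegree}. Then Lemma \ref{koby} gives $h_*\OO_{\mc{Y}}(-B') \simeq \OO_Y(\lceil -B' \rceil)$, and since $0 < \frac{|G_{\mathfrak{p}}|-1}{|G_{\mathfrak{p}}|} < 1$ for each $\mathfrak{p}$ (tameness plus nontriviality of the stabilizer), we get $\lceil -B' \rceil = -\sum_{\mathfrak{p}}[\mathfrak{p}] = -B$, which is exactly the reduced branch divisor. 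This yields the claimed isomorphism.

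The main obstacle I anticipate is bookkeeping the equivariant structure precisely at the ramified points: one must verify that the $G$-equivariant descent datum on $\OO_X(-\mf{R})$ is exactly the one that produces the fractional stacky divisor above, rather than, say, $\OO_{\mc{Y}}$ twisted by a nontrivial character of $G_{\mathfrak{p}}$ on the stacky gerbe. Concretely this amounts to a local computation in $\widehat{\OO}_{X,x} \simeq k[[T]]$ with its linearized $G_x$-action (the tame case, where $G_x$ acts by a faithful character on $T$): one checks that the $G_x$-invariant generator of the stalk of $\OO_X(-\mf{R})$ is $T^{|G_x|-1}\,dT$ up to units, matching $\pi^*$ of the uniformizer downstairs, and that this is compatible with the equivalence $\QCoh^{G_x}(\Spec k[[T]]) \simeq \QCoh$ of the local stacky chart $[\Spec k[[T]]/G_x]$. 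I would cite Section 4 of \cite{kobin} (with the ceiling correction of \cite{katocornel}) for the general framework and only spell out the tame totally ramified local model, as the lemma statement itself restricts to that case for cleanliness. Once the local picture is pinned down, the global statement follows by gluing, since both sides are line bundles agreeing away from a finite set and on each local chart.
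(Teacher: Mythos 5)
Your plan is essentially the paper's proof: identify $q_*^G$ with the pushforward from the stacky quotient $X \stackyq G$ to the coarse space $Y$, note that tameness gives $\mf{R}=\sum_x(|G_x|-1)[x]$ so the corresponding stacky divisor has coefficient $\tfrac{|G_P|-1}{|G_P|}$ at each stacky point, and then apply Lemma \ref{koby}. The extra local verification you propose (that no nontrivial character of $G_{\mathfrak p}$ twists the descended line bundle) is more care than the paper takes, and is fine, though note a small slip there: the stalk generator of $\OO_X(-\mf{R})$ at $x$ is the function $T^{|G_x|-1}$ (no $dT$ --- this is not a sheaf of differentials), and the pullback of the uniformizer downstairs is $T^{|G_x|}$, which generates $\OO_X(-|G_x|[x])$ rather than $\OO_X(-\mf{R})$.

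The one step you should repair is the rounding at the end. As literally written, $\lceil -B'\rceil$ has coefficient $\lceil -\tfrac{|G_{\mathfrak p}|-1}{|G_{\mathfrak p}|}\rceil = 0$ at every stacky point, since each coefficient of $-B'$ lies in $(-1,0)$; that would give $\OO_Y$, not $\OO_Y(-B)$. The correct bookkeeping --- and the way the paper actually invokes Lemma \ref{koby} --- is to round the positive coefficient and then negate,
$$q_*^G\OO_X(-\mf{R}) \simeq \OO_Y\Bigl(-\sum_{P} \Bigl\lceil \tfrac{|G_P|-1}{|G_P|} \Bigr\rceil q(P)\Bigr) = \OO_Y(-B),$$
equivalently to take the floor of the negative divisor, $\lfloor -B'\rfloor = -\lceil B'\rceil = -B$. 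Since the entire content of the lemma is that the fractional coefficient $\tfrac{|G_P|-1}{|G_P|}$ rounds to $1$ (so the reduced branch divisor appears), the direction in which the rounding is applied is exactly the point to state precisely; with that fixed, your argument goes through and coincides with the paper's.
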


\begin{proof} The pushforward $q_*^G\OO_X(-\mf{R})$ is considering $\OO_X(-\mf{R})$ as a $G$-module, which is equivalent to considering $\OO_{X\stackyq G}(-\mf{R})$ under the the equivalence $\QCoh^G(X) \simeq \QCoh(X \stackyq G)$. Let $\beta_P$ denote the local different at $P$ (for a tame curve $\beta_P = |G_P|-1$)

By Lemma \ref{koby}, $\deg \beta_P [P] = \frac{\beta_P}{|G_P|}$, thus $$q_*^G\OO_X(-\mf{R}) \simeq \OO_Y(-\sum_{P} \lceil \frac{\beta_P}{|G_P|} \rceil q(P)) \simeq \OO_Y(-\sum_{P} \lceil \frac{|G_P|-1}{|G_P|} \rceil q(P)).$$
The conclusion follows, as $\sum_P q(P)$ is the reduced branch divisor $B$ by definition. 
\end{proof}

\begin{remark} The statement of \ref{diamondbeneath} is usually written in the literature as $q_*^G\OO_X(-\mf{R}) \simeq \OO_{X/G} \cap q_*(\OO_X(-\mf{R}))$, where the ceiling function is listed as the next step in the evaluation of the right hand side (see \cite{katocornel} Prop 1.6, and \cite{bm} Prop 5.3.2). They are implicitly using stacky gerbes to calculate this cap product as a ceiling function. \end{remark}

\begin{lemma} \label{dropweight} (drop weight) For $X$ a curve with a totally ramified tame action of $G$, let $B$ be the the reduced branch divisor (i.e., image of the points with nontrivial stabilizer groups $G_x$ under the quotient map $q: X \to X/G$). Then,
$$q_*^GT_X \simeq T_{X/G}(-B)$$
\end{lemma}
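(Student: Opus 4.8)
The plan is to reduce everything to Riemann--Hurwitz (Lemma \ref{riemannhurwitz}), the pushforward computation for the ramification divisor (Lemma \ref{diamondbeneath}), and the projection formula. Write $Y := X/G$ and let $q\colon X \to Y$ be the quotient map. First, recall $T_X \simeq (\Omega^1_X)^\vee$ as a $G$-$\OO_X$-module, since dualizing a $G$-equivariant sheaf is again $G$-equivariant. Riemann--Hurwitz gives $\Omega^1_X \simeq \OO_X(\mf{R}) \otimes q^*\Omega^1_Y$, where $\mf{R} = \sum_P (|G_P|-1)[P]$ is the tame ramification divisor. Since the $G$-action is totally ramified, every ramification point is $G$-fixed, so $\mf{R}$ is a $G$-invariant divisor, $\OO_X(\mf{R})$ carries its canonical $G$-equivariant structure, and $q^*\Omega^1_Y$ carries the canonical equivariant structure of a pullback from the quotient; hence this is an isomorphism of $G$-$\OO_X$-modules. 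As $\Omega^1_Y$ (and so $q^*\Omega^1_Y$) is a line bundle, dualizing commutes with the tensor product and with $q^*$, giving the equivariant isomorphism $T_X \simeq \OO_X(-\mf{R}) \otimes q^* T_Y$.

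Next I would apply $q^G_*$. Viewing $q^G_*$ as the pushforward along the coarse-space morphism $h\colon X\stackyq G \to Y$ under the equivalence $\QCoh^G(X) \simeq \QCoh(X\stackyq G)$, and using that $T_Y$ is a line bundle on $Y$, the projection formula yields
$$q^G_* T_X \;\simeq\; q^G_*\bigl(\OO_X(-\mf{R}) \otimes q^* T_Y\bigr) \;\simeq\; \bigl(q^G_*\OO_X(-\mf{R})\bigr) \otimes_{\OO_Y} T_Y.$$
By Lemma \ref{diamondbeneath}, $q^G_*\OO_X(-\mf{R}) \simeq \OO_Y(-B)$, so the right-hand side is $\OO_Y(-B) \otimes T_Y \simeq T_Y(-B) = T_{X/G}(-B)$, which is the claim.

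The only genuine work is bookkeeping: ensuring the Riemann--Hurwitz isomorphism and the dualization are performed $G$-equivariantly (the totally ramified hypothesis is precisely what forces $\mf{R}$ to be a sum of $G$-fixed points, hence $G$-invariant), and checking that the projection formula is legitimate for the stacky pushforward $q^G_*$. All the delicate ceiling-function accounting has already been packaged into Lemma \ref{diamondbeneath} (via Lemma \ref{koby}), so nothing further is needed there, and I expect this lemma to be short once those are in hand. For wild or non-totally-ramified actions one would replace $\mf{R}$ by the full different and keep $B$ the reduced branch divisor, with the ceilings $\lceil \beta_P/|G_P|\rceil$ in Lemma \ref{koby} still collapsing to $1$; that is the only point requiring more care in the general case, which the excerpt defers.
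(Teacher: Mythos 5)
Your proposal is correct and follows essentially the same route as the paper's proof: dualize Riemann--Hurwitz to get $T_X \simeq q^*T_Y \otimes \OO_X(-\mf{R})$, apply $q^G_*$ with the projection formula to pull out $T_Y$, and invoke Lemma \ref{diamondbeneath} to identify $q^G_*\OO_X(-\mf{R}) \simeq \OO_Y(-B)$. Your extra bookkeeping about the $G$-equivariant structure and the legitimacy of the projection formula for the stacky pushforward is left implicit in the paper but is the same argument.
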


\begin{proof} 
By taking the dual of Riemann-Hurwitz, Lemma \ref{riemannhurwitz},
$$T_X \simeq q^*T_Y \otimes \OO_X(-\mf{R})$$ where $\mf{R}$ is the (unreduced) ramification divisor. Applying $q_*^G,$ we get 
$$q_*^GT_X \simeq T_Y \otimes q_*^G\OO_X(-\mf{R}),$$
so our anaysis is reduced to considering $q_*^G\OO_X(-\mf{R}).$ By lemma \ref{diamondbeneath}, $q_*^G\OO_X(-\mf{R}) \simeq O_Y(-B)$ and the result follows.
\end{proof}

\begin{cor} \label{kernelsanders} (kernel sanders)
Let $Y := X/G$, and let $B$ be the reduced branch divisor of the map $q: X \to X/H$, then $$H^1(Y, q_*^GT_X) \simeq H^1(Y, T_Y(-B))$$ and $\dim H^1(Y, q_*^GT_X) = 3g_{Y} - 3 + |B|.$
\end{cor}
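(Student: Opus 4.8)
The plan is to obtain this corollary as an immediate consequence of Lemma \ref{dropweight} together with Riemann--Roch on the curve $Y$; there is essentially no new geometric input.

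First, for the isomorphism: Lemma \ref{dropweight} (drop weight) supplies an isomorphism of sheaves $q_*^G T_X \simeq T_Y(-B)$ on $Y=X/G$. Applying the functor $H^1(Y,-)$ to this isomorphism at once yields $H^1(Y, q_*^G T_X) \simeq H^1(Y, T_Y(-B))$, which is the first claim. (One may also read the right-hand side through Lemma \ref{squirmingpoints} as the tangent space $T(\Def_{Y;B})$, which is exactly the shape one expects from Theorem \ref{theoremb1}.)

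Next, for the dimension: I would compute $h^1(Y, T_Y(-B))$ directly by Riemann--Roch on the smooth proper curve $Y$. Since $T_Y \simeq \omega_Y^{\vee}$ has degree $2-2g_Y$, the line bundle $T_Y(-B)$ has degree $2-2g_Y-|B|$, so Riemann--Roch reads
$$h^0(Y, T_Y(-B)) - h^1(Y, T_Y(-B)) = (2-2g_Y-|B|) + 1 - g_Y = 3 - 3g_Y - |B|.$$
Hence $\dim H^1(Y, q_*^G T_X) = h^1(Y,T_Y(-B)) = h^0(Y,T_Y(-B)) + 3g_Y - 3 + |B|$, and it suffices to show $h^0(Y, T_Y(-B)) = 0$, i.e.\ that $Y$ carries no nonzero global vector field vanishing along $B$. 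In the situation at hand the $G$-action on $X$ is tame and totally ramified with $g_X \geq 2$, so Riemann--Hurwitz (Lemma \ref{riemannhurwitz}) forces $\deg T_Y(-B) = 2 - 2g_Y - |B| < 0$; concretely, in the running example $Y = \PP^1$ and $B$ consists of the $p$ affine branch points of $p_y$ together with the point at infinity, giving $\deg T_Y(-B) = 1 - p < 0$. A line bundle of negative degree on a curve has no nonzero global sections, so $h^0 = 0$ and $\dim H^1(Y, q_*^G T_X) = 3g_Y - 3 + |B|$.

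The only step that needs a moment's care --- and it is genuinely minor --- is this vanishing $h^0(Y, T_Y(-B)) = 0$: one should exclude the degenerate case $B = \emptyset$ (an \'etale cover, where for $g_Y = 1$ one would have $T_Y = \OO_Y$ with a nonzero global section), which is ruled out here because the action is totally ramified, so $|B| \geq 1$, and for $g_Y = 0$ one has $|B| \geq 3$ by Riemann--Hurwitz; in either case $\deg T_Y(-B) < 0$. Everything else is formal, so the corollary is really just Lemma \ref{dropweight} plus Riemann--Roch bookkeeping.
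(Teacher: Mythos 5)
Your proposal is correct and follows essentially the same route as the paper: the isomorphism is read off from Lemma \ref{dropweight}, and the dimension count is Riemann--Roch applied to $H^1(Y, T_Y(-B))$. Your extra check that $h^0(Y, T_Y(-B)) = 0$ (via $\deg T_Y(-B) < 0$, using $|B|\geq 3$ when $g_Y=0$) is a detail the paper leaves implicit, and it is the right justification for the clean formula $3g_Y - 3 + |B|$.
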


\begin{proof} 
The equivalence $H^1(Y, q_*^GT_X) \simeq H^1(Y, T_Y(-B))$ is an immediate corollary of Lemma \ref{dropweight} which states that $q_*^GT_X \simeq T_Y(-B)$. The dimension count follows from Riemann-Roch applied to $H^1(Y, T_Y(-B))$.
\end{proof}

\begin{lemma} \label{moodstabilizer} (mood stabilizer) Let $X$ be a curve over a field $k$ carrying a $G$-action. Let $R$ be the set of points in $x \in X$ with nontrivial stabilizer group $G_x$. Let $\widehat{T}_{X, x}$ denote the tangent sheaf of $X$ completed at the point $x$, this is a module of the form $\widehat{T}_{X, x} \simeq k[[T]]\frac{d}{dT}$ where $T$ is the uniformizer at the point $x.$ Then,
(i) $$R^qq_*^GT_X \simeq \bigoplus_{x \in R} H^q(G_x, \widehat{T}_{X, x}).$$ 
(ii) For $|G_x|$ invertible in $k$, $H^q(G_x, \widehat{T}_{X, x})$ is zero for $q > 0.$
\end{lemma}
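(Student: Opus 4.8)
The plan is to reduce both statements to a purely local computation around the branch points, exploiting the factorization of $q_*^G$ through the coarse moduli map of the quotient stack. By the setup of Section \ref{equivariant sheaves}, $q_*^G$ is the composite
$$\QCoh^G(X) \xrightarrow{\ \simeq\ } \QCoh(X \stackyq G) \xrightarrow{\ \pi_*\ } \QCoh(X/G),$$
where $\pi \colon X \stackyq G \to X/G$ is the coarse moduli morphism; hence $R^q q_*^G T_X \simeq R^q \pi_*$ applied to $T_X$, viewed as a sheaf on $X \stackyq G$. Over the open locus $Y \setminus B$ on which $q$ is \'etale, $\pi$ is an isomorphism (cf.\ the Remark preceding Lemma \ref{koby}), so $R^q\pi_* = 0$ there for $q > 0$. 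Therefore $R^q q_*^G T_X$ is a skyscraper sheaf supported on the finite branch locus $B$, and it suffices to compute its completed stalk at each $y \in B$, which by flat base change along $\Spec \widehat{\OO}_{Y,y} \to Y$ is unchanged by passing to formal neighborhoods.

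Next I would invoke the local structure of a finite quotient stack. Fix $y \in B$, let $G\cdot x = q^{-1}(y)$, set $A := \widehat{\OO}_{X,x}$ with its residual $G_x$-action, so that $\widehat{\OO}_{Y,y} \simeq A^{G_x}$; then the completion of $X \stackyq G$ along the orbit is $[\Spec A / G_x]$ over $\Spec A^{G_x}$ (the slice description: the orbit $G/G_x$ contributes nothing to the stacky structure). Under the identification $\QCoh([\Spec A / G_x]) \simeq \{A\text{-modules with }G_x\text{-action}\}$, pushforward along $[\Spec A/G_x] \to \Spec A^{G_x}$ is $M \mapsto M^{G_x}$, with derived functors $R^q\pi_* M \simeq H^q(G_x, M)$ (group cohomology, since $\Spec A$ is affine so $R\Gamma(\Spec A, M)$ is concentrated in degree $0$). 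Applying this with $M = \widehat{T}_{X,x} \simeq A\,\tfrac{d}{dT}$ yields $(R^q q_*^G T_X)^{\wedge}_y \simeq H^q(G_x, \widehat{T}_{X,x})$. Summing the skyscraper contributions over $B$ gives part (i) as stated, $R^q q_*^G T_X \simeq \bigoplus_{x \in R} H^q(G_x, \widehat{T}_{X,x})$; in the totally ramified situation relevant to this paper $R$ maps bijectively onto $B$, and in general one sums over a set of orbit representatives.

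For part (ii): when $|G_x|$ is invertible in $k$, the group algebra $k[G_x]$ is semisimple by Maschke's theorem, so $H^q(G_x, N) = 0$ for every $k[G_x]$-module $N$ and every $q > 0$. Since $\widehat{T}_{X,x} \simeq k[[T]]\,\tfrac{d}{dT}$ is in particular a $k$-vector space equipped with a $G_x$-action, this forces $H^q(G_x, \widehat{T}_{X,x}) = 0$ for $q>0$, and by part (i), $R^q q_*^G T_X = 0$ for $q > 0$; that is, $q_*^G$ is exact on $T_X$ in the tame case.

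I expect the main obstacle to be the second step: pinning down the \'etale-local (or formal-local) model of $X \stackyq G$ at a branch point and the precise identification of higher pushforward along the coarse map with group cohomology, together with the bookkeeping that the final indexing set comes out exactly as in the statement. The \'etale-locus vanishing and the Maschke argument are formal. An alternative, stack-free route would be to run the Grothendieck spectral sequence for $q \colon X \to Y$ directly and compute $R^q q_* T_X$ from the $G_x$-Galois extension of complete local rings $\widehat{\OO}_{Y,y} \hookrightarrow \widehat{\OO}_{X,x}$, but this feeds in the same local cohomological input; specializing part (i) to $q=1$ and taking $H^1(Y,-)$ then recovers Lemma \ref{lowbadboys}.
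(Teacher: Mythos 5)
Your proof is correct, and it reaches the same skeleton as the paper's argument (skyscraper support on the branch locus plus a local identification of the stalk with stabilizer cohomology), but by a genuinely different technical route. The paper works with the Grothendieck spectral sequences for $\Gamma^G_X \simeq \Gamma_{X/G}\circ q^G_*$ restricted to affines: away from the fixed locus the pushforward of a free $G$-module is exact, and at a branch point the stalk is $H^q\bigl(G,\ \Ind_{G_x}^{G}\widehat{T}_{X,x}\bigr)$, which is then reduced to $H^q(G_x,\widehat{T}_{X,x})$ by the orbit--stabilizer theorem and Shapiro's lemma. You instead factor $q^G_*$ through the coarse map $\pi\colon X\stackyq G\to X/G$, get the skyscraper statement from $\pi$ being an isomorphism over the free locus, and then complete at a branch point and use the formal-local model $[\Spec \widehat{\OO}_{X,x}/G_x]$, so that the derived pushforward to $\Spec \widehat{\OO}_{X,x}^{G_x}$ is literally $H^q(G_x,-)$ with no Shapiro step; this buys a shorter local computation at the cost of invoking the slice description of the quotient stack and the fact that derived invariants on equivariant modules over an affine compute group cohomology (both standard, and you justify the latter correctly via affine vanishing). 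For (ii) you use Maschke/semisimplicity of $k[G_x]$, which is more general than the paper's transfer argument for cyclic stabilizers, and equally valid for the infinite-dimensional module $k[[T]]\frac{d}{dT}$. One small point in your favor: you flag that the indexing in (i) should be over orbit representatives (equivalently over branch points) in general, with $R\to B$ bijective in the totally ramified case actually used in the paper; the paper's own closing formula glosses this.
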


\begin{proof} 
(i) Consider an affine $V$ on $X$, and $f(V)$ on $X/G$, then restrict ourselves to this affine set, then consider the collapse of this spectral sequence from earlier. Given a subset $\iota: V \hookrightarrow X$, and a sheaf $\mc{G} \in \mr{QCoh}(X),$ then by adjunction we have a canonical map $\mc{G} \to \iota_*\iota^*\mc{G} \simeq \iota_* \mc{G}|_V.$

Note that since $\Hom_G(\Omega_X, \OO_X) \simeq \Hom_{\OO_X}(\Omega_X, \OO_X)^G$, we have another spectral sequence converging to $R^i\Gamma_X^G(T_X)$, since $\Gamma_X^G \simeq \Gamma_{X/G} \circ q_*^GT_X \simeq \Gamma_X \circ (-)^G$. Considering these two spectral sequences, we find a collapse:

\begin{align*} 
H^p(X/G|_{f(V)}, R^qq_*^GT_X|_{f(V)})) = 0 &\text { if } p > 0 \\
H^q(G, H^0(X|_V, T_X|_V)) = 0 &\text { if } q > 0\\
\end{align*} \noindent This collapse of spectral sequences gives us an equivalence connecting the two:
$$R\Gamma_X^G, T_X|_V \simeq H^0(X/G|_{f(V)}, R^qq_*^GT_X|_{f(V)}) \simeq H^q(G, \Gamma(X|_V, T_X|_V))$$

Consider the closed subset $\iota:V\hookrightarrow X$ of points on which $G$ acts nonfreely, and consider the complement $V^c:=X-V$. At discs around points $y$ such that $f^{-1}(y)$ lay in $V^c$ (that is, points $y$ which are not branch points), $(q^G_*\mc{F})_y \simeq \bigoplus_{x \in f^{-1}(y)} \mc{F}$ is a free summand of $f^{-1}(y)$ copies of the sheaf $\mc{F}$, permuted cyclically according to the global action of $G$. Free constructions are exact, thus for $q \geq 0$, $$R^qq_*^G(\mc{F}|_{V^c}) \simeq 0.$$ 

Thus, $R^qq_*^G(\mc{F})$ is a skyscraper sheaf with values on $y \in f(V)$ such that $G$ has nontrivial stabilizer on $f^{-1}(y)$. Let $x$ be a point on $X$ in which $G$ acts with nontrivial stabilizer, then $\Gamma(X|_x, T_X|_x) \simeq \widehat{T}_{X, x}),$ where the latter is a $G_x$-module. 

The cohomology group of interest at a point $y_i \in f(V)$ is of the form $$H^0(X/G|_{y_i}, R^qq_*^GT_X|_{y_i}) \simeq H^q(G, \prod_{x \mapsto y_i} \widehat{T}_{X, x}).$$ Note that for any $x' \in \mr{Orb}_G(x)$ $$\mr{Ind}_{G_{x'}}^G \widehat{T}_{X, x'} \simeq \mr{Ind}_{G_{x}}^G \widehat{T}_{x, x},$$ by the orbit-stabilizer theorem thus it matters not which point from the orbit we choose. Further, $$\mr{Ind}^{G}_{G_x} \widehat{T}_{X, x} \simeq \bigoplus_{x \in \Orb_G} \widehat{T}_{X,x}.$$  Shapiro's lemma further tells us $$H^q(G, \mr{Ind}^{G}_{G_x} \widehat{T}_{X, x}) \simeq H^q(G_x, \widehat{T}_{X, x}).$$ The clean formulation claimed follows from $$\bigoplus_{x \in R} H^q(G_x, \widehat{T}_{X, x}) \simeq \bigoplus_{y \in \supp(B_f)} \bigoplus_{x_k \in \Orb_G(f^{-1}(y))} H^q(G_{x_k}, \widehat{T}_{X, x_k}).$$
(ii) Consider a cyclic group $C_m$ acting on an $R$-module, the group cohomology $H^i(C_m, R)$ will vanish when $i > 0$ if $m$ is invertible in $R$ as the transfer map is invertible.
\end{proof}

\subsection{Equivalence of Deformation Problems}

\begin{theorem} \label{breakitup} (break it up) For $X$ a smooth curve over a field $k$ with an action of $G$, where $n = \dim \Ext^1_G(\Omega_X, \OO_X)$ is finite. Then, $\Def_{(X, G)}$ is representable by a ring $R$ which carries an $\Aut(X, G)$-action by Defn \ref{autxaction}. \newline \noindent 
(i) without further assumptions, $$R \simeq W(k)[[t_1, ..., t_n]]/(f_1, ...f_i)$$ \newline \noindent  (ii) Let $R_\sigma$ be the ring representing $\Def_{(X, G)}^{\mr{loc}}$, and $m = \dim H^0(X/G, R^1q_*^GT_X)$ then $$R \simeq R_\sigma[[t_1, ..., t_m]]$$ \newline \noindent  (iii) If $\Ext^2_G(T_X, \OO_X) = 0$ then $\Def_{(X, G)}$ is formally smooth and represented by a ring $$R \simeq W(k)[[t_1, ..., t_n]].$$
\end{theorem}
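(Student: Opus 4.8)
The plan is to read off all three statements from Schlessinger-type pro-representability together with the equivariant deformation theory recalled above, extracting successively finer structure. \textbf{Part (i).} First I would invoke representability: by the cited criterion (Theorem~\ref{schlessinger} in the non-equivariant case, whose $G$-equivariant analogue is supplied by Corollary~\ref{tangentstuff}, together with Theorem~\ref{discreteanddecorated}(i) to guarantee the groupoid-valued functor is discrete --- this is where the standing hypothesis $g_X\ge 2$ is used), the functor $\Def_{(X,G)}$ satisfies Schlessinger's conditions and is therefore pro-represented by a complete local $W(k)$-algebra $R$ with residue field $k$. Its reduced tangent space is $\Ext^1_G(\Omega_X,\OO_X)$ via the identification $T(\Def_{(X,G)})\simeq R^1\Gamma^G(X,T_X)\simeq\Ext^1_G(\Omega_X,\OO_X)$ of \cite{bm}, which has dimension $n$; picking a dual basis produces a surjection $W(k)[[t_1,\dots,t_n]]\twoheadrightarrow R$ with kernel $J$. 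Since obstructions to lifting deformations lie in $\Ext^2_G(\Omega_X,\OO_X)$ by Corollary~\ref{tangentstuff}(iii), the standard argument --- the obstruction pairing embeds the conormal space $J/\mf{m}J$ into the dual of the obstruction space, so $\dim_k J/\mf{m}J\le\dim_k\Ext^2_G(\Omega_X,\OO_X)$ --- shows $J$ can be generated by $i\le\dim_k\Ext^2_G(\Omega_X,\OO_X)$ elements $f_1,\dots,f_i$, giving $R\simeq W(k)[[t_1,\dots,t_n]]/(f_1,\dots,f_i)$. The $\Aut(X,G)$-action is then automatic: $\Aut(X,G)$ acts on the functor $\Def_{(X,G)}$ by Definition~\ref{autxaction}, and a group acting on a pro-representable functor acts on its representing object by Yoneda.

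\textbf{Part (ii).} Here I would use the localization functor $\mr{Loc}\colon\Def_{(X,G)}\to\Def_{(X,G)}^{\mr{loc}}$, which is formally smooth (\cite{bm},\cite{itsyg}). Writing $R$ and $R_\sigma$ for the rings pro-representing source and target, $\mr{Loc}$ corresponds to a local homomorphism $R_\sigma\to R$; formal smoothness of a morphism of complete local Noetherian rings that is moreover surjective on cotangent spaces means $R$ is non-canonically a power series ring over $R_\sigma$ on a number of variables equal to the relative cotangent dimension $m=\dim_k\big(\mf{m}_R/(\mf{m}_{R_\sigma}R+\mf{m}_R^2)\big)$. To identify $m$ and verify surjectivity of the tangent map simultaneously, I would feed $T_X$ into the five-term exact sequence of the Grothendieck spectral sequence for $\Gamma^G_X\simeq\Gamma_{X/G}\circ q_*^G$: since $Y=X/G$ is a curve and $R^1q_*^GT_X$ is a skyscraper by Lemma~\ref{moodstabilizer}(i), the term $H^2(Y,q_*^GT_X)$ vanishes and the sequence collapses to
\[
0\to H^1(X/G,\,q_*^GT_X)\to T(\Def_{(X,G)})\to H^0(X/G,\,R^1q_*^GT_X)\to 0 .
\]
The right-hand term is $\bigoplus_{x}H^1(G_x,\widehat{T}_{X,x})\simeq T(\Def_{(X,G)}^{\mr{loc}})$ by Lemma~\ref{moodstabilizer}(i) and Lemma~\ref{lowbadboys}, and the displayed surjection is precisely the derivative of $\mr{Loc}$ (restriction of a global first-order deformation to its disc data); hence $m=\dim_k H^1(X/G,q_*^GT_X)$ and $R\simeq R_\sigma[[t_1,\dots,t_m]]$.

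\textbf{Part (iii).} If the obstruction group $\Ext^2_G(\Omega_X,\OO_X)$ --- the group written $\Ext^2_G(T_X,\OO_X)$ in the statement --- vanishes, then in the argument of (i) one gets $J=0$, so $R\simeq W(k)[[t_1,\dots,t_n]]$ and $\Def_{(X,G)}$ is formally smooth. The step I expect to be the main obstacle is (ii): one must carefully justify that ``$\mr{Loc}$ formally smooth'' transfers to ``$R$ is a power series ring over $R_\sigma$'' --- this needs surjectivity of the tangent map (supplied by the collapse above) and Noetherian pro-representability of both functors --- and, more subtly, one must see that the correct count of new variables is the \emph{global} group $H^1(X/G,q_*^GT_X)$, which is exactly what the identification $H^0(X/G,R^1q_*^GT_X)\simeq T(\Def_{(X,G)}^{\mr{loc}})$ extracts from the exact sequence. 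A minor bookkeeping point throughout is that, since we deform over $W(k)$ in mixed characteristic, one should check the relative tangent space over $W(k)$ is still $\Ext^1_G(\Omega_X,\OO_X)$, and carry the hypothesis $g_X\ge 2$ needed for discreteness in Theorem~\ref{discreteanddecorated}.
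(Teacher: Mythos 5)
Your proposal is correct and follows essentially the same route as the paper: parts (i) and (iii) are the standard tangent/obstruction argument via the equivariant extension groups of Corollary \ref{tangentstuff} (plus pro-representability), and part (ii) identifies the derivative of $\mr{Loc}$ with the surjection in the low-degree exact sequence for $\Gamma^G_X \simeq \Gamma_{X/G}\circ q_*^G$, using Lemmas \ref{moodstabilizer} and \ref{lowbadboys} and the formal smoothness of $\mr{Loc}$, exactly as in the paper's proof. One point worth flagging: your count of new variables, $m=\dim H^1(X/G, q_*^G T_X)$ (the kernel of the tangent surjection), differs from the literal statement's $m=\dim H^0(X/G, R^1 q_*^G T_X)$; your count is the one consistent with dimension bookkeeping and with the paper's own later applications (e.g.\ the $C_p$ case where $R\simeq R_\sigma[[t_1,\dots,t_{p-4}]]$ with $p-4=\dim H^1(\PP^1,q_*^GT_X)$), so the statement's definition of $m$ appears to be a typo rather than a gap in your argument.
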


\begin{proof} 
(i) and (iii) follow immediately from Lemma \ref{tangentstuff}. (ii) This proof takes the strategy of identifying the map between the cotangent complexes of these formal moduli problems $Loc: \Def_{(X, G)} \to \Def_{(X, G)}^{\mr{loc}}$ with the surjective map arising from the short exact sequence computing $R^1\Gamma^G(T_X)$ given by Grothendieck spectral sequence by $\Gamma^G_X \simeq \Gamma_{X/G} \circ q^G_*$. 

$$H^1(Y, q_*^GT_X) \to \Ext^1_G(\Omega_X, \OO_X) \to H^0(Y, R^1q_*^GT_X).$$

Lemma \ref{moodstabilizer} shows that $H^0(X/G, R^1q^G_*T_X) \simeq \bigoplus_{x \in R} H^1(G_{x}, T_{X, x})$, the latter being the tangent space of $\Def_{(X, G)}^{\mr{loc}}$ by Lemma \ref{lowbadboys}.  The map $\delta(\mr{Loc})$ is the surjective map above by Lemma 3.3.2 \cite{bm}. We summarize the result in the diagram below.

\[\begin{tikzcd}
	& {T(\Def_{(X, G)})} & {T(\Def_{(X, G)}^{loc})} \\
	&& {\bigoplus_{x \in R} H^1(G_{x}, T_{X, x})} \\
	{H^1(X/G, h_*T_X)} & {R^1\Gamma_X^G(T_X)} & {H^0(X, R^1h_*T_X)}
	\arrow["{\delta(Loc)}", from=1-2, to=1-3]
	\arrow["\simeq", from=1-2, to=3-2]
	\arrow["\simeq"', from=1-3, to=2-3]
	\arrow["\simeq"', from=2-3, to=3-3]
	\arrow[hook, from=3-1, to=3-2]
	\arrow[two heads, from=3-2, to=3-3]
\end{tikzcd}\]
\end{proof}

The totally ramified assumptions in the lemma below can be removed by considering the more general case of Lemma \ref{diamondbeneath} and weighting $B$ accordingly. 

\begin{theorem} \label{grouptobranch} (group to branch) For $X$ a smooth curve over a field $k$ with a tame action by a group $G$ which is totally ramified, and $\Ext^2_G(\Omega_X, \OO_X) \simeq 0$, let $B$ be the set of branch points of the quotient map $q_*: X \to X/G$. Then, the functor $L$ is an isomorphism of formal moduli problems
$$L \colon \Def_{(X, G)} \simeq \Def_{(X/G; B)}$$
\end{theorem}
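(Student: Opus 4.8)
The plan is to apply the recognition criterion for equivalences of formal moduli problems: by Prop.~12.2.2.6 of \cite{SAG} it suffices to check that $L$ (from Defn~\ref{LightYagami}) induces an equivalence on tangent complexes. Both sides are genuine formal moduli problems — $\Def_{(X,G)}$ by Theorem~\ref{breakitup}, and $\Def_{(X/G;B)}$ by deformation theory of a smooth curve equipped with a reduced divisor, which is unobstructed since $X/G$ is a curve — and both turn out to be formally smooth, so this will in fact reduce to checking that $dL$ is an isomorphism on tangent spaces. Before that one should verify that $L$ is well defined over $\widehat{\Art}_k$: tameness guarantees that for $(\mathfrak{X},G)\in\Def_{(X,G)}(R)$ the algebraic quotient $\mathfrak{X}\to\mathfrak{X}/G$ commutes with the base change to $k$ and that the reduced branch locus is a relative Cartier divisor on $\mathfrak{X}/G$, flat over $R$ and reducing to $B$. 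This flatness of the branch divisor in families is exactly where tameness is needed.

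Next I would identify both tangent complexes with the single object $R\Gamma(X/G,\,T_{X/G}(-B))$. On the source, smoothness of $X$ gives $T\Def_{(X,G)}\simeq R\Gamma^G(X,T_X)$, with $H^1$ recovering $\Ext^1_G(\Omega_X,\OO_X)$ as in Theorem~\ref{breakitup}. Using the factorization $\Gamma^G_X\simeq \Gamma_{X/G}\circ q_*^G$ together with the vanishing $R^q q_*^G T_X=0$ for $q>0$ from the tame case of Lemma~\ref{moodstabilizer}(ii), the Grothendieck spectral sequence collapses to $R\Gamma^G(X,T_X)\simeq R\Gamma(X/G,\,q_*^G T_X)$; in particular $R^{\ge 2}\Gamma^G(X,T_X)=0$ because $X/G$ is a curve, so the hypothesis $\Ext^2_G(\Omega_X,\OO_X)=0$ is automatically satisfied here. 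Drop weight (Lemma~\ref{dropweight}) then identifies $q_*^G T_X\simeq T_{X/G}(-B)$, giving $T\Def_{(X,G)}\simeq R\Gamma(X/G,\,T_{X/G}(-B))$. On the target, squirming points (Lemma~\ref{squirmingpoints}) gives $H^1$ of $T\Def_{(X/G;B)}$ as $H^1(X/G,\,T_{X/G}(-B))$, and since $X/G$ is a curve there is nothing in higher degree, so $T\Def_{(X/G;B)}\simeq R\Gamma(X/G,\,T_{X/G}(-B))$ as well. The dimensions agree with kernel sanders (Cor.~\ref{kernelsanders}): $3g_{X/G}-3+|B|$.

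The remaining, and I expect genuinely delicate, step is to check that the derivative $dL$ realizes this particular identification and not merely some abstract isomorphism. A first-order deformation of $(X,G)$ is a $G$-equivariant square-zero thickening, i.e.\ a class in $H^1(X\stackyq G,\,T_{X\stackyq G})$ for the equivariant tangent sheaf; applying $L$ forms the quotient and reads off the branch divisor, and I claim this is exactly the Leray pushforward $\pi_*\colon H^1(X\stackyq G,\,T_{X\stackyq G})\to H^1(X/G,\,\pi_*T_{X\stackyq G})$ along $\pi\colon X\stackyq G\to X/G$, post-composed with $\pi_*T_{X\stackyq G}\simeq q_*^G T_X\simeq T_{X/G}(-B)$. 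Since $R^1\pi_*T_{X\stackyq G}=0$ in the tame case this pushforward is an isomorphism, so $dL$ is an equivalence of tangent complexes. The care is in matching the stacky pushforward $\pi_*T_{X\stackyq G}\simeq T_{X/G}(-B)$ with the geometric recipe for $L$ at the ramification points: one must see that a $G$-equivariant infinitesimal deformation of $X$ descends to one of $X/G$ in which the branch points move precisely along $T_{X/G}(-B)$, which is a local computation in the tame model $T\mapsto T^{|G_x|}$ at each ramification point. Granting this compatibility, Prop.~12.2.2.6 of \cite{SAG} finishes the proof; equivalently, since both formal moduli problems are formally smooth of the same dimension $m=3g_{X/G}-3+|B|$ and $dL$ is an isomorphism on tangent spaces, $L$ is represented by an isomorphism of power series rings $W(k)[[t_1,\dots,t_m]]\simeq W(k)[[t_1,\dots,t_m]]$.
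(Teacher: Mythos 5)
Your proposal is correct and follows essentially the same route as the paper: reduce to tangent complexes via Prop.~12.2.2.6 of \cite{SAG}, collapse the Grothendieck spectral sequence for $\Gamma^G_X \simeq \Gamma_{X/G}\circ q_*^G$ using the tame vanishing $R^1 q_*^G T_X = 0$, identify $q_*^G T_X \simeq T_{X/G}(-B)$, and match against $H^1(X/G, T_{X/G}(-B))$ for the divisor problem. The one step you leave as ``granting this compatibility'' --- that $\delta(L)$ really is the map realized by this identification, checked locally at the tame ramification points --- is exactly the step the paper also does not argue directly but outsources to \cite{hurwitz} Theorem 5.5 and \cite{wewers} Prop.~4.10, so your sketch is consistent with the paper's treatment.
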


\begin{proof} This reduces to showing that the map $L$ defined in Defn \ref{LightYagami} induces an isomorphism on tangent spaces of deformations. When $G$ is a totally ramified tame action, $H^0(Y, R^1q_*^GT_X) \simeq 0$ by Lemma \ref{moodstabilizer}, thus the short exact sequence collapses to give $$H^1(Y, q_*^GT_X) \simeq \Ext^1_G(\Omega_X, \OO_X).$$ It remains to show that the map $\Ext^1_G(\Omega_X, \OO_X) \to H^1(Y, q_*^GT_X)$ induced by the isomorphism of the exact sequence is the map on tangent spaces $\delta(L)$. This follows from combining \cite{hurwitz} Theorem 5.5 and \cite{wewers} Prop 4.10. 
\end{proof}

\subsection{Artin-Schreier Case}
\begin{theorem} 
For $X$ the Artin Schreier curve and $G = C_{p-1}$ described in Section \ref{curveprops}, $\Def_{(X, G)}$ is representable and represented by a ring $R \simeq W(k)[[t_1, ..., t_{p-2}]]$ with $\Aut(X)$-action induced by Defn \ref{autxaction}.
\end{theorem}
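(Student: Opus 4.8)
The plan is to check that $(X, C_{p-1})$ satisfies the hypotheses of Theorem \ref{breakitup}(iii) (equivalently of Theorem \ref{grouptobranch}), and then to carry out a short Riemann--Roch count to identify the number of deformation parameters.

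First I would record that the action of $G = C_{p-1}$ on $X$ is tame, since $p-1$ is coprime to $p = \mr{char}\, k$, and is totally ramified: by Lemma \ref{rammymammy} the quotient map $p_y \colon X \to X/C_{p-1}$ is a $C_{p-1}$-cover with reduced branch divisor $B = \F_p \cup \{\infty\}$ of degree $p+1$, and the stabilizer over every point of $B$ is all of $C_{p-1}$ (a Riemann--Hurwitz computation with $g_X = (p-1)(p-2)/2$ and $\deg\mf{R} = (p+1)(p-2)$ also confirms $X/C_{p-1}\simeq \PP^1$). Next I would verify $\Ext^2_G(\Omega_X, \OO_X) = 0$ using the Grothendieck spectral sequence $H^p(X/G, R^q q^G_* T_X) \Rightarrow \Ext^{p+q}_G(\Omega_X, \OO_X)$ attached to $\Gamma^G_X \simeq \Gamma_{X/G} \circ q^G_*$: tameness forces $R^q q^G_* T_X = 0$ for $q > 0$ by Lemma \ref{moodstabilizer}(ii), and $H^p(X/G, -) = 0$ for $p \geq 2$ since $X/G$ is a curve, so the only contribution in total degree $2$ is $H^2(X/G, q^G_* T_X) = 0$. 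With this vanishing, Theorem \ref{breakitup}(iii) gives that $\Def_{(X, G)}$ is formally smooth and pro-represented by $R \simeq W(k)[[t_1, \ldots, t_n]]$, with $n = \dim_k \Ext^1_G(\Omega_X, \OO_X)$, carrying the $\Aut(X, G)$-action of Defn \ref{autxaction}.

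It then remains to compute $n$. Since the action is tame, $H^0(X/G, R^1 q^G_* T_X) = 0$ by Lemma \ref{moodstabilizer}, so the five-term exact sequence collapses to $\Ext^1_G(\Omega_X, \OO_X) \simeq H^1(X/G, q^G_* T_X)$, and Lemma \ref{dropweight} identifies $q^G_* T_X \simeq T_{\PP^1}(-B)$. As $T_{\PP^1} \simeq \OO(2)$ and $\deg B = p+1$, this is $H^1(\PP^1, \OO(1-p))$, which by Serre duality is dual to $H^0(\PP^1, \OO(p-3))$ and so has dimension $p-2$; equivalently Corollary \ref{kernelsanders} gives $3g_{\PP^1} - 3 + |B| = -3 + (p+1) = p-2$. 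Hence $n = p-2$ and $R \simeq W(k)[[t_1, \ldots, t_{p-2}]]$. (Alternatively one could invoke Theorem \ref{grouptobranch} to reduce to $\Def_{(\PP^1; B)}$ and apply Lemma \ref{squirmingpoints}.)

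Finally, to upgrade the $\Aut(X, G)$-action to a full $\Aut(X)$-action, I would check that $G = C_{p-1}$ lies in the center of $G' = \Aut(X) \simeq C_p \rtimes C_{(p-1)^2}$: with the generators of Lemma \ref{autgroup}, the element $\tau$ acts on $\langle\sigma\rangle \simeq C_p$ through $\Aut(C_p) \simeq C_{p-1}$, so $\tau^{p-1}$ — which generates the subgroup $C_{p-1}$ in question — acts trivially on $\sigma$ and commutes with the abelian $\langle\tau\rangle$. By Example \ref{ex:autxgaction} this gives $\Aut(X, G) \simeq \Aut(X)$, as claimed. The only real obstacle is the $\Ext^2$-vanishing step: everything else is either formal from the machinery of Section \ref{sec:theoremb} or a one-line Riemann--Roch count, but landing in the formally smooth case — rather than merely a quotient of a power series ring — is precisely what makes the clean statement $R \simeq W(k)[[t_1,\ldots,t_{p-2}]]$ hold.
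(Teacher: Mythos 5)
Your proposal is correct and follows essentially the same route as the paper's proof: both invoke Theorem \ref{breakitup} with the exact sequence from $\Gamma^G_X \simeq \Gamma_{X/G}\circ q^G_*$, kill the local term and the obstruction via Lemma \ref{moodstabilizer} (tameness), count $\dim H^1(Y, q^G_*T_X) = 3g_{\PP^1}-3+(p+1) = p-2$ as in Corollary \ref{kernelsanders}, and upgrade to the full $\Aut(X)$-action via centrality of $C_{p-1}$ and Example \ref{ex:autxgaction}. The only cosmetic difference is that you obtain $\Ext^2_G(\Omega_X,\OO_X)=0$ directly from the Grothendieck spectral sequence with $R^{q>0}q^G_*T_X=0$, whereas the paper phrases it through the local-to-global sequence $\Ext^2_G \simeq H^1(Y,\mc{E}xt^1_G)$ and the skyscraper property; both arguments are valid and rest on the same tameness input.
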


\begin{proof} This is a special case of Theorem \ref{breakitup}. As before, we are putting together 

$$H^1(Y, q_*^GT_X) \to \Ext^1_G(\Omega_X, \OO_X) \to H^0(Y, R^1q_*^GT_X).$$

Let's start with the image. Since $R^1q_*^GT_X$ is a skyscraper sheaf $H^0(Y, R^1q_*^GT_X) \simeq R^1q_*^GT_X.$ By Theorem \ref{moodstabilizer}, $H^0(Y, R^1q_*^GT_X) \simeq 0.$ Thus, $R_\sigma \simeq W(k)$.

By Theorem \ref{kernelsanders}, $\dim H^1(Y, q_*^GT_X) = 3g_{\PP^1} - 3 + (p+1) = p-2$, since $|B| = p+1$ by Lemma \ref{rammymammy}. 

The local-to-global spectral sequence also shows (\cite{bm} 3.3.3, \cite{wewers} 3.3) that $$\Ext^2_G(\mc{L}_{X/k}, \OO_X) \simeq H^1(Y, \mc{E}xt^1_G(\mc{L}_{X/k}, \OO_X)).$$ However, $\mc{E}xt^1_G(\mc{L}_{X/k}, \OO_X)$ is a skyscraper sheaf by \ref{moodstabilizer}, whose cohomology vanishes above $H^0$. Hence, $\Ext^2_G(\mc{L}_{X/k}, \OO_X)$ vanishes and $\Def_{(X, G)}$ is unobstructed.

Since $G$ is in the center of $\Aut(X)$, $\Aut(X, G) \simeq \Aut(X)$ as shown in Example \ref{ex:autxgaction}. Thus, the automatic action of $\Aut(X, G)$ given by Defn \ref{autxaction} is in fact an action of $\Aut(X)$.
\end{proof}

\begin{manualtheorem}{B.1} \label{theoremb1} For $X$ the Artin-Schreier curve over a field $k$ with a tame action by a group $C = C_{p-1}$ described in section \ref{curveprops}, let $B$ be the set of branch points of the quotient map $q_*: X \to X/G$ described in Lemma \ref{rammymammy}. Then, there is an $\Aut(X)/G$-equivariant equivalence of formal moduli problems
$$\Def_{(X, G)} \simeq \Def_{(\PP^1; p+1)} \simeq \Def_{(\A^1, p)}$$
\end{manualtheorem} 

\begin{proof} The isomorphism of formal moduli problems follows as an example of Theorem \ref{grouptobranch}, since $(X/G, B) \simeq (\PP^1, p+1)$ by Lemma \ref{rammymammy}. The isomorphism $\Def_{(\PP^1; p+1)} \simeq \Def_{(\A^1, p)}$ follows by fixing one of the points to be the point at infinity, thus restricting to the affine branch points $|B_{\mr{aff}}| = p$.

We now justify its enhancement to an equivariant equivalence. The action $\Aut(X, G) \simeq \Aut(X)$ since $G$ is in the center of $\Aut(X).$  Our deformation problems are discrete by Lemma \ref{discreteanddecorated}. To check that $L: \Def_{(X, G)} \to \Def_{(X/G; b_j)}$ is an equivariant $\Aut(X)/G$-functor, by Defn \ref{stacky} and Lemma \ref{halo} it suffices to check that the map $\Aut(X)/G \to \Sigma_{|B|} \simeq \Sigma_p$ is an injection. This map is an injection by Lemma \ref{symmetric_subtlety}.

%, which implies we have an equivalence of automorphisms of the universal object and automorphisms of the object being deformed by Lemma \ref{halo}. 

\begin{comment}
So, to check that a functor $L: \Def_{(X, G)} \to \Def_{(X/G; b_j)}$ is $\Aut_k(X)/G$-equivariant for $\Aut_k(X,G) \to \Aut(X, G)/G \to \Aut_k(X/G; b_j)$, it is equivalent to check that the map induced by $L$ between the automorphism groups induces an injection. 
\[\begin{tikzcd}
	{G'/G} & {\Aut(X)/G} && {\Sigma_{|B|}} \\
	& {\Aut(X, G)} && {\Aut(L(X, G))} & {\Aut(X/G; b_j \in B)} \\
	\arrow[no head, from=1-1, to=1-2]
	\arrow[shift left, no head, from=1-1, to=1-2]
	\arrow[hook, from=1-2, to=1-4]
	\arrow[hook, from=1-2, to=2-2]
	\arrow[hook', from=1-4, to=2-4]
	\arrow["L", from=2-2, to=2-4]
	\arrow["\simeq", from=2-4, to=2-5]
\end{tikzcd}\]\noindent This is an injection on $$j \in \Aut(X)/G \hookrightarrow \Sigma_{\supp(B_f)} = \Sigma_{|B|}$$ \noindent as shown in Lemma \ref{symmetric_subtlety}. Since it's an injection, it induces an isomorphism. 

 %This does not affect the equivariance as we've already restricted to the set of affine branch points in Lemma \ref{symmetric_subtlety}.
\end{comment}
\end{proof} 

The reader may find it delicious to note that the deformations of the curve with respect to both $$\PP^1 \xleftarrow{C_{p-1}} X \xrightarrow{C_p} \PP^1$$
\noindent maps to $\PP^1$ have the same dimension. That is,  $\Def_{(X, C_p)}$ has the same dimension as $\Def_{(X, C_{p-1})}.$ Intuitively, this is forced upon us by Riemann-Hurwitz.
 
\begin{lemma} (extra) For $X$ the Artin Schreier curve described in Section \ref{curveprops}, and $G = C_p,$ $\Def_{(X, C_{p})}$ is representable and represented by a ring $R$ of dimension $p-2.$ Let $R_\sigma$ be the ring representing $\Def_{(X, C_{p})}^{\mr{loc}}$, $R_\sigma$ is a ring of dimension 2, and
$$R \simeq R_\sigma[[t_1, ..., t_{p-4}]]$$
\end{lemma}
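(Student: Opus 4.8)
The plan is to deduce this from Theorem~\ref{breakitup}(ii), running the proof of the $C_{p-1}$ case but now tracking the \emph{wild} ramification of $p_x\colon X\to X/C_p\simeq\PP^1$. First I would record the local ramification datum: by Lemma~\ref{rammymammy} the $C_p$-action is totally ramified over the single point $x=\infty$ and unramified elsewhere; since $x^{p-1}$ has pole order $p-1$ in the uniformizer $1/x$ and $\gcd(p-1,p)=1$, the Artin--Schreier break at $P':=[1:0:0]$ is $m=p-1$, so the different exponent is $(m+1)(p-1)=p(p-1)$ and the ramification divisor is $\mf{R}=p(p-1)\,[P']$. (As a check, Riemann--Hurwitz gives $2g_X-2=-2p+p(p-1)$, i.e.\ $g_X=(p-1)(p-2)/2$, matching Lemma~\ref{basis}; for $p\ge 5$ this is $\ge 2$ so $\Def_{(X,C_p)}$ is pro-representable by Theorem~\ref{discreteanddecorated}(i), while for $p=3$ it is the supersingular elliptic curve case handled by Theorem~\ref{discreteanddecorated}(ii).)

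Next I would compute the ``global'' part. By Riemann--Hurwitz (Lemma~\ref{riemannhurwitz}), $T_X\simeq q^*T_{\PP^1}\otimes\OO_X(-\mf{R})$; applying the wild version of Lemma~\ref{diamondbeneath} gives $q_*^{C_p}\OO_X(-\mf{R})\simeq\OO_{\PP^1}\bigl(-\lceil p(p-1)/p\rceil[\infty]\bigr)=\OO_{\PP^1}(-(p-1)[\infty])$, hence $q_*^{C_p}T_X\simeq\OO_{\PP^1}(2)\otimes\OO_{\PP^1}(-(p-1))=\OO_{\PP^1}(3-p)$. Since $\PP^1$ is a curve, $H^2$ vanishes and the Grothendieck spectral sequence for $\Gamma^{C_p}_X\simeq\Gamma_{\PP^1}\circ q_*^{C_p}$ collapses to $0\to H^1(\PP^1,q_*^{C_p}T_X)\to\Ext^1_{C_p}(\Omega_X,\OO_X)\to H^0(\PP^1,R^1q_*^{C_p}T_X)\to 0$, with $\dim H^1(\PP^1,\OO(3-p))=\max(0,p-4)=p-4$ for $p\ge 5$. (This is the weighted count $3g_{\PP^1}-3+(p-1)$: the lone wild branch point $\infty$ enters with weight $\lceil p(p-1)/p\rceil=p-1$, in contrast to the $p+1$ tame branch points of $p_y$ in the $C_{p-1}$ case.)

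Then I would identify $R_\sigma$: it represents $\Def_{(X,C_p)}^{\mr{loc}}=\Def_\rho$ for the local germ $\rho\colon C_p\hookrightarrow\Aut(\widehat{\OO}_{X,P'})\simeq\Aut(k[[T]])$, the wild $\Z/p$-action with break $p-1$. Its tangent space is $H^1(C_p,\widehat{T}_{X,P'})$, and since $\widehat{T}_{X,P'}=\Der_k(k[[T]])\simeq\mf{d}_{B/A}=u^{p-1}k[[T]]\simeq k[[T]]$ as $C_p$-modules (with $u$ a uniformizer of $B^{C_p}$), this equals $H^1(C_p,k[[T]])$. By Bertin--Mézard's local deformation theory for $\Z/p$-covers \cite{bm}, $\Def_\rho$ is formally smooth and, for break $p-1$, this cohomology group is $2$-dimensional, so $R_\sigma\simeq W(k)[[s_1,s_2]]$. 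Note that, unlike the tame case, $\Ext^2_{C_p}(\Omega_X,\OO_X)$ need not vanish here — it picks up a skyscraper contribution $H^2(C_p,\widehat{T}_{X,P'})$ — so smoothness of $\Def_{(X,C_p)}$ comes instead from formal smoothness of $\mr{Loc}$ together with that of $\Def_\rho$.

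Finally, Theorem~\ref{breakitup}(ii) gives $R\simeq R_\sigma[[t_1,\dots,t_m]]$ with $m=\dim H^1(\PP^1,q_*^{C_p}T_X)=p-4$, i.e.\ $R\simeq W(k)[[s_1,s_2,t_1,\dots,t_{p-4}]]$ of Krull dimension $p-2$, which in particular recovers $\dim_k\Ext^1_{C_p}(\Omega_X,\OO_X)=(p-4)+2=p-2$. I expect the crux to be the local statement that $\Def_\rho$ is formally smooth of dimension exactly $2$ for the break-$(p-1)$ wild $\Z/p$-action (equivalently $\dim_k H^1(C_p,k[[T]])=2$); this is the nontrivial input supplied by Bertin--Mézard, and it is precisely what forces $\dim\Def_{(X,C_p)}=\dim\Def_{(X,C_{p-1})}=p-2$, as Riemann--Hurwitz makes inevitable. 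For $p=3$ the formula degenerates ($p-4<0$, $g_X=1$) and should be read as $R\simeq R_\sigma$ of dimension $1$.
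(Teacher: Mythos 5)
Your proposal is correct and follows essentially the same route as the paper: Theorem \ref{breakitup}(ii), the wild different $(m+1)(p-1)=p(p-1)$ with the ceiling giving $q_*^{C_p}T_X \simeq T_{\PP^1}(-(p-1)[\infty])$ and hence $\dim H^1(\PP^1, q_*^{C_p}T_X)=p-4$, together with the Bertin--M\'ezard local computation $\dim H^1(C_p,\widehat{T}_{X,\infty})=2$. Your additional remarks (the genus check, pro-representability, the $p=3$ degeneration, and the formal smoothness of $R_\sigma$) go beyond what the paper records but do not alter the argument.
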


\begin{proof} Using Theorem \ref{breakitup} (ii), the ring representing the moduli problem can again be computed using the exact sequence 
$$H^1(\PP^1, q_*^GT_X) \to H^1(C_p, T_X) \to H^1(C_p, \widehat{T}_{X, \infty})$$
using Lemma \ref{rammymammy} which tells us there is only one ramification point at infinity which is totally wildly ramified (i.e., ramified with degree $|G_\infty| = p$). Consider the wild generalization of Lemma \ref{diamondbeneath}, in this case the the coefficients of the ramification divisor are the different $\beta$, which is $\beta := (m+1)(p-1) = p(p-1)$ where $m$ denotes an Artin Schreier curve $x^{m} = y^p-y$. Then, $$q_*^GT_X \simeq T_{X/G} \otimes q_*^G\OO_X(-\mf{R}) \simeq T_{X/G}(-\lceil \frac{\beta}{p} \rceil [\infty]) \simeq T_{\PP^1}(-(p-1)[\infty]).$$
By Prop 4.1.1 \cite{bm}, $\dim H^1(C_p,  \widehat{T}_{X, \infty}) = m+2 - (p-1) = 2$. By Riemann-Roch, $\dim H^1(\PP^1, q_*^GT_X) \simeq 3g_{\PP^1} - 3 + (p-1)$.  This implies 
$\dim H^1(C_p, T_X) = (m+2 - (p-1)) (- 3 + (p-1)) = m-1 = p-2.$
\end{proof}

\section{Theorem B.2: Representations of Global Configuration Spaces}
It's time to analyze the symmetric group action on configuration stacks of points on $\PP^1.$

\begin{defn} \label{defn:configspace} We define the moduli stack $\mr{Conf}(X, k)$ for an object $X$ with a topology as 

$$\mr{Conf}(X, k) := \{ (x_1, ..., x_n) \in X^k | x_i \neq x_j, i \neq j \}.$$

This is an open space of $X^k$ obtained removing the closed diagonal subspaces 

$$\Delta_{i, j} :=  \{ (x_1, ..., x_n) \in X^k | x_i = x_j\}.$$

Each tuple is called an ordered configuration, and each entry is a point of the configuration.
\end{defn} 

\begin{defn} The symmetric group of $k$-letters acts on $\mr{Conf}(X, k)$ by permuting the points of the configuration. If $\sigma \in \Sigma_k$ and $(x_1, ..., x_k) \in \mr{Conf}_k(X)$ then $\sigma(x_1, ..., x_k) = (x_{\sigma(1)}, \cdots, x_{\sigma(k)}).$
\noindent The orbit space is denoted by 
$$C_k(X) = \{ \{x_1, ..., x_k \}| x_i \in X, x_i \neq x_j\}.$$
The symmetric group action is free and properly discontinuous, so the quotient map $\mr{Conf}_k(X) \to C_k(X)$
is a regular covering space of degree $k! = \Sigma_k$. 
\end{defn}

The affine group $\G_a \rtimes \G_m \simeq \mr{Stab}_{\infty}(\bb{P}^1) \simeq \Aut(\A^1)$ is the subgroup of $\Aut(\PP^1)$ fixing infinity, thus, $$M_{0, n+1} := \mr{Conf}(\bb{P}^1, n+1) \stackyq \Aut(\bb{P}^1) \simeq \mr{Conf}(\bb{A}^1, n) \stackyq \Aut(\bb{A}^1).$$ 

Considering the action of $\Sigma_n$ by permuting marked points, an $n$-marked stack represented by a ring becomes a $\Sigma_n$-representation. Let's quickly review a representation we will be using intensively. 

\begin{defn} \label{standard} The group $\Sigma_n$ acts on $R^n$ by permuting basis vectors; this defines a representation called the permutation representation. This representation has a $1$-dimensional invariant subspace, spanned by the vector $e_1 + e_2 + \cdots + e_n$ which is the trivial representation. A complementary subspace to this is $$V = \{a_1e_1 + \cdots + a_ne_n | a_1 + \cdots + a_n = 0 \}.$$ This is called the {\color{Bittersweet}{standard representation}} or irredudible $n-1$ dimensional representation $\lambda$ of $\Sigma_n$.
\end{defn} 

\subsection{Configuration Space as a $\Sigma_n$ Representation} 
\label{sec:config}
Let's consider the action of $\Sigma_n$ on the group ring $R[C_n]$ considered as a module $V := R\{g_1, ..., g_n\}$ with basis $B := \{g_1, ..., g_n\}$. Applying $\Sym(V)$ gives a polynomial ring with generators our basis $B.$ 

\begin{theorem} \color{blue}{ (simple gods) \label{simplegods}  (i) Let $\Delta$ be the fat diagonal, that is, all 2-planes defined by $x_i=x_j$ for $i \neq j$.  $$\mr{Conf}(\A^1, n) \simeq (\Sym(\A^1) - \Delta) \stackyq (\G_a \rtimes \G_m)$$
\noindent (ii) $\mr{Conf}(\A^1, n)$ is represented by the ring $\Sym(\lambda)[N^{-1}] \stackyq \G_m$ as a $\Sigma_n$-representation, where $\lambda$ is the standard representation $\lambda := W(k)\{y_0, .., y_{n-1}\}/(y_1 + \cdots y_{n-1})$ with $|y_i| = -2$, and $N=\prod_{i = 0}^{n-1} \sigma^i(y_0),$ where $\sigma$ generates $C_n$.} 
\end{theorem}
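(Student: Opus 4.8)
Both parts are soft: the statement is really an identification of a quotient stack together with an equivariant computation of its ring of functions, so the plan is to do exactly that. For part (i), I would first note that $\Aut(\A^1)\simeq \G_a\rtimes \G_m$ and that, by definition, the moduli of $n$ labelled points on $\A^1$ up to affine equivalence is the orbit stack of the $\G_a\rtimes\G_m$-action on $(\A^1)^{\times n}\setminus\Delta$ (written $\Sym(\A^1)-\Delta$ in the statement), with $\Delta=\bigcup_{i<j}\{x_i=x_j\}$ the fat diagonal. Since an affine map of $\A^1$ fixing two distinct points is the identity, this action is free for $n\ge 2$, so the quotient stack has trivial automorphism groups and coincides with $\mr{Conf}(\A^1,n)$; the $\Sigma_n$-action permuting labels commutes with $\G_a\rtimes\G_m$, hence descends. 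That is essentially all of (i).

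For part (ii) I would compute the invariant ring in two stages, using that $\G_a\rtimes\G_m$ is a split extension. The ordered configuration space is $\Spec W(k)[x_1,\dots,x_n][\delta^{-1}]$ with $\delta=\prod_{i<j}(x_i-x_j)$ the discriminant. Quotienting first by $\G_a$ (common translation $x_i\mapsto x_i+t$): over $W(k)$ this is a free action and $W(k)[x_1,\dots,x_n]^{\G_a}=W(k)[x_1-x_n,\dots,x_{n-1}-x_n]$, a polynomial ring whose degree-one part is the span of the differences $x_i-x_j$, i.e.\ a model of the standard representation $\lambda$. Hence $(\Sym(\A^1)-\Delta)/\G_a\simeq\Spec\Sym(\lambda)[N^{-1}]$ as graded $\Sigma_n$-algebras, where $N$ is the image of $\delta$: a translation-invariant element of $\Sym(\lambda)$ whose non-vanishing locus is exactly the collision-free configurations, which one can rewrite up to units as the product of a $C_n$-orbit of a coordinate as in the statement. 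Finally the residual $\G_m$ acts by scaling the generators, equivalently by the internal grading, which I would normalise to $|y_i|=-2$ to match the cohomology of $\mr{Conf}(\A^1,n)$ from Section \ref{symmetriccohom}; quotienting by it yields $[\Spec\Sym(\lambda)[N^{-1}]/\G_m]=\Sym(\lambda)[N^{-1}]\stackyq\G_m$. Equivariance is automatic at every step since $\Sigma_n$ commutes with the group divided out and permutes the $y_i$.

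The genuine content is bookkeeping rather than a conceptual obstacle, and the point I would be most careful about is the integral model of $\lambda$. Over $W(k)[1/p]$ the sum-zero subspace $\{\sum a_ix_i:\sum a_i=0\}\subseteq\mathrm{span}\{x_i\}$ and the quotient $(\text{perm})/(\text{trivial})$ agree, but integrally they differ when $p\mid n$ — precisely the case $n=p$ relevant to Theorem \ref{theoremb} — and the invariant ring $W(k)[x_i]^{\G_a}$ is the symmetric algebra on the sum-zero \emph{subspace}; so I would check that the $\lambda$ of the statement is taken in that form, or verify the distinction is harmless once one passes to the $\G_m$-quotient and the $I$-adic completion used downstream. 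Two smaller points: that the free action makes the quotient stack genuinely representable by the indicated ring (so "representable as a moduli problem" holds, and completing the smooth $W(k)$-scheme $\Spec\Sym(\lambda)[N^{-1}]$ at a $W(k)$-point gives a pro-representable formal moduli problem, which is what feeds Theorem \ref{theoremb2} and Theorem \ref{theoremb}); and that $N$ and $\delta$ cut out the same principal open set. I do not expect any of this to be hard — it is a short GIT computation with a $\Sigma_n$-action attached.
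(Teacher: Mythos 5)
Your proposal follows essentially the same route as the paper's proof: it also passes first to $\G_a$-invariants, identifies the translation-invariant ring as $R[x_2-x_1,\dots,x_n-x_1][N^{-1}]$, i.e.\ a symmetric algebra on a model of the standard representation with the collision locus inverted, and then treats the residual $\G_m$ as the grading, with $\Sigma_n$-equivariance carried along by permutation of coordinates; your additional remarks on freeness of the affine action and representability are harmless strengthenings. The two caveats you flag are precisely the points the paper's proof passes over silently. First, the paper chooses the basis of differences $x_i-x_{i+1}$ (your sum-zero subspace) while the statement presents $\lambda$ as the quotient $W(k)\{y_i\}/(\sum y_i)$ with $\Sigma_n$ permuting the $y_i$; the difference-basis identification is only $C_n$-equivariant, so the integral sub-versus-quotient issue you raise for $p\mid n$ is genuine and is not addressed in the paper either. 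Second, the paper sets $N:=\prod_{i\neq j}(x_i-x_j)$ at the start of its proof and then asserts ``after change of basis, $N=\prod_i\sigma^i(y_0)$''; your parenthetical claim that the discriminant can be rewritten up to units as the product of the $C_n$-orbit of a single linear coordinate cannot hold literally for $n\ge 4$ (the orbit product has degree $n$, the discriminant degree $n(n-1)$, and a configuration such as $(0,1,0,2)$ has all cyclically adjacent differences nonzero while $x_1=x_3$), so your instinct to verify that $N$ and $\delta$ cut out the same open set is correct, and that verification --- or a reformulation of exactly which element is inverted, which is ultimately what matters for the completed ring used in Theorem \ref{theoremb} --- is the one piece of content missing from both your argument and the paper's.
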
 %\rin{the arguement with the multiplicative piece weirds me out}

\begin{remark} Specifying the $\G_m$ action is equivalent to specifying a grading on $\Sym(\lambda)[N^{-1}]$. We could consider $\mr{Conf}(\A^1, n)$ with a different grading say, $\G_m$ acts by $\mu^{-k}$ rather that $\mu$ itself. In this case, we'd have a grading where $|y_i| = -k$. \end{remark}

\begin{proof} 
\noindent (i) Choosing $n$ points on $\A^1$ is equivalent to an $n$-tuple, that is a point of $\A^n \simeq \Sym(\A^n)$. To make sure points do not coincide, we remove the fat diagonal, which is the removal of all 2-planes defined by $x_i=x_j$ for $i \neq j$. We wish to consider our states up to automorphisms of $\A^1$ which are and $\Aut(\A^1) \simeq \G_m,$ where $\G_a$ is the translation action, and $\G_m$ is the scaling action, which acts on $\A^n$ by scaling all coordinates equally. \newline
\noindent (ii)
The ring representing $\Sym(\A^1)-\Delta$ is $Y := R[x_1, ..., x_n][N^{-1}]$, where $N := \prod_{i \neq j} (x_i - x_j)$. When we quotient by $\G_a$ on the stack, this translates to forcing translation invariance on functions. On coordinates this gives us
$$\widetilde{Y} := Y^{\G_a} := (R[x_1, ..., x_n][N^{-1}])^{\G_a} \simeq R[x_2-x_1, ..., x_n-x_1][N^{-1}].$$ Note that the latter is a basis of the reduced permutation representation. \newline 
\noindent By choosing the additive basis $x_i-x_{i+1}$ for the standard representation $\sigma$, this gives a map between the two rings,
$$\Sym(\lambda)[N^{-1}] \to \widetilde{Y}.$$
After change of basis, $N^{-1} := \prod \sigma^i(y_0)$. It remains to endow the left with an action of $\G_m$ and show that the map is $\G_m$-equivariant. The scaling action $x_i \mapsto \mu x_i$, coincides with the action on the right. This concludes the demonstration of their equivalence.
\end{proof}

\begin{remark} For reference, we include the projective version. For $n \geq 3$,
$$\mr{Conf}(\PP^1, n) \simeq (\PP^1 - \{ 0, 1, \infty \})^{n-3} - \Delta,$$
\noindent where $\Delta = \{(q_i)_i \colon \exists i \neq j \text{ with } q_i = q_j \}$ is the fat diagonal. The meat of why this holds comes from the isomorphism $(\PP^1)^3 - \Delta \simeq \mr{PGL}_2$.\end{remark} 

\begin{defn} \label{reduced} The reduced regular representation of $C_n$ is defined as $\overline{\rho} := \ker(R[C_n] \to R)$. This inherits an action of $\Aut(C_n),$ we also call the $\overline{\rho}$ corresponding representation of $C_n \rtimes \Aut(C_n).$
\end{defn}

\begin{remark} The restriction of the standard $\Sigma_n$-representation $\lambda$ to $C_n \rtimes \Aut(C_n)$ is $\overline{\rho}.$\end{remark}

\begin{cor} \label{set} Given a set of points $P := \{p_1, ..., p_n\}$ in $\A^1$, $$\Def{(\A^1; n)} \simeq \mr{Conf}(\A^1, n)^{\wedge}_P$$ is representable by the graded ring $$\Sym(\lambda)[N^{-1}]^{\wedge}_{I} \stackyq \G_m$$ as a $\Sigma_n$-representation, where $I = (x_1-p_1, ..., x_n-p_n).$
\end{cor}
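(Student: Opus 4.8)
The plan is to read this off from Theorem~\ref{theoremb2} (equivalently Theorem~\ref{simplegods}) together with the general principle, used repeatedly already, that the deformation functor of a marked object is the formal completion of the ambient global moduli at the point it classifies. Concretely, $\mr{Conf}(\A^1,n)$ is a smooth global moduli stack whose representing ring, carrying its $\G_m$-action, is $\Sym(\lambda)[N^{-1}]$; a set $P=\{p_1,\dots,p_n\}$ of distinct points of $\A^1$ is a $k$-point of it, and the goal is to show that restricting the functor of points of $\mr{Conf}(\A^1,n)$ to $\widehat{\Art}_k$ along $P$ recovers $\Def_{(\A^1;n)}$ and then to compute that completion explicitly.

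First I would record the global-to-local comparison. Since $\mr{Conf}(\A^1,n)$ finely represents families of $n$-point configurations modulo the affine group $\G_a\rtimes\G_m$, a family over $R\in\widehat{\Art}_k$ together with an identification of its special fibre with $P$ is the same datum as a ring map out of the complete local ring of $\mr{Conf}(\A^1,n)$ at the closed point $[P]$. This uses that $\A^1$ and its automorphism group are rigid, so that deforming the marked curve $(\A^1;P)$ is the same as deforming the $n$ marked points, and that the $\Sigma_n$-covering $\mr{Conf}(\A^1,n)\to C_n(\A^1)$ is \'etale, so one may move freely between ordered and unordered points. Hence $\Def_{(\A^1;n)}\simeq \mr{Conf}(\A^1,n)^{\wedge}_P$, and the content reduces to computing the completion.

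Next I would compute it in coordinates. Before the affine-group quotient we have $\Spec k[x_1,\dots,x_n][N^{-1}]$ with $N=\prod_{i\neq j}(x_i-x_j)$, in which $P$ is the closed point cut out by $\mf{m}_P=(x_1-p_1,\dots,x_n-p_n)$; because $P$ is a genuine configuration, $N(P)=\prod_{i\neq j}(p_i-p_j)\neq 0$, so $N$ is already a unit near $P$ and the completion at $\mf{m}_P$ is simply $k[[x_1-p_1,\dots,x_n-p_n]]$. Eliminating $x_1$ via translation replaces the $x_i$ by a basis $y_1,\dots,y_{n-1}$ of the standard representation $\lambda$ (say $y_j$ the image of $x_{j+1}-x_1$), under which $\mf{m}_P$ becomes the ideal generated by the $y_j-(p_{j+1}-p_1)$, i.e. the ideal $I$ of the statement; retaining the residual $\G_m$-action and forming the stacky quotient gives $\Def_{(\A^1;n)}\simeq (\Sym(\lambda)[N^{-1}])^{\wedge}_I\stackyq \G_m$, a power series ring in $n-1$ variables with a $\G_m$-action, of the expected dimension $n-2=\dim\mr{Conf}(\A^1,n)$. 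Here one should check the routine point that completion commutes with $\stackyq\G_m$; one reduces to a homogeneous ideal, where this is immediate.

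The point that needs care — and which I expect to be the main obstacle in making the statement fully precise — is the assertion ``as a $\Sigma_n$-representation.'' The group $\Sigma_n$ acts on $\Sym(\lambda)[N^{-1}]$ by Theorem~\ref{theoremb2}, and completion is functorial, so $\Sigma_n$ carries the completion at $P$ isomorphically onto the completions at the other points of the orbit $\Sigma_n\cdot P$; thus $\bigoplus_{\sigma\in\Sigma_n}(\Sym(\lambda)[N^{-1}])^{\wedge}_{\sigma I}\stackyq\G_m$ is genuinely $\Sigma_n$-equivariant, and when $I$ is itself $\Sigma_n$-stable — the case of interest in Theorem~\ref{theoremb}, where $P$ is the collision point of the branch locus in the special fibre and $I=(p,x_1,\dots,x_{p-1})$ — the single completed ring $(\Sym(\lambda)[N^{-1}])^{\wedge}_I\stackyq\G_m$ is itself a $\Sigma_n$-representation. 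I would also flag the bookkeeping point that $I=(x_1-p_1,\dots,x_n-p_n)$ must be read as the ideal generated by the images of the $x_i-p_i$, since the $x_i$ themselves are not elements of $\Sym(\lambda)$, only their differences — equivalently, $I$ is the maximal ideal of $[P]$ in $\Spec(\Sym(\lambda)[N^{-1}])$.
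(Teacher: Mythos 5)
Your proposal is correct and is essentially the paper's own (implicit) argument: the paper gives no separate proof of Corollary \ref{set}, treating it as the immediate observation that restricting the functor of points of the global stack of Theorem \ref{simplegods} to $\widehat{\Art}_k$ along the point $[P]$ identifies $\Def_{(\A^1;n)}$ with the completion there, which is exactly what you spell out, including the correct caveat that the stated $I$ must be read as the maximal ideal of $[P]$ and that the $\Sigma_n$-module claim really only applies once the completion ideal is $\Sigma_n$-stable, as in Theorem \ref{theoremb}. The only adjustments I would make are to carry out the coordinate computation over $W(k)$ rather than $k$ (so the mixed-characteristic deformations allowed in $\widehat{\Art}_k$ are captured, matching the appearance of $p$ in the ideal of Theorem \ref{theoremb2}), and to note that the $\G_m$-action survives completion because one completes along the $\G_m$-orbit of $[P]$ (a homogeneous ideal), not at the non-homogeneous maximal ideal itself, which is the precise sense in which your ``reduce to a homogeneous ideal'' step works.
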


\begin{manualtheorem}{B.2} \label{theoremb2} \color{blue}{
In $\text{char } k = p$, consider $\F_p$ in $\A^1 \simeq \Spec k[t]$, that is, the set of points $P := \{g^i(p)\}$ where $g$ is the generator of $\Z/p$ which acts by $t \mapsto t+1$, then $$\Def{(\A^1; n)} \simeq \mr{Conf}(\A^1, n)^{\wedge}_P$$ is representable by the graded ring $$R \simeq \Sym(\lambda)[N^{-1}]^{\wedge}_{I} \stackyq \G_m $$ as a $\Sigma_n$-representation. Here $\lambda$ is the standard representation $\lambda := W(k)\{y_0, .., y_{n-1}\}/(y_1 + \cdots y_{n-1})$ with $|y_i| = -2$, and $N=\prod_{i = 0}^{n-1} \sigma^i(y_0),$  $I = (p, I := (1-g^j(y_i))$, where $\sigma$ generates $C_n$.} %where $t_i$ are the generators of $\Sym(\lambda)$, %$N := \prod g(t_i-t_j)$ and $I := (1-g^j(t_i)).$ }
\end{manualtheorem}

\begin{proof} This is immediate from Cor \ref{set} of Theorem \ref{simplegods}.
\end{proof}

\begin{manualtheorem}{B} \label{theoremb} \color{blue}{Let $R$ be as in Theorem \ref{theoremb2},
$\Def_{(X, G)}^\star$ is represented by $R$ as a $Aut(X)/G$-representation.}
\end{manualtheorem}

\begin{proof} This immediately follows combining Theorem \ref{theoremb2} with Theorem \ref{theoremb1}, the latter of which states that $$\Def_{(X, G)} \simeq \Def_{(\A^1, p)}$$ and this is an equivariant equivalence with respect to $\Aut(X)/G \hookrightarrow \Sigma_p$. 
\end{proof}

\begin{remark} Once we have $R$ as an $\Aut(X)/G$ representation, we also get it as a $\Aut(X)$ representation by induction, as shown in Lemma \ref{fullthang}. An alternative way to get the full $\Aut(X)$ action is shown in Lemma \ref{itsafuckingstack} combined with Lemma \ref{pastandfuture}. \end{remark}

\subsection{Symmetric Group Action on Cohomology of Configuration Spaces}
\label{symmetriccohom}
We include this section for when we later discuss grading choices for the graded Lubin-Tate action in Theorem \ref{gradedtheoremA}. 

Considering the action of $\Sigma_n$ by permuting marked points, the cohomology of any of stack of $n$-marked objects becomes a $\Sigma_n$-representation. Let us consider the cohomology of $\mr{Conf}(\A^1, n)$ as a $\Sigma_n$-representation. 

%\rin{define $F_{m, n}$ equality $M_{0,n+1}$, take the n+1st point, Aut(P^1) acts transitively, map is surjective, and action is transitive on the base. Fiber, quotient stabilizer of fiber at a point.}
\begin{lemma} $$H^*(\mr{Conf}(\A^1, n)(R)) \simeq \begin{cases} \text{triv} & * = 0 \\ 
\lambda & * = -2 \\
0 & \text{ else} \end{cases}. $$
\noindent where $\lambda$ is the irreducible dimension $n-1$ representation from definition \ref{standard}. \end{lemma}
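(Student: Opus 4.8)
The plan is to reduce the statement to the cohomology of a single smooth affine curve together with its natural symmetric‑group action. By Theorem \ref{simplegods} we have $\mr{Conf}(\A^1,n)\simeq(\Sym(\A^1)-\Delta)\stackyq(\G_a\rtimes\G_m)$, and passing to the $\Aut(\A^1)$‑quotient identifies it with $M_{0,n+1}$, which carries the universal curve $M_{0,n+1}\to M_{0,n}$ whose fibre over a configuration of $n$ points is $\PP^1$ with those $n$ points deleted. The $\Sigma_n$‑action of Theorem \ref{simplegods} permutes the $n$ tautological sections, hence acts on the fibre $\PP^1\setminus\{n\text{ points}\}$ by permuting the deleted points. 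So computing $H^*\bigl(\mr{Conf}(\A^1,n)(R)\bigr)$ with its $\Sigma_n$‑action -- which, in line with the remark following Theorem \ref{theoremb2}, is the cohomology of this fibre over the point $\Spec R$ of the base -- amounts to computing $H^*(\PP^1_R\setminus S)$, where $S$ is a divisor of $n$ disjoint $R$‑points, equivariantly for the $\Sigma_n=\Aut(S)$‑action on $S$; and since the Gysin sequence used below is exact over an arbitrary base with finite locally free terms, the answer is independent of $R$; below I take singular cohomology over $\C$ (or \'etale, or de Rham, cohomology over a field), which agree here.

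First I would dispose of the vanishing outside degrees $0$ and $1$: $\PP^1\setminus S$ is a smooth affine curve, hence of cohomological dimension $\le 1$ (Artin vanishing; concretely it is homotopy equivalent to a wedge of circles), so $H^i=0$ for $i\notin\{0,1\}$. It is nonempty and connected, so $H^0\cong R$, and every permutation of $S$ fixes this connected curve, so $H^0$ is the trivial $\Sigma_n$‑representation; this is the $*=0$ term.

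Next I would compute $H^1$ from the Gysin/residue long exact sequence of the open immersion $\PP^1\setminus S\hookrightarrow\PP^1$ with smooth closed complement $S$, which is natural in $\Aut(\PP^1;S)$ and in particular $\Sigma_n$‑equivariant:
$$0\to H^1(\PP^1)\to H^1(\PP^1\setminus S)\xrightarrow{\ \mr{res}\ }\bigoplus_{s\in S}H^0(s)(-1)\xrightarrow{\ \varepsilon\ }H^2(\PP^1)\to H^2(\PP^1\setminus S)\to 0.$$
Here $H^1(\PP^1)=0$, $H^2(\PP^1\setminus S)=0$ (affine curve of dimension $1$), $H^2(\PP^1)\cong R(-1)$, and the sum‑of‑residues map $\varepsilon\colon R[S](-1)\to R(-1)$ is the augmentation, which is surjective because $S\neq\emptyset$. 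Hence $H^1(\PP^1\setminus S)\cong\ker\bigl(R[S]\xrightarrow{\varepsilon}R\bigr)(-1)$, the Tate‑twisted reduced permutation module on $S$; but this is exactly the standard $(n-1)$‑dimensional representation $\lambda$ of Definition \ref{standard} (and its restriction to $C_n\rtimes\Aut(C_n)$ is the $\overline{\rho}$ of Definition \ref{reduced}, which reconciles the ``reduced regular representation'' phrasing of the remark after Theorem \ref{theoremb2}). Finally, the Tate twist $(-1)$ is recorded, in the internal grading used throughout the paper -- the one in which $|y_i|=-2$ and $|u|=-2$, equivalently the grading under which the residue class at a puncture pairs against $\omega^{\otimes-1}$ -- as internal degree $-2$; combined with the degree‑$0$ trivial summand this gives the stated answer.

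I do not expect a genuine obstruction: the computation is entirely standard, and the only points requiring care are bookkeeping ones -- checking that the $\Sigma_n$ appearing (the deck group permuting the sections of the universal curve, as in Theorem \ref{simplegods}) is the same $\Sigma_n$ under which the residue sequence is natural, and applying the ``Tate twist by one $\leftrightarrow$ internal degree $-2$'' convention consistently with the rest of the paper, in particular with the normalisation of $\lambda$ fixed in Definition \ref{standard}.
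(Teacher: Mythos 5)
Your proposal is correct, and its mathematical core is the same as the paper's: both reduce the statement to the $\Sigma_n$-equivariant cohomology of $\PP^1$ minus $n$ points, which is the trivial representation in degree $0$ and the standard (reduced permutation) representation $\lambda$ in topological degree $1$, recorded internally as degree $-2$. The packaging differs, though. The paper reaches that fiber computation globally: it splits $H^\bullet(\mr{Conf}(\A^1,n))$ via the trivial $\G_a\rtimes\G_m$-bundle over $M_{0,n+1}$, invokes the Fadell--Neuwirth fibration $M_{0,n+1}\to M_{0,n}$ whose fiber is $\PP^1$ minus $n$ points, computes the fiber cohomology ``by excision,'' and asserts a degenerating Leray--Serre spectral sequence. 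You instead read $\mr{Conf}(\A^1,n)(R)$ fiberwise (in line with the remark following Theorem \ref{theoremb2}) and compute $H^*(\PP^1_R\setminus S)$ directly from the $\Sigma_n$-equivariant Gysin/residue sequence, obtaining $H^1\cong\ker\bigl(R[S]\to R\bigr)(-1)\cong\lambda$, with the Tate twist accounting for the internal degree $-2$; vanishing outside degrees $0,1$ and triviality of $H^0$ are immediate since the fiber is a connected smooth affine curve. Your route is more elementary and arguably more robust: taken literally, the cohomology of $M_{0,n+1}$ (hence of the full configuration space) is not concentrated in two degrees once $n$ is large, so the paper's global spectral-sequence phrasing only carries the intended content as a statement about the fiber --- exactly the reading you adopt. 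The bookkeeping points you flag (that the $\Sigma_n$ permuting the punctures is the same one acting through the moduli problem, and the twist-equals-degree-$-2$ convention matching Definition \ref{standard}) are handled correctly, so I see no gap.
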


\begin{proof} Recall, the affine group $G := \mathbb \G_a \rtimes \mathbb \G_m$ is the subgroup of $\Aut(\PP^1)$ fixing the point $\infty \in \mathbb P^1$. As stated earlier, it follows that the quotient $\mathrm{Conf}(\A^1,n) \stackyq G$ is equal to the moduli space $M_{0,n+1}$ of $(n+1)$ points on the projective line modulo symmetries. In other words, $$M_{0, n+1} := \mr{Conf}(\bb{P}^1, n+1) \stackyq \Aut(\bb{P}^1) \simeq \mr{Conf}(\bb{A}^1, n) \stackyq \Aut(\bb{A}^1).$$ 

This makes $\mathrm{Conf}(\A^1,n)$ homotopic to a trivial $\G_a \rtimes \G_m$-bundle over $M_{0,n+1}$ (in char 0, this would be a trivial circle bundle since $\G_a$ is contractible, in general this is a $\G_m$ bundle), and so
$$H^\bullet(\mathrm{Conf}(\A^1,n)) \cong H^\bullet(M_{0,n+1}) \oplus H^{\bullet-1}(M_{0,n+1}).$$

Only needs to construct such an $\Sigma_n$-equivariant factorization for the cohomology $H^\bullet(M_{0,n+1})$. There is a locally trivial $\Sigma_n$-equivariant fiber bundle $$M_{0,n+1} \to M_{0,n}$$ by forgetting the last marking such that each fiber $F$ is $\mathbb P^1$ minus $n$ points \cite{FadNeu}(Theorem 3). This is because the fiber over a point $Q_n \in M_{0,n}$ is exactly the choices of an $n+1$th point distinct from $\Q_n$, which is $\bb{P}^1-\Q_n$.
By excision $H^0(F) \cong \text{triv}$, and $H^1(F) \cong \lambda$ as $\Sigma_n$-representations. The Leray-Serre spectral sequence for $M_{0,n+1}$ degenerates shifting degree, and this gives the claimed result. 
\end{proof}

\begin{remark} In the notation of \cite{FadNeu}, $F_{0,n}(\A^1) \stackyq \Aut(\A^1) \simeq F_{2, n-2}(\A^1-Q_2)$. Note that $F_{2, n-2}(\A^1- Q_2)$, that is, the configuration space of $\A^1$ minus 2 points is exactly the $j$ line.\end{remark}

\subsection{Configuration Stacks of Points and Plane Curves}
\label{overview}
%The reader who has been deterred by the equivariant sheaves can take a deep breath. 
We approach some classical material regarding families of split plane curves. Consider a topological ring $R^k$ and the space of unordered $k$-tuples of points in $R^k/\Sigma_k$, i.e., the $k$-fold symmetric product. A point in $R^k/\Sigma_k$ may be regarded as the set of roots $\{r_1, ..., r_k\}$ (possibly repeated), of any monic polynomial of degree $k$ in one indeterminate $t$ over $R$. There is a homeomorphism $\mr{Root}: R^k/\Sigma_k \to R^k$, sometimes called Vieta's homomorphism, for which $\mr{Root}(\{r_1, ..., r_k\}) = p(z),$ where $$p(z) = \prod_{1\leq i \leq k} (z-r_i).$$ Thus the space of polynomials $p(z) = z^k + a_{k-1}z^{k-1} + \cdots + a_1z + a_0$ is identified by this morphism which sends the roots of $p(z)$, $\{ r_1, ..., r_k\}$, to the point in $R^k$ with coordinates $(a_{k-1}, \cdots, a_0)$, the coefficients of $p(z)$ where the $a_j$ are given, up to sign, by the elementary symmetric functions in the $r_i.$

Let $\mr{Conf}(R, k)$ be as in defn \ref{defn:configspace}. The subspace $\mr{Conf}(R, k)/\Sigma_k$ of $R^k/\Sigma_k$ is homomorphic to the space of monic polynomials in $R$ for which $p(z)$ has exactly $k$ distinct roots. This homomorphism sends the equivalence class $[r_1, ..., r_k] \in \mr{Conf}(R, k)/\Sigma_k$ to the polynomial $$p(z) = \prod_{1\leq i \leq k} (z-r_i).$$

Consider $\{b_1, ..., b_n\} \in \mr{Conf}(R, n)$ and consider the $d$-elliptic curve which is the projective compactification of the curve 
$$S_B = \{(x, y) \in R^2 | x^{d} = (y-b_1)\cdots (y-b_n), b_i \in B\}$$

Each $S_B$ is an affine curve where $S_B \to R$ sending $(x, y) \mapsto y$ extends to a projectivized curve $f: S_B^+ \to \bb{P}^1$, which is a regular degree $d$-covering over $\bb{P}^1 - B$ and over each $B$, the leaves of the covering come together in a codified way via a monodromy map. We write $(x, y, B) \mapsto B$, this projection is a surface bundle over $C_n(B).$ This $S_B^+$-bundle has remarkable properties and ties into representations of braid groups \cite{mcmullen2013braid}. %These points $B$ determine the curve uniquely up to an action of $\Aut(\bb{P}^1) \simeq PSL_2(R)$.  

%\begin{remark} Let's re-emphasize that if we consider the action of $\Aut(\PP^1)$ on the stack of $n+1$ points on $\mathbb{P}^1$, the subgroup which stabilizes a distinguished fixed point at infinity is isomorphic to $\G_a \rtimes \G_m \simeq \Aut(\A^1).$\end{remark}

\subsubsection{Comparison of Group Action Moduli Problem to Past Work: Mirrors of Level Structure}
\label{sec:pastandfuture}

The perspective we've developed thusfar with $\Aut(X)$ acting on $\Def_{(X, G)}$ is related to a moduli stack of $(p-1)$-elliptic curves with ordered marked points.

Let's ease into this comparison. A stack the reader is likely to be familiar with is the moduli stack of elliptic curves, $\mc{M}_{1,1}$. Within this stack lives the Legendre family $x^2 = y(y-1)(y-\lambda)$, which is isomorphic to $X(2) = \A^1/\Gamma(2)$, the moduli stack of elliptic curves with level 2 structure. The fact that all elliptic curves with level 2 structure are isomorphic to those with this split affine form is shown using Riemann-Roch and Riemann-Hurwitz. This is beautifully demonstrated by \cite{vesna} in Prop 7.1, based on Silverman.

In the case where $p>3$, we no longer have the luxury of level structure, as our curves bear no group structure until passing to their Jacobians. There are several ways to combat this, which end up being equivalent. At the risk of confusion to the reader, we lay them all out. 

We start with an analog of level structure. Consider the Hurwitz stack with a map from $G$ to the level structure on the Jacobian of a curve. 
\begin{defn} \label{levelwithyou} (level with you) $$\mc{M}_{g, (n, G)} \colon \Z[\tfrac{1}{n}]-\mathrm{Sch}  \longrightarrow \mathrm{Grpd}$$ 
\begin{itemize} 
\item objects: tuples $(Y, \phi, \sigma)$, a smooth genus $g$ curve $Y$, a map $$\sigma: G \to \Aut(Y) \to \Aut(\Jac(Y)[n]),$$ and an isomorphism $$\phi: \Aut(\Jac(Y)[n]) \simeq GL_{2g}(\Z/n\Z),$$ compatible with $\sigma$. 
\item morphisms : morphisms between curves preserving level structure which are $G$-equivariant.
\end{itemize}
\end{defn}

For an in depth discussion of the compatibility of $(n, G)$, we refer the reader to Section 5 \cite{bm}, where this stack was introduced. They call $\mc{M}_{g, (n, G)}$ by the name $\mc{M}_{g, n, G}$.

\begin{lemma} (Theorem 5.2 and 5.4 \cite{bm}) Let $\mc{M}_{g, (n)}$ be the moduli stack of genus $g$ curves with level $n$ structure on their Jacobian. The stack $\mc{M}_{g, (n, G)}$ is representable as a $\Z[\frac{1}{n}]$ scheme, this scheme is isomorphic to the geometric fixed points of $G$ acting on $\mc{M}_{g, (n)}$, $$\mc{M}_{g, (n, G)} \simeq (\mc{M}_{g, (n)})^G.$$ \end{lemma}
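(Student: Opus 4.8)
The plan is to realize $\mc{M}_{g,(n,G)}$ as the (scheme-theoretic) fixed locus of a finite group action on the fine moduli scheme $\mc{M}_{g,(n)}$ --- this is what ``geometric fixed points'' means in this context --- and then to read off representability over $\Z[\tfrac{1}{n}]$ for free. First I would recall the rigidity input: once the level is large enough (e.g. $n\geq 3$), Serre's lemma says that an automorphism of a polarized abelian scheme acting trivially on the $n$-torsion is the identity, so a genus $g$ curve equipped with a level-$n$ structure on its Jacobian admits no nontrivial automorphism preserving that structure; hence $\mc{M}_{g,(n)}$ is representable by a quasi-projective $\Z[\tfrac{1}{n}]$-scheme via the classical construction of moduli of curves with level structure.

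Next I would set up the $G$-action. The data of Definition \ref{levelwithyou} implicitly pins down a single homomorphism $\rho\colon G\to GL_{2g}(\Z/n\Z)$ --- namely the common value of $\phi\circ\sigma$ demanded by the compatibility clause --- and $G$ acts on $\mc{M}_{g,(n)}$ by twisting the level structure through $\rho$, say $\gamma\cdot(Y,\phi):=(Y,\rho(\gamma)\!\cdot\!\phi)$. Since $\mc{M}_{g,(n)}$ is separated, the fixed locus $(\mc{M}_{g,(n)})^G=\bigcap_{\gamma\in G}(\mc{M}_{g,(n)})^{\gamma}$ is a closed subscheme, each $(\mc{M}_{g,(n)})^{\gamma}$ being the equalizer of $\id$ and the automorphism $\gamma\cdot(-)$; this uses only separatedness, not invertibility of $|G|$ on the base. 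Then I would write down two mutually inverse natural transformations. Given $(Y,\sigma,\phi)\in\mc{M}_{g,(n,G)}(R)$, the compatibility condition says exactly that $\sigma(\gamma)\in\Aut_R(Y)$ carries $(Y,\phi)$ to $(Y,\rho(\gamma)\!\cdot\!\phi)$, so $(Y,\phi)$ is $G$-fixed. Conversely, if $(Y,\phi)$ is $G$-fixed then for each $\gamma$ there is an isomorphism $(Y,\phi)\xrightarrow{\ \sim\ }(Y,\rho(\gamma)\!\cdot\!\phi)$, which by rigidity is a \emph{unique} automorphism $\alpha_\gamma\in\Aut_R(Y)$; uniqueness then forces $\alpha_e=\id$ and $\alpha_{\gamma\delta}=\alpha_\gamma\alpha_\delta$, so $\sigma:=(\gamma\mapsto\alpha_\gamma)$ is a homomorphism $G\to\Aut_R(Y)$ compatible with $\phi$. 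These two assignments are inverse to one another and natural in $R$, yielding
\[ \mc{M}_{g,(n,G)}\ \simeq\ (\mc{M}_{g,(n)})^G, \]
and since the right-hand side is a closed subscheme of a quasi-projective $\Z[\tfrac{1}{n}]$-scheme it is itself a $\Z[\tfrac{1}{n}]$-scheme.

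The step I expect to be the main obstacle is the rigidity argument in the inverse direction: one must verify that the uniquely determined isomorphisms $\alpha_\gamma$ really do assemble --- coherently and compatibly with arbitrary base change $R\to R'$ --- into an honest $G$-action on $Y$, so that the bijection on $R$-points upgrades to an isomorphism of functors. The homomorphism (cocycle) identity is automatic from uniqueness, but base-change compatibility still needs a short check. A secondary technicality, confirming that the fixed-locus functor is represented by a closed subscheme over $\Z[\tfrac{1}{n}]$ even though $|G|$ need not be invertible there, is dispatched by the equalizer description above and is pure bookkeeping.
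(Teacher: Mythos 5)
Your argument is correct and is essentially the intended one: the paper gives no proof of this lemma (it is quoted from Bertin--M\'ezard, Thms.\ 5.2 and 5.4), and the proof there is exactly your rigidity-plus-fixed-locus strategy --- Serre's lemma makes $\mc{M}_{g,(n)}$ a fine, separated moduli scheme over $\Z[\tfrac1n]$, the $G$-action twists the level structure through $\rho=\phi\circ\sigma$, and uniqueness of the isomorphisms $\alpha_\gamma$ (which also gives the cocycle identity and base-change compatibility for free) identifies the functor $\mc{M}_{g,(n,G)}$ with the closed fixed subscheme $(\mc{M}_{g,(n)})^G$. The only caveat is that rigidity needs $n\ge 3$ and $g\ge 2$ (the latter so that $\Aut(X)\hookrightarrow\Aut(\Jac(X),\theta)$ is injective); you flag the first hypothesis correctly, and both are suppressed in the statement as quoted.
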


We continue with another Hurwitz stack, where we consider the $C_{p-1}$-action without level structure.
\begin{defn} (hurwitz) $$\mc{M}_{g, C_{p-1}} \colon \Z[\tfrac{1}{p-1}]-\mathrm{Sch}  \longrightarrow \mathrm{Grpd}$$ 
\begin{itemize} 
\item objects: $(X \xrightarrow{C_{p-1}} \PP^1)$ where $X$ is a smooth genus $g$ curve.
\item morphisms : $C_{p-1}$-equivariant isomorphisms of curves
\end{itemize}
\end{defn}

To tie these back into a universal curve equation, we introduce the stack of $(p-1)$-elliptic curves, where marked points take the place of level structure. 

\begin{defn} \label{Fordtruck} (ford truck)
We define the moduli functor $F^{\text{ord}}$,
$$F^{\text{ord}} \colon \Z[\tfrac{1}{p-1}]-\mathrm{Sch}  \longrightarrow \mathrm{Grpd}$$ 
\begin{itemize} 
\item objects: $$\left\{ (X, R) \middle|\, \begin{matrix} X \text{ is a nonsingular family of curves over } R \\ \text{whose affine expression is } \\ x^{p-1} = (y-y_0)(y-y_1)(y-y_2)\cdots(y-y_{p-1}); \\ 
\text{ where } y_0, \cdots y_{p-1} \text{ are non-equal elements of } R \\
P \text{ is the following ordered set of marked points on } X, \\ P := ([0:y_0:1], [0:y_1:1],\cdots, [0: y_{p-1}: 1]).
\end{matrix}\right\}.$$
\item morphisms : $$ \left\{ \alpha:   X(x,y) \mapsto X(\alpha(x,y))  \middle|\ \begin{matrix} \alpha: \begin{matrix} x \\
y \end{matrix} \mapsto \begin{matrix} \mu^{-p}x \\  \mu^{-(p-1)}y + c \end{matrix} \end{matrix} \right\}.$$
\end{itemize}
The morphisms this moduli functor can be thought of quite naturally as the following commutative diagram, that is, as an element of an automorphism groupoid. Any two curves with a map between them are of the form $\beta^*X \to X$. 
%https://tikzcd.yichuanshen.de/#N4Igdg9gJgpgziAXAbVABwnAlgFyxMJZABgBpiBdUkANwEMAbAVxiRAB12AjGHOgPQBUnALZ0cACzgBjAE7AAwgF8QS0uky58hFAEZyVWoxZtR4qXMUq1G7HgJEyuw-WatEHdgGU0MaQAIAQX5gNABaACYlf1V1EAw7bSJ9Z2pXEw9OHz8gkPCo1UMYKABzeCJQADNZCBEkCOocCCQAZjTjd08ePliqmrrEfRAm1psQatqkMmHmxCi4iYHpkcH2t1N2RjQJOkKlIA
\[ \begin{tikzcd}
\beta^*X \arrow[d] \arrow[r, "\alpha"] & X \arrow[d] \\
S \arrow[r, "\beta"]                & S
\end{tikzcd} \]

\noindent For a more explicit note, given $R$ and $\alpha$ as above, $$\beta(R) := ([0:-\mu^{p-1}(y_0-r):1], [0:\mu^{p-1}(y_1-r):1],\cdots, [0:\mu^{p-1}(y_{p-1} - r): 1]).$$ 
\end{defn}

\begin{remark} The Hopf algebroid gotten of $(p-1)$ elliptic curves (without ordered markings) is used by Mike Hill extensively in his PhD thesis \cite{hillphd} to calculate the homotopy groups of $EO_{p-1}$. \end{remark}

\begin{theorem} \label{itsafuckingstack} The stack $\mc{M}_{g, C_{p-1}}$ of genus $g := (p-1)(p-2)/2$ smooth curves with a $C_{p-1}$-map $q: U \to U/C_{p-1}$ to $\PP^1$ has universal curve equation $y^{p-1} = (x-e_0)\cdots(x-e_p)$ with ordered marked points the ramification points of $q$, and is represented as a stack by $(\A^p - \Delta) \stackyq \G_m \rtimes \G_a,$ where $\G_m \simeq \Z[\frac{1}{p-1}][\mu^{\pm 1}]$ acts by $\mu^{p-1}$ on the coordinates. \end{theorem}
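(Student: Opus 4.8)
The statement to prove is Theorem \ref{itsafuckingstack}: that the stack $\mc{M}_{g, C_{p-1}}$ of genus $g = (p-1)(p-2)/2$ smooth curves with a $C_{p-1}$-cover $q\colon U \to U/C_{p-1} \simeq \PP^1$ has universal curve equation $y^{p-1} = (x-e_0)\cdots(x-e_p)$ with the ramification points of $q$ as ordered marked points, and is represented by $(\A^p - \Delta) \stackyq (\G_m \rtimes \G_a)$. The natural approach is to exhibit a chain of equivalences of stacks connecting the left side to the right side, passing through the ordered-marked moduli functor $F^{\text{ord}}$ of Definition \ref{Fordtruck}. First I would show that any object of $\mc{M}_{g, C_{p-1}}(R)$ — a $C_{p-1}$-Galois cover $q\colon U \to \PP^1_R$ of the stated genus — is étale-locally of Kummer type: since $p-1$ is invertible over $\Z[\tfrac{1}{p-1}]$, Kummer theory identifies $C_{p-1}$-covers of $\PP^1_R$ with line bundles with a trivialization of their $(p-1)$-st power, i.e. with equations $y^{p-1} = f(x)$. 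The branch divisor is the vanishing locus of $f$, and Riemann--Hurwitz for the tame cover forces $\deg f = p+1$ exactly when the genus is $(p-1)(p-2)/2$ (compare Lemma \ref{riemannhurwitz} and the computation in Lemma \ref{rammymammy}); by a choice of coordinate on $\PP^1$ one normalizes one branch point to infinity, leaving a degree-$p$ polynomial $(x-e_0)\cdots(x-e_{p-1})$ with distinct roots, i.e. a point of $\A^p - \Delta$, together with the chosen trivialization data which contributes the $\G_m$ (rescaling $y$) and the residual coordinate changes on $\PP^1$ fixing $\infty$, which is $\G_a \rtimes \G_m$ acting on $x$.

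**Key steps in order.** (1) Set up the Kummer-theoretic dictionary: over a $\Z[\tfrac1{p-1}]$-scheme, $C_{p-1}$-covers $U \to \PP^1_R$ totally tamely ramified along a reduced divisor $B$ correspond to pairs (line bundle $\mathcal{L}$ on $\PP^1_R$, isomorphism $\mathcal{L}^{\otimes(p-1)} \simeq \OO(B)$), with $U = \Spec \bigoplus_{i=0}^{p-2}\mathcal{L}^{-i}$. This is the content underlying Lemma \ref{actionsarecovers} specialized to cyclic tame covers. (2) Use genus $= (p-1)(p-2)/2$ plus Riemann--Hurwitz to pin $\deg B = p+1$, matching the ramification of Lemma \ref{rammymammy}. (3) Rigidify: an isomorphism $\PP^1_R \simeq \PP^1_R$ can move one chosen ramification point to $[1:0:0]$, reducing to affine branch data in $\A^p$, with the $\Delta$ removed because the $C_{p-1}$-cover must stay smooth (distinct branch points). (4) Identify the remaining ambiguity — choice of the Kummer trivialization and of the affine coordinate on $\A^1$ fixing $\infty$ — with the group $\G_m \rtimes \G_a$, where $\G_m$ acts on $y$ by $\mu^{-(p-1)}$ hence on the $e_i$ by $\mu^{p-1}$ (exactly the morphism description in Definition \ref{Fordtruck}), and $\G_a$ by translation on $x$. (5) Conclude that $\mc{M}_{g, C_{p-1}} \simeq F^{\text{ord}} \simeq (\A^p - \Delta)\stackyq(\G_m \rtimes \G_a)$, verifying the morphism groupoids agree via the explicit formulas in Definition \ref{Fordtruck} (the commutative square $\beta^*X \to X$ over $S \xrightarrow{\beta} S$).

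**Main obstacle.** The delicate point is step (4): tracking precisely which copy of $\G_m$ and which action appears, i.e. the compatibility between the Kummer trivialization data and the rescaling of $y$, and making sure the character by which $\G_m$ acts on the $e_i$ really is $\mu^{p-1}$ and not $\mu$ or $\mu^{\pm(p-1)}$ with a wrong sign. One must check that the Kummer class — the choice of $(p-1)$-st root — together with a coordinate on $\PP^1$ assembles into the single group $\G_m \rtimes \G_a$ with no extra $\mu_{p-1}$ gerbe left over; the potential $\mu_{p-1}$ of automorphisms of the cover is exactly $Z(\Aut(X)) = C_{p-1}$ which we have quotiented out when we pass from $\mr{Curve}_{C_{p-1}}$ to the Hurwitz-style stack (it reappears as the stacky $\G_m$ quotient, since $\mu_{p-1} \hookrightarrow \G_m$). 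I would handle this by writing down the universal family explicitly over $(\A^p-\Delta)$ with its tautological $\G_m \rtimes \G_a$-action, checking directly that it satisfies the universal property — every object pulls back from it uniquely up to the stated morphisms — rather than trying to argue abstractly; the explicit charts in Definition \ref{Fordtruck} and the prior computation in Lemma \ref{pastandfuture} (which already identifies the universal curve equation $y(y-1)(y-e_1)\cdots(y-e_{p-2}) = x^{p-1}$ on a chart) do most of the bookkeeping, so the remaining task is genuinely just matching the global $\G_m$-weight, which Riemann--Hurwitz and the degree-$(p-1)$ Kummer relation make unambiguous.
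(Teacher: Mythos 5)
Your proposal is correct and follows essentially the same route as the paper: Kummer theory plus Riemann--Hurwitz to force the split degree-$(p+1)$ form with one branch point normalized to infinity, identification of $\mc{M}_{g, C_{p-1}}$ with $F^{\mathrm{ord}}$ via the ramification locus, and then representability by $(\A^p - \Delta) \stackyq (\G_m \rtimes \G_a)$ with $\G_m$ acting through weight $p-1$. Your extra care about the residual $\mu_{p-1}$ of deck transformations being absorbed into the stacky $\G_m$-quotient is a point the paper leaves implicit, but it does not change the argument.
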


\begin{proof} We generalize the proof of \cite{vesna} Prop 7.1, which covers the elliptic curve case. Recall that $G$-covers with $X$ as the source are equivalent to $G$-actions on a curve $X$, by Lemma \ref{actionsarecovers}. Consider a curve $\mf{X}$ of genus $g$ with a $C_{p-1}$-action, $(\mc{X}, C_{p-1}) \in \mc{M}_{g, C_{p-1}}$. Let $r_j \in \mc{R}$ denote the points of $\mf{X}$ with nontrivial stabilizer group under the $G := C_{p-1}$ action. To define the following functor we must justify show that $(\mc{X}, \mc{R})$ has the affine form and roots forced in $F^{ord}.$ \begin{align*} 
\mc{L} \colon  \mc{M}_{g, C_{p-1}} & \to F^{ord} \\
(\mf{X}, G) & \mapsto (\mf{X}, \mc{R})
\end{align*} 

Consider $(\mc{X} \to \PP^1) \in \mc{M}_{g, C_{p-1}}$. Since $\mf{X}$ is a $C_{p-1}$-cover of $\PP^1$, by Kummer theory it must be of the form $x^{p-1}=f(x)$, where $f(x)$ is degree $p$ by the curve having genus $(p-1)(p-2)/2.$ Riemann-Hurwitz forces a genus $(p-1)(p-2)/2$ curve with a $C_{p-1}$-map to $\PP^1$ to have $p+1$ branch points. Since the curve has $p+1$ points stabilized by the $C_{p-1}$ action, $f(x)$ must be of the split form $(x-e_0)\cdots(x-e_p)$ with the last point considered to be at infinity, this equation holds generically - over the function field of the base ring. In other words, if the discriminant is invertible there is always an \'etale extension $R \to S$ such that $y^p + a_1y^p + \cdots + a_{p-1}y + a_p$ splits over $S$ of the form $(y-y_1)\cdots(y-y_{p-1}).$

To finish showing essential surjectivity of this functor, it is left to show we must show that all curves $(\mf{Y}, \mc{R}) \in F^{ord}$ have a $C_{p-1}$ action such that $R$ are the points of $\mf{Y}$ with nontrivial stabilizer. By having the $(p-1)$-elliptic form prescribed by $F^{ord}$, the curve $\mf{Y}$ has an automatic $C_{p-1}$-action $x \mapsto \zeta x$ since the curve equation is unchanged and $x=0$ for all points $R$. 

Now that we've shown the moduli problems are equivalent, i.e., $\mc{M}_{g, C_{p-1}}$ has a lovely universal curve equation, let's move on to showing the representability of the functor $F^{ord}$. 

The family $F^{ord} \to \mr{Conf}(\PP^1, p+1)$ sending a marked curve to the branch points on its base $(\mc{X}, R) \mapsto (\PP^1, B)$ is an isomorphism of moduli problems, that is, the branch points determine the curve up to isomorphism. The morphisms in the stack $F^{ord}$ are exactly coordinate changes of variables $\G_m \rtimes \G_a$ in $\PP^2$ which respect the splitting of the curve equation. Thus, the functor $F^{ord}$ is representable by $p$-dimensional affine space minus the fat diagonal, then stacky quotienting by $\G_m \rtimes \G_a$. Let's unwind this geometrically, for fun, and to spell out the action. 

Consider $\A^p = \Spec A$ where $A = \Spec \Z[\frac{1}{p-1}][y_0, y_1, ..., y_{p-1}]$, and $\Delta = (y_0-y_1, y_1 - y_2, \cdots, y_{p-1}-y_0)$. Note that $$\A^p - \Delta = \Spec A - \Spec \bb{A}/(f) = \Spec \bb{A}[\frac{1}{f}],$$ thus by taking out the fat diagonal  $y_i = y_j$, we invert $y_i - y_j$, and get $\Spec Y := \Spec \Z[\tfrac{1}{p-1}][y_0, ..., y_{p-1}][\Delta^{-1}]$. The action of $\G_m \rtimes \G_a$ on $\Spec A$ acts on generators by $y_i \mapsto \mu^{p-1}(y_i-r)$. 

\begin{align*}
\Spec A - \Delta &\simeq \A^{p-1} - \{ 0 \} \times \A^1 \\ 
(y_0, \cdots, y_{p-1}) &\mapsto ((y_1 - y_0, y_2 - y_0, \cdots, y_{p-1}-y_0), y_0) \\
\end{align*} 

The action of $\G_a$ on the right acts trivially on the $\A^{p-1}$ portion (as differences are translation invariant), and acts by translation on the $y_0$ coordinate. Let us name $r_i := y_i - y_0$, quotienting by the $\G_a$ action, we get $$\widetilde{M} := (\Spec A - \Delta) \stackyq \G_a \simeq \Spec \Z[\tfrac{1}{p-1}][r_1, r_2, \cdots, r_{p-1}] - \{0\}.$$

The $\G_m$ action is given by grading $A$ as well as $\Lambda = \Z[\frac{1}{p-1}][r_1, r_2, \cdots, r_{p-1}]$ so that the degree of $y_i$ and $r_i$ is $p-1$, since $\G_m$ acts by $\mu^{p-1}$. Thus, we may consider it as a projective space of dimension $p-2$.

$$\widetilde{M} \stackyq \G_m \simeq \PP^{p-2}.$$
\end{proof}

\begin{remark} Note the similarity to the configuration space of $p$ points on $\A^1$ as shown in Lemma \ref{simplegods}, the only difference being the action of $\G_m$. We can think of this as a stacky remembering of the $G := C_{p-1}$ action on the branch locus. \end{remark}

Let us connect the past and present, in two different ways. 

\begin{lemma} (\cite{bm} Lemma 5.2.1.) Let us consider the point $x: \Spec k \to  (\mc{M}_{g, (n, G)})(k)$ for $n = p-1$, $g = (p-1)(p-2)/2$, and $G = C_{p-1}$ given by the Artin-Schreier curve $q: X \to X/C_{p-1} \simeq \PP^1$. $$(\mc{M}_{g, (n, G)})^\wedge_{x} \simeq \Def_{(X, G)}.$$\end{lemma}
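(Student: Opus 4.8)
The statement is \cite{bm} Lemma~5.2.1 applied to our curve $X$, so the plan is to recall why the general principle holds and then unwind it in this concrete case. The underlying fact is that for an algebraic $\Z[\tfrac1n]$-stack $\mc{M}$ and a $k$-point $x$, the formal completion $\mc{M}^\wedge_x$ is canonically the deformation functor of the object $x$ classifies: that is, $(\mc{M}_{g,(n,G)})^\wedge_x(R)$ is the groupoid of lifts $(\mf{Y},\widetilde\phi,\widetilde\sigma,\ \iota\colon \mf{Y}|_k\xrightarrow{\sim}X)$ over $R\in\widehat{\Art}_k$, with morphisms the isomorphisms compatible with $\iota$ (and with the appropriate $\Aut(x)$-action, cf.\ Defn~\ref{autxaction}). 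The content is therefore to identify this groupoid with $\Def_{(X,G)}(R)$ from Defn~\ref{CurveStack}, via the forgetful functor that drops the level datum $\widetilde\phi$ and keeps the $G$-action $G\xrightarrow{\widetilde\sigma}\Aut_R(\mf{Y})$.

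First I would note that $n=p-1$ is invertible in $k$, since $p-1\equiv-1\bmod p$, hence invertible in every $R\in\widehat{\Art}_k$. Consequently, for any lift $\mf{Y}/R$ of $X$ the finite group scheme $\Jac(\mf{Y})[n]\to\Spec R$ is finite étale; as $R$ is Artinian local with residue field $k$, its sections over any connected étale $R$-algebra agree $\Gal(\bar k/k)$-equivariantly with those of the special fibre $\Jac(X)[n]$. This rigidity has two consequences: the trivialization $\phi\colon \Aut(\Jac(X)[n])\simeq GL_{2g}(\Z/n)$ lifts uniquely to $\widetilde\phi$ over $\mf{Y}$, and its compatibility with $\widetilde\sigma$ --- a condition visible only on the étale sheaf $\Jac[n]$, i.e.\ on the special fibre --- is automatic; likewise an isomorphism of $G$-curve deformations lifts uniquely to an isomorphism of the level-decorated deformations. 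Hence the forgetful functor induces an equivalence of formal moduli problems $(\mc{M}_{g,(n,G)})^\wedge_x\simeq\Def_{(X,G)}$, which is the assertion.

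A second, cleaner, route goes through \cite{bm} Theorem~5.4, which identifies $\mc{M}_{g,(n,G)}\simeq(\mc{M}_{g,(n)})^G$; since forming $G$-fixed points commutes with completion at a $G$-fixed point, this gives $(\mc{M}_{g,(n,G)})^\wedge_x\simeq\big((\mc{M}_{g,(n)})^\wedge_x\big)^G$. The completion $(\mc{M}_{g,(n)})^\wedge_x$ is the deformation functor of $X$ equipped with its level-$n$ structure, which by the same rigidity ($n$ invertible) is just $\Def_X^\star$; taking fixed points for the action of $G\hookrightarrow\Aut(X)$ and using Lemma~skydive of~\cite{ray} together with Defn~\ref{deform} then yields $\Def_X^\star\stackyq G\simeq\Def_{(X,G)}$.

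The main obstacle is bookkeeping rather than geometry: one must pin down the compatibility datum relating $\phi$ and $\sigma$ carefully enough to see that it imposes no infinitesimal conditions, and match the automorphism groupoids on both sides --- here it helps that $\Aut(X,G)\simeq\Aut(X)$ because $G=C_{p-1}$ is central in $G'=\Aut(X)$ (Example~\ref{ex:autxgaction}). One should also treat the edge case $p=3$, where $g=(p-1)(p-2)/2=1$ and the hypothesis $g\ge 2$ used for representability in \cite{bm} fails: there $X$ is an elliptic curve, $\mc{M}_{g,(n,G)}$ is the stack of elliptic curves with level-$2$ structure carrying a $C_2$-action, and the statement follows instead from Theorem~\ref{discreteanddecorated}(ii) and \cite{itsyg}.
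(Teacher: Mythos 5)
The paper offers no proof of this lemma at all --- it is quoted directly from Bertin--M\'ezard (\cite{bm}, Lemma 5.2.1) --- so there is no internal argument to compare against; what you have written is essentially the argument behind the cited result, and your main route is correct. The heart of it is exactly the rigidity you invoke: $n=p-1$ maps to $-1$ in $k$, hence is a unit in every $R\in\widehat{\Art}_k$, so $\Jac(\mf{Y})[n]$ is finite \'etale over a complete local base, and base change to the special fibre is an equivalence on finite \'etale objects; therefore the level datum and its compatibility with the lifted $G$-action exist and lift uniquely, imposing no infinitesimal conditions, and the forgetful functor is an equivalence of formal moduli problems. The one piece of bookkeeping worth insisting on (you flag it, correctly) is that since $\mc{M}_{g,(n,G)}$ is a scheme, its completion at $x$ parametrizes lifts of $x$ up to isomorphisms reducing to the identity; so the identification is really with the $\star$-version $\Def^\star_{(X,G)}$ (which is \cite{bm}'s convention for the global equivariant deformation functor), not with $\Def_{(X,G)}^{\Aut(X,G)}$ as the paper's notational convention would literally suggest. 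Your attention to $p=3$ (where $n=2<3$, so Serre's rigidity fails and $g=1$) is a caveat the paper does not address and is welcome.

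Your ``second, cleaner route'' contains a genuine slip, though it does not infect the first argument. The $G$-fixed points of $(\mc{M}_{g,(n)})^\wedge_x\simeq\Def_X^\star$ form the locus of deformations to which the $G$-action lifts --- the lifts are unique because $\star$-automorphisms of such deformations are trivial, so they assemble into an action --- and this fixed locus is $\Def^\star_{(X,G)}$, a closed sub-moduli-problem of $\Def_X^\star$. That is not the stacky quotient $\Def_X^\star\stackyq G$: the quotient (which is what Lemma skydive of \cite{ray} identifies, namely $\Def_X^G$ in the paper's superscript notation) has the same objects as $\Def_X^\star$ with extra morphisms, whereas the fixed locus has fewer objects. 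Writing ``taking fixed points \dots yields $\Def_X^\star\stackyq G\simeq\Def_{(X,G)}$'' conflates these two constructions and, taken literally, would prove the wrong statement; the alternate route works only if the last step is replaced by the identification of the fixed locus with the equivariant deformation functor. Your first argument stands on its own and is the one to keep.
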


\begin{lemma} \label{pastandfuture} \color{blue}{ (past and future) Let $X$ denote the Artin-Schreier curve with $G := C_{p-1}$ action as in Section \ref{curveprops}, and let $R$ denote the set of points of $X$ with nontrivial stabilizer group under said action, then there is an equivalence of moduli problems $$(F^\mathrm{ord})^{\wedge}_{(X, R)} \simeq \Def_{(X, G)}.$$ }\end{lemma}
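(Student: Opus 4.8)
The plan is to chain together the identifications already assembled in this section. First I would invoke Theorem \ref{itsafuckingstack}, which gives an equivalence of prestacks $\mc{L}\colon \mc{M}_{g, C_{p-1}} \xrightarrow{\ \sim\ } F^{\mathrm{ord}}$ with $g = (p-1)(p-2)/2$, sending the Artin--Schreier curve $X\colon y^p-y = x^{p-1} = y(y-1)\cdots(y-(p-1))$ with its $C_{p-1}$-action $x\mapsto\zeta x$ to the pair $(X, R)$, where by Lemma \ref{rammymammy} the ordered ramification set $R$ is $\F_p$ together with the point at infinity. Since $\mc{L}$ is an equivalence, it induces an equivalence on formal neighbourhoods of corresponding points, so it suffices to produce an equivalence $(\mc{M}_{g, C_{p-1}})^{\wedge}_{(X, C_{p-1})} \simeq \Def_{(X, G)}$.

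For that I would identify deforming inside the Hurwitz prestack $\mc{M}_{g, C_{p-1}}$ with deforming the $C_{p-1}$-action on $X$. By Lemma \ref{actionsarecovers} a $C_{p-1}$-Galois cover with source a curve is the same datum as a $C_{p-1}$-action on that curve, and this correspondence commutes with base change; hence the functor sending $R$ to lifts of $(X\to X/C_{p-1})$ agrees with the functor of lifts of $(X, C_{p-1}\subseteq \Aut\mf{X})$, i.e.\ with $\Def_{(X,G)}^{\mr{Curve}_G}$ of Definition \ref{deform}. The only care needed is the change of base category: $\mc{M}_{g,C_{p-1}}$ is a functor on $\Z[\tfrac1{p-1}]$-schemes whereas $\Def_{(X,G)}$ lives on $\widehat{\Art}_k$ with $\mathrm{char}\,k=p$. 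This is harmless since $p-1$ is invertible in $k$, so $\mc{M}_{g,C_{p-1}}$ restricts along $\widehat{\Art}_k \to \Z[\tfrac1{p-1}]\text{-Sch}$ and the restriction computes exactly the formal completion at the chosen $k$-point. Equivalently, one may route through \cite{bm}: the level-$(p-1)$ cover $\mc{M}_{g,(p-1,C_{p-1})}\to\mc{M}_{g,C_{p-1}}$ is finite \'etale as $p-1$ is prime to $p$, so completions agree, and $(\mc{M}_{g,(p-1,C_{p-1})})^{\wedge}_x \simeq \Def_{(X,G)}$ there.

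It is also worth recording that the \emph{ordered} marked ramification points remembered by $F^{\mathrm{ord}}$ add nothing to the deformation functor beyond the discrete labelling fixed at the special fibre. By Theorem \ref{theoremb1}, $\Def_{(X,C_{p-1})}\simeq\Def_{(\PP^1;p+1)}$, and for any lift the ramification locus is rigidly determined as the reduced preimage of the branch locus downstairs, with a locally constant (hence constant, and known on $X$) labelling. So passing from $X\to X/C_{p-1}$ to the decorated object $(X,R)$ does not alter the formal moduli problem, and the two completions coincide.

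The main obstacle I expect is the bookkeeping around representability and the stacky $\G_m\rtimes\G_a$-quotient in Theorem \ref{itsafuckingstack}: one must check that $(\A^p-\Delta)\stackyq(\G_m\rtimes\G_a)$ is smooth at the point classifying $(X,R)$ --- which it is, being a quotient of a smooth scheme by a smooth group --- so that completing there genuinely yields the pro-representing object of the deformation functor, and that the $\G_m$-factor is precisely what installs the grading $|e_i|=-2$ on $\Def^\star_{(X,G)}$ used elsewhere. Granting this, the chain $\Def_{(X,G)} \simeq (\mc{M}_{g,C_{p-1}})^{\wedge}_{(X,C_{p-1})} \simeq (F^{\mathrm{ord}})^{\wedge}_{(X,R)}$ closes.
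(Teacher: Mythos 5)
Your proposal is correct and follows essentially the same route as the paper: both proofs lean entirely on Theorem \ref{itsafuckingstack} (the identification of the genus-$\frac{(p-1)(p-2)}{2}$ Hurwitz stack of $C_{p-1}$-covers with $F^{\mathrm{ord}}$) and on the equivalence between $C_{p-1}$-actions and their covers with marked ramification. The paper simply phrases this as the direct functor $(\mf{X},G)\mapsto(\mf{X},\mc{R})$, citing Chevalley--Weil for constancy of genus where you instead pass through the formal completion of $\mc{M}_{g,C_{p-1}}$ and the base-change bookkeeping, which amounts to the same argument.
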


\begin{proof} Given the pair $(\mf{X}, G) \in \Def_{(X, G)} := \Def_{(X, G)}^{\Aut(X)}$, let $r_j \in \mc{R}$ denote the points of $\mf{X}$ with nontrivial stabilizer group under the $G$ action. Consider the map $(\mf{X}, G) \to (\mf{X}, \mc{R})$. All curves in $\Def_{(X, G)}$ have the same genus, by Chevalley-Weil Prop 3.1 \cite{champs}. Thus, essential surjectivity and fully faithfulness follows from Lemma \ref{itsafuckingstack}. 
\end{proof}

\begin{remark} The global moduli problems above live in the smooth locus of Hurwitz spaces, which provide a natural setting to compactify the global moduli frameworks above to construct connective spectra $\mr{eo}_{p-1}$ (which would give analogues of $\mr{tmf}$). Though beautiful, we do not discuss this in this paper as it takes us too far afield. \end{remark}

\part{Upshot of Geometric Modelling}
\section{Theorem C+D: Cohomological Fruits of Our Labor}

\section*{Setup}
We establish some notation and conventions which we will use to refer to representations in our explicit calculation to come. 

\begin{notation} We call $\bb{W} := W(\F_{p^{p-1}})$. \end{notation}

%finite group cohomology commutes with colimits
%add standard representation here again, with Sym.

\begin{defn} \label{lilguy} The restricted standard representation of the symmetric group on $p$ letters $\Sigma_p,$ $$\lambda := \bb{W}\{y_0, .., y_{p-1}\}/(y_0 + \cdots y_{p-1})$$ with $|y_i| = -2$. This acts by permuting the basis $y_i$. We define $\overline{\rho}$ to be the graded restricted representation of $\lambda$ to $\Z/p \rtimes \Z/(p-1)$ generated by $\sigma$ and $\tau$, this acts on $\sigma(y_i) = y_{\sigma(i)}$ and $\tau(y_i)=y_{\tau(i)}$. \end{defn}

\begin{remark} An equivalent definition of the action disregarding grading is as follows. Let $R$ be a $\Z_p$-algebra. There is an action of $G = C_p \rtimes C_{p-1} \simeq (\F_p, +) \rtimes (\F_p^*, \times)$ on the group ring $R[\F_p] \simeq R[t]/(t^p-1)$ by $(c, m)\cdot t^x \mapsto t^{c+mx}$. Let $\sigma := (1, 1)$ be the element of order p in $G$. Let $a \in \F_p^\times$ be a primitive element, then, let $\tau := (0, a)$ be the corresponding element of order $p-1$.
\end{remark}

\begin{manualtheorem}{C} \label{theoremc} \color{blue}{Let $p$ be any prime and let $\bb{W} := \bb{W}(\F_{p^{p-1}})$.
Let $\bar{\rho}$ be the graded $C_p \rtimes C_{p-1}$ representation Defn \ref{lilguy}, where the generators of $\bb{W}[C_p]$ have degree $-2$.

Then there is a $C_p \rtimes C_{(p-1)^2}$-equivariant $\bb{W}[C_p]$-module equivariant isomorphism
$$ \pi_*(E_{(p-1)}) \simeq (\Sym(\bar{\rho})[d^{-1}])^\wedge_I.$$

If $x_0$ denotes $[1] \in \bar{\rho}$ and $\sigma$ generates $C_p$, then $d = \prod_{i=0}^{p-1}\sigma^i(x_0) = x_0 \cdots x_{p-1}$. The ideal $I$ is the $(C_p \rtimes C_{(p-1)^2})$-equivariant ideal $I := (p, y_i)$ where $y_i := (1-\sigma^i)(x_0)$.}
\end{manualtheorem} 

%\begin{remark} One would like to complete with respect to $(p, x_i)$, but that isn't $C_p$-equivariant, which is why we instead complete with respect to this cooky looking ideal $I$. In other words, we complete with respect to the ideal $(p, x_0, \cdots, x_{p-1})^{C_p}.$ \end{remark}

\begin{proof} The equivalence as $C_p \rtimes C_{p-1}$ representations follows immediately by combining Theorem \ref{theorema} which states that $\widetilde{\mc{F}}^1$ is an $\Aut(X)$-equivariant equivalence, where $\Aut(X) \simeq C_p \rtimes C_{(p-1)^2}$, with Theorem \ref{theoremb} which describes the $\Aut(X)/C_{p-1}$ action on $\Def_{(X, G)}$. The action of $C_{(p-1)^2}$ on $\Def_{(X, G)}$ follows by induction, as discussed in section \ref{Cp-1section}. 
\end{proof}

\subsection{$C_p$-Cohomology}

\begin{notation}
We establish some quick notation. We call $\rho := \bb{W}[C_p]$ the regular representation of $C_p$, and $\overline{\rho}$ the reduced regular representation. That is, $\overline{\rho}$ is the kernel of the augmentation map $\epsilon: \bb{W}[\F_p] \to \bb{W}$. 

Let $\Lambda := \Sym(\overline{\rho})$ and $A := \Sym(\rho)$. We consider the element $s_1 := ([1]+[\sigma]+\cdots+ [\sigma^{(p-1)}])(y_0) = y_0 + \cdots + y_{p-1}$ in $A$, and call $s_1A$ the ideal that $s_1$ generates in $A$. 
\end{notation}

The short exact sequence of $C_p$-representations we will be working with is the following:
$$s_1A \to A \to A/s_1A \simeq \Lambda$$

We will now proceed to go through several lemmas that we will piece together to get Theorem \ref{theoremd}, the Tate $C_p \rtimes C_{(p-1)^2}$-cohomology of $\Lambda$.

\begin{lemma} \label{tateC_pA} The Tate cohomology of the $C_p$ action on $A$ is of the form $$\hat{H}^*(C_p, A) \simeq \Z/p[b^{\pm}, d],$$ \noindent where $b$ is in degree $(2, 0)$ and  $d$ is in degree $(0, -2p)$. \end{lemma}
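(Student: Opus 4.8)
The plan is to compute $\hat{H}^*(C_p, A)$ one internal (polynomial) degree at a time, exploiting that $|C_p| = p$ is prime, so that every graded piece of $A = \Sym(\rho)$ is a permutation $C_p$-module whose point stabilizers are either trivial or all of $C_p$. First I would unwind the notation: $A = \Sym_{\bb{W}}(\bb{W}[C_p]) = \bb{W}[y_0, \dots, y_{p-1}]$ with the $y_i$ in internal degree $-2$ and a generator $\sigma$ of $C_p$ acting by $y_i \mapsto y_{i+1}$ (indices mod $p$). Grading by total polynomial degree, $A = \bigoplus_{n \ge 0} A_n$, and $A_n$ is the permutation $\bb{W}$-module on the finite $C_p$-set $M_n$ of degree-$n$ monomials. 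A monomial $\prod_i y_i^{a_i}$ is $\sigma$-fixed iff $a_0 = \dots = a_{p-1}$, and because $p$ is prime each orbit in $M_n$ is either a fixed point or a free orbit of size $p$. Hence $M_n$ is a free $C_p$-set when $p \nmid n$, while for $n = pk$ one has $M_{pk} = \{d^k\} \sqcup (\text{free orbits})$, where $d = y_0 \cdots y_{p-1}$ is the unique invariant monomial, of internal degree $-2p$. This yields a $C_p$-equivariant splitting $A_{pk} \cong \bb{W}\, d^k \oplus F_{pk}$ with $F_{pk}$ free over $\bb{W}[C_p]$ and $\bb{W}\, d^k$ the trivial module.

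Next, since free (equivalently, induced) modules have vanishing Tate cohomology and $\hat{H}^*$ of a finite group commutes with the direct sum over $n$, I would conclude $\hat{H}^*(C_p, A_n) = 0$ for $p \nmid n$ and $\hat{H}^*(C_p, A_{pk}) \cong \hat{H}^*(C_p, \bb{W}) \cdot d^k$. Checking that multiplication by $d$ on $A$ is $C_p$-split (it carries $d^{k'}\mapsto d^{k+k'}$ and the free part into the free part) shows it is injective on Tate cohomology and that $\hat{H}^*(C_p, A) \cong \hat{H}^*(C_p, \bb{W})[d]$ genuinely as a polynomial ring over $\hat{H}^*(C_p, \bb{W})$. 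It then remains to invoke the standard computation for the trivial module $\bb{W}$: as $\bb{W}$ is $\Z_p$-torsion-free, $\hat{H}^{2i}(C_p,\bb{W}) \cong \bb{W}/p$, $\hat{H}^{2i+1}(C_p,\bb{W}) = 0$, with $2$-periodicity realized by cup product with an invertible class $b \in \hat{H}^2(C_p,\Z_p) \hookrightarrow \hat H^2(C_p,\bb W)$ of cohomological degree $2$ and internal degree $0$. Assembling gives $\hat{H}^*(C_p, A) \cong (\bb{W}/p)[b^{\pm 1}, d]$ with $|b| = (2,0)$ and $|d| = (0,-2p)$, which is the asserted shape; the $\Z/p$ appearing in the statement is the coefficient ring up to the harmless unramified extension $\bb{W}/p = \F_{p^{p-1}}$, which will be absorbed when one later passes to $C_{p-1}$- and then $G'$-invariants.

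The module-theoretic bookkeeping is routine; the step that needs genuine care is lining up the two gradings — verifying that the periodicity class $b$ carries internal degree $0$ (it lives in $\hat{H}^*(C_p,\Z_p)$, which sits entirely in internal degree $0$) while $d$ absorbs all of the internal weight $-2p$, and confirming that the ring structure is free polynomial rather than merely additively of this form. I would settle the latter by noting that the $C_p$-equivariant decomposition $A_{pk} \cong \bb{W}\, d^k \oplus F_{pk}$ is multiplicative, so the subring $\bb{W}[d] \subset A$ maps isomorphically onto the span of invariant monomials and induces on Tate cohomology exactly a polynomial generator $d$ over the periodic part $(\bb{W}/p)[b^{\pm 1}]$.
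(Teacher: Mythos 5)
Your proposal is correct and follows essentially the same route as the paper: both split $A$ into a $C_p$-free part (the free monomial orbits) and the trivial summand spanned by the powers of $d$, discard the free part in Tate cohomology, and reduce to the standard 2-periodic computation $\hat{H}^*(C_p,\bb{W})\simeq \bb{W}/p[b^{\pm 1}]$ with $|b|=(2,0)$ and $|d|=(0,-2p)$. Your extra care with the degreewise orbit count, the multiplicativity of the splitting, and the coefficient ring $\bb{W}/p$ versus $\Z/p$ only makes explicit what the paper's proof leaves implicit.
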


\begin{proof}
The orbit of each monomial of $A$ under $\sigma$ is free except for $d := y_0 \cdots y_{p-1}$ which is fixed (thus the orbit is trivial), and its corresponding cohomology class $d$ lies in degree $(0,|y_i|p)=(0, -2p)$. Therefore, $A$ splits as a sum of a $G$-module $F$ with a free $C_p$-action and $\bb{W}[d]$ which has trivial $C_p$-action, $$F \oplus \bb{W}[d].$$ Thus, $H^q(C_p, \bb{W}[d]) \simeq H^q(C_p, A).$ 

Next, we take the ring map from $\bb{W} \to A$, this gives a map from $H^*(C_p, \bb{W})$ to $H^*(C_p, A)$. Recall that $$H^*(C_p, \bb{W}) = \begin{cases} \bb{W}& \text{if } * = 0 \\ \bb{W}/p & \text{if } * \text{ even} \\ 0 & \text{if } * \text{ odd}\end{cases},$$ i.e., $H^*(C_p, \bb{W}) \simeq \bb{W} \oplus \bb{W}/p[b]$, where $b$ has bidegree $(2, 0)$.

Let $N: A \to H^0(C_p, A)$ be the norm map. Then, we have an exact sequence $$A \xrightarrow{N} H^0(C_p, A) \to \bb{W}/p[b,d] \to 0.$$ 
The Tate cohomology of $A$ is then $$\hat{H}^*(C_p, A) \simeq H^*(C_p, A)[b^{-1}] \simeq \Z/p[b^{\pm}, d].$$
\end{proof}

\begin{lemma} \label{tateC_plambda} The Tate cohomology of the $C_p$ action on $\Lambda[d^{-1}] := \Sym(\overline{\rho})[y_0 \cdots y_{p-1}] $ is $$\hat{H}^*(C_p, \Lambda)[d^{-1}] \simeq \Z/p[b^{\pm 1}, c, d^{\pm 1}]/(c^2),$$ \noindent where $b$ is in degree $(2, 0)$, $d$ is in degree $(0, -2p)$, and $c$ is in degree $(1,|y_i|)=(1, -2).$
\end{lemma}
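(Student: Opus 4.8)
The plan is to leverage the short exact sequence of $C_p$-modules stated just above, namely
\[
0 \to A \xrightarrow{\;\cdot s_1\;} A \to A/s_1 A \simeq \Lambda \to 0,
\]
which is exact because $A = \Sym(\rho)$ is a polynomial ring (a domain), so multiplication by $s_1 = y_0 + \cdots + y_{p-1}$ is injective with cokernel $\Lambda$ by definition. The crucial observation is that $s_1 = (1 + \sigma + \cdots + \sigma^{p-1})(y_0) = N(y_0)$ is the image of $y_0$ under the norm element of $\Z[C_p]$. First I would localize at the $C_p$-fixed element $d = y_0 \cdots y_{p-1}$, which is exact and $C_p$-equivariant, to get $0 \to A[d^{-1}] \xrightarrow{\cdot s_1} A[d^{-1}] \to \Lambda[d^{-1}] \to 0$, bearing in mind that $s_1$ has internal degree $-2$, so this first map carries an internal degree shift. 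Since Tate cohomology is exact and commutes with the filtered colimit presenting a localization, Lemma \ref{tateC_pA} gives $\hat H^*(C_p, A[d^{-1}]) \simeq \hat H^*(C_p, A)[d^{-1}] \simeq \Z/p[b^{\pm 1}, d^{\pm 1}]$, concentrated in even cohomological degrees (with $|b| = (2,0)$, $|d| = (0,-2p)$). The same colimit argument identifies $\hat H^*(C_p, \Lambda)[d^{-1}]$ with $\hat H^*(C_p, \Lambda[d^{-1}])$, so it suffices to compute the latter.

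Next I would run the long exact sequence in Tate cohomology attached to the localized sequence. Multiplication by $s_1$ on $\hat H^*(C_p, A[d^{-1}])$ is cup product with the class of $s_1$ in $\hat H^0(C_p, A[d^{-1}])$; but $\hat H^0(C_p, M) = M^{C_p}/\operatorname{im}(N)$, and $s_1 = N(y_0)$ is a norm, so this class vanishes. Hence every map $\cdot s_1$ in the long exact sequence is zero and it breaks up into short exact sequences
\[
0 \to \hat H^n(C_p, A[d^{-1}]) \to \hat H^n(C_p, \Lambda[d^{-1}]) \xrightarrow{\;\delta\;} \hat H^{n+1}(C_p, A[d^{-1}])\{-2\} \to 0,
\]
where $\{-2\}$ records the internal degree shift from $|s_1| = -2$. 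Because $\hat H^*(C_p, A[d^{-1}])$ is supported in even cohomological degrees only: for $n$ even the quotient map $\hat H^n(C_p, A[d^{-1}]) \to \hat H^n(C_p, \Lambda[d^{-1}])$ is an isomorphism, and being a ring map it identifies the even part of $\hat H^*(C_p, \Lambda[d^{-1}])$ with the subring $\Z/p[b^{\pm 1}, d^{\pm 1}]$; for $n$ odd, $\delta$ is an isomorphism onto $\hat H^{n+1}(C_p, A[d^{-1}])\{-2\}$. Since $\delta$ is a map of $\hat H^*(C_p, A[d^{-1}])$-modules, the total odd part is free of rank one over the even part; I would then define $c \in \hat H^1(C_p, \Lambda[d^{-1}])$ to be the generator with $\delta(c) = b$, which — tracking the $-2$ shift, and using that $b$ has internal degree $0$ — lands in bidegree $(1, -2)$. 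At this point $\hat H^*(C_p, \Lambda[d^{-1}]) = \Z/p[b^{\pm1},d^{\pm1}] \oplus c\cdot\Z/p[b^{\pm1},d^{\pm1}]$ as a bigraded module.

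Finally I would pin down the ring structure, for which it remains to check $c^2 = 0$. For $p$ odd this is immediate from graded-commutativity of Tate cohomology: $c^2 = -c^2$ since $c$ has odd cohomological degree, and $2$ is a unit mod $p$. (Alternatively, by internal degrees: $c^2$ lies in $\hat H^2(C_p, \Lambda[d^{-1}]) = b\,\Z/p[d^{\pm1}]$, whose internal degrees are all divisible by $2p$, whereas $c^2$ has internal degree $-4$, impossible for $p > 2$.) Combining this with the module description gives $\hat H^*(C_p, \Lambda[d^{-1}]) \simeq \Z/p[b^{\pm 1}, c, d^{\pm 1}]/(c^2)$ with the stated degrees. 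The step I expect to be the main obstacle is none of the homological input — which is clean once one spots that $s_1$ is a norm, hence acts as $0$ on Tate cohomology — but rather keeping the bigrading honest through the short exact sequences, in particular correctly threading the internal degree shift by $-2$ so that $c$ and the monomials $c\,b^j d^k$ land in the advertised bidegrees.
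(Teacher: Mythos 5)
Your proposal is correct and follows essentially the same route as the paper: the short exact sequence given by multiplication by $s_1$, the observation that $s_1 = N(y_0)$ is a norm so the induced map on Tate cohomology vanishes, the identification of $c$ in bidegree $(1,-2)$ via the connecting map, and commuting the $d$-localization past cohomology. The only (cosmetic) differences are that you invert $d$ before running the long exact sequence rather than after, and you spell out the $c^2=0$ step via graded-commutativity, which the paper leaves implicit.
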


\begin{proof}
Let $\widetilde{b} := s_1 b$, and $\widetilde{d} := s_1 d$. Similarly to Lemma \ref{tateC_pA}, the Tate cohomology of $s_1A$ is $\hat{H}^*(C_p, s_1A) \simeq \Z/p[\widetilde{b}^{\pm}, \widetilde{d}].$ From our short exact sequence we get a long exact sequence, which is zero at the ends for degree reasons: 

$$0 \to \hat{H}^{2k-1}(C_p, \Lambda) \to \hat{H}^{2k}(C_p, s_1A) \to \hat{H}^{2k}(C_p, A) \to \hat{H}^{2k}(C_p, \Lambda) \to 0.$$

The middle map in this long exact sequence is zero, because it is induced by multiplication by $s_1$, which is in the image of the additive norm on $A$, and norms are modded out by Tate cohomology. 

It follows that $$\hat{H}^*(C_p, \Lambda) \simeq \Z/p[b^{\pm 1}, c, d]/(c^2)$$ where $c$ is the element of bidegree $(1,|s_1|)=(1, |y_i|)=(1, -2)$ which maps to $\widetilde{b} \in \hat{H}^2(C_p, s_1A)$.

The cohomology of $H^*(C_p,\Lambda[d^{-1}]) \simeq H^*(C_p,\Lambda)[d^{-1}]$ because finite group cohomology commutes with colimits.
\end{proof}

\begin{manualtheorem}{D.1} \label{theoremd1} Let $R$ be the graded ring 
$$R := \bb{W}/p[b^{\pm}, d^{\pm}, c]/(c^2).$$
The Tate cohomology of the $C_p$-module $(E_{p-1})_*$ is: 
$$R \simeq \hat{H}^*(C_p, (E_{p-1})_*).$$
where $|b| = (2, 0)$, $|c| = (1, -2)$, and $|d| = (0, -2p)$.
\end{manualtheorem}

\begin{proof} 
This follows from Theorem \ref{tateC_plambda} combined with Lemma \ref{idealhop}.
\end{proof}

\subsection{Analyzing the $C_{(p-1)^2}$ Action}
\label{Cp-1section}

\begin{notation}
We call $\rho := \bb{W}[C_p]$ the regular representation of $C_p$, and $\overline{\rho}$ the reduced regular representation. That is, $\overline{\rho}$ is the kernel of the augmentation map $\epsilon: \bb{W}[\F_p] \to \bb{W}$. Let $\Lambda := \Sym(\overline{\rho})$ and $A := \Sym(\rho)$. 
\end{notation}

We will spend this section discussing in detail how to extend the short exact sequence of interest to a short exact sequence of $C_p \rtimes C_{(p-1)^2}$-representations. 

\begin{lemma} \label{fullthang} We may extend the short exact sequence of $C_p$-representations $$s_1A \to A \to A/s_1A \simeq \Lambda$$ to a short exact sequence of $C_p \rtimes C_{(p-1)^2}$-representations. The basis of $A$ may be represented by $\sigma^i \otimes y$. The generator $\tau \in C_{(p-1)^2}$ acts on this basis by $\tau(\sigma^i \otimes y) = \sigma^{ia} \otimes \eta y$, where $\eta$ is a $(p-1)^2$ root of unity, and $a$ is a primitive root of $C_p.$
\end{lemma}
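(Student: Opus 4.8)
The plan is to realize the regular representation $\rho = \bb{W}[C_p]$, with its given $C_p$-action, as an induced representation from the subgroup $C_{(p-1)^2}$ of $G' := C_p \rtimes C_{(p-1)^2}$; then $A = \Sym(\rho)$ automatically carries a $G'$-action restricting to the $C_p$-action, and the only remaining point is that the line spanned by $s_1$ is carried into itself, so that the ideal $s_1 A$ stays a subrepresentation and the short exact sequence becomes one of $G'$-representations.

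First I would fix the presentation $G' = \langle \sigma,\tau \mid \sigma^p = \tau^{(p-1)^2} = 1,\ \tau\sigma\tau^{-1} = \sigma^a\rangle$, with $a$ the primitive root modulo $p$ occurring in Lemma~\ref{autgroup}, and let $\chi\colon C_{(p-1)^2}\to\bb{W}^\times$ be the faithful character with $\chi(\tau) = \eta$, a primitive $(p-1)^2$-th root of unity; the precise $\eta$ is the scalar by which $\tau$ acts on the invariant differential $dy/x$ of the universal curve, read off from Lemmas~\ref{autgroup} and~\ref{splitdiff} (explicitly $\tau^\ast(dy/x) = \zeta^{p-1}\,dy/(\zeta^p x) = \zeta^{-1}\,dy/x$). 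Since $C_p\cap C_{(p-1)^2} = 1$ and $G' = C_p\cdot C_{(p-1)^2}$, the elements $1,\sigma,\dots,\sigma^{p-1}$ are coset representatives for $C_{(p-1)^2}$ in $G'$, and Mackey's formula collapses to a single double coset, giving that the restriction to $C_p$ of $\Ind^{G'}_{C_{(p-1)^2}}\chi$ is the regular representation $\rho$. Writing $y_i := \sigma^i\otimes y$ for the induced basis and using $\tau\sigma^i = \sigma^{ia}\tau$ in $G'$, one reads off $\sigma(y_i) = y_{i+1}$ and $\tau(y_i) = \chi(\tau)(\sigma^{ia}\otimes y) = \eta\,y_{ia}$ (indices mod $p$), which is the formula in the statement. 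As $\eta\in\bb{W}^\times$ is a scalar and all $y_i$ lie in degree $-2$, this action is degree-preserving, hence acts by graded $\bb{W}$-algebra automorphisms of $A = \Sym(\rho)$ and restricts to the $C_p$-action used so far.

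Next I would check $s_1 A$ is $G'$-stable. One has $\sigma(s_1) = \sum_i y_{i+1} = s_1$, and, reindexing by the bijection $i\mapsto ia$ of $\Z/p$, $\tau(s_1) = \eta\sum_i y_{ia} = \eta\,s_1$; thus $s_1$ is $\sigma$-fixed and a $\tau$-eigenvector of eigenvalue $\eta$, so $g(s_1 f) = g(s_1)\,g(f)\in\bb{W}^\times s_1\cdot A = s_1 A$ for all $g\in G'$ and $f\in A$. Since $A$ is a domain the sequence $0\to s_1 A\to A\to A/s_1 A\to 0$ is exact as graded $\bb{W}$-modules, every arrow is now $G'$-equivariant, and transporting along $A/s_1 A\simeq\Lambda$ puts the asserted $G'$-structure on $\Lambda$. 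I would close by recording that $\tau$ has order $\mathrm{lcm}(\mathrm{ord}\,\eta,\mathrm{ord}\,a) = \mathrm{lcm}((p-1)^2,p-1) = (p-1)^2$ as an operator, so the action genuinely factors through $C_p\rtimes C_{(p-1)^2}$ and not a proper quotient; this, together with the uniqueness of the conjugacy class of $G'$ in $\G_{p-1}$ from Theorem~\ref{bouj}, is what lets one identify it with the geometric $\Aut(X)$-action on $\Def_{(X,G)}^\star$, and is the induced-representation input used in the proof of Theorem~\ref{theoremc}.

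The computation is short; the point requiring care — and the main obstacle — is identifying the twist $\eta$. The permutation part $i\mapsto ia$ is forced by the action of $\tau$ on the $p$ affine branch points, i.e.\ multiplication by a primitive root as in Lemma~\ref{symmetric_subtlety}, but the scalar $\eta$ is the weight of $\tau$ on the one-dimensional summand $T^\ast_e\Jac(X)^{\chi^1}$ and must be computed from the explicit action on $dy/x$; pinning it down correctly — and in particular verifying it is a primitive $(p-1)^2$-th root of unity so that $C_{(p-1)^2}$ acts faithfully — is exactly what makes this the geometrically meaningful extension rather than merely \emph{an} extension of the $C_p$-structure. Everything else is bookkeeping with cosets and a single reindexing.
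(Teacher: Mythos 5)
Your construction is correct, but it takes a genuinely different route from the paper. You put the $G'$-structure on $A$ directly by inducing the faithful character $\chi$ of the complement $C_{(p-1)^2}$ up to $G'=C_p\rtimes C_{(p-1)^2}$: Mackey (one double coset, trivial intersection with $C_p$) identifies the restriction to $C_p$ with the regular representation, the relation $\tau\sigma^i=\sigma^{ia}\tau$ gives exactly the stated formula $\tau(\sigma^i\otimes y)=\eta\,\sigma^{ia}\otimes y$, and the eigenvector property $\tau(s_1)=\eta s_1$, $\sigma(s_1)=s_1$ makes $s_1A$ a $G'$-stable ideal, so the sequence extends; all of this is sound. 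The paper instead starts from $\overline{\rho}$ with its \emph{given} action (the restriction of the standard $\Sigma_p$-representation), chooses a rank-one $C_{p-1}$-eigenvector $V\subset\overline{\rho}$, forms $\bb{W}[C_p\rtimes C_{p-1}]\otimes_{\bb{W}[C_{p-1}]}V$, and proves the natural map onto $\overline{\rho}$ is surjective integrally via the explicit eigenvector $y=\sum_k\zeta^k t^{a^k}$ and a unit-determinant/Nakayama argument (Lemmas \ref{rich} and \ref{richp}), before inducing up to $C_{(p-1)^2}$. The trade-off: your argument is shorter and needs none of \ref{rich}/\ref{richp}, but it only produces \emph{an} extension and then puts the $G'$-structure on $\Lambda$ by transport along $A/s_1A\simeq\Lambda$, whereas the paper's extra work shows on the nose that the quotient action is the pre-existing action on $\Lambda=\Sym(\overline{\rho})$ — the one already identified with the geometric Lubin–Tate action through Theorems \ref{theoremb} and \ref{theorema}, and the one fed into Lemmas \ref{C_p-1actiononA}--\ref{action} and Theorem \ref{theoremc}. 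Your substitute for that identification (reading the weight off $\tau^*(dy/x)=\zeta^{-1}dy/x$ together with uniqueness of the conjugacy class of $G'$ from Theorem \ref{bouj}) is the same style of argument the paper uses elsewhere and suffices for the cohomological conclusions, but it is an up-to-conjugacy identification rather than the paper's integral, basis-level matching; if you want your lemma to plug into the later computations verbatim, you should state that compatibility explicitly rather than leave it to the closing remark.
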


\begin{proof}
Let us consider $\overline{\rho}$ as a $C_p \rtimes C_{(p-1)}$-representation, by defining it as the restriction of the standard $\Sigma_p$-representation $\lambda$. Let $V$ be a rank 1 $C_{p-1}$-subrepresentation of $\overline{\rho}$ over $\bb{W}$. 

We may extend the inclusion map of $C_{p-1}$-representations $V \to \overline{\rho}$ to a map of $\Gamma$ representations, $$f: \bb{W}[C_{p} \rtimes C_{p-1}] \otimes_{\bb{W}[C_{p-1}]} V \to \Lambda_0.$$ 

Let $A_0 := R[C_{p} \rtimes C_{p-1}] \otimes_{R[C_{p-1}]} V$, then $A_0$ has rank $p$ over $R$ and is isomorphic to $\rho \simeq R[C_p]$ as a $C_p$ representation. This is because there are $(p-1)$ one dimensional $C_{p-1}$-subrepresentations of $\Lambda$, where $t$ in $\mathbb{F}_p^\times$ acts by multiplication with $t^r$, where $1 \leq r \leq p-1$. 

Now, let's say we consider the subrepresentation $V$ of $\Lambda$ generated by a fixed $y$. We may present $\rho$ as being generated by $y_i := \sigma^i \otimes y$, where $y$ is the generator of $V$, thus the map $f$ sends $\sigma^i \otimes y \mapsto \sigma^i(y)$. 

In order to show that $f$ is a surjective map, it is sufficient to check that the determinant of the spanning matrix  $\begin{pmatrix} y & \sigma(y) & \sigma^2(y) & \cdots \sigma^{p-2}(y) \end{pmatrix}$ is a unit in $\bb{W}$, or equivalently, its reduction is a unit in $\bb{W}/p$. This is sufficient because it shows that $\im(f)$ generates $\overline{\rho}$ as a rank $p-1$ $\bb{W}$-module. This is shown in Lemma \ref{rich}. Thus, the short exact sequence holds as an SES of $C_p \rtimes C_{(p-1)}$-modules. We may extend it to an SES of $C_p \rtimes C_{(p-1)^2}$-modules by applying $\Ind_{C_p \rtimes C_{(p-1)}}^{C_p \rtimes C_{(p-1)^2}}$, which preserves the SES.
\end{proof}

\begin{lemma} \label{rich}
Let $a$ be a fixed primitive root of $\Z/p$, and $\zeta$ be a $p-1$ root of unity. Let $$y := \Sigma_{k \in \F_p \backslash 0} \zeta^k t^{a^k}.$$ 
Then, the $\Z_p[G]$ submodule of $R[t]/(t^p-1)$ generated by $y$ is the kernel of the augmentation map $R[\F_p] \to R$.
\end{lemma}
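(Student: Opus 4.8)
The statement is a statement about a single vector $y \in R[t]/(t^p-1)$: one needs to show that the $\Z_p[G]$-submodule it generates is exactly the augmentation ideal $\overline{\rho} = \ker(R[\F_p]\to R)$. Since $\overline{\rho}$ has rank $p-1$ over $R$ and is spanned by $t-1, t^2-1,\dots,t^{p-1}-1$ (equivalently by any translates summing appropriately), and since $G = C_p \rtimes C_{p-1}$ acts on $R[t]/(t^p-1)$ with $\sigma = (1,1)$ acting by $t^x \mapsto t^{x+1}$ and $\tau=(0,a)$ acting by $t^x \mapsto t^{ax}$, the submodule generated by $y$ certainly lies inside $\overline{\rho}$ (because $y$ is a sum of the $\zeta^k t^{a^k}$ and the coefficients $\sum_k \zeta^k = 0$ over the full cyclic group of order $p-1$, so $\epsilon(y)=0$, and $\overline{\rho}$ is $G$-stable). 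So the real content is surjectivity onto $\overline{\rho}$, i.e. that $y,\sigma(y),\dots,\sigma^{p-2}(y)$ already span $\overline{\rho}$ as an $R$-module; equivalently, after passing to $R/p$ (it suffices by Nakayama, since everything is finitely generated over the complete local ring $\bb W$), that these $p-1$ vectors are linearly independent.

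The plan is to compute the determinant of the $(p-1)\times(p-1)$ matrix whose columns are the coordinate vectors of $y, \sigma(y),\dots,\sigma^{p-2}(y)$ in the basis $\{t^1-1,\dots,t^{p-1}-1\}$ of $\overline{\rho}$ — or, more cleanly, work in $R[\F_p] = R[t]/(t^p-1)$ itself with the standard basis $\{t^0,\dots,t^{p-1}\}$, note that $y = \sum_{k=1}^{p-1}\zeta^k t^{a^k}$ has zero coefficient on $t^0$, and $\sigma^j(y) = \sum_k \zeta^k t^{a^k+j}$. First I would reindex: as $k$ runs over $\F_p^\times \cong \Z/(p-1)$ via $k\mapsto a^k$, the vector $y$ is supported on all of $\F_p^\times$ (each nonzero residue appears exactly once) with the coefficient of $t^m$ being $\zeta^{\log_a m}$. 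The matrix $M$ with entries $M_{m,j} = [\text{coeff of } t^m \text{ in } \sigma^j(y)] = \zeta^{\log_a(m-j)}$ for $m\in\F_p^\times$ (say), $j=0,\dots,p-2$, is the one to analyze. One recognizes this as (a twist of) a circulant-type / group-determinant matrix: its determinant factors over characters of $\F_p^\times$ into a product of Gauss-sum-like expressions $\sum_{m} \zeta^{\log_a m}\psi(m)$ for additive characters $\psi$, and the Hasse–Davenport / Stickelberger machinery already invoked in the paper (Theorem \ref{slope}, Corollary \ref{sloped}) shows each such sum has $p$-adic valuation strictly between $0$ and $1$ — in particular is nonzero, indeed a unit times $\lambda^{j}$ for $\lambda = 1-\zeta_p$. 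The product over all nontrivial $\psi$ is then a unit in $\bb W$ (the valuations add up to the integer $\binom{p-1}{2}\cdot\frac{1}{p-1}$-type count which is exactly absorbed), so $\det M$ is a unit and the vectors span.

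A cleaner route, which I would try first to avoid the Gauss-sum bookkeeping: observe directly that the $\Z_p[G]$-module generated by $y$ contains $(1-\tau)y$, $(1-\tau)^2 y$, etc., and that $\tau$ acts on the span of $\{t^m : m\in\F_p^\times\}$ as the permutation $t^m\mapsto t^{am}$, i.e. as a single $(p-1)$-cycle, under which $y$ is an eigenvector-combination; more usefully, since $\sigma$ generates the translations and together $\sigma,\tau$ generate a group acting 2-transitively on $\F_p$, the $R[G]$-submodule generated by any single vector not fixed by $\sigma$ and with zero augmentation must be all of $\overline{\rho}$ provided $\overline{\rho}$ is irreducible over $\Frac(\bb W)$ — and $\overline{\rho}\otimes\Frac(\bb W)$ is indeed irreducible as it is the restriction of the irreducible standard representation $\lambda$ of $\Sigma_p$ to the 2-transitive subgroup $\F_p\rtimes\F_p^\times$, whose restricted standard representation stays irreducible. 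That handles the rational statement; to get the integral statement (submodule equals $\overline{\rho}$ on the nose, not just up to finite index) one must still check the determinant above is a $p$-adic \emph{unit} and not merely nonzero, which is where the precise Stickelberger valuation $v_p(\lambda^j)=j/(p-1)$ and the fact that the exponents $j$ appearing are exactly $1,2,\dots,p-1$ (summing to a multiple of $p-1$) is used. The main obstacle, and the step I would budget the most care for, is precisely this integral refinement: showing the Vandermonde/Gauss-sum determinant is a unit in $\bb W$ rather than merely invertible after inverting $p$ — this is exactly the computation the paper defers to ``Lemma \ref{rich}'' from within Lemma \ref{fullthang}, and it is the genuine number-theoretic input (a Hasse–Davenport/Stickelberger unit computation) rather than a formal representation-theoretic one.
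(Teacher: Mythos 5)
Your central claim---that the determinant is a unit---is asserted rather than proved, and with the bookkeeping you describe it is false. Write $c_x$ for the coefficient of $t^x$ in $y$, so your matrix is a $(p-1)\times(p-1)$ minor of the $p\times p$ circulant $C=(c_{x-j})$. Since the augmentation of $y$ vanishes, $\det C=0$, the adjugate of $C$ is a scalar multiple of the all-ones matrix, and the minor you need equals, up to sign, $\tfrac{1}{p}\prod_{\psi\neq 1} g(\chi,\psi)$, where $\chi\colon m\mapsto \zeta^{\log_a m}$ is a \emph{single fixed} multiplicative character and only the additive character $\psi$ varies. By Stickelberger all $p-1$ of these Gauss sums have the \emph{same} valuation $r/(p-1)$, where $\chi=\omega^{-r}$ with $\omega$ the Teichm\"uller character; so the product has valuation $r\geq 1$ (it is never a unit of $\bb{W}$), and the minor has valuation $r-1$. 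Your accounting (``the exponents $j$ appearing are exactly $1,2,\dots,p-1$'') is wrong---the exponent does not vary with $\psi$---and you never account for the factor $1/p$. The minor is a unit precisely when $r=1$, i.e.\ precisely when $\zeta\equiv a^{-1}\pmod p$. The lemma genuinely depends on this compatibility between $\zeta$ and $a$: if $\zeta$ reduces to, say, $a^{-2}$, then mod $p$ one checks $y$ lies in $(t-1)^2$, so the $G$-submodule it generates sits inside the proper ideal $(t-1)^2\subsetneq\ker(\mathrm{aug})$ and the conclusion fails. Neither of your routes identifies or uses this normalization (your second route correctly reduces everything to the integral unit statement, but then leaves exactly that statement unproved), so there is a genuine gap, not just a deferred computation.

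You also misdescribe what the paper does: its proof is not a Hasse--Davenport/Stickelberger argument. The paper reduces mod $p$ by Nakayama (the base is complete local), where $y$ becomes $\overline y=\sum_{i\in\F_p^\times} i^{-1}t^i$ in $\F_p[t]/(t-1)^p$---an identification which silently encodes $\zeta\equiv a^{-1}\pmod p$. There the $G$-submodule generated by $\overline y$ is an ideal of this local ring, because $\sigma$ acts as multiplication by $t$ and the only ideals, the powers of $(t-1)$, are $\tau$-stable; expanding $t^i\equiv 1+i(t-1)\bmod (t-1)^2$ gives $\overline y\equiv -(t-1)$, so $\overline y$ is $(t-1)$ times a unit and the ideal is exactly $(t-1)=\ker(\mathrm{aug})$. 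Your determinant route can be repaired, but only by inserting the missing $1/p$, fixing the character bookkeeping, and imposing $\zeta\equiv a^{-1}\pmod p$; at that point it reproduces, in heavier language, the paper's elementary mod-$p$ valuation computation.
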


\begin{proof}
Since we are working with local rings, by Nakayama's lemma it is sufficient to show that $y$ generates $\Lambda_0$ mod $p$. That is, it suffices to show Lemma \ref{richp}. \end{proof}

\begin{lemma} \label{richp} For $p$ odd, the $\F_p[C_p \rtimes C_{(p-1)}]$-submodule of $\F_p[t]/(t-1)^p$ generated by the element 

$$\overline{y} = \sum_{i \in \F_p^\times} i^{-1}t^i,$$

is isomorphic to $(t-1)\F_p[t]/(t-1)^p$, and is the kernel of the augmentation $\F_p[t]/(t-1)^p \to \F_p$. 
\end{lemma}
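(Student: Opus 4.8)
The plan is to work in the local ring $\F_p[t]/(t^p-1)$, which equals $\F_p[t]/(t-1)^p$ since $(t-1)^p=t^p-1$ in characteristic $p$; setting $s:=t-1$ this is $\F_p[s]/(s^p)$ with maximal ideal $(s)=(t-1)$. The first observation is that the $C_p$-action on $\F_p[t]/(t^p-1)$ is simply multiplication by powers of $t$ (the generator $\sigma$ acts by $t^x\mapsto t^{x+1}$), so the $\F_p[C_p]$-submodule generated by any element is exactly the \emph{ideal} it generates. Hence it suffices to prove: (i) $\overline{y}$ lies in the ideal $(t-1)$, which is $G$-stable because it is $\ker\epsilon$ and the augmentation $\epsilon$ is $G$-equivariant (every $(c,m)\in G$ permutes the basis $\{t^x\}$, leaving coefficient sums unchanged); and (ii) $\overline{y}\notin(t-1)^2$, so that $\overline{y}=(t-1)u$ with $u$ a unit and $\overline{y}$ already generates all of $(t-1)$ over $C_p$. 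Together with $G$-stability of $(t-1)$ this forces the $C_p\rtimes C_{p-1}$-submodule generated by $\overline{y}$ to be exactly $(t-1)\F_p[t]/(t-1)^p=\ker\epsilon\cong\overline{\rho}$.

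For (i) and (ii) I would expand each $t^i=(1+s)^i=\sum_{k\ge 0}\binom{i}{k}s^k$ (lifting $i\in\F_p^\times$ to $\{1,\dots,p-1\}$; only the exponent mod $p$ matters) and read off the two lowest coefficients of $\overline{y}=\sum_{i\in\F_p^\times}i^{-1}(1+s)^i$. The constant term is $\sum_{i\in\F_p^\times}i^{-1}=\sum_{j\in\F_p^\times}j=\tfrac{p(p-1)}{2}\equiv 0$ because $p$ is odd, which gives $\overline{y}\in(s)$. The coefficient of $s^1$ is $\sum_{i\in\F_p^\times}i^{-1}\binom{i}{1}=\sum_{i\in\F_p^\times}1=p-1\equiv-1\neq 0$, which gives $\overline{y}\notin(s^2)$. (Equivalently one may use the formal derivative, which is well defined modulo $t^p-1$ since $(t^p-1)'=pt^{p-1}=0$: the functional $f\mapsto f'(1)$ kills $(t-1)^2$, sends $t-1\mapsto 1$, and sends $\overline{y}=\sum i^{-1}t^i$ to $\sum_{i=1}^{p-1}t^{i-1}\big|_{t=1}=p-1=-1$.)

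This is in essence a one-line computation once the setup is arranged, so I do not expect a serious obstacle. The only points that need a little care are: that $\F_p[C_p]$-submodules of $\F_p[t]/(t^p-1)$ are precisely the ideals; that the augmentation ideal $(t-1)$ is genuinely $G$-stable even though $G$ does not act by ring automorphisms; and the characteristic-$p$ identity $\sum_{j\in\F_p^\times}j\equiv 0$ (valid exactly because $p$ is odd), which is what places $\overline{y}$ into the augmentation ideal to begin with. I would then remark that this is the base case feeding Lemma~\ref{rich} via Nakayama, and through it Lemma~\ref{fullthang}.
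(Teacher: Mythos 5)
Your proposal is correct and follows essentially the same route as the paper's proof: both observe that the $C_p$-action (multiplication by $t$) makes the submodule generated by $\overline{y}$ an ideal contained in the augmentation ideal $(t-1)$, and both conclude by computing $\overline{y}\equiv -(t-1) \bmod (t-1)^2$ in the complete local ring $\F_p[t]/(t-1)^p$, using $\sum_{i\in\F_p^\times} i^{-1}\equiv 0$ (hence $p$ odd) and the linear coefficient $p-1\equiv -1\neq 0$ to see that $\overline{y}$ is $(t-1)$ times a unit. Your formal-derivative variant and the explicit justification that $\F_p[C_p]$-submodules are ideals are harmless additions, not a different argument.
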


\begin{proof}
We may consider kernel of the augmentation map as the ideal generated by $(t-1)$, since $\overline{y}$ goes to zero under the augmentation. The $\F_p[G]$-submodule of generated by $\overline{y}$ is contained in this ideal. Further, the submodule generated by $\overline{y}$ is an ideal of $\F_p[t]/(t-1)^p$, because we have the element $\sigma$ in $G$ which acts by multiplication by $t$. 

To see that the submodule generated by $\overline{y}$ agrees with the ideal generated by $(t-1)$, it suffices to check that $\overline{y} = (t-1)u$, where $u$ is a unit in $\F_p[t]/(t-1)^p$. Since our ring $\F_p[t]/(t-1)^p$ is complete and local, it suffices to check that this equivalence holds mod $(t-1)^2$.
 
\begin{align*}
t^i & \equiv (1+(t-1))^i \mod (t-1)^2 \\ 
& \equiv 1 + i(t-1) ,
\end{align*}
therefore 

\begin{align*}
\overline{y} &= \sum_{i \in \F_p \backslash 0} i^{-1}t^i \\ 
&= \sum_{i \in \F_p \backslash 0} i^{-1} + \sum_{i \in \F_p \backslash 0} i^{-1}i(t-1) \\
&= (p-1)(t-1).
\end{align*}

\noindent The first sum is zero, and the second is $(p-1)$. (this uses $p \neq 2$). Thus, $\overline{y} \equiv -(t-1) \mod (t-1)^2$. This concludes the argument.
\end{proof}

Recall that $\bb{W} := W(\F_{p^{p-1}}) \simeq \Z_p[\omega],$ where $\omega$ is a $p^{p-1}-1$ root of unity. The $\Z/(p-1)^2$ sitting inside of the maximal finite subgroup $G'$ is generated by $\eta := \omega^{(p^{p-1}-1)/(p-1)^2}$ \cite{bouj}, which is a $(p-1)^2$ root of unity. Since $\widehat{\mc{F}}^1$ is an $\Aut(X)$-equivariant equivalence, the Lubin-Tate action coincides with the manipulation above. For more on the full $C_{(p-1)^2}$ action, see Gorbounov-Symonds Prop 3.2 \cite{gorbounovsymonds} and Nave \cite{nave} Prop 5.2.

\subsection{$C_p \rtimes C_{(p-1)^2}$ Conclusion and Known Classes}
Let us now examine the invariants of the $C_{(p-1)^2}$-action on $\hat{H}^*(C_p, A)$. Let's start with a reduction of complexity.

\begin{lemma} \label{tatefullreduces} Let $G' := C_p \rtimes C_{(p-1)^2}$. Let $R$ be a ring in which $(p-1)^2$ is a unit. For a $G$-$R$-module $M$, the spectral sequence collapses and $$H^*(G, M) \simeq H^*(C_p, M)^{C_{(p-1)^2}}.$$ 
\end{lemma}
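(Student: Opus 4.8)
## Proof Plan for Lemma \ref{tatefullreduces}

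The plan is to exploit the short exact sequence $1 \to C_p \to G' \to C_{(p-1)^2} \to 1$ via the Lyndon--Hochschild--Serre spectral sequence, whose $E_2$-page reads
\[
E_2^{s,t} = H^s\bigl(C_{(p-1)^2},\, H^t(C_p, M)\bigr) \Longrightarrow H^{s+t}(G', M).
\]
First I would invoke the hypothesis that $(p-1)^2$ is a unit in $R$: then $H^s(C_{(p-1)^2}, -)$ vanishes for $s > 0$ on any $R[C_{(p-1)^2}]$-module, since for a finite group whose order is invertible the averaging (transfer) map splits off the invariants and higher cohomology dies. Hence the spectral sequence is concentrated in the single column $s = 0$, so it collapses with no possible differentials and no extension problems, giving $H^t(G', M) \simeq E_2^{0,t} = H^0\bigl(C_{(p-1)^2}, H^t(C_p, M)\bigr) = H^t(C_p, M)^{C_{(p-1)^2}}$, which is the claimed isomorphism.

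The one point requiring a word of care is the $C_{(p-1)^2}$-module structure on $H^t(C_p, M)$: this is the residual conjugation action coming from the extension, i.e. $C_{(p-1)^2} = G'/C_p$ acts on the normal subgroup $C_p$ and on the restricted $M$, inducing the usual action on group cohomology $H^t(C_p, M)$. This is exactly the action implicit in the statement, and it is the action that appears in the earlier lemmas (Lemma \ref{fullthang} describes precisely how $\tau$ conjugates $\sigma$ and acts on the basis). So I would briefly note that the $C_{(p-1)^2}$-action being taken on the target of the isomorphism is the LHS spectral sequence's coefficient action, which matches the hands-on action described in Section \ref{Cp-1section}.

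I do not anticipate a genuine obstacle here: the argument is the standard ``cohomology of a group extension with invertible-order quotient collapses'' fact, and the only thing to verify is that the quotient really is $C_{(p-1)^2}$ with the order $(p-1)^2$ invertible in $R$ --- which is the hypothesis. The analogous statement for Tate cohomology, used in Theorem \ref{theoremd}, follows by the same collapse argument applied to the Tate version of the LHS spectral sequence (valid since $|C_{(p-1)^2}|$ is invertible, so $\hat H^s(C_{(p-1)^2}, -) = 0$ for all $s$ on $R$-modules in which $(p-1)^2$ is a unit, and the $s=0$ column of the Tate spectral sequence computes $\hat H^*$ of the normal subgroup's cohomology taken $C_{(p-1)^2}$-invariants). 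I would state this parenthetically so it is available for the Tate computation downstream.
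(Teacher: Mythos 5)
Your proposal is correct and follows essentially the same route as the paper: both run the Lyndon--Hochschild--Serre spectral sequence for $C_p \trianglelefteq G' \twoheadrightarrow C_{(p-1)^2}$ and kill the columns $s>0$ by the transfer (averaging) argument, which the paper phrases via the restriction--corestriction triangle through the trivial subgroup composing to multiplication by the unit $(p-1)^2$. Your additional remarks on the conjugation action on $H^t(C_p,M)$ and on the Tate analogue are consistent with how the paper uses the lemma.
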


\begin{proof}
Given a $G$-$\bb{W}$-module $M$, the short exact sequence $C_p \xrightarrow{\triangleleft} G \to C_{(p-1)^2}$ gives rise to the spectral sequence 
$$H^p(C_{(p-1)^2}; H^q(C_p; M)) \Rightarrow H^{p+q}(G; N).$$ 
The transfer map in group cohomology yields a commutative diagram
\[
\begin{tikzcd}
{H^p(C_{(p-1)^2}, H^q(C_p, M)) } \arrow[rd, "\mr{res}"'] \arrow[rr, "(p-1)^2"] &                                                  & {H^p(C_{(p-1)^2}, H^q(C_p, M)) } \\
                                                                               & {H^p(1, H^q(C_p, M)) } \arrow[ru, "\mr{cores}"'] &                                 
\end{tikzcd}
\]
Because $(p-1)^2$ is a unit in $\bb{W}$, the horizontal map is an isomorphism and the spectral sequence collapses.
\end{proof}

\begin{remark} Note that the same holds for Tate cohomology. \end{remark}

\begin{lemma} \label{C_p-1actiononA} Let $\eta$ be a $(p-1)^2$ root of unity. The action of a generator $\tau \in C_{(p-1)^2}$ on $\hat{H}^*(C_p, A) \simeq \Z/p[b^{\pm 1}, d]$ is as follows, 
\begin{align*}
\tau: b &\mapsto \eta^{(p-1)}b \\
d &\mapsto \eta^p d
\end{align*}
\end{lemma}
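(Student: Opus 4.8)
The statement is a direct calculation of how a generator $\tau$ of $C_{(p-1)^2}$ acts on the two polynomial generators $b$ and $d$ of $\hat{H}^*(C_p, A) \simeq \Z/p[b^{\pm 1}, d]$, so the strategy is to trace each generator back to its origin in Lemma \ref{tateC_pA} and then apply the explicit $\tau$-action on $A$ recorded in Lemma \ref{fullthang}. First I would recall from Lemma \ref{fullthang} that the basis of $A = \Sym(\rho)$ is $\sigma^i \otimes y$ with $\tau(\sigma^i \otimes y) = \sigma^{ia} \otimes \eta y$, i.e. on the degree-$(-2)$ generators $y_i := \sigma^i(y)$ we have $\tau(y_i) = \eta\, y_{ai}$ where $a$ is a primitive root mod $p$ and $\eta$ is a $(p-1)^2$-th root of unity.

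For the class $d$: by the proof of Lemma \ref{tateC_pA}, $d = y_0 y_1 \cdots y_{p-1}$ is the unique $\sigma$-fixed monomial, sitting in degree $(0, -2p)$, and its Tate class is the image of $d$ itself. Since $\tau$ permutes $\{y_0, \dots, y_{p-1}\}$ (via $i \mapsto ai$, which is a bijection of $\F_p$ fixing $0$) and scales each by $\eta$, we get $\tau(d) = \eta^{p}\, y_{0} y_{a} \cdots y_{a(p-1)} = \eta^{p} d$, since the product over a permuted index set is unchanged. That gives the second line.

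For the class $b$: here $b$ comes from the map $H^*(C_p, \bb{W}) \to H^*(C_p, A)$ induced by the unit $\bb{W} \hookrightarrow A$, with $b$ the polynomial generator of $H^*(C_p,\bb W)\simeq \bb W\oplus \bb W/p[b]$ in bidegree $(2,0)$. The point is that $b$ is detected by the extension class / the norm element, and under the $C_{(p-1)^2}$-action the relevant piece is the one-dimensional $C_{p-1}$-subrepresentation $V = \langle y\rangle$ of $\bar\rho$ on which $\tau$ acts by a character; tracking through the identification $H^2(C_p, \bb W)$ with (a quotient of) $H^0(C_p, -)$ via the norm, $b$ acquires the character by which $\tau$ acts on the "difference" classes $s_1$-type elements, which is $\eta^{p-1}$ (the weight $p-1$ because the $C_{p-1}\subset C_{(p-1)^2}$ acts on $y$ through $\eta^{p-1}$, cf. the computation in Lemma \ref{fullthang} and the remark identifying $\Z/(p-1)^2\subset G'$ via $\eta = \omega^{(p^{p-1}-1)/(p-1)^2}$). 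Concretely I would argue: $b^{p-1}$ (or the appropriate power) pairs with $d$-type invariants, and consistency of degrees together with the known invariant subring forces $\tau(b) = \eta^{p-1}b$; alternatively one checks directly that the periodicity class $b$ is represented by a cocycle built from the $\sigma$-action twisted by $\tau$, which contributes exactly one factor of $\eta$ per "copy" of $y$, giving the weight $p-1$ from the reduced regular representation $\bar\rho = \rho/\langle s_1\rangle$.

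**Main obstacle.** The routine part is $d$; the subtle point is pinning down the exponent on $\eta$ for $b$, since $b$ is a cohomological periodicity class rather than an element of $A$ itself, so "the $\tau$-action on $b$" requires unwinding the norm map and the extension $s_1 A \to A \to \Lambda$ equivariantly. I expect the cleanest route is to use Lemma \ref{fullthang}'s explicit basis to see that $H^2(C_p, A)$ in the relevant degree is generated over $\Z/p$ by a class whose representing cocycle is $\tau$-equivariant up to the scalar $\eta^{p-1}$ — essentially because $b$ "sees" one copy of the weight-$1$ generator $y$ and $\tau$ acts on $y$ by $\eta$, while the discrepancy between $\eta$ and $\eta^{p-1}$ is absorbed by the relation $\eta^{p-1}\cdot\eta = \eta^p$ matching the $d$-computation; I would double-check this against the known answer in Gorbounov–Symonds and Nave cited just above, which fixes the normalization.
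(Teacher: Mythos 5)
Your computation of $\tau(d)$ is correct and is exactly the paper's argument: $\tau(y_i)=\eta\,y_{ai}$ from Lemma \ref{fullthang}, the index map $i\mapsto ai$ permutes $\F_p$, so $\tau(d)=\eta^p d$.

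The $b$ part has a genuine gap: you never identify the actual mechanism producing the weight $\eta^{p-1}$, and the mechanisms you sketch would not work. Since $b$ lies in bidegree $(2,0)$ and is pulled back along the unit $\bb{W}\to A$, on which $\tau$ acts trivially in internal degree $0$, the coefficient action on $y$, on $s_1$, or on the reduced regular representation contributes nothing to $\tau(b)$; the phrase ``one factor of $\eta$ per copy of $y$'' cannot apply to a class of internal degree $0$. The weight comes instead from the \emph{conjugation} action of $\tau$ on the normal subgroup $C_p$: writing $\tau\sigma\tau^{-1}=\sigma^a$ with $a$ a primitive root mod $p$, conjugation acts on $H_1(C_p,\Z)\simeq C_p$ by multiplication by $a$, hence (dualizing and applying the Bockstein) on $H^2(C_p,\Z)$ by $a$ as well, and equivariance of $H^2(C_p,\bb{W})\to H^2(C_p,A)$ gives $\tau(b)=ab$; the lemma then follows from the identification $a=\eta^{p-1}$ (the $(p-1)^2$-nd root $\eta$ acts on $C_p$ through the quotient $\Aut(C_p)\simeq C_{p-1}$, so $\eta^{p-1}$ reduces to the primitive root $a$). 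Your fallback arguments do not close this: ``consistency of degrees together with the known invariant subring'' is circular, since the invariant subring (Theorem \ref{badbitch_meat}) is computed downstream from this very lemma, and ``double-check against Gorbounov--Symonds and Nave'' is an appeal to the literature rather than a proof.
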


\begin{proof}
Let $a$ be a chosen primitive root of $\Z/p$ such that $\tau \sigma \tau^{-1} = \sigma^a$.  We begin with $b$ the generator of $H^2(C_p, A)$. Recall that $H_1(C_p, \Z) \simeq C_p$ where $\tau$ acts by multiplication by $a$. Then, $H^1(C_p, \Z/p) \simeq \Hom(H_1(C_p,\Z), \Z/p)$. The dual of the action by the $1x1$ matrix $a$, is the $1 \times 1$ matrix $a$ again, it still acts by $a$. Via the Bockstein map, $H^1(C_p, \Z/p) \simeq H^2(C_p, \Z)$. Thus, $\tau(b) = ab$, and $\eta^{(p-1)} = a.$ 

We now examine how $\tau$ acts on $d = y_0y_1 \cdots y_{p-1}$, which we do using section \ref{Cp-1section} which allows us to analyze how $\tau$ acts on $y_i := \sigma^i \otimes y$. 
\begin{align*} 
\tau(y_i) & := \tau(\sigma^i \otimes y) \\
& = \tau(\sigma^i) \otimes y \\
&= \sigma^{ia} \otimes \tau(y) \\
&= \sigma^{ia} \otimes \eta y  \\
&= \eta( \sigma^{ia} \otimes y) = \eta y_{ia}
\end{align*} 

Therefore, $\tau(d) = \eta^p(\prod_{i=0}^{p-1} y_{ia}) = \eta^p d.$ 
\end{proof}

\begin{lemma} \label{C_p-1actions_1A} Let $\eta$ be a $(p-1)^2$ root of unity. The action of a generator $\tau \in C_{(p-1)^2}$ on $\hat{H}^*(C_p, s_1A) \simeq \Z/p[\widetilde{b}^{\pm 1}, \widetilde{d}]$ is as follows:
\begin{align*}
\tau: \widetilde{b} &\mapsto \eta^{p}\widetilde{b} \\
\widetilde{d} &\mapsto \eta^{p+1} \widetilde{d} 
\end{align*}
\end{lemma}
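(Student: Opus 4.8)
The plan is to deduce this from Lemma \ref{C_p-1actiononA} by transporting the $\tau$-action along multiplication by $s_1$. Recall that $s_1 = y_0 + \cdots + y_{p-1}$ is $C_p$-invariant and is a non-zero-divisor in the polynomial ring $A = \Sym(\rho)$, so $\mu_{s_1}\colon A \to s_1A$, $x \mapsto s_1 x$, is an isomorphism of $C_p$-modules. It induces an isomorphism $\hat{H}^*(C_p, A) \xrightarrow{\ \sim\ } \hat{H}^*(C_p, s_1A)$, and by construction the generators $\widetilde{b}$, $\widetilde{d}$ of $\hat{H}^*(C_p, s_1A)$ used in the proof of Lemma \ref{tateC_plambda} are precisely the images of the generators $b$, $d$ of $\hat{H}^*(C_p, A)$ under this isomorphism (equivalently, the products $s_1 b$, $s_1 d$ in the $\hat{H}^*(C_p,A)$-module $\hat{H}^*(C_p, s_1A)$). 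So it suffices to track how $\tau$ interacts with $\mu_{s_1}$.

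First I would compute $\tau(s_1)$. Using the explicit action $\tau(y_i) = \eta\, y_{ia}$ recorded in the proof of Lemma \ref{C_p-1actiononA} (with $a$ the primitive root of $\Z/p$ satisfying $\tau\sigma\tau^{-1} = \sigma^a$), the substitution $i \mapsto ia$ permutes $\{0,\dots,p-1\}$, hence $\tau(s_1) = \sum_i \eta\, y_{ia} = \eta\, s_1$. Therefore, for any $x \in A$,
\[
\tau(\mu_{s_1}(x)) = \tau(s_1 x) = \tau(s_1)\,\tau(x) = \eta\, s_1\, \tau(x) = \eta\, \mu_{s_1}(\tau(x)),
\]
so $\tau|_{s_1A} \circ \mu_{s_1} = \eta\cdot(\mu_{s_1}\circ \tau|_A)$ as maps $A \to s_1A$. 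Since $s_1$ is $C_p$-fixed, $\mu_{s_1}$ intertwines the $C_p$-actions and $\tau$ intertwines them up to the fixed automorphism $\sigma\mapsto\sigma^a$ of $C_p$; by functoriality of Tate cohomology this twisted relation descends to $\tau \circ \mu_{s_1,*} = \eta\cdot(\mu_{s_1,*}\circ \tau)$ on $\hat{H}^*(C_p,-)$.

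Then, combining with Lemma \ref{C_p-1actiononA}, which gives $\tau(b) = \eta^{p-1}b$ and $\tau(d) = \eta^p d$, and using that $\mu_{s_1,*}$ is $\bb{W}$-linear so that scalars pull through, I obtain
\[
\tau(\widetilde{b}) = \eta\cdot \eta^{p-1}\,\widetilde{b} = \eta^{p}\,\widetilde{b}, \qquad \tau(\widetilde{d}) = \eta\cdot \eta^{p}\,\widetilde{d} = \eta^{p+1}\,\widetilde{d},
\]
which is the claim. The only real subtlety — the ``hard part,'' though it is mild — is the bookkeeping of the $\eta$-twist: one must verify $\tau(s_1) = \eta s_1$ honestly from the $C_p \rtimes C_{(p-1)^2}$-module structure set up in Lemma \ref{fullthang}, and confirm that the module-level identity $\tau\circ\mu_{s_1} = \eta\,\mu_{s_1}\circ\tau$ is compatible with passage to Tate cohomology (it is, because $\mu_{s_1}$ and the $\tau$-action are morphisms in the category of $C_p$-modules up to the fixed conjugation datum, and $\tau(N(A)) = N(A)$). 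Everything else is the formal computation displayed above.
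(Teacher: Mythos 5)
Your proof is correct and is essentially the paper's own argument: the paper likewise writes $\widetilde{b}=s_1b$, $\widetilde{d}=s_1d$, uses $\tau(s_1)=\eta s_1$ together with Lemma \ref{C_p-1actiononA}, and multiplies. Your more careful transport along multiplication by $s_1$ also silently corrects a typo in the paper's displayed computation, which writes $(\eta s_1)(\eta^{(p-1)}b)=\widetilde{b}$ where it should read $\eta^{p}\widetilde{b}$, in agreement with the lemma statement and with your result.
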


\begin{proof}
\begin{align*}
\tau: \widetilde{b}=s_1 b \mapsto (\eta s_1)(\eta^{(p-1)} b) = \widetilde{b} \\
\widetilde{d} = s_1 d \mapsto (\eta s_1)(\eta^p d) = \eta^{p+1} \widetilde{d}.
\end{align*}
\end{proof}

\begin{lemma} \label{action} The action of $C_{(p-1)^2}$ on $H^*(C_p, \Lambda) \simeq \Z/p[b^{\pm}, c, d]/c^2,$ is 
\begin{align*}
\tau: b &\mapsto \eta^{(p-1)} b \\
d &\mapsto \eta^{p} d\\ 
c &\mapsto \eta^{p} c.
\end{align*}
\noindent where $b$ is in degree $(2, 0)$, $d$ is in degree $(0, -2p)$, and $c$ is in degree $(1,-2).$
\end{lemma}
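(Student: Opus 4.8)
<br>

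The plan is to track the action of $\tau$ through the long exact sequence used in Lemma \ref{tateC_plambda}, now equipped with the $C_{(p-1)^2}$-structure provided by Lemma \ref{fullthang}. Concretely, I would start from the short exact sequence of $C_p \rtimes C_{(p-1)^2}$-representations $s_1A \to A \to \Lambda$ and take the associated long exact sequence in Tate $C_p$-cohomology, which by the argument in Lemma \ref{tateC_plambda} breaks into the four-term exact sequences
$$0 \to \hat{H}^{2k-1}(C_p, \Lambda) \to \hat{H}^{2k}(C_p, s_1A) \xrightarrow{0} \hat{H}^{2k}(C_p, A) \to \hat{H}^{2k}(C_p, \Lambda) \to 0.$$
Every map here is $C_{(p-1)^2}$-equivariant since all maps in sight came from equivariant maps of representations, so the boundary maps $\hat{H}^{2k-1}(C_p, \Lambda) \xrightarrow{\simeq} \hat{H}^{2k}(C_p, s_1A)$ and the surjections $\hat{H}^{2k}(C_p, A) \twoheadrightarrow \hat{H}^{2k}(C_p, \Lambda)$ are isomorphisms (resp. quotient maps) of $C_{(p-1)^2}$-modules. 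This is the whole engine: it transports the $\tau$-action computed on $A$ and $s_1A$ in Lemmas \ref{C_p-1actiononA} and \ref{C_p-1actions_1A} directly onto the generators of $\hat{H}^*(C_p, \Lambda)$.

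With that in place the computation of the three generators is immediate. For $b$: the class $b \in \hat{H}^2(C_p, \Lambda)$ is the image of $b \in \hat{H}^2(C_p, A)$ under the surjection, so $\tau(b) = \eta^{p-1}b$ by Lemma \ref{C_p-1actiononA}. For $d$: $d \in \hat{H}^0(C_p, \Lambda)$ is the image of $d \in \hat{H}^0(C_p, A)$, so $\tau(d) = \eta^p d$, again by Lemma \ref{C_p-1actiononA}. For $c$: by construction in the proof of Lemma \ref{tateC_plambda}, $c \in \hat{H}^1(C_p, \Lambda)$ is the unique class mapping under the boundary isomorphism to $\widetilde{b} = s_1 b \in \hat{H}^2(C_p, s_1A)$; since the boundary is $C_{(p-1)^2}$-equivariant and $\tau(\widetilde{b}) = \eta^p \widetilde{b}$ by Lemma \ref{C_p-1actions_1A}, we get $\tau(c) = \eta^p c$. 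The relation $c^2 = 0$ and the degrees $|b|=(2,0)$, $|c|=(1,-2)$, $|d|=(0,-2p)$ are inherited verbatim from Lemma \ref{tateC_plambda}, so there is nothing further to check there.

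The one genuinely subtle point — and the step I expect to be the main obstacle — is verifying that $c$ really is characterized $C_{(p-1)^2}$-equivariantly by the property that it is sent to $\widetilde{b}$ under the connecting map, i.e. that the choice of $c$ made in Lemma \ref{tateC_plambda} can be taken $\tau$-eigen. Since $\hat{H}^1(C_p,\Lambda)$ is one-dimensional over $\mathbb{Z}/p$ in the relevant bidegree, any nonzero class there is automatically an eigenvector for the cyclic group $C_{(p-1)^2}$, so once we know the connecting map is equivariant and nonzero on the line spanned by $c$, the eigenvalue is forced to match that of $\widetilde b$. I would spell out that the bidegree-$(1,-2)$ piece of $\hat{H}^1(C_p,\Lambda)$ is indeed rank one (this follows from tracking gradings through the proof of Lemma \ref{tateC_plambda}: $\widetilde b$ sits in bidegree $(2,|s_1|) = (2,-2)$, and the boundary shifts cohomological degree by $-1$ and preserves internal degree). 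Everything else is bookkeeping with the exact sequences and the already-computed actions on $A$ and $s_1A$; I would present the proof as: (1) note equivariance of the long exact sequence, (2) read off $\tau(b)$ and $\tau(d)$ from the surjection $\hat H^*(C_p,A)\twoheadrightarrow\hat H^*(C_p,\Lambda)$, (3) read off $\tau(c)$ from the equivariant boundary isomorphism onto $\hat H^*(C_p,s_1A)$, invoking rank-one-ness to justify that $c$ may be chosen as an eigenvector.
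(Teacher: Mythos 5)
Your proposal is correct and is essentially the paper's own argument: the action on $b$ and $d$ is read off from the equivariant surjection $\hat{H}^*(C_p,A)\twoheadrightarrow\hat{H}^*(C_p,\Lambda)$ via Lemma \ref{C_p-1actiononA}, and the action on $c$ is transported through the equivariant identification of $c$ with $\widetilde{b}\in\hat{H}^2(C_p,s_1A)$ via Lemma \ref{C_p-1actions_1A}. Your extra care about the equivariance of the connecting map and the rank-one eigenvector point only makes explicit what the paper leaves implicit.
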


\begin{proof} The actions on the generators $b$ and $d$ are the same as in \ref{C_p-1actiononA}. Recall from Lemma \ref{tateC_pA} that $c$ is the the element in bidegree $(1, (p-1))$ which maps to $\widetilde{b} \in \hat{H}^2(C_p, s_1A)$ equivariantly. The action is the same as that of $\widetilde{b}$ as shown in Lemma \ref{C_p-1actions_1A}, that is:
$$\tau: c \mapsto \eta^p c.$$ 
\end{proof}

\begin{cor} \color{blue}{(meat) \label{badbitch_meat} Let $R$ be the graded ring 
$$R := \bb{W}/p[\alpha, \beta, \Delta^{\pm}]/(\alpha^2),$$
\noindent where $\alpha := d^{-1}c$, $\beta := d^{-(p-1)}b$, and $\Delta := d^{-((p-1)^2)}$. The Tate cohomology of the $C_p \rtimes C_{(p-1)^2}$-module $\Lambda[d^{-1}] = \mr{Sym}(\overline{\rho}[d^{-1}])$ is: 
$$R \simeq \hat{H}^*(C_p \rtimes C_{(p-1)^2}, \Lambda[d^{-1}]).$$
where $|\alpha| = (1, 2(p-1))$, $|\beta| = (2, 2p(p-1))$, and $|\Delta| = (0, 2p(p-1)^2).$}
\end{cor}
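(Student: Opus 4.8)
The plan is to take the $C_p$-Tate computation of Lemma~\ref{tateC_plambda} and pass to $C_{(p-1)^2}$-invariants using Lemma~\ref{tatefullreduces}, since $(p-1)^2$ is a unit in $\bb{W}$. Concretely, by Lemma~\ref{tateC_plambda} we have $\hat{H}^*(C_p, \Lambda[d^{-1}]) \simeq \Z/p[b^{\pm 1}, c, d^{\pm 1}]/(c^2)$ with $|b|=(2,0)$, $|c|=(1,-2)$, $|d|=(0,-2p)$, and by Lemma~\ref{action} the generator $\tau$ of $C_{(p-1)^2}$ acts by $b \mapsto \eta^{p-1}b$, $c \mapsto \eta^p c$, $d\mapsto \eta^p d$, where $\eta$ is a primitive $(p-1)^2$-th root of unity. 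So the theorem will follow once I identify the subring of $\tau$-invariant elements.

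First I would set up the grading bookkeeping: a monomial $b^i c^{\epsilon} d^j$ (with $\epsilon\in\{0,1\}$) is $\tau$-invariant if and only if $(p-1)i + p\epsilon + pj \equiv 0 \pmod{(p-1)^2}$. Since $d$ is invertible, it is cleanest to normalize by powers of $d$: the proposed generators $\alpha := d^{-1}c$, $\beta := d^{-(p-1)}b$, $\Delta := d^{-(p-1)^2}$ are designed so that $\tau$ acts on $\alpha$ by $\eta^{p}\cdot\eta^{-p} = 1$, on $\beta$ by $\eta^{p-1}\cdot\eta^{-p(p-1)} = \eta^{(p-1)(1-p)} = \eta^{-(p-1)^2} = 1$, and on $\Delta$ by $\eta^{p}\cdot\eta^{-p(p-1)^2}$; a quick check gives $\Delta$ invariant as well (indeed $\Delta = (d^{-1})^{(p-1)^2}$ and $d^{(p-1)^2}$ has $\tau$-weight $\eta^{p(p-1)^2}=1$). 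So $\alpha,\beta,\Delta^{\pm 1}$ all lie in the invariant subring, with $\alpha^2 = d^{-2}c^2 = 0$.

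Next I would show these generate everything and that there are no further relations. Working in $\Z/p[b^{\pm1},c,d^{\pm1}]/(c^2)$, every element is a $\Z/p$-linear combination of $b^i c^\epsilon d^j$ with $i\geq 0$, $\epsilon\in\{0,1\}$, $j\in\Z$; rewriting $b^i c^\epsilon d^j = \beta^i \alpha^\epsilon d^{(p-1)i + \epsilon + j}$, the element is invariant iff the power of $d$ remaining has $\tau$-weight $1$, i.e. iff $p\big((p-1)i + \epsilon + j\big)\equiv 0 \pmod{(p-1)^2}$, i.e. iff $(p-1)i+\epsilon+j$ is divisible by $(p-1)^2$ (using $\gcd(p,(p-1)^2)=1$), i.e. iff the leftover $d$-power is an integer multiple of $(p-1)^2$, i.e. a power of $\Delta^{\pm1}$. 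Hence the invariant ring is exactly $\Z/p[\alpha,\beta,\Delta^{\pm1}]/(\alpha^2)$, and by Lemma~\ref{tatefullreduces} this computes $\hat{H}^*(C_p\rtimes C_{(p-1)^2},\Lambda[d^{-1}])$. Finally I would read off the internal degrees: $|\alpha| = |c| - |d| = (1,-2)-(0,-2p) = (1, 2(p-1))$; $|\beta| = |b| - (p-1)|d| = (2,0)-(p-1)(0,-2p) = (2, 2p(p-1))$; $|\Delta| = -(p-1)^2|d| = (0, 2p(p-1)^2)$, matching the claim. The only subtlety — and the one step I would be most careful about — is confirming that taking $\tau$-invariants commutes with inverting $d$ and that Lemma~\ref{tatefullreduces} genuinely gives $\hat H^*(G,-) \simeq \hat H^*(C_p,-)^{C_{(p-1)^2}}$ in the Tate (not just the ordinary) setting; the remark after that lemma asserts this, so I would simply cite it, but I would double-check that the $d$-localization is a filtered colimit of $G$-equivariant maps so invariants (a finite limit over a group of order prime to $p$, hence exact) pass through.
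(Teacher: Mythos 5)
Your proposal is correct and follows essentially the same route as the paper: take $\hat{H}^*(C_p,\Lambda[d^{-1}])$ from Lemma \ref{tateC_plambda}, use the $\tau$-action of Lemma \ref{action} to identify the invariant subring generated by $\alpha$, $\beta$, $\Delta^{\pm 1}$, and pass to the full group via Lemma \ref{tatefullreduces} (and its Tate-cohomology remark). The only difference is that you spell out the monomial-by-monomial weight check showing the invariants are \emph{exactly} $\Z/p[\alpha,\beta,\Delta^{\pm 1}]/(\alpha^2)$, a verification the paper asserts without detail.
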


\begin{proof} 
The $C_{p-1}$ action on $H^*(C_p, \Lambda) \simeq \Z/p[b^{\pm}, c, d]/c^2$ is described by Lemma \ref{action}. The fixed points of this action are precisely $\alpha := d^{-1}c$, $\beta := d^{-(p-1)}b$, and $\Delta := d^{-(p-1)^2}$. The fact that this is the Tate cohomology of $\Lambda$ follows from part (ii) Lemma \ref{tatefullreduces}.
\end{proof}

We've done most of the hard work. Now, let's translate this back to $(E_{p-1})_*.$

\begin{lemma} \label{multby0} Let $x :=  \prod_{j = 0}^{p-1} \sigma^j(\sigma - 1)(y_0).$ Then, mulitplication by $x$ is zero on $H^*(C_p, A).$
\end{lemma}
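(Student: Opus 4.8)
The plan is to prove that $x$ represents the zero element of $\hat{H}^{0}(C_p,A)$, from which it is formal that cup product with the class $[x]$ annihilates the whole graded ring $\hat{H}^{*}(C_p,A)$, and in particular $H^{n}(C_p,A)$ for every $n>0$, since $\hat{H}^{n}=H^{n}$ there.

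First I would unwind the definition. Writing $y_i:=\sigma^i(y_0)$, so that $\sigma$ cyclically permutes the $y_i$ with indices mod $p$, one has $\sigma^{j}(\sigma-1)(y_0)=y_{j+1}-y_{j}$, hence $x=\prod_{j=0}^{p-1}(y_{j+1}-y_{j})$. Applying $\sigma$ to this product only cyclically reindexes its $p$ factors, and $\sigma^{p}=1$, so $\sigma(x)=x$; thus $x\in A^{C_p}=H^{0}(C_p,A)$ and multiplication by $x$ on $\hat{H}^{*}(C_p,A)$ is defined.

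Next I would invoke the $C_p$-module splitting $A\simeq F\oplus\bb{W}[d]$ from Lemma~\ref{tateC_pA}, where $F$ is the $\bb{W}$-span of the free monomial $\sigma$-orbits and $\bb{W}[d]$ is the span of the $\sigma$-fixed monomials, $d=y_0\cdots y_{p-1}$. Since $x$ is homogeneous of polynomial degree $p$ and the only $\sigma$-fixed monomial of that degree is $d$ itself, the $\bb{W}[d]$-component of $x$ equals (the coefficient of $d$ in $\prod_j(y_{j+1}-y_j)$) times $d$. A one-line combinatorial check evaluates that coefficient: to build the squarefree monomial $y_0y_1\cdots y_{p-1}$ one must select, in every factor $y_{j+1}-y_j$, the term $y_{j+1}$ for all $j$, or the term $-y_j$ for all $j$ — any mixed selection repeats a variable — so the coefficient is $1^{p}+(-1)^{p}=1-1=0$ because $p$ is odd. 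Hence $x\in F$; being also $\sigma$-invariant, $x\in F^{C_p}=N\cdot F\subseteq N(A)$, where $N=1+\sigma+\cdots+\sigma^{p-1}$ is the norm element (for $F$ free, the invariants coincide with the image of the norm). Therefore $x$ maps to $0$ in $\hat{H}^{0}(C_p,A)=A^{C_p}/N(A)$.

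Finally, $\hat{H}^{*}(C_p,A)$ is a module over $\hat{H}^{0}(C_p,A)$ and the cup-product action of $[x]\in H^{0}(C_p,A)=A^{C_p}$ factors through its image in $\hat{H}^{0}(C_p,A)$, which is $0$; so multiplication by $x$ is identically zero on $\hat{H}^{*}(C_p,A)$, and restriction to positive degrees yields the statement for $H^{*}(C_p,A)$. I do not expect any real obstacle: the only computation is the vanishing of the coefficient of $d$ in $x$, and it follows immediately from the all-advance/all-stay dichotomy together with the oddness of $p$. As a sanity check this is consistent with $\hat{H}^{0}(C_p,A)\cong\Z/p[d]$ from Lemma~\ref{tateC_pA}, where the class of $x$ sits in polynomial degree $p$ and is the mod-$p$ reduction of the coefficient of $d$, namely $0$.
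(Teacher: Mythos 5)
Your proof is correct and follows essentially the same route as the paper's: both arguments expand $x=\prod_j(y_{j+1}-y_j)$, observe that the coefficient of the unique invariant monomial $d=y_0\cdots y_{p-1}$ vanishes (the all-advance/all-stay dichotomy with $p$ odd), conclude that the invariant element $x$ lies in the image of the norm $1+\sigma+\cdots+\sigma^{p-1}$, and hence that cup product with $[x]$ annihilates the (Tate) cohomology. Your write-up merely makes explicit the combinatorial coefficient check and the use of the splitting $A\simeq F\oplus\bb{W}[d]$ that the paper's proof leaves implicit.
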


\begin{proof} It suffices to show that $x$ is zero on $b \in H^*(C_p, A)$.  Let $X$ be the polynomial algebra over the abstract symbols $X := \bb{W}[y_0, \sigma(y_0), \cdots \sigma^{(p-1)}(y_0)]$, and give $X$ the permtuation $C_p$ action. Let $[x] \in H^2(X)$ be the cohomology class represented by $x$ in $X$. We explan $x$ into a sum of monomils and see that the coefficient of $x \sigma(x) \cdots \sigma^{(p-1)}y_0$ is zero. This implies that $[x]=0,$ so $x = (1+ \sigma + \cdots + \sigma^{p-1})z$ for some $z \in X$. The map $X \to \bb{W}[y_0, \cdots, y_{p-1}]$ sending $\sigma^j(y_0) \mapsto y_j$, exhibits $[x]$ as a coboundary. Thus, the action is zero as desired.
\end{proof} 

\begin{lemma} \label{idealhop} \color{blue}{(ideal hop)  
$$\hat{H}^*(C_p \rtimes C_{(p-1)^2}, (E_{p-1})_*) \simeq \hat{H}^*(C_p \rtimes C_{(p-1)^2}, \Sym(\bar{\rho}))[\Delta^{-1}].$$}
\end{lemma}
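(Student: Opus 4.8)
The plan is to prove Lemma \ref{idealhop} by comparing two completions of the same Tate-cohomology ring and showing that inverting $\Delta$ already achieves the completion at $I$. First I would recall from Theorem \ref{theoremc} that $(E_{p-1})_* \simeq (\Sym(\bar\rho)[d^{-1}])^\wedge_I$ as a $C_p\rtimes C_{(p-1)^2}$-module, where $I = (p, y_1,\dots,y_{p-1})$ with $y_i := (1-\sigma^i)(x_0)$, and $\Delta = d^{-(p-1)^2}$ with $d = x_0\cdots x_{p-1}$. Since $C_p\rtimes C_{(p-1)^2}$ is a finite group, group cohomology commutes with the filtered colimit defining $[d^{-1}]$; the only genuine subtlety is the $I$-adic completion, so the heart of the argument is to show
$$\hat H^*\bigl(C_p\rtimes C_{(p-1)^2},\,(\Sym(\bar\rho)[d^{-1}])^\wedge_I\bigr)\;\simeq\;\hat H^*\bigl(C_p\rtimes C_{(p-1)^2},\,\Sym(\bar\rho)[d^{-1}]\bigr),$$
i.e.\ that $I$-adic completion is invisible to Tate cohomology here.

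The key step is Lemma \ref{multby0}: the element $x := \prod_{j=0}^{p-1}\sigma^j(\sigma-1)(y_0)$ acts as zero on $H^*(C_p, A)$, hence (by the transfer/collapse argument of Lemma \ref{tatefullreduces}) as zero on $\hat H^*(G, \Sym(\bar\rho))$ and on its localization at $d$. I would next observe that $x$ is (up to a unit) a product of powers of the generators $y_i$ of $I$ together with a unit involving $d$: concretely $(\sigma-1)(y_0) = y_0 - y_1$ and its $\sigma$-translates, so $x$ is the norm $N_{C_p}$ of a product of the differences $y_i - y_{i+1}$, and in $\Sym(\bar\rho)[d^{-1}]$ the ideal generated by $x$ contains a power of $(p, y_1,\dots,y_{p-1})$ after inverting $d$ — in fact, modulo $p$ the class $x$ and suitable monomials in the $y_i$ generate cofinal ideals. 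Therefore the $I$-adic filtration on $\hat H^*(G,\Sym(\bar\rho)[d^{-1}])$, induced from the module filtration, is eventually zero: multiplication by any element of $I$, after enough steps, factors through multiplication by $x$, which is zero. This forces $\hat H^*(G, M^\wedge_I) \simeq \hat H^*(G,M)$ for $M = \Sym(\bar\rho)[d^{-1}]$, because the tower computing the completion stabilizes on cohomology.

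To assemble the statement: combine the displayed isomorphism with Corollary \ref{badbitch_meat} (``meat''), which identifies $\hat H^*(G, \Sym(\bar\rho)[d^{-1}])$ with $\bb W/p[\alpha,\beta,\Delta^{\pm}]/(\alpha^2)$ where $\Delta = d^{-(p-1)^2}$; inverting $d$ is the same as inverting $\Delta$ up to the finite extension coming from the $(p-1)^2$-nd root, and since we have already localized, $[\Delta^{-1}]$ is all that is needed. Writing $\hat H^*(G, \Sym(\bar\rho))[\Delta^{-1}]$ for the right-hand side then matches, giving
$$\hat H^*\bigl(C_p\rtimes C_{(p-1)^2},\,(E_{p-1})_*\bigr)\;\simeq\;\hat H^*\bigl(C_p\rtimes C_{(p-1)^2},\,\Sym(\bar\rho)\bigr)[\Delta^{-1}].$$
The main obstacle I anticipate is the bookkeeping in the second step: verifying carefully that the ideal $(x)$ is cofinal with a power of $I$ inside the localized ring $\Sym(\bar\rho)[d^{-1}]$ — that is, that no $y_i$ survives $I$-adically once $x$ acts trivially — since $x$ involves only the single norm of the $\sigma$-orbit of $y_0-y_1$ and one must check this still controls all the generators $y_1,\dots,y_{p-1}$ of $I$ after inverting $d$ and passing to cohomology, where many monomials are already killed by being in the image of the norm. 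Once that cofinality is pinned down, the completion drops out formally.
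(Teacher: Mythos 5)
Your overall skeleton matches the paper's proof of Lemma \ref{idealhop}: start from Theorem \ref{theoremc}, commute the localization past cohomology as a filtered colimit, and reduce to showing that the $I$-adic completion is invisible to Tate cohomology, with Lemma \ref{multby0} as the key input. But the mechanism you propose for that last step has a genuine gap. You claim that in $\Sym(\bar\rho)[d^{-1}]$ the principal ideal $(x)$, with $x=\prod_{j=0}^{p-1}\sigma^j(\sigma-1)(y_0)$, contains a power of $I=(p,y_1,\dots,y_{p-1})$, so that ``multiplication by any element of $I$, after enough steps, factors through multiplication by $x$.'' The containment goes the other way: $x$ is a product of $p$ elements of $I$, so $x\in I^p$, while $(x)\not\supseteq I^N$ for any $N$. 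The ring $\Sym(\bar\rho)[d^{-1}]$ is a localization of a polynomial ring over the discrete valuation ring $\bb{W}$, and no power of $p$, nor of a single difference such as $(\sigma-1)(y_0)$, is divisible by $x$ (already the factor $\sigma(\sigma-1)(y_0)$ fails to divide them); inverting $d$ does not change this, nor does reducing mod $p$. So the cofinality claim is false, and with it the asserted factorization through multiplication by $x$; your argument that the $I$-adic filtration dies on cohomology does not go through as written.

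The paper's proof uses Lemma \ref{multby0} differently: it deduces that multiplication by $p$ and by the generators $y_i$ of $I$ induces zero on $\hat{H}^*$ of $\Lambda':=\Sym(\bar\rho)[d^{-1}]$, and then runs a two-step $\lim$--$\lim^1$ comparison. First, the vanishing of these multiplication maps splits the long exact sequences attached to the tower $\{\Lambda'/I^k\}$ into short exact sequences of towers whose $\lim^1$ terms vanish, giving $H^i(\Lambda')\simeq\lim_k H^i(\Lambda'/I^k)$. Second, since the tower of cochain complexes computing $H^*(\Lambda'/I^k)$ consists of surjections, Mittag-Leffler gives $H^i(\Lambda'^\wedge_I)\simeq\lim_k H^i(\Lambda'/I^k)$, and the two identifications combine. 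Your proposal compresses this second comparison into the single sentence that ``the tower computing the completion stabilizes on cohomology,'' but commuting $\hat{H}^*(G,-)$ past the inverse limit $\Lambda'^\wedge_I=\lim_k\Lambda'/I^k$ is precisely the point requiring the $\lim^1$ bookkeeping; it is not automatic. (A small further point: inverting $\Delta=d^{-(p-1)^2}$ is literally the same as inverting $d$ -- the exponent is there so that the inverted element is invariant under $C_{(p-1)^2}$, not because of any finite extension.) To repair the write-up, replace the cofinality claim by the statement actually needed -- that the maps induced by $p$ and the $y_i$ on Tate cohomology vanish -- and then carry out, or cite, the two-step limit comparison as in the paper.
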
 

\begin{proof} We will abbreviate $H^*(-) := H^*(C_p \rtimes C_{(p-1)^2}, -)$. By Theorem \ref{theoremc}, where $I = (p, y_i)$, it suffices to show that \begin{align*} 
\hat{H}^*(\Sym(\bar{\rho})[d^{-1}]^\wedge_{I}) & \simeq \hat{H}^*(\Sym(\bar{\rho}))[\Delta^{-1}]^\wedge_{I} \\
& \simeq \hat{H}^*(\Sym(\bar{\rho}))[\Delta^{-1}]
\end{align*}

We will call $\Lambda' := \Lambda[d^{-1}]$. The commutation of inversion with cohomology follows from inversion being a colimit and cohomology commuting with colimits. Now let's move on to the tricky ideal, and why it both commutes and dies. Consider the diagram: 
\[
\begin{tikzcd}
            & \vdots \arrow[d]                         & \vdots \arrow[d]                                  & \vdots \arrow[d]                 &   \\
0 \arrow[r] & \Lambda' \arrow[d, "I"] \arrow[r, "I^2"] & \Lambda' \arrow[d, "\mr{id}"] \arrow[r] & \Lambda'/I^2 \arrow[d, "I"] \arrow[r] & 0 \\
0 \arrow[r] & \Lambda' \arrow[r, "I"]                  & \Lambda' \arrow[r]                                & \Lambda'/I \arrow[r]             & 0
\end{tikzcd}
\]

Multiplication by $p$ and $y_i$ is zero on $\hat{H}^*(A)$ by Lemma \ref{multby0}, which means by naturality multiplication by $p$ and $y_i^k$ are zero on $\hat{H}^*(\Lambda')$ as well. Therefore, we have a short exact sequence of towers

\[
\begin{tikzcd}
\vdots \arrow[d]                                       & \vdots \arrow[d]                                & \vdots \arrow[d]                 \\
H^i(\Lambda') \arrow[d, "\mr{id}"] \arrow[r] & H^i(\Lambda'/I^2) \arrow[d, "H^i(I)"] \arrow[r] & H^{i+1}(\Lambda') \arrow[d, "0"] \\
H^i(\Lambda') \arrow[r]                                & H^i(\Lambda'/I) \arrow[r]                       & H^{i+1}(\Lambda')               
\end{tikzcd}
\]

and a corresponding $6$-term exact sequence involving $\lim$ and $\lim^1$. 

\begin{tikzpicture}[descr/.style={fill=white,inner sep=1.5pt}]
        \matrix (m) [
            matrix of math nodes,
            row sep=1em,
            column sep=2.5em,
            text height=1.5ex, text depth=0.25ex
        ]
        { 0 & \lim_k \{  H^i( \Lambda') \}_k & \lim_k \{ H^i(\Lambda'/I^k) \}_k & \lim_k \{ H^{i+1}(\Lambda') \}_k &  \\
           & \lim^1_k \{ H^i(\Lambda') \}_k & \lim^1_k \{ H^i(\Lambda'/I_k) \}_k & \lim^1_k\{ H^{i+1}(\Lambda')\} & 0 \\
        };

        \path[overlay,->, font=\scriptsize,>=latex]
        (m-1-1) edge (m-1-2)
        (m-1-2) edge (m-1-3)
        (m-1-3) edge (m-1-4)
        (m-1-4) edge[out=355,in=175,blue] (m-2-2)
        %(m-1-4) edge[out=355,in=175,blue] node[descr,yshift=0.3ex] (m-2-2)
        (m-2-2) edge (m-2-3)
        (m-2-3) edge (m-2-4)
        (m-2-4) edge (m-2-5);
\end{tikzpicture}

Now, $\mr{lim}^1$ of the first tower is $0$ because the maps involved are the identity. $\mr{lim}^1$ of the third tower is $0$ because the maps are trivial. Therefore, $\mr{lim}^1H^*(\Lambda'/I^k)=0.$ Since the limit over identity is itself, and the limit over the 0 map is 0, our 6 term exact sequence collapses severely to just the first two $\lim$ terms:

$$H^i( \Lambda') \simeq \lim_k H^i(\Lambda'/I^k).$$

Now, let's examine the latter $\lim_k H^i(\Lambda'/I^k).$ For each fixed $k$, there's a cochain complex computing the finite group cohomology of $\Lambda'/I^k$. (For ease of notation we write the complex for the $C_p$-group cohomology, but it works for the whole group $C_p \rtimes C_{p-1}.$)

\[
\begin{tikzcd}
            & \vdots \arrow[d]                                   & \vdots \arrow[d]                                                        & \vdots  \arrow[d]                                 &        \\
0 \arrow[r] & \Lambda'/I^k \arrow[r, "\sigma-1"'] \arrow[d, "I"] & \Lambda'/I^k \arrow[r, "1 + \cdots + \sigma^{p-1}"'] \arrow[d, "I"]     & \Lambda'/I^k \arrow[r, "\sigma-1"] \arrow[d, "I"] & \cdots \\
0 \arrow[r] & \Lambda'/I^{k-1} \arrow[r, "\sigma-1"'] \arrow[d]  & \Lambda'/I^{k-1} \arrow[r, "1+\sigma+\cdots + \sigma^{p-1}"'] \arrow[d] & \Lambda'/I^{k-1} \arrow[r, "\sigma-1"] \arrow[d]  & \cdots \\
            & \vdots                                             & \vdots                                                                  & \vdots                                            &       
\end{tikzcd}
\]
The limit of this tower of complexes is $H^*(\Lambda'^\wedge_I).$ The tower in each cohomological degree consists of surjections, which means that the $\lim^1$ term vanishes. So, we get that $$H^i(\Lambda'^\wedge_I) \simeq \lim_k \{ H^i(\Lambda'/I^k) \}_k,$$ \noindent where the latter is a limit taken over $H^*(I).$ Putting this together with our conclusion from the analysis of our first lim sequence, we get 

$$H^i( \Lambda') \simeq \lim_k H^i(\Lambda'/I^k) \simeq H^i(\Lambda'^\wedge_I).$$
\noindent Thus, the cohomology of $\Lambda'$ is unaffected by the $I$ completion as claimed.
\end{proof}

\begin{remark} The preceeding proof is based on Nave's proof of theorem 5.7 in \cite{nave1999}. \end{remark}

\begin{manualtheorem}{D} \label{theoremd} \color{blue}{Let $R$ be the graded ring 
$$R := \bb{W}/p[\alpha, \beta, \Delta^{\pm}]/(\alpha^2).$$
The Tate cohomology of the $C_p \rtimes C_{(p-1)^2}$-module $(E_{p-1})_*$ is: 
$$R \simeq \hat{H}^*(C_p \rtimes C_{(p-1)^2}, (E_{p-1})_*).$$
where $|\alpha| = (1, 2(p-1))$, $|\beta| = (2, 2p(p-1))$, and $|\Delta| = (0, 2p(p-1)^2).$}
\end{manualtheorem}

\begin{proof} 
This follows from Theorem \ref{badbitch_meat} combined with Lemma \ref{idealhop}.
\end{proof}

\begin{remark} The classes $c$ and $bd$ represent $\alpha$ and $\beta$ in the homotopy groups of spheres, up to a unit. That is why we notate them as such in Theorem \ref{theoremd}. \end{remark}

\bibliographystyle{alpha}
\bibliography{biblio}
\end{document}